\DeclareMathAlphabet{\pazocal}{OMS}{zplm}{m}{n}
\newcommand{\Ab}{\pazocal{A}}
\newcommand{\Ba}{\mathcal{B}}
\newcommand{\Bb}{\pazocal{B}}
\newcommand{\Ca}{\mathcal{C}}
\newcommand{\Cb}{\pazocal{C}}
\newcommand{\Da}{\mathcal{D}}
\newcommand{\Db}{\pazocal{D}}
\newcommand{\Ea}{\mathcal{E}}
\newcommand{\Eb}{\pazocal{E}}
\newcommand{\Fa}{\mathcal{F}}
\newcommand{\Fb}{\pazocal{F}}
\newcommand{\Gb}{\pazocal{G}}
\newcommand{\Ib}{\pazocal{I}}
\newcommand{\Ia}{\mathcal{I}}
\newcommand{\Ja}{\mathcal{J}}
\newcommand{\Ka}{\mathcal{K}}
\newcommand{\La}{\mathcal{L}}
\newcommand{\Lb}{\pazocal{L}}
\newcommand{\Ma}{\mathcal{M}}
\newcommand{\Mb}{\pazocal{M}}
\newcommand{\Na}{\mathcal{N}}
\newcommand{\Nb}{\pazocal{N}}
\newcommand{\Ob}{\pazocal{O}}
\newcommand{\Pb}{\pazocal{P}}
\newcommand{\Sb}{\pazocal{S}}
\newcommand{\Tb}{\pazocal{T}}
\newcommand{\Ub}{\pazocal{U}}
\newcommand{\Vb}{\pazocal{V}}
 \newtheorem{lem}{Lemma}[subsection]
 \newtheorem{prop}[lem]{Proposition}
  \newtheorem{coro}[lem]{Corollary}
 \newtheorem{theo}[lem]{Theorem}
  \newtheorem{defi}[lem]{Definition}
  \newtheorem{rem}[lem]{Remark}
 \newtheorem{exa}[lem]{Example}
 \newtheorem{comm}{Commentary}
  \newtheorem{roots}[lem]{}
  \newtheorem{line_bundles}[lem]{}
  \newtheorem{cong_sub_groups}[lem]{}  
  \newtheorem{Analytic_algebra}[lem]{}  
  \newtheorem{Notation}[lem]{}
    \newtheorem{Final_remark}[lem]{}  
 \newcommand{\Q }{{\mathbb Q }}
 \newcommand{\G }{{\mathbb G }}
 \newcommand{\T }{{\mathbb T }}
 \newcommand{\B }{{\mathbb B }}
 \newcommand{\Z }{{\mathbb Z }}
 \newcommand{\N }{{\mathbb N }}
 \newcommand{\stirlingii}{\genfrac{\{}{\}}{0pt}{}}
\DeclareRobustCommand{\abinom}{\genfrac{\langle}{\rangle}{0pt}{}}
\newcommand{\simeqd}{\mathrel{\rotatebox[origin=c]{-90}{$\simeq$}}}
\begin{document}
\def \bangle{ \atopwithdelims \langle \rangle}
\title{\textbf{G-equivariance of formal models
of flag varieties}}
\author{ Andr\'{e}s Sarrazola Alzate}
\date{}
\maketitle
\begin{abstract}
Let $\G$ be a split connected reductive group scheme over the ring of integers $\mathfrak{o}$ of a finite extension $L|\Q_p$ and $\lambda\in X(\T)$ an algebraic character of a split maximal torus $\T\subseteq\G$. Let us also consider $X^{\text{rig}}$ the rigid analytic flag variety of $\G$ and $G=\G(L)$. In the first part of this paper, we introduce a family of $\lambda$-twisted differential operators on a formal model $\mathfrak{Y}$ of $X^{\text{rig}}$. We compute their global sections and we prove coherence together with several cohomological properties. In the second part, we define the category of coadmissible $G$-equivariant arithmetic $\Da(\lambda)$-modules over the family of formal models of the rigid flag variety $X^{\text{rig}}$. We show that if $\lambda$ is such that $\lambda + \rho$ is dominant and regular ($\rho$ being the Weyl character), then the preceding category is anti-equivalent to the category of admissible locally analytic $G$-representations, with central character $\lambda$. In particular, we generalize the results in \cite{HPSS} for algebraic characters.
\end{abstract}
\justify
\textbf{Key words}: Flag varieties, formal models, Beilinson-Bernstein correspondence, admissible locally analytic representations, localization. 

\makeatletter
\@starttoc{toc}
\makeatother

\section{Introduction}
\justify
Let $L|\Q_p$ be a finite extension and $\mathfrak{o}$ its ring of integers. Throughout this work $\G$ will always denote a split connected reductive group scheme over  $\mathfrak{o}$. We will fix a Borel subgroup $\B\subset \G$ which contain a split maximal torus $\T\subset\B$ of $\G$. We will also denote by $X:=\G/\B$ the smooth flag $\mathfrak{o}$-scheme associated to $\G$ and by $\mathfrak{X}$ the smooth formal scheme. In \cite{HPSS} the authors have introduced certain sheaves of differential operators (with congruence level $k\in\N$) $\Da^{\dag}_{\mathfrak{Y},k}$ on a family of formal models $\mathfrak{Y}$ of $X^{\text{rig}}$ the rigid analytic flag variety. They study their cohomological properties and show that $\mathfrak{Y}$ is $\Da^{\dag}_{\mathfrak{Y},k}$-affine, which is a fundamental result. Moreover, it is proved in \cite[Theorem 5.3.12]{HPSS} that the category of admissible locally analytic representations of the $L$-analytic group $\G(L)$ can be described in terms of $\G(L)$-equivariant families $(\Ma_{\mathfrak{Y},k})$ of modules over $\Da^{\dag}_{\mathfrak{Y},k}$ on the projective system of all formal models $\mathfrak{Y}$ of $X^{\text{rig}}$.
\justify
Our motivation is to study the preceding localization theorem in the twisted situation. In this work we will treat the algebraic case. This is, we will only consider characters of the Lie algebra $\mathfrak{t}=\text{Lie}(\T)$ arriving from characters $\lambda\in X(\T)=\text{Hom}(\T,\G_m)$ via differentiation. In this situation $\lambda$ induces an invertible sheaf $\La(\lambda)$ on $\mathfrak{X}$ and we define $\Da^{\dag}_{\mathfrak{X},k}(\lambda)$ as the sheaf of differential operators (with conguence level $k$) acting on $\La(\lambda)$. We will follow the philosophy described in \cite{HPSS} introducing sheaves of differential operators on each admisisble blow-up of $\mathfrak{X}$. Let $\text{pr}:\mathfrak{Y}\rightarrow\mathfrak{X}$ be an admissible blow-up, then for $k>>0$\footnote{This technical condition is clarified in proposition \ref{mult_blow_up}. It is also explained in (\ref{congruence level}) below.}
\begin{eqnarray*}
\Da^{\dag}_{\mathfrak{Y},k}(\lambda):=\text{pr}^*\Da^{\dag}_{\mathfrak{X},k}(\lambda) = \Ob_{\mathfrak{Y}}\otimes_{\text{pr}^{-1}\Ob_{\mathfrak{X}}}\text{pr}^{-1}\Da^{\dag}_{\mathfrak{X},k}(\lambda)
\end{eqnarray*}
is a sheaf of rings on $\mathfrak{Y}$. Let us denote by $\rho$ the so-called Weyl character and let us assume that $\lambda+\rho\in\mathfrak{t}^*_L = \text{Hom}_{L}(\mathfrak{t}\otimes_{\mathfrak{o}}L,L)$ is a dominant and regular character of $\mathfrak{t}_L := \mathfrak{t}\otimes_{\mathfrak{o}}L$. In this situation, we will show that the direct image functor $\text{pr}_*$ induces an equivalence of categories between the category of coherent $\Da^{\dag}_{\mathfrak{Y},k}(\lambda)$-modules and the category of coherent $\Da^{\dag}_{\mathfrak{X},k}(\lambda)$-modules. In addition, we have $\text{pr}_*\Da^{\dag}_{\mathfrak{Y},k}(\lambda)=\Da^{\dag}_{\mathfrak{X},k}(\lambda)$, which implies that 
\begin{eqnarray*}
H^{0}(\mathfrak{Y},\Da^{\dag}_{\mathfrak{Y},k}(\lambda)) = H^{0}(\mathfrak{X},\Da^{\dag}_{\mathfrak{X},k}(\lambda)).
\end{eqnarray*}
It is shown that $H^0(\mathfrak{X},\Da^{\dag}_{\mathfrak{X},k}(\lambda))$ can be identified with the central redaction $\Db^{\text{an}}(\G(k)^{\circ})_{\lambda}$\footnote{Via the classical isomorphism of Harish-Chandra $\lambda$ induces a character $\chi_{\lambda}: Z(\text{Lie}(\G)\otimes_{\mathfrak{o}}L)\rightarrow L$ which allows to consider the central redaction.} of Emerton's analytic distribution algebra  $\Db^{\text{an}}(\G(k)^{\circ})$ of the wide open rigid-analytic k-th congruence group $\G(k)^{\circ}$. Our first result is
\justify
\textbf{Theorem 1.}  Let $\text{pr}:\mathfrak{Y}\rightarrow\mathfrak{X}$ be an admissible blow-up. Suppose that $\lambda\in\text{Hom}(\T,\G_m)$ is an algebraic character such that $\lambda+\rho\in\mathfrak{t}_L^*$ is a dominant and regular character of $\mathfrak{t}_L$. Then $H^{0}(\mathfrak{Y},\bullet)$ induces an equivalence between the categories of coherent $\Da^{\dag}_{\mathfrak{Y},k}(\lambda)$-modules and finitely presented $\Db^{\text{an}}(\G(k)^{\circ})_{\lambda}$-modules. 
\justify
As in the classical case, the inverse functor is determined by the localization functor 
\begin{eqnarray*}
\La oc^{\dag}_{\mathfrak{Y},k}(\lambda) (\bullet) := \Da^{\dag}_{\mathfrak{Y},k}(\lambda)\otimes_{\Db^{\text{an}}(\G(k)^{\circ})_{\lambda}}(\bullet).
\end{eqnarray*}
Let us now describe the most important tools in our localization theorem. On the algebraic side, we will first assume that $G_0= \G(\mathfrak{o})$ and that $D(G_0,L)$ is the algebra of locally analytic distributions of the compact analytic group $G_0$ (in the sense of \cite{ST}). The key point will be to construct a structure of weak Fréchet-Stein algebra on $D(G_0,L)$ (in the sense of \cite[Definition 1.2.6]{Emerton2}) that will allow us to localize the coadmissible $D(G_0,L)$-modules (relative to this weak Fréchet-Stein structure). In fact, if $\Cb^{\text{cts}}(G_0,L)_{\G(k)^{\circ}-{\text{an}}}$ is the vector space of locally analytic vectors of the space of continuous $L$-valued functions and $D(\G(k)^{\circ},G_0):=(\Cb^{\text{cts}}(G_0,L)_{\G(k)^{\circ}-\text{an}})'_b$ is its strong dual, then we have an isomorphism
\begin{eqnarray*}
D(G_0,L) \xrightarrow{\simeq}\varprojlim_{k\in\N} D(\G(k)^{\circ},G_0)
\end{eqnarray*}
\justify
which defines the structure of weak Fréchet-Stein algebra and such that
\begin{eqnarray} \label{rel_1.2}
D(\G(k)^{\circ},G_0) = \displaystyle\bigoplus_{g\in G_0/G_k} \Db^{\text{an}}(\G(k)^{\circ})\delta_g.
\end{eqnarray}
\justify
Here $G_k:=\G(k)(\mathfrak{o})$ is a normal subgroup of $G_0$, the direct sum runs through a set of representatives of the cosets of $G_k$ in $G_0$ and $\delta_g$ is the Dirac distribution supported in $g$. We will denote by $\Cb_{G_0,\lambda}$ the category of coadmssible $D(G_0,L)$-modules with central character $\lambda$ (coadmissible $D(G_0,L)_{\lambda}$-modules, where $D(G_0,L)_{\lambda}$ denotes the central reduction). 
\justify
Now, on the geometric side, we will consider $\text{pr}:\mathfrak{Y}\rightarrow\mathfrak{X}$ a $G_0$-equivariant admssible blow-up such that the invertible sheaf $\La (\lambda)$ is equipped with a $G_0$-action that allows us to define a left $G_0$-action $T_g:\Da^{\dag}_{\mathfrak{Y},k}(\lambda)\rightarrow (\rho_g)_*\Da^{\dag}_{\mathfrak{Y},k}(\lambda)$ on $\Da^{\dag}_{\mathfrak{Y},k}(\lambda)$\footnote{Here $g\in G_0$ and $\rho_g :\mathfrak{Y}\rightarrow\mathfrak{Y}$ is the morphism induced by $G_0$-equivariance.}, in the sense that for every $g,h\in G_0$ we have the cocycle condition $T_{hg} = (\rho_g)_* T_h\;\circ\; T_g$. So, we will say that a coherent $\Da^{\dag}_{\mathfrak{Y},k}(\lambda)$-module $\Ma$ is \textit{strongly $G_0$-equivariant} if there is a family $(\varphi_g)_{g\in G_0}$ of isomorphisms $\varphi_g : \Ma\rightarrow (\rho_g)_*\Ma$ of sheaves of $L$-vector spaces, which satisfy the following properties (conditions ($\dag$)) : 
\begin{itemize}
\item[$\bullet$] For every $g,h\in G_0$ we have $(\rho_{g})_*\varphi_h\;\circ\;\varphi_g = \varphi_{hg}$.
\item[$\bullet$] If $\Ub\subseteq \mathfrak{Y}$ is an open subset, $P\in\Da^{\dag}_{\mathfrak{Y},k}(\lambda)(\Ub)$ and $m\in\Ma(\Ub)$ then $\varphi_{g}(P\bullet m)=T_{g}(P)\bullet \varphi_{g}(m)$.
\item[$\bullet$]\footnote{We identify here $H^0(\mathfrak{Y},\Da^{\dag}_{\mathfrak{Y},k}(\lambda))$ with $\Db^{\text{an}}(\G(k)^{\circ})_{\lambda}$ and we use lemma \ref{trivial_on_k+1} to give a sense to this condition.} For any $g\in G_{k+1}$ the application $\varphi_g:\Ma\rightarrow (\rho_g)_*\Ma$ is equal to the multiplication by $\delta_g\in \Db^{\text{an}}(\G(k))_{\lambda}$. 
\end{itemize}
\justify
A morphism between two strongly $G_0$-equivariant $\Da^{\dag}_{\mathfrak{Y},k}(\lambda)$-modules $(\Ma, (\varphi_g^{\Ma})_{g\in G_0})$ and $(\Na, (\varphi_g^{\Na})_{g\in G_0})$ is a morphism $\psi:\Ma\rightarrow \Na$ which is $\Da^{\dag}_{\mathfrak{Y},k}(\lambda)$-linear and such that, for every $g\in G_0$, we have $\varphi_g^{\Na}\;\circ\; \psi = (\rho_g)_*\psi\;\circ\; \varphi^{\Ma}_{g}$.  We denote by $\text{Coh}(\Da^{\dag}_{\mathfrak{Y},k}(\lambda),G_0)$ the category of strongly $G_0$-equivariant $\Da^{\dag}_{\mathfrak{Y},k}(\lambda)$-modules. We have the following result \footnote{We use the relationship (\ref{rel_1.2}) to give a sense to the assertion of the theorem.} 
\justify
\textbf{Theorem 2.} Let $\lambda\in\text{Hom}(\T,\G_m)$ be an algebraic character such that $\lambda+\rho\in\mathfrak{t}_L^*$ is a dominant and regular character of $\mathfrak{t}_L$. The functors $\La oc^{\dag}_{\mathfrak{Y},k}(\lambda)$ and $H^{0}(\mathfrak{Y},\bullet)$ induce equivalences between the categories of finitely presented $D(\G(k)^{\circ},G_0)$-modules (with central character $\lambda$) and $\text{Coh}(\Da^{\dag}_{\mathfrak{Y},k}(\lambda),G_0)$.
\justify
Still on the geometric side, let us consider the set $\underline{\Fb}_{\mathfrak{X}}$ of couples $(\mathfrak{Y},k)$ such that $\mathfrak{Y}$ is an admissible blow-up of $\mathfrak{X}$ and $k\ge k_{\mathfrak{Y}}$, where
\begin{eqnarray}\label{congruence level}
k_{\mathfrak{Y}} := \underset{\Ia}{\text{min}}\;\text{min}\{k\in\N\;|\; \varpi^k\in\Ia\} 
\end{eqnarray}
\justify
and $\Ia$ is an ideal subsheaf of $\Ob_{\mathfrak{X}}$, such that $\mathfrak{Y}\simeq V(\Ia)$. This set carries a partial order. As is shown in \cite{HPSS} the group $G_0$ acts on $\underline{\Fb}_{\mathfrak{X}}$ and this action respects the congruence level. This means that for every couple $(\mathfrak{Y},k)\in\underline{\Fb}_{\mathfrak{X}}$ there is a couple $(\mathfrak{Y}.g,k_{\mathfrak{Y}.g})\in\underline{\Fb}_{\mathfrak{X}}$ with an isomorphism $\rho_g:\mathfrak{Y}\rightarrow \mathfrak{Y}.g$ and such that $k_{\mathfrak{Y}}=k_{\mathfrak{Y}.g}$. So, we will say that a family $\Ma:=(\Ma_{\mathfrak{Y},k})_{(\mathfrak{Y},k)\in\underline{\Fb}_{\mathfrak{X}}}$ of coherent $\Da^{\dag}_{\mathfrak{Y},k}(\lambda)$-modules is a coadmissible $G_0$-equivariant $\Da(\lambda)$-module on $\underline{\Fb}_{\mathfrak{X}}$ if for any $g\in G_0$, with morphism $\rho_g:\mathfrak{Y}\rightarrow\mathfrak{Y}.g$, there is an isomorphism 
\begin{eqnarray*}
\varphi:\Ma_{\mathfrak{Y}.g,k}\rightarrow(\rho_g)_*\Ma_{\mathfrak{Y},k}
\end{eqnarray*}
that satisfies the conditions $(\dag)$ and such that, if $(\mathfrak{Y}',k')\succeq (\mathfrak{Y},k)$ with $\pi:\mathfrak{Y}'\rightarrow\mathfrak{Y}$, then there is a transition morphism $\pi_*\Ma_{\mathfrak{Y}',k'}\rightarrow\Ma_{\mathfrak{Y},k}$ which satisfies obvious transitivity conditions. Moreover, a morphism $\Ma\rightarrow\Na$ between two such a modules is a morphism $\Ma_{\mathfrak{Y},k}\rightarrow \Na_{\mathfrak{Y},k}$ of $\Da^{\dag}_{\mathfrak{Y},k}(\lambda)$-modules which is compatible with the additional structures. We will note this category $\Cb^{G_0}_{\mathfrak{X},\lambda}$ and for every $\Mb\in \Cb^{G_0}_{\mathfrak{X},\lambda}$, we will consider the projective limit
\begin{eqnarray*}
\Gamma(\Ma):= \varprojlim_{(\mathfrak{Y},k)\in\underline{\Fb}_{\mathfrak{X}}} H^{0}(\mathfrak{Y},\Ma_{\mathfrak{Y},k})
\end{eqnarray*}   
\justify
in the sense of the Abelian groups.
\justify
Now, let $M$ be a coadmissible $D(G_0,L)_{\lambda}$-module and  $V:=M'_b$ its associated locally analytic representation. The vector space of $\G(k)^{\circ}$-analytic vectors $V_{\G(k)^{\circ}-\text{an}}\subseteq V$ is stable under the action of $G_0$ and its dual $M_k:= (V_{\G(k)^{\circ}-\text{an}})'_b$ is a finitely presented $D(\G(k)^{\circ},G_0)$-module. In this situation, theorem 2 produces a coherent $\Da^{\dag}_{\mathfrak{Y},k}(\lambda)$-module 
\begin{eqnarray*}
\La oc^{\dag}_{\mathfrak{Y},k}(\lambda)(M_k) := \Da^{\dag}_{\mathfrak{Y},k}(\lambda)\otimes_{\Db^{\text{an}}(\G(k)^{\circ})_{\lambda}}M_k
\end{eqnarray*}  
for each element $(\mathfrak{Y},k)\in\underline{\Fb}_{\mathfrak{X}}$. We will note this family
\begin{eqnarray*}
\La oc^{G_0}_{\lambda}(M):= \left(\La oc^{\dag}_{\mathfrak{Y},k}(\lambda)(M_k)\right)_{(\mathfrak{Y},k)\in\underline{\Fb}_{\mathfrak{X}}}.
\end{eqnarray*}
\justify
\textbf{Theorem 3.} Let $\lambda\in\text{Hom}(\T,\G_m)$ be an algebraic character such that $\lambda+\rho\in\mathfrak{t}_L^*$ is a dominant and regular character of $\mathfrak{t}_L$. The functors $\La oc^{G_0}_{\lambda}(\bullet)$ and $\Gamma(\bullet)$ induce equivalences of categories between the category $\Cb_{G_0,\lambda}$ (of coadmissible $D(G_0,L)_{\lambda}$-modules) and the category $\Cb^{G_0}_{\mathfrak{X},\lambda}$.
\justify
Finally, the last part of this work is devoted to the study of coadmissible $D(G,L)_{\lambda}$-modules, where $G:=\G(L)$\footnote{Here $G_0$ is a (maximal) compact subgroup of $G$. This compactness property allows to define the structure of weak Fréchet-Stein algebra.}. To do this, we will consider the Bruhat-Tits building $\Bb$ of $G$ (\cite{BT1} and \cite{BT2}). It is a simplicial complex equipped with a  $G$-action. For any special vertex $v\in \Bb$, the theory of Bruhat and
Tits associates a reductive group $\G_v$ whose generic fiber is canonically isomorphic to $\G\times_{\text{Spec}(\mathfrak{o})} \text{Spec}(L)$. Let $X_v$ be the flag scheme of $\G_v$, and $\mathfrak{X}_v$ its formal completion along its special fiber. We consider the set $\underline{\Fb}$ composed of triples $(\mathfrak{Y}_v, k, v)$ such that $v$ is a special vertex, $\mathfrak{Y}_v\rightarrow \mathfrak{X}_v$ is an admissible blow-up of $\mathfrak{X}_v$ and $k \ge k_{\mathfrak{Y}_v}$. According to (\ref{partial_order_underline_F}) $\underline{\Fb}$ is partially ordered. In addition, for each special vertex $v\in\Bb$, each element $g\in G$ induces an isomorphism $\rho^v_g : \mathfrak{X}_v\rightarrow \mathfrak{X}_{vg}$, such that if $(\rho^v_g)^{\natural}: \Ob_{\mathfrak{X}_{vg}}\rightarrow (\rho^v_g)_*\Ob_{\mathfrak{X}_{v}}$ is the comorphism map and $\pi : \mathfrak{Y}_v \rightarrow \mathfrak{X}_v$ is an admissible blow-up along $V(\Ia)$, then the (admissible) blow-up along $V((\rho^v_g)^{-1}(\rho^v_g)_*\Ia)$ produces a scheme $\mathfrak{Y}_{vg}$ with an isomorphism $\rho^v_g:\mathfrak{Y}_v\rightarrow\mathfrak{Y}_{vg}$, such that $k_{\mathfrak{Y}_v}=k_{\mathfrak{Y}_{vg}}$ and for every $g,h\in G$ we have $\rho^{vg}_h\;\circ\;\rho^v_g=\rho^v_{gh}$.  
\justify
A coadmissible $G$-equivariant arithmetic $\Da(\lambda)$-module on $\underline{\Fb}$, consists of a family $(\Ma_{(\mathfrak{Y}_v,k,v)})_{(\mathfrak{Y}_v,k,v)\in \underline{\Fb}}$ of coherent $\Da^{\dag}_{\mathfrak{Y}_v,k}(\lambda)$-modules satisfying the condition $(\dag)$ plus some compatibility properties (definition \ref{coadmissible_G_arithmetic}) that allow us to form the projective limit
\begin{eqnarray*}
\Gamma(\Ma):= \varprojlim_{(\mathfrak{Y}_v,k,v)\in\underline{\Fb}} H^{0}(\mathfrak{Y}_v,\Ma_{(\mathfrak{Y}_v,k,v)}),
\end{eqnarray*}
\justify
which, as we will show, has a structure of coadmissible $D(G,L)_{\lambda}$-module. On the other hand, given a coadmissible $D(G,L)_{\lambda}$-module $M$, we consider
$V:= M'_b$ its continuous dual, which is a locally analytic representation of $G$. Then let $M_{v,k}$ be the dual space of the subspace $V_{\G_v(k)^{\circ}-\text{an}}\subseteq V$ of $\G_v(k)^{\circ}$-analytic vectors. For every $(\mathfrak{Y}_v,k,v) \in\underline{\Fb}$, we have a coherent $\Da^{\dag}_{\mathfrak{Y}_v,k}(\lambda)$-module
\begin{eqnarray*}
\La oc^{\dag}_{\mathfrak{Y}_v,k}(\lambda)(M_{v,k})= \Da^{\dag}_{\mathfrak{Y}_v,k}(\lambda)\otimes_{\Db^{\text{an}}(\G_v(k)^{\circ})_{\lambda}}M_{v,k}.
\end{eqnarray*}
We note this family $\La oc^{G}_{\lambda}(M)$. We will show the following result (theorem \ref{third_equivalence}).
\justify
\textbf{Theorem 4.} Let $\lambda\in\text{Hom}(\T,\G_m)$ be an algebraic character such that $\lambda+\rho\in\mathfrak{t}_L^*$ is a dominant and regular character of $\mathfrak{t}_L$. The functors $\La oc^G_{\lambda}(\bullet)$ and $\Gamma(\bullet)$ give an equivalence between the
categories of coadmissible $D(G,L)_{\lambda}$-modules and  coadmissible $G$-equivariant arithmetic $\Da(\lambda)$-modules.
\justify
The last task will be to study the projective limit
\begin{eqnarray*}
X_{\infty} := \varprojlim_{(\mathfrak{Y}_v,k,v)} \mathfrak{Y}_v.
\end{eqnarray*}
This is the Zariski-Riemann space associated to the rigid flag variety $X^{\text{rig}}$. We can also form the projective limit $\Da(\lambda)$ of the sheaves $\Da^{\dag}_{\mathfrak{Y},k}(\lambda)$ which is a sheaf of
 $G$-equivariant  differential operators on $\mathfrak{X}_{\infty}$. Similarly, if $(\Ma_{(\mathfrak{Y}_v,k,v)})_{(\mathfrak{Y}_v,k,v)\in\underline{\Fb}}$ is a coadmissible $G$-equivariant arithmetic $\Da(\lambda)$-module, then we can form the projective limit $\Ma_{\infty}$. The data $\Ma_{(\mathfrak{Y}_v,k,v)\in\underline{\Fb}}\rightsquigarrow \Ma_{\infty}$ induces a faithful functor from the category of coadmissible $G$-equivariant arithmetic $\Da(\lambda)$-modules on $\underline{\Fb}$ to the category of $G$-equivariant $\Da(\lambda)$-modules on $\mathfrak{X}_{\infty}$ (theorem \ref{fully_faithful}). In fact, this is a fully faithful functor as we will briefly explain in our Final remark (\ref{Final_remar}).
\justify
We summarize the main results of this work with the following commutative diagrams of functors (cf. \cite[Theorem 5.4.10]{PSS2}) 

\begin{eqnarray*}
\begin{tikzcd} [column sep=large, row sep=large]
\left\{
\hspace{0,2 cm}
\begin{matrix}
\text{Coadmissible} \\
D(G,L)_{\lambda}-\text{modules}
\end{matrix}
\hspace{0,2 cm}
\right\}\arrow[r, "\Lb oc^{G}_{\lambda}" , "\simeq" '] \arrow[d]
& 
\left\{
\hspace{0,2 cm}
\begin{matrix}
\text{Coadmissible}\;G-\text{equivariant}\\
\text{arithmetic}\;\Da(\lambda)-\text{modules}
\end{matrix}
\hspace{0,2 cm}
\right\} \arrow[d] \\
\left\{
\hspace{0,2 cm}
\begin{matrix}
\text{Coadmissible} \\
D(G_0,L)_{\lambda}-\text{modules}
\end{matrix}
\hspace{0,2 cm}
\right\} \arrow[r, "\Lb oc^{G_0}_{\lambda}" , "\simeq" ']
&
\left\{
\hspace{0,2 cm}
\begin{matrix}
\text{Coadmissible}\;G_0-\text{equivariant}\\
\text{arithmetic}\;\Da(\lambda)-\text{modules}
\end{matrix}
\hspace{0,2 cm}
\right\}
\end{tikzcd}
\end{eqnarray*} 
Here the left-hand vertical arrow is the restriction functor coming from the homomorphism
\begin{eqnarray*}
D(G_0,L)_{\lambda}\rightarrow D(G,L)_{\lambda}
\end{eqnarray*}
and the right-hand vertical arrow is the forgetful functor.
\justify
\textbf{Acknowledgements:} The present article contains a part of the author PhD thesis written at the Universities of Strasbourg and Rennes 1 under the supervision of C. Huyghe and T. Schmidt. Both have always been very patient and attentive supervisors. For this, I express my deep gratitude to them. 
\justify
 \textbf{Notation:} Throughout this work $\varpi$ will denote the uniformizer of $\mathfrak{o}$. Furthermore, if $Y$ is an arbitrary noetherian scheme over $\mathfrak{o}$, then for every $j\in\N$ we will denote by $Y_j:=Y\times_{\text{Spec}(\mathfrak{o})}\text{Spec}(\mathfrak{o}/\varpi^{j+1})$ the reduction modulo $\varpi^{j+1}$, and by
\begin{eqnarray*}
\mathfrak{Y}= \varinjlim_{j}Y_j
\end{eqnarray*}
the formal completion of $Y$ along the special fiber. Moreover, if  $\Eb$ is a sheaf of $\mathfrak{o}$-modules on $Y$ then its $\varpi$-completion $\Ea:=\varprojlim_{j}\Eb/\varpi^{j+1}\Eb$ will be considered as a sheaf on $\mathfrak{Y}$. Finally, the base change of a sheaf of $\mathfrak{o}$-modules on $Y$ (resp. on $\mathfrak{Y}$) to $L$ will always be denoted by the subscript $\Q$. For instance $\Eb_{\Q}:=\Eb\otimes_{\mathfrak{o}}L$ (resp. $\Ea_{\Q}:=\Ea\otimes_{\mathfrak{o}}L$).
\newpage

\section{Arithmetic definitions}

\subsection{p-adic coefficients and divided powers}\label{p-adic_coeff}
\justify
Let $p$ be a prime number and let us fix a positive integer $m$. Throughout this work, we will denote by $\Z_p$ the ring of $p$-adic integers and by $\Z_{(p)}$ the localization of $\Z$ with respect to the prime ideal $(p)$. Moreover, if $k\in\N$, we will note $q_k$ the quotient of the euclidean division of $k$ by $p^m$. Berthelot has introduced in \cite{Berthelot1} the following coefficients for any two integers $k,k'$ with $k\ge k'$

\begin{eqnarray*}
\stirlingii{k}{k'}:=\displaystyle\frac{q_k!}{q'_k!q''_k!},
\hspace{0.5 cm}
k'':= k-k'.
\end{eqnarray*}
\justify
In fact, we can generalize these coefficients for multi-indexes $\underline{k}=(k_1,\; ...,\; k_N)\in \N^{N}$ by defining $
q_{\underline{k}}! := q_{k_1}!\; ... \; q_{k_N}!$ and 
\begin{eqnarray*}
\stirlingii{\underline{k}}{\underline{k'}} := \displaystyle \frac{q_{\underline{k}}!}{q_{\underline{k'}}! q_{\underline{k''}}!}\in \N
\hspace{0.5 cm}
\text{and}
\hspace{0.5 cm}
\abinom{\underline{k}}{\underline{k'}} := \binom{\underline{k}}{\underline{k'}}\stirlingii{\underline{k}}{\underline{k'}}^{-1} \in \Z_p.
\end{eqnarray*}
\justify
Now, let A be a $\Z_{(p)}$ algebra. We say that a triple $(I,J,\gamma)$ is an $m$-PD ideal of $A$, if $\gamma$ defines a structure of divided powers on $J$ (a PD-structure in the sense of \cite{Berthelot2}) and $I$ is endowed of a system of partial divided powers, meaning that for any integer $k$, which decomposes as $k=p^mq + r$ (with $r<p^m$), there exists an operation defined for every $x\in I$ by

\begin{eqnarray*}
x^{k} = x^r \gamma_k\left(x^{p^m}\right).
\end{eqnarray*} 

\begin{exa}\label{Example_PD_structure}
Let  $\mathfrak{o}$ be a discrete valuation ring of unequal characteristic $(0,p)$ and uniformizing parameter $\varpi$. Let us write $p=u\varpi^e$, with $u$ a unit of $\mathfrak{o}$ and $e$ a positive integer (called the absolute ramification index of $\mathfrak{o}$). Let $k\in\N$. Then $\gamma_v(x):=x^v/v!$ defines a PD-structure on $(\varpi)^{k}$ if and only if $e\le k(p-1)$ \cite[Section 3, examples 3.2 (3)]{Berthelot2}. In particular, we dispose of a PD-structure on $(p)\subseteq \Z_{(p)}$. We let $x^{[k]}:=\gamma_{k}(x)$ and we denote by $((p), [])$ this PD-ideal. Moreover, if $k\le e-1$ and $m\ge log_p(k)$, then $(\varpi)^k$ endowed with the preceding PD-structure defines an m-PD-structure on $(\varpi)$ \cite [Section 1.3, examples (i)]{Berthelot1}.
\end{exa}

\subsection{Arithmetic differential operators}\label{ADP}
\justify
Let us suppose that $\mathfrak{o}$ is endowed with the m-PD-structure $(\mathfrak{a},\mathfrak{b},[\; ])$ defined in example \ref{Example_PD_structure}. Let $X$ be a smooth $\mathfrak{o}$-scheme, and $\Ib\subset\Ob_{X}$ a quasi-coherent ideal. Let us consider the sheaf of principal parties $\Pb_{(m)}(X)$ \cite[Section 2.1]{Berthelot1},which contains an m-PD structure $(\bar{\Ib},\tilde{\Ib}, [\; ])$ and the sequence of ideals $(\bar{\Ib}^{\{n\}})_{n\in\N}$ defining the m-PD-filtration \cite[1.3]{Berthelot3}.\\
\medskip
For every $n\in\N$, the algebra

\begin{eqnarray*}
\Pb^{n}_{X,(m)}:=\Pb^{n}_{(m)}(\Ib)/\bar{\Ib}^{\{n\}}
\end{eqnarray*}
is quasi-coherent and can be considered as a sheaf on $X$. Moreover, the projections $p_1,p_2: X\times_{\mathfrak{o}}X\rightarrow X$ induce two morphisms $d_1,d_2:\Ob_{X}\rightarrow \Pb^{n}_{X,(m)}$ endowing $ \Pb^{n}_{X,(m)}$ of a \textit{left} and a \textit{right} structure of $\Ob_{X}$-algebra, respectively. 

\begin{defi}\label{Diff order n}
Let $m,n$ be positive integers. The sheaf of differential operators of level $m$ and order less or equal to n on $X$ is defined by
\begin{eqnarray*}
\Db^{(m)}_{X,n}:=\mathcal{H}om_{\Ob_{X}}(\Pb^n_{X,(m)},\Ob_{X}).
\end{eqnarray*}
\end{defi}
\justify
If $n\le n'$ then \cite[Proposition 1.4.1]{Berthelot1} gives us a canonical surjection $\Pb^{n'}_{X,(m)}\rightarrow\Pb^{n}_{X,(m)}$ which induces the injection $\Db^{(m)}_{X,n}\hookrightarrow\Db_{X,n'}^{(m)}$ and the sheaf of \textit{differential operators of level} $m$ is defined by
\begin{eqnarray*}
\Db_{X}^{(m)}:=\bigcup_{n\in\N}\Db^{(m)}_{X,n}.
\end{eqnarray*}
We remark for the reader that by definition $\Db_{X}^{(m)}$ is endowed with a natural filtration called the \textit{order filtration}, and like the sheaves $\Pb^{n}_{X,(m)}$, the sheaves $\Db_{X,n}^{(m)}$ are endowed with two natural structures of $\Ob_{X}$-modules. Moreover, the sheaf $\Db^{(m)}_{X}$ acts on $\Ob_{X}$: if $P\in\Db^{(m)}_{X,n}$, then this action is given by the composition $\Ob_{X}\xrightarrow{d_1} \Pb^{n}_{X,(m)}\xrightarrow{P} \Ob_{X}$.\\
Finally, let us give a local description of $\Db^{(m)}_{X,n}$. Let $U$ be a smooth open affine subset of $X$ endowed with a family of local coordinates $x_1,\;.\;.\;.\;,x_N$. Let $dx_1,\;.\;.\;.\;,dx_N$ be a basis of $\Omega_{X}(U)$ and $\partial_{x_1},\;.\;.\;.\;,\partial_{x_N}$ the dual basis of $\Tb_X(U)$ (as usual, $\Tb_{X}$ and $\Omega_X$ denote the tangent and cotangent sheaf on $X$, respectively). Let $\underline{k}\in\N^N$. Let us denote by $|\underline{k}|=\sum_{i=1}^{N}k_i$ and   $\partial_i^{[k_i]}=\partial_{x_i}/k_i!$  for every $1\le i\le N$. Then, using multi-index notation, we have $\underline{\partial}^{[\underline{k}]}=\prod_{i=1}^{N}\partial_i^{[k_i]}$ and $\underline{\partial}^{<\underline{k}>}=q_{\underline{k}}!\underline{\partial}^{[\underline{k}]}$. In this case, the sheaf $\Db_{X,n}^{(m)}$ has the following description on $U$

\begin{eqnarray}\label{locally_Berthelot}
\Db_{X,n}^{(m)}(U)=\bigg\{\sum_{|\underline{k}|\le n}a_{\underline{k}}\underline{\partial}^{<\underline{k}>}\;|\; a_{\underline{k}}\in\Ob_{X}(U)\;\text{and}\;\underline{k}\in\N^{N}\bigg\}.
\end{eqnarray}

\subsection{Symmetric algebra of finite level}\label{Sym_alg}
\justify
In this subsection we will focus on introducing the constructions in \cite{Huyghe1}. As before, let $X$ denote a smooth $\mathfrak{o}$-scheme and let us consider $\Lb$ a locally free $\Ob_X$-module of finite rank, $\textbf{S}_X(\Lb)$ the symmetric algebra associated to $\Lb$ and $\Ib$ the ideal of homogeneous elements of degree 1. If $\Pb_{\textbf{S}_{X}(\Lb),(m)}(\Ib)$ denotes the $m$-divided power enveloping of $(\textbf{S}_X(\Lb),\Ib)$ (\cite[Proposition 1.4.1]{Berthelot1}) then we can consider the coherent sheaves on $X$ 
\begin{eqnarray}
\Gamma_{X,(m)}(\Lb):=\Pb_{\textbf{S}_{X}(\Lb),(m)}(\Ib)\;\;\;\;\text{and}\;\;\;\; \Gamma_{X,(m)}^n(\Lb):= \Gamma_{X,(m)}(\Lb)/\bar{\Ib}^{\{n+1\}}.
\end{eqnarray}
Those algebras are graded \cite[Proposition 1.3.3]{Huyghe1} and if $\eta_1,...,\eta_N$ is a local basis of $\Lb$, we have 
\begin{eqnarray*}
\Gamma_{X,(m)}^{n}(\Lb)=\displaystyle\bigoplus_{|\underline{l}|\le n}\Ob_{X}\underline{\eta}^{\{\underline{l}\}}.
\end{eqnarray*}
As before $\underline{\eta}^{\{\underline{l}\}}=\prod_{i=1}^{N}\eta_i^{\{l_i\}}$ and $q_i!\eta_i^{\{l_i\}}=\eta^{l_i}$. We define by duality 
\begin{eqnarray*}
\text{Sym}^{(m)}(\Lb):=\displaystyle\bigcup_{k\in\N}\mathcal{H}om_{\Ob_X}\left(\Gamma_{X,(m)}^k(\Lb^{\vee}), \Ob_{X}\right),
\end{eqnarray*}
By \cite[Propositions 1.3.1, 1.3.3 and 1.3.6]{Huyghe1} we know that $\text{Sym}^{(m)}(\Lb)=\oplus_{n\in\N}\text{Sym}^{(m)}_n(\Lb)$ is a commutative graded algebra with noetherian sections over any open affine subset. Moreover, locally  over a basis $\eta_1,...,\eta_N$ of $\Lb$ we have the following description
\begin{eqnarray*}
\text{Sym}^{(m)}_n(\Lb)=\displaystyle\bigoplus_{|\underline{l}|=n}\Ob_{X}\underline{\eta}^{<\underline{l}>},\;\;\;\text{where}\;\;\; \frac{l_i !}{q_i!}\eta_i^{<l_i>}=\eta_i^{l_i}.
\end{eqnarray*}

\begin{rem}
By \cite[A.10]{Berthelot2} we have that $\textbf{S}^{(0)}_X(\Lb)$ is the symmetric algebra of $\Lb$, which justifies the terminology.
\end{rem}
\justify
We end this subsection by  remarking the following results from \cite{Huyghe1}. Let $\Ib$ be the kernel of the comorphism $\Delta^{\sharp}$ of the diagonal embedding  $\Delta: X\rightarrow X\times_{\text{Spec}(\mathfrak{o})}X$. In \cite[Proposition 1.3.7.3]{Huyghe1} Huyghe shows that the graded algebra associated to the m-PD-adic filtration of $\Pb_{X,(m)}$ it is identified with the graded m-PD-algebra $\Gamma_{X,(m)}(\Ib/\Ib^2)=\Gamma_{X,(m)}(\Omega_{X}^1)$. More exactly, we dispose of a canonical morphism of $\Ob_X$-algebras 
\begin{eqnarray*}
\textbf{S}_X(\Omega_X)\rightarrow gr_{\bullet} \Pb_{X, (m)}
\end{eqnarray*}
which extends, via universal property  \cite[Proposition 1.4.1]{Berthelot1}, to a morphism $\Gamma^n_{X,(m)}(\Omega_{X}^1) \xrightarrow{\simeq}gr_{\bullet}(\Pb^n_{X,(m)})$. By definition, it induces a graded morphism
\begin{eqnarray}\label{graded}
\text{Sym}^{(m)}(\Tb_{X})\rightarrow gr_{\bullet}\Db^{(m)}_{X}
\end{eqnarray}
which is in fact an isomorphism of $\Ob_X$-algebras.

\subsection{Arithmetic distribution algebra of finite level}\label{ADA}
\justify
As in the introduction, let us consider $\G$ a split connected reductive group scheme over $\mathfrak{o}$ and $m\in\N$ fixed. We propose to give a description of the algebra of distributions of level $m$ introduced in \cite{HS1}. Let $I$ denote the kernel of the surjective morphism of $\mathfrak{o}$-algebras $\epsilon_\G:\mathfrak{o}[\G]\rightarrow \mathfrak{o}$, given by the identity element of $\G$. We know that $I/I^2$ is a free $\mathfrak{o}=\mathfrak{o}[\G]/I$-module of finite rank. Let $t_1,\;.\;.\;.\;,t_l\in I$ such that modulo $I^2$ these elements form a basis of $I/I^2$. The $m$-divided power enveloping algebra of $(\mathfrak{o}[\G],\; I)$, denoted by $P_{(m)}(\G)$, it is a free $\mathfrak{o}$-module with basis the elements $\underline{t}^{\{\underline{k}\}}=t_{1}^{\{k_1\}}\;.\;.\;.\;t_{l}^{\{k_l\}}$,
where $q_{i}!t_{i}^{\{k_i\}}=t_i^{k_i}$, for every $k_i=p^mq_i+r_i$ and $0\le r_i<p^m$. These algebras are endowed with a decreasing filtration by ideals $I^{\{n\}}$ (the $m$-PD filtration), such that $I^{\{n\}}=\oplus_{|\underline{k}|\ge n}\mathfrak{o}\;\underline{t}^{\{\underline{k}\}}$.
The quotients $P^n_{(m)}(\G):=P_{(m)}(\G)/I^{\{n+1\}}$ are therefore $\mathfrak{o}$-modules generated by the elements $\underline{t}^{\{\underline{k}\}}$ with $|\underline{k}|\le n$ \cite[Proposition 1.5.3 (ii)]{Berthelot1}. Moreover, there exists an isomorphism of $\mathfrak{o}$-modules 
\begin{eqnarray*}
P^n_{(m)}(\G)\simeq \displaystyle\bigoplus_{|\underline{k}|\le n} \mathfrak{o}\;\underline{t}^{\{\underline{k}\}}
\end{eqnarray*}
and for any two integers $n$, $n'$ such that $n\le n'$ we have a canonical surjection $\pi^{n',n}:P^{n'}_{(m)}(\G)\rightarrow P^n_{(m)}(\G)$. The module of distributions of level $m$ and order $n$ is $D_n^{(m)}(\G):=\text{Hom}(P^n_{(m)}(\G),\mathfrak{o})$. \textit{The algebra of distributions of level m} is
\begin{eqnarray*}
D^{(m)}(\G):=\varinjlim_{n} D^{(m)}_{n}(\G),
\end{eqnarray*}
where the limit is formed respect to the maps  $\text{Hom}_{\mathfrak{o}}(\pi^{n',n},\mathfrak{o})$. The multiplication is defined as follows. By universal property (\cite[Proposition 1.4.1]{Berthelot1}) there exists a canonical application $\delta^{n,n'}:P_{(m)}^{n+n'}(\G)\rightarrow P^{n}_{(m)}\G)\otimes_{\mathfrak{o}}P^{n'}_{(m)}(\G)$. If $(u,v)\in D^{(m)}_n(\G)\times D^{(m)}_{n'}(\G)$, we define $u.v$ as the composition
\begin{eqnarray*}
u.v: P^{n+n'}_{(m)}(\G)\xrightarrow{\delta^{n,n'}} P^{n}_{(m)}(\G)\otimes_{\mathfrak{o}}P^{n'}_{(m)}(\G)\xrightarrow{u\otimes v}\mathfrak{o}.
\end{eqnarray*}
Let us denote by $\mathfrak{g}:=\text{Hom}_{\mathfrak{o}}(I/I^2,\mathfrak{o})$
the Lie  algebra of $\G$. This is a free $\mathfrak{o}$-module with basis $\xi_1,\;.\;.\;.\;,\xi_l$  defined as the dual basis of the elements $t_1,\;.\;.\;.\;, t_l$. If for every multi-index $\underline{k}\in\N^l$, $|\underline{k}|\le n$, we denote by $\underline{\xi}^{<\underline{k}>}$ the dual of the element $\underline{t}^{\{\underline{k}\}}\in P^n_{(m)}(\G)$, then $D^{(m)}_n(\G)$ is a free $\mathfrak{o}$-module of finite rank with a basis given by the elements $\underline{\xi}^{<\underline{k}>}$ with $|\underline{k}|\le n$ \cite[proposition 4.1.6]{HS1}. 

\begin{rem}\footnote{This remark exemplifies the local situation when $X=\text{Spec}(A)$ with $A$ a $\Z_{(p)}$-algebra \cite[Subsection 1.3.1]{Huyghe1}.}\label{Sym-Locally}
Let A be an $\mathfrak{o}$-algebra and $E$ a free $A$-module of finite rank with base  $(x_1,...,x_N)$. Let $(y_1, ... ,y_{N})$ be the dual base of  $E^{\vee}:=\text{Hom}_{A}(E,A)$. As in the preceding subsection, let $\textbf{S}(E^{\vee})$ be the symmetric algebra and $\textbf{I}(E^{\vee})$ the augmentation ideal. Let $\Gamma_{(m)}(E^{\vee})$  be the $m$-divided power enveloping algebra of $(\textbf{S}(E^{\vee}),\;\textbf{I}(E^{\vee}))$.  We put $\Gamma^n_{(m)}(E^{\vee}):=\Gamma_{(m)}(E^{\vee})/\overline{I}^{\{n+1\}}$. These are free $A$-modules with base $y_1^{\{k_1\}}...y_N^{\{k_N\}}$ with $\sum k_i\le n$ \cite[1.1.2]{Huyghe1}. Let $\{\underline{x}^{<\underline{k}>}\}_{|\underline{k}|\le n}$ be the dual base of Hom$_A(\Gamma_{(m)}^n(E^{\vee}),A)$. We define
\begin{eqnarray*}
Sym^{(m)}(E):=\displaystyle\bigcup_{n\in\N}\text{Hom}_{A}\left(\Gamma^{n}_{(m)}(E^{\vee}),A\right).
\end{eqnarray*}
This is a free $A$-module with a base given by all the $\underline{x}^{<\underline{k}>}$. The inclusion $\text{Sym}^{(m)}(E)\subseteq \text{Sym}^{(m)}(E)\otimes_{\mathfrak{o}}L$ gives  the relation
\begin{eqnarray}\label{Rel_base_change}
x_i^{<k_i>} = \displaystyle\frac{k_i !}{q_i !} x^{k_i}.
\end{eqnarray}
Moreover, it also has a structure of algebra defined as follows. By \cite[Proposition 1.3.1]{Huyghe1} there exists an application $\Delta_{n,n'}:\Gamma_{(m)}^{n+n'}(E^{\vee})\rightarrow\Gamma^{n}_{(m)}(E^{\vee})\otimes_{A}\Gamma^{n'}_{(m)}(E^{\vee})$, which allows to define the product of $u\in Hom_{A}(\Gamma^{n}_{(m)}(E^{\vee}),A)$ and $v\in Hom_{A}(\Gamma^{n'}_{(m)}(E^{\vee}),A)$  by the composition 
\begin{eqnarray*}
u.v: \Gamma^{n+n'}_{(m)}(E^{\vee})\xrightarrow{\Delta_{n,n'}}\Gamma^{n}_{(m)}(E^{\vee})\otimes_{A}\Gamma^{n'}_{(m)}(E^{\vee})\xrightarrow{u\otimes v} A.
\end{eqnarray*}
This maps endows $Sym^{(m)}(E)$  of a structure of a graded noetherian $\mathfrak{o}$-algebra \cite[Propositions 1.3.1, 1.3.3 and 1.3.6]{Huyghe1}.
\end{rem}
\justify
We have the following important properties \cite[Proposition 4.1.15]{HS1}.

\begin{prop}\label{gr.Dist algebra}
\begin{itemize}
\item[(i)] There exists a canonical isomorphism of graded $\mathfrak{o}$-algebras $gr_{\bullet}(D^{(m)}(\G))\simeq Sym^{(m)}(\mathfrak{g})$.
\item[(ii)] The $\mathfrak{o}$-algebras $gr_{\bullet}(D^{(m)}(\G))$ and $D^{(m)}(\G)$ are noetherian. 
\end{itemize}
\end{prop}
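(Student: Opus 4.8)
The plan is to reduce both statements to the corresponding facts about the symmetric algebra of finite level $\text{Sym}^{(m)}(\mathfrak{g})$ established in \cite{Huyghe1} (recalled in Remark \ref{Sym-Locally}), using the $m$-PD-adic filtration on $P_{(m)}(\G)$ and the duality that defines $D^{(m)}(\G)$. First I would fix the basis $t_1,\dots,t_l$ of $I/I^2$ and the dual basis $\xi_1,\dots,\xi_l$ of $\mathfrak{g}$, so that $D^{(m)}_n(\G)$ is free over $\mathfrak{o}$ with basis $\{\underline{\xi}^{<\underline{k}>}\}_{|\underline{k}|\le n}$ and the order filtration on $D^{(m)}(\G)$ has graded pieces $gr_n D^{(m)}(\G) = D^{(m)}_n(\G)/D^{(m)}_{n-1}(\G)$, which is free with basis the classes of $\underline{\xi}^{<\underline{k}>}$ with $|\underline{k}| = n$. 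On the other side, the $m$-PD-filtration $(I^{\{n\}})_n$ on $P_{(m)}(\G)$ has, by \cite[Proposition 1.3.7.3]{Huyghe1} applied in this affine/group setting (the same statement that yields (\ref{graded})), associated graded $gr_\bullet P_{(m)}(\G) \simeq \Gamma_{(m)}(I/I^2)$ as graded $m$-PD-$\mathfrak{o}$-algebras. Dualizing this graded isomorphism degree by degree — each $P^n_{(m)}(\G)$ and each $\Gamma^n_{(m)}(I/I^2)$ being free of finite rank over $\mathfrak{o}$ — gives a canonical identification $gr_\bullet D^{(m)}(\G) \simeq Sym^{(m)}(\mathfrak{g})$ of graded $\mathfrak{o}$-modules; one then checks it respects products.

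For part (i), the key point is compatibility of multiplications. The product on $D^{(m)}(\G)$ is induced by the comultiplication $\delta^{n,n'}: P^{n+n'}_{(m)}(\G)\to P^n_{(m)}(\G)\otimes_{\mathfrak{o}} P^{n'}_{(m)}(\G)$, while the product on $Sym^{(m)}(\mathfrak{g})$ is induced by $\Delta_{n,n'}: \Gamma^{n+n'}_{(m)}(\mathfrak{g}^\vee)\to \Gamma^n_{(m)}(\mathfrak{g}^\vee)\otimes_A \Gamma^{n'}_{(m)}(\mathfrak{g}^\vee)$. I would argue that passing to the associated graded of $\delta^{n,n'}$ recovers $\Delta_{n,n'}$ under the identification $gr_\bullet P_{(m)}(\G)\simeq \Gamma_{(m)}(I/I^2)$; this is essentially the statement that the diagonal $\mathfrak{o}[\G]\to\mathfrak{o}[\G]\otimes_{\mathfrak{o}}\mathfrak{o}[\G]$ induces on $I/I^2$ the "primitive" coaction, so that its $m$-PD envelope is the standard comultiplication of the divided-power Hopf algebra $\Gamma_{(m)}(I/I^2)$. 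Dualizing, the graded product on $gr_\bullet D^{(m)}(\G)$ matches that of $Sym^{(m)}(\mathfrak{g})$. A clean way to see this at the level of basis elements: the associated graded of $\underline{\xi}^{<\underline{k}>}\cdot\underline{\xi}^{<\underline{k}'>}$ in $gr_{|\underline{k}|+|\underline{k}'|}$ equals $\underline{\xi}^{<\underline{k}+\underline{k}'>}$ up to the expected binomial-type coefficient, exactly as in $Sym^{(m)}(\mathfrak{g})$, since the lower-order terms (commutators, coming from the non-cocommutativity of $\delta^{n,n'}$, i.e. the Lie bracket) are killed in the graded.

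Part (ii) then follows formally: by \cite[Propositions 1.3.1, 1.3.3 and 1.3.6]{Huyghe1} (as recalled in Remark \ref{Sym-Locally}), $Sym^{(m)}(\mathfrak{g})$ is a noetherian $\mathfrak{o}$-algebra, so by part (i) $gr_\bullet D^{(m)}(\G)$ is noetherian; and a filtered ring whose associated graded is noetherian is itself noetherian (the order filtration on $D^{(m)}(\G)$ is exhaustive and has $D^{(m)}_{-1}=0$, so the standard argument lifting ideals from the graded applies). The main obstacle I anticipate is the careful bookkeeping in the proof that the associated graded of the comultiplication $\delta^{n,n'}$ is the comultiplication of $\Gamma_{(m)}(\mathfrak{g}^\vee)$ — one must track the interaction of the $m$-PD structure with the Hopf-algebra structure of $\mathfrak{o}[\G]$ and verify the divided-power coefficients $q_{\underline{k}}!$ behave correctly under comultiplication — but since \cite[Proposition 4.1.15]{HS1} is cited as the source, I would largely transcribe and adapt that argument rather than reprove it from scratch.
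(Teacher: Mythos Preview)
Your proposal is correct and in fact more detailed than what the paper itself provides: the paper gives no proof of this proposition at all, simply citing \cite[Proposition 4.1.15]{HS1} as its source. The strategy you outline --- identifying $gr_\bullet P_{(m)}(\G)$ with $\Gamma_{(m)}(I/I^2)$ via the $m$-PD-adic filtration, dualizing degreewise to obtain $gr_\bullet D^{(m)}(\G)\simeq \text{Sym}^{(m)}(\mathfrak{g})$, checking that the comultiplications match on the graded level, and then deducing noetherianity of $D^{(m)}(\G)$ from that of $\text{Sym}^{(m)}(\mathfrak{g})$ by the standard filtered/graded argument --- is precisely the natural route and is, as you anticipate, what one finds in \cite{HS1}.
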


\subsection{Integral models}\label{Integral_models}
\justify
In this section we will assume that $X$ is a smooth $\mathfrak{o}$-scheme endowed with a right $\G$-action.

\begin{defi}\label{defi I.models}
Let $A$ be an $L$-algebra (resp. a sheaf of $L$-algebras). We say that an $\mathfrak{o}$-subalgebra $A_0$ (resp. a subsheaf of $\mathfrak{o}$-algebras) is an integral model of $A$ if $A_0\otimes_{\mathfrak{o}}L= A$.
\end{defi} 

\begin{rem}
Let us recall that throughout this paper $\mathfrak{g}$ denotes the Lie algebra of the split connected reductive group $\mathfrak{o}$-scheme $\G$ and $\Ub(\mathfrak{g})$ its universal enveloping algebra. As we have remarked in the introduction, if $\mathfrak{g}_L$ denotes the $L$-Lie algebra of the algebraic group  $\G_L=\G\times_{\text{Spec}(\mathfrak{o})}\text{Spec}(L)$ and $\Ub(\mathfrak{g}_L)$ its universal enveloping algebra, then $\Ub(\mathfrak{g})$ is an integral model of $\Ub(\mathfrak{g}_L)$.  Moreover, the algebra of distributions of level $m$, introduced in the preceding subsection,  it is also an integral model of $\Ub(\mathfrak{g}_{L})$ \cite[subsection 4.1]{HS1}. This latest example will be specially important in this work.
\end{rem}

\begin{prop}\label{morp.HS}
The right $\G$-action induces a canonical homomorphism of filtered $\mathfrak{o}$-algebras 
\begin{eqnarray*}
\Phi^{(m)}: D^{(m)}(\G)\rightarrow H^{0}(X,\Db^{(m)}_{X}).
\end{eqnarray*}
\end{prop}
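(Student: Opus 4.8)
The plan is to construct $\Phi^{(m)}$ by "differentiating" the group action, exactly as one constructs the infinitesimal action of a Lie algebra on functions, but keeping track of the $m$-PD structures so that the divided-power generators $\underline{\xi}^{<\underline{k}>}$ are sent to genuine differential operators of level $m$. Concretely, the right $\G$-action is a morphism $a\colon X\times_{\mathfrak{o}}\G\to X$, with comorphism $a^{\sharp}\colon\Ob_X\to\Ob_X\otimes_{\mathfrak{o}}\mathfrak{o}[\G]$ (after the usual identifications). Composing $a^{\sharp}$ with $\mathrm{id}\otimes\epsilon_{\G}$ recovers the identity, so $a^{\sharp}$ factors through the $I$-adic (more precisely, the $m$-PD-adic) filtrations in a controlled way. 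The first step is therefore to show that $a^{\sharp}$ induces, for every $n$, an $\Ob_X$-linear (for the left structure) comorphism
\begin{eqnarray*}
\Pb^{n}_{X,(m)}\longrightarrow \Ob_X\otimes_{\mathfrak{o}}P^{n}_{(m)}(\G),
\end{eqnarray*}
compatible with the transition maps in $n$. This is the heart of the construction: it says that the $m$-PD structure on the principal parts of $X$ is pulled back compatibly from the $m$-PD structure on $P_{(m)}(\G)$ built from $(\mathfrak{o}[\G],I)$. One gets this from the universal property of the $m$-PD-envelope (\cite[Proposition 1.4.1]{Berthelot1}): the map $\Ob_X\to\Ob_X\otimes_{\mathfrak{o}}\mathfrak{o}[\G]\to\Ob_X\otimes_{\mathfrak{o}}P_{(m)}(\G)$ kills the appropriate ideal and respects partial divided powers, hence descends.

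Dualizing over $\Ob_X$ (using that all the modules in sight are locally free of finite rank, so $\mathcal{H}om_{\Ob_X}(-,\Ob_X)$ is exact and turns the surjections into injections) yields for each $n$ an $\Ob_X$-linear map
\begin{eqnarray*}
\Ob_X\otimes_{\mathfrak{o}}D^{(m)}_n(\G)\longrightarrow \mathcal{H}om_{\Ob_X}(\Pb^{n}_{X,(m)},\Ob_X)=\Db^{(m)}_{X,n},
\end{eqnarray*}
and taking global sections and the colimit over $n$ gives the map $\Phi^{(m)}\colon D^{(m)}(\G)\to H^0(X,\Db^{(m)}_X)$, visibly respecting the order filtrations. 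The second step is to check it is a ring homomorphism. Here I would trace through the two multiplication rules: the product on $D^{(m)}(\G)$ is defined via the co-multiplication $\delta^{n,n'}\colon P^{n+n'}_{(m)}(\G)\to P^{n}_{(m)}(\G)\otimes_{\mathfrak{o}}P^{n'}_{(m)}(\G)$ coming from the group law $\G\times\G\to\G$, and the product on $\Db^{(m)}_X$ is defined via the analogous co-multiplication on $\Pb^{\bullet}_{X,(m)}$ coming from $X\times X\times X\to X\times X\to X$. The associativity/compatibility axiom for a group action — namely $a\circ(a\times\mathrm{id})=a\circ(\mathrm{id}\times\mathrm{mult})$ as maps $X\times\G\times\G\to X$ — dualizes to exactly the commuting square relating these two co-multiplications through the comorphism of the first step. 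That square is what makes $\Phi^{(m)}$ multiplicative; unitality is immediate since the identity section of $\G$ induces the identity operator. Filtered-ness was already built in.

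The main obstacle I expect is the first step: verifying that the comorphism of the action is genuinely compatible with the $m$-PD filtrations, i.e. that it sends $\bar{\Ib}^{\{n\}}$ into $\Ob_X\otimes I^{\{n\}}$ and respects the partial divided power operations $x\mapsto x^{r}\gamma_k(x^{p^m})$, not merely the naive $I$-adic filtrations. One has to unwind Berthelot's construction of $\Pb^{n}_{X,(m)}$ and of $P^{n}_{(m)}(\G)$ and invoke the universal property of the $m$-PD-envelope in the right category; this is essentially the level-$m$ refinement of the classical fact that a group action gives a Lie algebra action on the sheaf of principal parts. Everything else — exactness of dualization, compatibility with transition maps, the ring-homomorphism computation — is formal bookkeeping with multi-indices and the divided-power binomial coefficients $\stirlingii{\underline{k}}{\underline{k'}}$, which behave well precisely because the constructions on $\G$ and on $X$ use the same $m$-PD formalism. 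A reference worth citing, if available, is the analogous statement in \cite{HS1} for the level-$m$ distribution algebra, or the classical $\mathcal{D}$-module analogue; the present statement is its arithmetic counterpart and the proof is structurally identical.
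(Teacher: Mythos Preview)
Your proposal is correct and follows essentially the same approach as the paper: both construct $\Phi^{(m)}$ by first producing, from the comorphism $\rho^{\natural}:\Ob_X\to\Ob_X\otimes_{\mathfrak{o}}\mathfrak{o}[\G]$ of the action, the compatible family of maps $\rho^{(n)}_m:\Pb^{n}_{X,(m)}\to\Ob_X\otimes_{\mathfrak{o}}P^{n}_{(m)}(\G)$, and then defining $\Phi^{(m)}(u)=(id\otimes u)\circ\rho^{(n)}_m$ for $u\in D^{(m)}_n(\G)$. The paper simply cites \cite[Proposition 4.4.1 (ii)]{HS1} for the verification that this is a filtered ring homomorphism, whereas you sketch the argument (via the associativity axiom $a\circ(a\times\mathrm{id})=a\circ(\mathrm{id}\times\mathrm{mult})$) yourself; both are the same construction.
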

\begin{proof}
The reader can find the proof of this proposition in \cite[Proposition 4.4.1 (ii)]{HS1}, we will briefly discuss the construction of $\Phi^{(m)}$.  The central idea in the construction is that if $\rho:X\times_{\mathfrak{o}}\G\rightarrow X$ denotes the $\G$-action, then the comorphism $\rho^{\natural}:\Ob_{X}\rightarrow\Ob_{X}\otimes_{\mathfrak{o}}\mathfrak{o}[\G]$ induces a morphism 
\begin{eqnarray*}
\rho^{(n)}_{m}:\Pb^{n}_{X,(m)}\rightarrow\Ob_{X}\otimes_{\mathfrak{o}}P^{n}_{(m)}(\G)
\end{eqnarray*}
for every $n\in\N$. Those applications are compatible when varying $n$. Let $u\in D^{(m)}_n(\G)$ we define $\Phi^{(m)}(u)$ by 
\begin{eqnarray*}
\Phi^{(m)}(u):\Pb^{n}_{X,(m)}\xrightarrow{\rho^{(n)}_m} \Ob_{X}\otimes_{\mathfrak{o}}P^{n}_{(m)}(\G)\xrightarrow{id\otimes u}\Ob_{X}.
\end{eqnarray*}
Again, those applications are compatible when varying $n$ and we get the morphism of the proposition.
\end{proof}

\begin{rem}  \label{Invarian global sections of group}
\begin{itemize}
\item[(i)] If $X$ is endowed with a left $\G$-action, then it turns out that $\Phi^{(m)}$ is an anti-homomorphism.
\item[(ii)] In \cite[Theorem 4.4.8.3]{HS1} Huyghe and Schmidt have shown that if $X=\G$ and we consider the right (resp. left) regular action, then the morphism of the preceding proposition is in fact a canonical filtered isomorphism (resp. an anti-isomorphism) between $D^{(m)}(\G)$ and  $H^0(\G, \Db^{(m)}_{\G})^{\G}$, the $\mathfrak{o}$-submodule of (left) $\G$-invariant global sections. This isomorphism induces a bijection between $D_n^{(m)}(\G)$ and $H^{0}(\G,\Db^{(m)}_{\G,n})^{\mathbb{G}}$, and it is compatible when varying $m$.  
\end{itemize}
\end{rem}
\justify
We will denote by 
\begin{eqnarray*}
\Phi^{(m)}_X : \Ob_{X}\otimes_{\mathfrak{o}} D^{(m)}(\G) \rightarrow \Db^{(m)}_X
\end{eqnarray*}
the morphism of sheaves (of $\mathfrak{o}$-modules) defined by: if $U\subset X$ is an open subset and $f\in \Ob_X (U)$, $u\in D^{(m)}(\G)$, then 
\begin{eqnarray*}
\Phi^{(m)}_{X,U} (f\otimes u):= f\cdot \Phi^{(m)}(u)|_{U}.
\end{eqnarray*}
\justify
Let us define $\Ab_{X}^{(m)}=\Ob_{X}\otimes_{\mathfrak{o}}D^{(m)}(\G)$, and let us remark that we can endow this sheaf with the skew ring multiplication coming from the action of $D^{(m)}(\G)$ on $\Ob_{X}$ via the morphism $\Phi^{(m)}_X$. This is
\begin{eqnarray}\label{skew_mult}
(f\otimes u) \cdot (g\otimes v) := \left( f\cdot \Phi^{(m)}_X (u)\right) g\otimes v + fg\otimes uv.
\end{eqnarray}
\justify
This multiplication defines over $\Ab_X^{(m)}$ a structure of a sheaf of associative $\mathfrak{o}$-algebras, such that it becomes an integral model of the sheaf of $L$-algebras $\Ub^\circ:=\Ob_{X_L}\otimes_{L}\Ub(\mathfrak{g}_L)$. To see this, let us recall how the multiplicative structure of the sheaf $\Ub^\circ$ is defined (cf. \cite[subsection 5.1]{PSS1} or \cite[section 2]{Milicic1}).
\justify
Differentiating the right action of $\G_L$ on $X_L$ we get a morphism of Lie algebras
\begin{eqnarray}\label{Action_Lie}
\tau:\ \mathfrak{g}_L\rightarrow H^0(X_L,\Tb_{X_L}).
\end{eqnarray}
This implies that $\mathfrak{g}_{L}$ acts on $\Ob_{X_L}$ by derivations and we can endow $\Ub^{\circ}$ with the skew ring multiplication
\begin{eqnarray}\label{smash}
(f\otimes\eta)(g\otimes\zeta)=\left( f\tau(\eta)\right) g\otimes\zeta + fg\otimes\eta\zeta
\end{eqnarray}
for $\eta\in\mathfrak{g}_L$, $\zeta\in\Ub(\mathfrak{g}_L)$ and $f,g\in\Ob_{X_L}$. With this product the sheaf $\Ub^{\circ}$ becomes a sheaf of associative algebras \cite[Section 2, page 11]{Milicic1}.

\begin{rem}\label{Operator_rep}
As in (\ref{skew_mult}) we can define a morphism (called the operator-representation) of sheaves of $L$-algebras
\begin{eqnarray*}
\Psi_{X_L}: \Ob_{X_L}\otimes_{L}\Ub(\mathfrak{g}_L) \rightarrow \Db_{X_L};
\hspace{0,3 cm}
f\otimes \eta\mapsto  f\; \tau (\eta)
\hspace{0,4 cm}
(f\in \Ob_{X_L},\;\eta\in\mathfrak{g}_L.).
\end{eqnarray*}
\justify
We get the following commutative diagram
\begin{eqnarray*}
\begin{tikzcd}
D^{(m)}(\G) \arrow[r,"\Phi^{(m)}"] \arrow[d,hook] & H^{0}(X,\Db^{(m)}_X)\arrow[d, hook]\\
\Ub(\mathfrak{g}_L) \arrow[r,"\Psi_{X_L}"]  & H^{0}(X_L,\Db_{X_L}).
\end{tikzcd}
\end{eqnarray*}
\end{rem}
\justify
Given that $D^{(m)}(\G)$ is an integral model of the universal enveloping algebra $\Ub(\mathfrak{g}_L)$, then by (\ref{skew_mult}) and (\ref{smash}) we can conclude that $\Ab_{X}^{(m)}$ is also a sheaf of associative $\mathfrak{o}$-algebras being a subsheaf of $\Ub^{\circ}$.

\begin{prop}\label{prop 1.4.1} \cite[Corollary 4.4.6]{HS1}
\begin{itemize}
\item[(i)]  The sheaf $\Ab_{X}^{(m)}$ is a locally free $\Ob_X$-module.
\item[(ii)]  There exists a unique structure over $\Ab_{X}^{(m)}$ of filtered $\Ob_{X}$-ring and there is a canonical isomorphism of graded $\Ob_{X}$-algebras $gr (\Ab_{X}^{(m)})= \Ob_{X}\otimes_{\mathfrak{o}}\text{Sym}^{(m)}(\mathfrak{g})$.
\item[(iii)]  The sheaf $\Ab_{X}^{(m)}$ (resp. $gr(\Ab_{X}^{(m)})$) is a coherent sheaf of $\Ob_{X}$-rings (resp. a coherent sheaf of $\Ob_{X}$-algebras), with noetherian sections over open affine subsets of $X$. 
\end{itemize}
\end{prop}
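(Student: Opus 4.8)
The plan is to handle the three assertions in turn, deriving (ii) and (iii) from the corresponding facts for the arithmetic distribution algebra $D^{(m)}(\G)$ recalled above. For (i), I would start from the fact that $D^{(m)}(\G)=\varinjlim_n D_n^{(m)}(\G)$ is a free $\mathfrak{o}$-module, with basis $\{\underline{\xi}^{<\underline{k}>}\}_{\underline{k}\in\N^l}$ (each $D_n^{(m)}(\G)$ being free of finite rank on the $\underline{\xi}^{<\underline{k}>}$ with $|\underline{k}|\le n$, and the transition maps being injective). Tensoring with $\Ob_X$ over $\mathfrak{o}$ then exhibits $\Ab_X^{(m)}=\bigoplus_{\underline{k}\in\N^l}\Ob_X\,\underline{\xi}^{<\underline{k}>}$ as a free, in particular locally free, $\Ob_X$-module; this also shows $\Ab_X^{(m)}$ is quasi-coherent over $\Ob_X$, a point I will reuse in (iii).

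For (ii), I would equip $\Ab_X^{(m)}$ with the exhaustive, bounded-below filtration $\Ab_{X,n}^{(m)}:=\Ob_X\otimes_{\mathfrak{o}}D_n^{(m)}(\G)$, so that $\Ab_{X,0}^{(m)}=\Ob_X$. The key check is that the skew multiplication (\ref{skew_mult}) is compatible with this filtration: for $u\in D_n^{(m)}(\G)$ and $v\in D_{n'}^{(m)}(\G)$, Proposition \ref{morp.HS} gives that $\Phi_X^{(m)}(u)$ has order $\le n$, so the cross term $(f\cdot\Phi_X^{(m)}(u)(g))\otimes v$ lies in $\Ab_{X,n'}^{(m)}$, while $fg\otimes uv\in\Ab_{X,n+n'}^{(m)}$ because $D^{(m)}(\G)$ is a filtered ring; hence $\Ab_{X,n}^{(m)}\cdot\Ab_{X,n'}^{(m)}\subseteq\Ab_{X,n+n'}^{(m)}$. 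Uniqueness I would get by noting that $\Ab_X^{(m)}$ is generated as a filtered $\Ob_X$-algebra by $\Ob_X$ in degree $0$ together with (constant sections of) $D^{(m)}(\G)$, whose product and commutation with $\Ob_X$ are already fixed, consistently with the inclusion $\Ab_X^{(m)}\subseteq\Ub^{\circ}$ recalled just before the statement. Finally, in the associated graded the cross term in (\ref{skew_mult}) drops out, lying in strictly smaller filtration degree, so the induced product is $(f\otimes\bar u)(g\otimes\bar v)=fg\otimes\bar u\,\bar v$; this yields $gr(\Ab_X^{(m)})\simeq\Ob_X\otimes_{\mathfrak{o}}gr(D^{(m)}(\G))$ as graded $\Ob_X$-algebras, and Proposition \ref{gr.Dist algebra}(i) rewrites the right-hand side as $\Ob_X\otimes_{\mathfrak{o}}\text{Sym}^{(m)}(\mathfrak{g})$.

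For (iii), I would argue locally on an affine open $U=\text{Spec}(A)$. Smoothness of $X$ over $\mathfrak{o}$ makes $A$ a finitely generated, hence noetherian, $\mathfrak{o}$-algebra. From Proposition \ref{gr.Dist algebra}(ii), $\text{Sym}^{(m)}(\mathfrak{g})$ is a noetherian graded $\mathfrak{o}$-algebra with degree-zero part $\mathfrak{o}$, so it is finitely generated as an $\mathfrak{o}$-algebra; therefore $gr(\Ab_X^{(m)})(U)=A\otimes_{\mathfrak{o}}\text{Sym}^{(m)}(\mathfrak{g})$ is a finitely generated $A$-algebra, hence a finitely generated and so noetherian $\mathfrak{o}$-algebra, commutative and thus left and right noetherian. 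Invoking the standard fact that a ring carrying an exhaustive filtration bounded below whose associated graded is left (resp. right) noetherian is itself left (resp. right) noetherian, I conclude that $\Ab_X^{(m)}(U)$ is noetherian. Combined with quasi-coherence from (i), this gives that $\Ab_X^{(m)}$ is a coherent sheaf of rings; applying the same reasoning to the finitely generated $\Ob_X$-algebra $\Ob_X\otimes_{\mathfrak{o}}\text{Sym}^{(m)}(\mathfrak{g})$ gives that $gr(\Ab_X^{(m)})$ is a coherent sheaf of $\Ob_X$-algebras.

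The step I expect to be the main obstacle is (iii): one must ensure the base change $A\otimes_{\mathfrak{o}}(-)$ preserves noetherianity, which is exactly why one needs the refined input that $\text{Sym}^{(m)}(\mathfrak{g})$ is \emph{finitely generated} over $\mathfrak{o}$ (extracted from Proposition \ref{gr.Dist algebra}(ii) via the fact that a noetherian graded ring over a noetherian base ring is finitely generated), rather than bare noetherianity; and one must carefully globalize the ``noetherian graded $\Rightarrow$ noetherian filtered'' argument from affine sections to the statement that $\Ab_X^{(m)}$ is a coherent \emph{sheaf of rings}, using that coherence can be checked on an affine cover and that the filtration consists of $\Ob_X$-submodules. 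Everything else is routine once the multiplication (\ref{skew_mult}) and the structure results for $D^{(m)}(\G)$ are in place.
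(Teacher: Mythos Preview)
The paper does not actually prove this proposition: it is stated with a bare citation to \cite[Corollary 4.4.6]{HS1} and no argument is given. Your proposal therefore supplies what the paper omits, and the line of reasoning you sketch---freeness of $D^{(m)}(\G)$ for (i), the tensor filtration together with Proposition~\ref{gr.Dist algebra}(i) for (ii), and the passage ``$\text{Sym}^{(m)}(\mathfrak{g})$ noetherian graded $\Rightarrow$ finitely generated over $\mathfrak{o}$ $\Rightarrow$ $A\otimes_{\mathfrak{o}}\text{Sym}^{(m)}(\mathfrak{g})$ noetherian $\Rightarrow$ $\Ab_X^{(m)}(U)$ noetherian'' combined with \cite[Proposition 3.1.3]{Berthelot1} for (iii)---is the natural one and is correct in substance.

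One small caution on (ii): the displayed formula (\ref{skew_mult}) in the paper is only literally correct for $u$ of filtration degree $\le 1$ (compare (\ref{smash}), which is explicitly restricted to $\eta\in\mathfrak{g}_L$); for general $u$ the genuine product in the smash product $\Ob_X\otimes_{\mathfrak{o}}D^{(m)}(\G)$ involves the full coproduct $\Delta(u)=\sum u_{(1)}\otimes u_{(2)}$, namely $(f\otimes u)(g\otimes v)=\sum f\,\Phi_X^{(m)}(u_{(1)})(g)\otimes u_{(2)}v$. Your conclusion is unaffected: since the coproduct on $D^{(m)}(\G)$ respects the order filtration, each summand lies in $\Ab^{(m)}_{X,n+n'}$, and modulo $\Ab^{(m)}_{X,n+n'-1}$ only the term with $u_{(1)}\in D_0^{(m)}(\G)$ survives, giving $fg\otimes \overline{uv}$ in the associated graded. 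So the identification $gr(\Ab_X^{(m)})\simeq \Ob_X\otimes_{\mathfrak{o}}\text{Sym}^{(m)}(\mathfrak{g})$ goes through exactly as you say; just be aware that you are implicitly using the filtered-coalgebra structure of $D^{(m)}(\G)$ rather than the two-term formula (\ref{skew_mult}) at face value.
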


\section{Twisted arithmetic differential operators with congruence level}
\justify
In this chapter we will introduce  congruence levels to the constructions given in the preceding sections. This means, deformations of our (integral) differential operators. This notion will be a fundamental tool to define differential operators on an admissible blow-up of the flag $\mathfrak{o}$-scheme $X$.

\subsection{Linearization of group actions}\label{Linearization_alg_gps}

Let us start with the following definition from \cite[Chapter II, exercise 5.18]{Hartshorne1} (cf. \cite[Definition 3.1.1]{Brion1}).

\begin{defi}
Let $Y$ be an $\mathfrak{o}$-scheme. A (geometric) line bundle over $Y$ is a scheme $\textbf{L}$ together with a morphism $\pi:\textbf{L}\rightarrow Y$ such that $Y$ admits an open covering $(U_i)_{i\in I}$ satisfying the following two conditions:
\begin{itemize}
\item[(i)] For any $i\in I$ there exists an isomorphism $\psi_{i}: \pi^{-1}(U_i)\xrightarrow{\simeq} \mathbb{A}_{U_i}^1$.
\item[(ii)] For any $i,j\in I$ and for any open affine subset $V=\text{Spec}(A[x])\subseteq U_i\cap U_i$ the automorphism $\theta_{ij}: \psi_{j}\circ\psi_{i}^{-1}|_{V}: \mathbb{A}^{1}_{V}\rightarrow \mathbb{A}^{1}_{V}$ of $\mathbb{A}^{1}_V$ is given by a linear automorphism $\theta^{\natural}_{ij}$ of $A[x]$. This means, $\theta_{ij}^{\natural}(a)=a$ for any $a\in A$, and $\theta_{ij}^{\natural} (x)= a_{ij}x$ for a suitable $a_{ij}\in A$.
\end{itemize} 

\end{defi}
\justify
In the preceding definition, the scheme $\textbf{L}$ is obtained by glueing the trivial line bundles $p_{1,i}:U_i\times \mathbb{A}^1_{\mathfrak{o}}\rightarrow U_i$ via the linear transition functions $(a_{ij})$. Thus, each fibre $\textbf{L}_x$ is a line, in the sense that it has a canonical structure of a 1-dimensional affine space.

\begin{defi}
Given a line bundle $\pi: \textbf{L}\rightarrow Y$ and a morphism $\phi: Y'\rightarrow Y$, the pull-back $\phi^{*}(\textbf{L})$ is the fiber product $\textbf{L}\times_{Y}Y'$ equipped with its projection to $Y'$. 
\end{defi} 
\justify
Now, let $\pi:\textbf{L}\rightarrow Y$ be a line bundle over $Y$, then a \textit{section} of $\pi$ over an open subset $U\subset Y$ is a morphism $s: U\rightarrow \textbf{L}$ such that $\pi\circ s = id_U$. Moreover the presheaf $\Lb$ defined by 
\begin{eqnarray*}
U\subseteq Y\;\; \mapsto\;\;\{s:U\rightarrow \textbf{L}\;|\;\;s\;\; \text{is a section over}\;\; U\}
\end{eqnarray*}
is a sheaf called the \textit{sheaf of sections} of the line bundle $\textbf{L}$. This is an invertible sheaf (i.e., a locally free sheaf of rank 1).
\justify
On the other hand, if $\Eb$ is a locally free sheaf of rank 1 on $Y$ and we let 
\begin{eqnarray*}
\textbf{V}(\Eb):= \underline{\text{Spec}}_{Y}\left(\text{Sym}_{\Ob_{Y}}\left(\Eb\right)\right)
\end{eqnarray*}
be the line bundle over $Y$ associated to $\Eb$ \cite[1.7.8]{Grothendieck2}, then we  have a one-to-one correspondence between isomorphism classes of locally free sheaves of rank 1 on $Y$ and isomorphic classes of (geometric) line bundles over $Y$ \cite[Chapter II, Exercises 5.1 (a) and 5.18 (d)]{Hartshorne1}
\begin{eqnarray}\label{Correspondence_alg_geo}
\begin{matrix}
\{\text{Isomorphism classes of locally free sheaves of rank 1}\} &\leftrightarrow & \{\text{Isomorphic classes of line bundles}\}\\
\Eb &\mapsto & \textbf{V}(\Eb^{\vee})\\
\Lb &\mapsfrom & \textbf{L} 
\end{matrix}
\end{eqnarray}
\justify
Let $\pi:\textbf{L}\rightarrow Y$ be a line bundle over $Y$, let $\Lb$ be its sheaf of sections and $\phi:Y'\rightarrow Y$ a morphism of schemes; an easy calculation shows that the sheaf of sections of the pull-back line bundle $\phi^*(\textbf{L}):=\textbf{L}\times_{Y}Y'\rightarrow Y'$ is equal to $\phi^{*}(\Lb)$.
\justify
Let us suppose now that $Y$ is endowed with a right $\G$-action $\alpha: Y \times_{\text{Spec}(\mathfrak{o})}\G\rightarrow Y$. In particular, for every $g\in\G(\mathfrak{o})$ we dispose of a translation morphism
\begin{eqnarray*}
\rho_g : Y=Y\times_{\text{Spec}(\mathfrak{o})}\text{Spec}(\mathfrak{o}) \xrightarrow{id_Y\times g} Y\times_{\text{Spec}(\mathfrak{o})}\G \xrightarrow{\alpha} Y
\end{eqnarray*}
\justify
In the next lines we will study (geometric) line bundles which are endowed with a right $\G$-action.
 
\begin{defi}
Let $\pi:\textbf{L}\rightarrow Y$ be a line bundle. A $\G$-Linearization of $\textbf{L}$ is a right $\G$-action $\beta:\textbf{L}\times_{\text{Spec}(\mathfrak{o})}\G\rightarrow \textbf{L}$ satisfying the following two conditions:
\begin{itemize}
\item[(i)] The diagram 
\begin{eqnarray*}
\begin{tikzcd}
\textbf{L}\times_{\text{Spec}(\mathfrak{o})}\G \arrow[r, "\beta"] \arrow[d, "\pi\times id_{\G}"]
& \textbf{L} \arrow[d,"\pi"]\\
Y\times_{\text{Spec}(\mathfrak{o})}\G \arrow[r,"\alpha"]
& Y
\end{tikzcd}
\end{eqnarray*}
is commutative. 
\item[(ii)] The action on the fibers is $\mathfrak{o}$-linear.
\end{itemize} 
\end{defi}
\justify
Let $g\in\G(\mathfrak{o})$ and let us suppose that  $\Psi:\alpha^{*}(\textbf{L})\rightarrow p_1^*(\textbf{L})$ is a morphism of line bundles over $Y\times_{\text{Spec}(\mathfrak{o})} \G$. Let us consider the translation morphism 
\begin{eqnarray*}
\rho_g: Y=Y \times_{\text{Spec}(\mathfrak{o})}\text{Spec}(\mathfrak{o}) \xrightarrow{id_Y\times g} Y\times_{\text{Spec}(\mathfrak{o})}\G \xrightarrow{\alpha} Y.
\end{eqnarray*}
We have the relations $(id_Y \times g)^*\alpha^*(\textbf{L})=\rho_g^*(L)$ and $(id_Y\times g)^*p_1^*(\textbf{L})= \textbf{L}$. So every morphism of line bundles $\Psi:\alpha^{*}(\textbf{L})\rightarrow p_1^*(\textbf{L})$ induces morphisms $\Psi_{g}: \rho_g^*(L)\rightarrow L$ for all $g\in\G(\mathfrak{o})$. The following reasoning can be found in \cite[Page 104]{Dolgachev} or \cite[Lemma 3.2.4]{Brion1}.

\begin{prop}\label{Action_line_bundle}
Let $\pi:\textbf{L}\rightarrow Y$ be a line bundle over $Y$ endowed with a $\G$-linearization $\beta:\textbf{L}\times_{\text{Spec}(\mathfrak{o})}\G\rightarrow \textbf{L}$. Then there exists an isomorphism 
\begin{eqnarray*}
\Psi: \alpha^{*}(\textbf{L})\rightarrow p_1^*(\textbf{L})
\end{eqnarray*}
of line bundles over $\textbf{L}\times_{\text{Spec}(\mathfrak{o})}\G$, such that $\Psi_{gh}=\Psi_{g}\circ \rho_{g}^{*}(\Psi_{h})$\;  for all $g,h\in\G(\mathfrak{o})$.
\end{prop}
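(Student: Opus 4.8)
The plan is to unwind the definition of a $\G$-linearization and realize the desired isomorphism $\Psi$ as the comorphism of $\beta$ viewed relative to the base $Y$. More precisely, consider the two line bundles $\alpha^*(\textbf{L})$ and $p_1^*(\textbf{L})$ over $Y\times_{\text{Spec}(\mathfrak{o})}\G$. The action $\beta:\textbf{L}\times_{\text{Spec}(\mathfrak{o})}\G\rightarrow\textbf{L}$ together with the projection $p_{\G}:\textbf{L}\times_{\text{Spec}(\mathfrak{o})}\G\rightarrow\G$ gives a morphism $(\beta,p_{\G}\circ(\pi\times\text{id}))$, and by condition (i) in the definition of a $\G$-linearization this morphism factors through the fiber product $\textbf{L}\times_{Y}(Y\times_{\text{Spec}(\mathfrak{o})}\G)$ where $Y\times\G$ maps to $Y$ via $\alpha$; that fiber product is precisely $\alpha^*(\textbf{L})$. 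Note $\textbf{L}\times_{\text{Spec}(\mathfrak{o})}\G = p_1^*(\textbf{L})$ with its projection $\pi\times\text{id}_{\G}$ to $Y\times\G$. So $\beta$ induces a morphism of schemes over $Y\times_{\text{Spec}(\mathfrak{o})}\G$
\begin{eqnarray*}
\Psi: p_1^*(\textbf{L}) \longrightarrow \alpha^*(\textbf{L}),
\end{eqnarray*}
and by condition (ii) ($\mathfrak{o}$-linearity on fibers) this is a morphism of line bundles. Because $\beta$ is a (right) group action, $\Psi$ is invertible: its inverse comes from $\beta$ composed with the inversion on $\G$, or more cleanly from the fact that $\beta$ defines an action so the associated map on pullbacks is an isomorphism. (If one prefers the opposite direction of arrow as in the statement, take $\Psi^{-1}$; I will orient things so the arrow matches $\alpha^*(\textbf{L})\rightarrow p_1^*(\textbf{L})$.)

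First I would make the identification $\alpha^*(\textbf{L}) = \textbf{L}\times_{Y,\alpha}(Y\times_{\text{Spec}(\mathfrak{o})}\G)$ completely explicit and check that $(\beta, \pi\times\text{id}_{\G})$ really lands in it; this is a direct diagram chase using the commuting square of condition (i). Second, I would verify $\mathfrak{o}$-linearity fiberwise, which is condition (ii) restated, so $\Psi$ is a morphism of line bundles rather than just of schemes. Third, I would produce the inverse of $\Psi$ using the action axioms for $\beta$ (associativity and unit), exactly as one shows that the "translation by $g$" maps are isomorphisms in the group-action setting. Fourth — and this is the only part with real bookkeeping — I would derive the cocycle relation $\Psi_{gh} = \Psi_g\circ\rho_g^*(\Psi_h)$ by pulling $\Psi$ back along $\text{id}_Y\times g$ and $\text{id}_Y\times h$ and using the associativity of $\beta$, i.e.\ $\beta\circ(\beta\times\text{id}_{\G}) = \beta\circ(\text{id}_{\textbf{L}}\times\mu_{\G})$ where $\mu_{\G}$ is the multiplication on $\G$. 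Concretely, $\Psi_g$ is obtained from $\Psi$ by applying $(\text{id}_Y\times g)^*$, using the stated identities $(\text{id}_Y\times g)^*\alpha^*(\textbf{L}) = \rho_g^*(\textbf{L})$ and $(\text{id}_Y\times g)^*p_1^*(\textbf{L}) = \textbf{L}$; then the associativity square, restricted to the point $(g,h)\in\G(\mathfrak{o})\times\G(\mathfrak{o})$, becomes exactly the claimed composition identity.

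I expect the main obstacle to be purely organizational rather than conceptual: keeping straight the several base-change identifications (which pullback equals which fiber product, and in which variable the action $\alpha$ or the multiplication $\mu_{\G}$ is being used) so that the cocycle computation in the last step is unambiguous. It helps to fix, once and for all, that $\textbf{L}$ is glued from trivial pieces with linear transition functions $a_{ij}$, so that locally $\Psi$ is multiplication by a unit-valued function on $Y\times\G$, and the cocycle condition reduces to the evident multiplicativity of that function in the $\G$-variable — which is just a reformulation of $\beta$ being an action. With that local picture in hand, all four steps are routine, and the cited references (\cite[Lemma 3.2.4]{Brion1}, \cite[Page 104]{Dolgachev}) confirm the argument in the classical setting; the only thing to check is that nothing in the passage from a field to the base ring $\mathfrak{o}$ causes trouble, and it does not, since every construction used is stable under arbitrary base change.
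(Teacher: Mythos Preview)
Your proposal is correct and follows essentially the same route as the paper: both arguments identify $p_1^*(\textbf{L})$ with $\textbf{L}\times_{\text{Spec}(\mathfrak{o})}\G$, use the commuting square in condition (i) and the universal property of the fiber product $\alpha^*(\textbf{L})$ to produce a morphism $\psi: p_1^*(\textbf{L})\rightarrow\alpha^*(\textbf{L})$, invoke condition (ii) for fiberwise linearity, check invertibility via the group-action structure (the paper does this on stalks using $\psi_{(xg,g^{-1})}$ as inverse, you via the action axioms), derive the cocycle relation from $\rho_h\circ\rho_g=\rho_{gh}$/associativity of $\beta$, and finally set $\Psi:=\psi^{-1}$ to match the stated orientation. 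Your additional remark about the local picture (multiplication by a unit on $Y\times\G$) is a helpful sanity check not present in the paper but entirely consistent with it.
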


\begin{proof}
By definition of linearization we have the following commutative diagram
\begin{eqnarray*}
\begin{tikzcd}
\textbf{L}\times_{\text{Spec}(\mathfrak{o})}\G
\arrow[drr, bend left, "\pi \times id_{\G}"]
\arrow[ddr, bend right, "\beta"]
\arrow[dr, dotted, "\psi" description] & & \\
& \alpha^*(\textbf{L}) \arrow[r, "p_2"] \arrow[d, "p_1"]
& Y\times_{\text{Spec}(\mathfrak{o})}\G \arrow[d, "\alpha"] \\
& \textbf{L} \arrow[r, "\pi"]
& Y.
\end{tikzcd}
\end{eqnarray*}
By universal property there exists a unique morphism of line bundles $\psi: p_1^{*}(\textbf{L})\rightarrow \alpha^{*}(\textbf{L})$, which is linear on the fibers since so is $\beta$. Let $g\in\G(\mathfrak{o})$. To see that $\psi$ is an isomorphism we can use the correspondence (\ref{Correspondence_alg_geo}). In this case, if $x\in Y$,  $g\in\G(\mathfrak{o})$ and  $\psi_{(x,g)}:\Lb_{x}\rightarrow \Lb_{xg}$ denotes the respective morphism between the stalks, then $\psi_{(x,g)}$ is an isomorphism, being $\psi_{(xg,g^{-1})}$ the inverse.  
\justify
Let $g,h\in\G(\mathfrak{o})$. Applying $(id_X\times g)^*$ to $\psi$ we get the morphism $\psi_g :\textbf{L}\rightarrow \rho_g^*(\textbf{L})$ and given that $\beta$ is a right action ($\rho_h\;\circ\;\rho_g=\rho_{gh}$), it  fits into the following commutative diagram 
\begin{eqnarray*}
\begin{tikzcd}[row sep = large]
    \textbf{L} \arrow{r}{\psi_g} \arrow[swap]{dr}{\psi_{gh}} & \rho_g^*(\textbf{L}) \arrow{d}{\rho^*_g (\psi_h)} \\
     & \rho_g^* \rho_h^* (\textbf{L}) = \rho_{gh}^*(\textbf{L}).
  \end{tikzcd}
\end{eqnarray*}
Moreover, since $\psi_{g}: \textbf{L}\rightarrow \rho^*_g (\textbf{L})$ is an isomorphism for every $g\in\G(\mathfrak{o})$ (the fiber over $x\in Y$ coincides with $\psi_{(x,g)}$) then we can consider the morphism $\Psi_g :=\psi_g^{-1}: \rho^*_g (\textbf{L})\rightarrow \textbf{L}$ which coincides with the fibers of the morphism 
\begin{eqnarray*}
\Psi := \psi^{-1}: \alpha^* (\textbf{L})\rightarrow p_1^* (\textbf{L}).
\end{eqnarray*}
By construction, these morphism satisfy the cocycle condition of the proposition. This means that for every $g,h\in \G(\mathfrak{o})$, we have 
\begin{eqnarray*}
\Psi_{gh}=\Psi_{g}\;\circ\; \rho_{g}^{*}(\Psi_{h}).
\end{eqnarray*} 
\end{proof}

\subsection{Associated Rees rings and arithmetic differential operators with congruence level}
\justify
Throughout this sections $X$ will denote a smooth scheme over $\mathfrak{o}$. As usual, we will denote by $\Db^{(m)}_{X}$ the sheaf of level $m$ differential operators on $X$. As we have remarked in the previous chapter, those sheaves come equipped with a filtration 
\begin{eqnarray*}
\Ob_{X}\subseteq \Db^{(m)}_{X,1}\subseteq ... \subseteq \Db^{(m)}_{X,d}\subseteq ... \subseteq \Db^{(m)}_{X},
\end{eqnarray*}
with $\Db^{(m)}_{X,d}$ the sheaf of level $m$ differential operators of order less or equal than $d$.
\justify
\footnote{This digression can be found before the proof of \cite[Proposition 3.3.7]{HPSS}.} Now, let $\Ab$ be a sheaf of $\mathfrak{o}$-algebras endowed with a positive filtration $(F_{d}\Ab)_{d\in\N}$ and such that $\mathfrak{o}\subset F_0\Ab$. The sheaf $\Ab$ gives rise to a subsheaf of graded rings $R(\Ab)$  of the polynomial algebra $\Ab[t]$ over $\Ab$. This is defined by
\begin{eqnarray*}
R(\Ab):=\displaystyle\bigoplus_{i\in\N}F_i\Ab\cdot t^i,
\end{eqnarray*}
its associated Rees ring. This subsheaf comes equipped with a filtration by the sheaves of subgroups 
\begin{eqnarray*}
R_{d}(\Ab):= \displaystyle\bigoplus_{i=0}^d F_{i}\Ab\cdot t^i \subseteq R(\Ab).
\end{eqnarray*}
Specializing $R(\Ab)$ in an element $\mu\in\mathfrak{o}$ we get a sheaf of filtered subrings $\Ab_{\mu}$ of $\Ab$. More exactly, $\Ab_{\mu}$ equals the image under the homomorphism of sheaves of rings $\phi_{\mu}:R(\Ab)\rightarrow \Ab$, sending $t\mapsto \mu$, and it is equipped with the filtration induced by $\Ab$. Moreover, if the sheaf of graded rings gr$(\Ab)$, associated to the filtration $(F_d\Ab)_{d\in\N}$, is flat over $\mathfrak{o}$ then\footnote{This is \cite[Claim 3.3.10.]{HPSS}.} 
\begin{eqnarray}\label{Filt_esp_of_Rees}
F_d\Ab_{\mu}=\displaystyle\sum_{i=0}^d \mu^{i}F_{i}\Ab.
\end{eqnarray}
\justify
If $\psi:\Ab\rightarrow \Bb$ is a morphism of positive filtered $\mathfrak{o}$-algebras (with $\mathfrak{o}\subseteq F_0\Ab$ and $\mathfrak{o}\subseteq F_0\Bb$), then the commutative diagram
\begin{eqnarray*}
\begin{tikzcd}[column sep=huge]
R(\Ab) \arrow[r,"a_dt^d\mapsto\psi(a_d)t^d"] \arrow[d, "\phi_{\mu}"]
& R(\Bb) \arrow[d, "\phi_{\mu}"]\\
\Ab \arrow[r, "\psi"]
&\Bb
\end{tikzcd}
\end{eqnarray*}
gives us a filtered morphism of rings $\psi_{\mu}:\Ab_{\mu}\rightarrow\Bb_{\mu}$. This in particular implies that for $\mu\in\mathfrak{o}$ fixed, the preceding process is functorial.

\begin{rem}\label{Rees_rings_for_algebras}
The previous digression is well-known for rings. In this setting we have results completely analogues to the ones presented so far (\cite[Chapter 12, section 6]{MRS}). We will use these results in the next sections.
\end{rem}
\justify
Now, let $k$ be a non-negative integer called a \textit{congruence level} \cite[Subsection 2.1]{HSS}. By using the order filtration $(\Db^{(m)}_{\widetilde{X}})_{d\in\N}$ of the sheaf $\Db^{(m)}_{X}$, we can define the sheaf of \textit{arithmetic differential operators of congruence level k}, $\Db^{(m,k)}_{X}$, as the subsheaf of $\Db^{(m)}_{X}$ given by the specialization of $R(\Db^{(m)}_{X})$ in $\varpi^{k}\in\mathfrak{o}$. This means
\begin{eqnarray*}
\Db^{(m,k)}_{X}:=\displaystyle\sum_{d\in\N}\varpi^{kd} \Db^{(m)}_{X,d}.
\end{eqnarray*}
By (\ref{graded}) and \cite[Proposition 1.3.4.2]{Huyghe1} we can also conclude that, if $(\Db^{(m,k)}_{X,d})_{d\in\N}$ denotes the order filtration induced by $\Db^{(m)}_{X}$, then 
\begin{eqnarray*}
\Db^{(m,k)}_{X,d}=\displaystyle\sum_{i=0}^{d}\varpi^{ki}\Db_{X,i}^{(m)}.
\end{eqnarray*}
In local coordinates we can describe the sheaf $\Db^{(m,k)}_{X}$ in the following way. Let $U\subseteq X$ be an open affine subset endowed with coordinates $x_1,...,x_N$. Let $dx_1,...,dx_N$ be a basis of $\Omega_{X}(U)$ and $\partial_{x_1},...,\partial_{x_N}$ the dual basis of $\Tb_{X}(U)$. By using the notation in section \ref{ADP}, one has the following description \cite[Subsection 2.1]{HSS}
\begin{eqnarray*}
\Db^{(m,k)}_{X}(U)=\bigg\{\displaystyle\sum_{\underline{v}}^{<\infty}\varpi^{k|\underline{v}|}a_{\underline{v}}\underline{\partial}^{<\underline{v}>}|\; a_{\underline{v}}\in\Ob_{X}(U) \bigg\}.
\end{eqnarray*}

\subsection{Arithmetic differential operators acting on a line bundle}\label{Diff_op_acting_bundle}
\justify
Throughout this subsection $X:=\G/\B$ will always denote the flag scheme. For technical reasons (cf. Proposition \ref{morp.HS}) in this work we will always suppose that the group $\G$, and the scheme $X$ are endowed with the right regular $\G$-action. This means that for any $\mathfrak{o}$-algebra $A$ and $g_0,g\in \G(A)$ we have 
\begin{eqnarray*}
g_0\bullet g = g^{-1}g_0,\;\;\; g_0\;\textbf{N}(A)\bullet g = g^{-1}g_0\; \textbf{N}(A) \;\;\text{and}\;\; g_0\;\B(A)\bullet g=g^{-1}g_0\;\B(A).
\end{eqnarray*}
Under this action, the canonical projection $\G\rightarrow X$ is clearly $\G$-equivariant.
\justify
Finally, we recall for the reader that the sheaf $\Db^{(m)}_{X}$ is endowed with a left and a right structure of $\Ob_{X}$-module. These structures come from the canonical morphisms of rings $d_1,\;d_2:\Ob_{X}\rightarrow\Pb^{n}_{X,(m)}$, which are induced by universal property and the projections.  By construction, these actions also endow the sheaf $\Db^{(m,k)}_X$  with a left and a right structure of $\Ob_{X}$-module.

\begin{roots}\label{roots}\textbf{Dominant and regular characters.}
\end{roots}
\justify
Let us consider the positive system $\Lambda^{+}\subset\Lambda \subset X(\T)$ ($X(\T)=\text{Hom}(\T,\G_m)$ the group of algebraic characters) associated to the Borel subgroup scheme $\B\subset\G$. The Weyl subgroup $W=N_{\G}(\T)/\T$ acts naturally on the space $\mathfrak{t}^*_L:=\text{Hom}_L(\mathfrak{t}_L,L)$, and via differentiation $d: X(\T)\hookrightarrow \mathfrak{t}^*$ we may view $X(\T)$ as a subgroup of $\mathfrak{t}^*$ in such a way that $X^*(\T)\otimes_{\mathfrak{o}}L=\mathfrak{t}^*_L$. Let $\rho=\frac{1}{2}\sum_{\alpha\in\Lambda^{+}}\alpha$  be the so-called Weyl vector. Let $\check{\alpha}$ be a coroot of $\alpha\in\Lambda$ viewed as an element of $\mathfrak{t}_L$. An arbitrary weight $\lambda\in\mathfrak{t}^*_L$ is called \textit{dominant} if $\lambda(\check{\alpha})\ge 0$ for all $\alpha\in\Lambda^{+}$. The weight $\lambda$ is called \textit{regular} if its stabilizer under the $W$-action is trivial.

\begin{line_bundles}\label{line_bundles}\textbf{Line bundles on homogeneous spaces.}
\end{line_bundles}
\justify
Let us suppose now that $X:=\G/\B$ is again the smooth flag $\mathfrak{o}$-scheme. We dispose of a canonical isomorphism $\T\simeq \B/\textbf{N}$ which in particular implies that every algebraic character $\lambda\in\text{Hom}(\T,\G_m)$ induces a character of the Borel subgroup $\lambda:\B\rightarrow\G_m$. Let us consider the locally free action of $\B$ on the trivial fiber bundle $\G\times \mathfrak{o}$ over $\G$ given by 
\begin{eqnarray*}
b.(g,u):=(gb^{-1}, \lambda(b)u);\;\;\;(g\in\G,\; b\in\B,\; u\in\mathfrak{o}).
\end{eqnarray*}
\justify
We denote by $\textbf{L}(\lambda):=\B\diagdown (\G\times \mathfrak{o})$ the quotient space obtained by this action.\\
Let $\pi: \G\rightarrow X$ be the canonical projection. Since the map $\G\times\mathfrak{o}\rightarrow X$, $(g,u)\mapsto \pi(x)$ is constant on $\B$-orbits, it induces a morphism $\pi_{\lambda}:\textbf{L}(\lambda)\rightarrow X$. Moreover, given that $\pi$ is locally trivial \cite[Part II, 1.10 (2)]{Jantzen} $\pi_{\lambda}: \textbf{L}(\lambda)\rightarrow X$ defines a line bundle over $X$ \cite[Part I, 5.16]{Jantzen}. Furthermore, the right $\G$-action on $\G\times \mathfrak{o}$ given by
\begin{eqnarray*}
(g_0,u)\bullet g\mapsto (g^{-1}g_0,u)\;\;\;(g\in\G,\;(g_0,u)\in\G\times\mathfrak{o})
\end{eqnarray*}
induces a right action on $\textbf{L}(\lambda)$ for which $\textbf{L}(\lambda)$ turns out to be a $\G$-linearized line bundle on $X$. By  proposition \ref{Action_line_bundle}, the sheaf of sections $\Lb(\lambda)$ of the line bundle $\textbf{L}(\lambda)$ is a $\G$-equivariant invertible sheaf.

\begin{defi}
Let $\lambda\in\text{Hom}(\T,\G_m)$ be an algebraic character. For every congruence level $k\in\N$, we define the sheaf of level $m$ arithmetic differential operators acting on the line bundle $\Lb(\lambda)$ by
\begin{eqnarray*}
\Db_{X}^{(m,k)}(\lambda):=\Lb(\lambda)\otimes_{\Ob_{X}}\Db^{(m,k)}_X\otimes_{\Ob_{X}}\Lb(\lambda)^{\vee}.
\end{eqnarray*}
\end{defi}
\justify
The multiplicative structure of the sheaf $\Db^{(m,k)}_{X}(\lambda)$ is defined as follows. If $\alpha^{\vee},\beta^{\vee}\in\Lb(\lambda)^{\vee}$, $P,Q\in\Db^{(m,k)}_{X}$ and $\alpha,\beta\in\Lb(\lambda)$ then 
\begin{eqnarray}\label{mt}
\alpha\otimes P\otimes \alpha^{\vee}\bullet \beta\otimes Q \otimes \beta^{\vee} = \alpha\otimes P \left<\alpha^{\vee},\beta\right> Q\otimes \beta^{\vee}.
\end{eqnarray}
\justify
Moreover, the action of $\Db_{X}^{(m,k)}(\lambda)$ on $\Lb(\lambda)$ is given by
\begin{eqnarray*}
\left(t\otimes P\otimes t^{\vee}\right)\bullet s := \left(P\bullet<t^{\vee},s>\right)t \hspace{1 cm} (s,t\in \Lb(\lambda)\;\text{and}\; t^{\vee}\in\Lb(\lambda)^{\vee}).
\end{eqnarray*}

\begin{rem}
Given that the locally free $\Ob_{X}$-modules of rank one $\Lb(\lambda)^{\vee}$ and $\Lb(\lambda)$  are in particular flat, the sheaf $\Db^{(m,k)}_{X}(\lambda)$ is filtered by the order of twisted differential operators. This is, the subsheaf $\Db^{(m,k)}_{X,d}$ of $\Db^{(m,k)}_{X}$, of differential operators of order less that $d$, induces a subsheaf of twisted differential operators of order less than $d$ defined by 
\begin{eqnarray*}
\Db^{(m,k)}_{X,d}(\lambda):=\Lb(\lambda)\otimes_{\Ob_{X}}\Db^{(m,k)}_{X,d}\otimes_{\Ob_{X}}\Lb(\lambda)^{\vee},
\end{eqnarray*}
and given than the tensor product preserves inductive limits, we obtain 
\begin{eqnarray*}
\Db^{(m,k)}_{X}(\lambda)=\varinjlim_{d}\Db^{(m,k)}_{X,d}(\lambda).
\end{eqnarray*}
Moreover, the exact sequence
\begin{eqnarray*}
0\rightarrow \Db^{(m,k)}_{X,d-1}\rightarrow \Db^{(m,k)}_{X,d}\rightarrow \Db^{(m,k)}_{X,d}/\Db^{(m,k)}_{X,d-1}\rightarrow 0
\end{eqnarray*}
induces the exact sequence 
\begin{eqnarray*}
0\rightarrow \Db^{(m,k)}_{X, d-1}(\lambda)\rightarrow \Db^{(m,k)}_{X,d}(\lambda)\rightarrow \Lb(\lambda)\otimes_{\Ob_{X}} \Db^{(m,k)}_{X,d}/\Db^{(m,k)}_{X,d-1}\otimes_{\Ob_{X}}\Lb(\lambda)^{\vee}\rightarrow 0
\end{eqnarray*}
which tells us that 
\begin{eqnarray*}
gr \left(\Db^{(m,k)}_{X}(\lambda)\right)\simeq \Lb(\lambda)\otimes_{\Ob_{X}}gr \left(\Db^{(m,k)}_{X}\right)\otimes_{\Ob_{X}}\Lb(\lambda)^{\vee}\simeq gr \left(\Db^{(m,k)}_{X}\right).
\end{eqnarray*}
The second isomorphism is defined by $\alpha\otimes P \otimes \alpha^{\vee} \mapsto \alpha^{\vee}(\alpha)P$. This is well defined because $gr\left(\Db^{(m,k)}_{X}\right)$ is in particular a commutative ring.
\end{rem}

\begin{prop}\label{graded_lambda}
There exists a canonical isomorphism of graded sheaves of algebras 
\begin{eqnarray*}
gr_{\bullet}\left(\Db^{(m,k)}_{X}(\lambda)\right)\xrightarrow{\simeq} \text{Sym}^{(m)}(\varpi^{k}\Tb_{X}).
\end{eqnarray*}
\end{prop}
\begin{proof}
By (\ref{graded}), and the fact that $\Db^{(m,k)}_{X}$ and $\varpi^k\Tb_{X}$ are locally free sheaves (and therefore free $\varpi$-torsion) we have the following short exact sequence
\begin{eqnarray*}
0\rightarrow \Db^{(m,k)}_{X,d-1}\rightarrow \Db^{(m,k)}_{X,d}\rightarrow \text{Sym}^{(m)}_d\left(\varpi^k\Tb_{X}\right)\rightarrow 0,
\end{eqnarray*}
which gives us the isomorphisms
\begin{eqnarray*}
\text{Sym}^{(m)}\left(\varpi^k\Tb_{X}\right)\simeq gr_{\bullet}\left(\Db^{(m,k)}_{X}\right)\simeq gr_{\bullet}\left(\Db^{(m,k)}_{X}(\lambda)\right).
\end{eqnarray*}
\end{proof}
\justify
In the next proposition we will use the notation introduced in subsections \ref{p-adic_coeff} and \ref{ADP}.

\begin{prop}\label{algebraic_local_desc}
There exists a covering $\Sb$ of $X$ by affine open subsets such that, over every open subset $U\in\Sb$ the rings $\Db^{(m,k)}_{U}(\lambda)$ and $ \Db^{(m,k)}_{U}$ are isomorphic. 
\end{prop}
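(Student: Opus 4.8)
The plan is to use that $\Lb(\lambda)$ is an invertible sheaf: on any open $U$ over which it is free, the two twists by $\Lb(\lambda)$ and $\Lb(\lambda)^{\vee}$ in the definition of $\Db^{(m,k)}_U(\lambda)$ cancel each other, and one is left with $\Db^{(m,k)}_U$.

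First I would produce the covering $\Sb$. Since $\Lb(\lambda)$ is locally free of rank one, there is an open cover $(V_i)$ of $X$ with $\Lb(\lambda)|_{V_i}$ free; refining the $V_i$ by an affine open cover and using that $X$ is separated over $\mathfrak{o}$ (so that finite intersections of affine opens are again affine) yields a cover $\Sb$ of $X$ by affine opens on which $\Lb(\lambda)$ is free. Concretely, one may even take for $\Sb$ the translates of the big cell under the Weyl group: each is isomorphic to $\mathbb{A}^N_{\mathfrak{o}}$, and since $\mathfrak{o}$ is a discrete valuation ring one has $\operatorname{Pic}(\mathbb{A}^N_{\mathfrak{o}}) = \operatorname{Pic}(\mathfrak{o}) = 0$, so $\Lb(\lambda)$ is automatically trivial on every member of this cover; moreover these opens carry systems of local coordinates as in \S\ref{ADP}, so the explicit description of $\Db^{(m,k)}_U$ given there is available.

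Next, fix $U \in \Sb$ and choose a generator $s$ of the free rank-one $\Ob_U$-module $\Lb(\lambda)(U)$ together with the dual generator $s^{\vee}$ of $\Lb(\lambda)^{\vee}(U)$, so that $\langle s^{\vee}, s\rangle = 1$. Because the tensor product $\Lb(\lambda)\otimes_{\Ob_U}\Db^{(m,k)}_U\otimes_{\Ob_U}\Lb(\lambda)^{\vee}$ uses the left $\Ob_X$-structure of $\Db^{(m,k)}_X$ on the first factor and its right $\Ob_X$-structure on the last, every local section of $\Db^{(m,k)}_U(\lambda)$ over $U$ is uniquely of the form $s\otimes P\otimes s^{\vee}$ with $P\in\Db^{(m,k)}_X(U)$. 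I would then check that the bijection $P\mapsto s\otimes P\otimes s^{\vee}$, which is compatible with the left and right $\Ob_U$-module structures (and, by the exact sequences in the Remark preceding Proposition \ref{graded_lambda}, with the order filtrations), is a ring isomorphism $\Db^{(m,k)}_U\xrightarrow{\simeq}\Db^{(m,k)}_U(\lambda)$. This is immediate from the multiplication rule (\ref{mt}): $(s\otimes P\otimes s^{\vee})\bullet(s\otimes Q\otimes s^{\vee}) = s\otimes P\langle s^{\vee},s\rangle Q\otimes s^{\vee} = s\otimes PQ\otimes s^{\vee}$, and $1\in\Db^{(m,k)}_U$ maps to $s\otimes 1\otimes s^{\vee}$, which is the unit of $\Db^{(m,k)}_U(\lambda)$.

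The argument presents no genuine obstacle. The only point needing a little care is the construction of $\Sb$, namely arranging a single cover that is at once affine and trivializing for $\Lb(\lambda)$, which is handled by separatedness of $X$ (or, more concretely, by the vanishing of the Picard group of affine space over $\mathfrak{o}$). The verification that the trivialization-induced map respects multiplication is short, but it is the substantive content, and it is exactly where the normalization $\langle s^{\vee},s\rangle = 1$ is used.
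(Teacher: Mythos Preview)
Your argument is correct and in fact cleaner than the paper's. Both proofs choose an affine cover trivializing $\Lb(\lambda)$ and exploit the same multiplication rule (\ref{mt}); the difference is only in how the bijection $\Db^{(m,k)}_U\to\Db^{(m,k)}_U(\lambda)$ is set up and checked. The paper works in local coordinates $x_1,\dots,x_M$: it first invokes the Leibniz relation $\underline{\partial}^{<\underline{v}>}g=\sum\stirlingii{\underline{v}}{\underline{v}'}\underline{\partial}^{<\underline{v}'>}(g)\underline{\partial}^{<\underline{v}''>}$ to rewrite any element $\alpha\otimes f\underline{\partial}^{<\underline{v}>}\otimes g\alpha^{\vee}$ and thereby prove the explicit decomposition $\Db^{(m,k)}_U(\lambda)=\bigoplus_{\underline v}\varpi^{k|\underline v|}\Ob_U\,\alpha\otimes\underline{\partial}^{<\underline v>}\otimes\alpha^{\vee}$, then defines $\theta$ on this basis and verifies multiplicativity by expanding both sides via Leibniz again. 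You bypass all of this by observing directly that, since $\Lb(\lambda)|_U$ and $\Lb(\lambda)^{\vee}|_U$ are free of rank one and the tensor products use the left and right $\Ob_U$-structures of $\Db^{(m,k)}_U$, every section is uniquely $s\otimes P\otimes s^{\vee}$; then $(\ref{mt})$ together with $\langle s^{\vee},s\rangle=1$ gives multiplicativity in one line. What the paper's route buys is the explicit basis description, which it cites later (e.g.\ in Proposition~\ref{locally_hatD} and Proposition~\ref{fini_blow_up}); but that description follows immediately from your isomorphism combined with the standard local form of $\Db^{(m,k)}_U$, so nothing is lost.
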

\begin{proof}
Let us star by considering $U\subset X$ an affine open subset endowed with local coordinates $x_1,...,x_M$. For every $\underline{v}\in\N^M$ and $f\in\Ob_{X}(U)$ we have the following relation \cite[proposition 2.2.4, iv]{Berthelot1}
\begin{eqnarray*}
\underline{\partial}^{<\underline{v}>}f=\displaystyle\sum_{\underline{v}'+\underline{v}''=\underline{v}}\stirlingii{\underline{v}}{\underline{v}'}\underline{\partial}^{<\underline{v}'>}(f)\underline{\partial}^{<\underline{v}''>} \in \Db^{(m,0)}_{U}=\Db^{(m)}_{U}.
\end{eqnarray*}
Now, let's take an affine covering $\Sb$ of $X$ such that for every $U\in\Sb$, $U$ is endowed with local coordinates and there exists a local section $\alpha\in\Lb(\lambda)(U)$ such that $\Lb(\lambda)|_{U}=\alpha\Ob_{U}$ and $\mathcal{L}(\lambda)^{\vee}|_{U}=\alpha^{\vee}\Ob_{U}$, where $\alpha^{\vee}$ denotes the dual element associated to $\alpha$. Let us show that
\begin{eqnarray}\label{B1}
\Db^{(m,k)}_{U}(\lambda)=\displaystyle\bigoplus_{\underline{v}}\varpi^{k|\underline{v}|}\Ob_{U}\alpha\otimes\underline{\partial}^{<\underline{v}>}\otimes\alpha^{\vee}.
\end{eqnarray}
It is enough to show that for every $\underline{v}\in\N^M$ and $f,g\in\Ob_U$ the section $\alpha\otimes \varpi^{k|\underline{v}|}f\underline{\partial}^{<\underline{v}>}\otimes g\alpha^{\vee}$ belongs to the right side of (\ref{B1}). In fact, from the first part of the proof
\begin{eqnarray*}
\alpha\otimes\varpi^{k|\underline{v}|}f\underline{\partial}^{<\underline{v}>}\otimes g\alpha^{\vee}
=\alpha\otimes \varpi^{k|\underline{v}|}f\underline{\partial}^{<\underline{v}>}g\otimes \alpha^{\vee}= \displaystyle\sum_{\underline{v}'+\underline{v}''=\underline{v}}\varpi^{k|\underline{v}|}f\stirlingii{\underline{v}}{\underline{v}'}\underline{\partial}^{<\underline{v}'>}(g)\alpha\otimes\underline{\partial}^{<\underline{v}''>}\otimes \alpha^{\vee}
\end{eqnarray*}
and we get the relation (\ref{B1}). Let us define $\theta: \Db^{(m,k)}_{U}(\lambda)\rightarrow \Db^{(m,k)}_{U}$ by $\theta\left(\varpi^{k|\underline{v}|}f\alpha\otimes\underline{\partial}^{<\underline{v}>}\otimes\alpha^{\vee}\right)= \varpi^{k|\underline{v}|}f\underline{\partial}^{<\underline{v}>}$ and let us see that $\theta$ is a homomorphism of rings (the multiplication on the left is given by (\ref{mt})). By (\ref{B1}), the elements in $\Db^{(m,k)}_{U}(\lambda)$ are linear combinations of the terms $\varpi^{k|\underline{v}|}f\alpha\otimes\underline{\partial}^{<\underline{v}>}\otimes\alpha^{\vee}$ and therefore, it is enough to show that $\theta$ preserves the multiplicative structure over the elements of this form. So, let us take $\underline{v},\underline{u}\in\N$ and $f,g\in \Ob_U$. On the one hand
\begin{align*}
\theta(\varpi^{k|\underline{v}|}f\alpha \otimes \underline{\partial}^{<\underline{v}>}\otimes\alpha^{\vee}\bullet \varpi^{k|\underline{u}|}g\alpha\otimes\underline{\partial}^{<\underline{u}>}\otimes\alpha^{\vee})
& =  \theta(\varpi^{k|\underline{v}|}f\alpha\otimes\underline{\partial}^{<\underline{v}>}\varpi^{k|\underline{u}|}g\underline{\partial}^{<\underline{u}>}\otimes\alpha^{\vee})\\
& = \displaystyle\sum_{\underline{v}'+\underline{v}''=\underline{v}}\varpi^{k|\underline{v}|}f\stirlingii{\underline{v}}{\underline{v}'}\underline{\partial}^{<\underline{v}'>}(\varpi^{k|\underline{u}|}g)\underline{\partial}^{<\underline{v}''>}\underline{\partial}^{<\underline{u}>},
\end{align*}
and on the other hand
\begin{align*}
\theta(\varpi^{k|\underline{v}|}f\alpha \otimes \underline{\partial}^{<\underline{v}>}\otimes\alpha^{\vee})\bullet\theta(\varpi^{k|\underline{u}|}g\alpha \otimes \underline{\partial}^{<\underline{u}>}\otimes\alpha^{\vee})
&=\varpi^{k|\underline{v}|}f\underline{\partial}^{<\underline{v}>}\bullet\varpi^{k|\underline{u}|}g\partial^{<\underline{u}>}\\
&= \displaystyle\sum_{\underline{v}'+\underline{v}''=\underline{v}}\varpi^{k|\underline{v}|}f\stirlingii{\underline{v}}{\underline{v}'}\underline{\partial}^{<\underline{v}'>}(\varpi^{k|\underline{u}|}g)\underline{\partial}^{<\underline{v}''>}\underline{\partial}^{<\underline{u}>}.
\end{align*}
Both equations show that $\theta$ is a ring homomorphism. \\
Finally, a reasoning completely analogous shows that the morphism $\theta^{-1}:\Db^{(m,k)}_{U}\rightarrow\Db^{(m,k)}_{U}(\lambda)$ defined by
\begin{eqnarray*}
\theta^{-1}(\varpi^{k|\underline{v}|}f\underline{\partial}^{<\underline{v}>})=\varpi^{k|\underline{v}|}f\alpha\otimes\underline{\partial}^{<\underline{v}>}\otimes\alpha^{\vee}
\end{eqnarray*}
it is also a homomorphism of rings and $\theta\circ\theta^{-1}=\theta^{-1}\circ\theta=id$. This ends the proof of the lemma.
\end{proof}

\begin{cong_sub_groups}\label{cong_sub_groups}\textbf{Congruence subgroups and wide open congruence subgroups}
\end{cong_sub_groups}
\justify
Let us denote by $\mathbb{F}_q=\mathfrak{o}/(\varpi)$ the residue field of $\mathfrak{o}$, and let us consider $\G_L := \G\times_{\text{Spec}(\mathfrak{o})}\text{Spec}(L)$ the generic fiber of $\G$ and $\G_{\mathbb{F}_q}:=\G\times_{\text{Spec}(\mathfrak{o})}\text{Spec}(\mathbb{F}_q)$ the special fiber. For every $k\in\N$, there exists a smooth model $\G(k)$ of $\G$ such that $\text{Lie}(\G(k))=\varpi^{k}\mathfrak{g}$. In fact, we take $\G(0):=\G$ and we construct $\G(1)$ as the dilatation of the trivial subgroup of $\G_{\mathbb{F}_q}$ in $\G$ \cite[Chapter 3, Section 3.2]{BLR}. This is a flat $\mathfrak{o}$-scheme which is an integral model of  $\G_L$ \cite[Proposition 1.1]{WW}. In general, we let $\G(k+1)$ be the dilatation of the trivial subgroup of $\G(k)_{\mathbb{F}_q}$ in $\G(k)$, in such a way that for every $k\in\N$ we dispose of a canonical morphism $\G(k+1)\rightarrow \G(k)$.
\justify
We end this briefly discussion about the congruence subgroups with the following description of the distribution algebra $D^{(m)}(\G(k))$.\footnote{This is exactly as in \cite[3.3.2]{HPSS}}. Let us take a triangular decomposition $\mathfrak{g}=\mathfrak{n}\oplus \mathfrak{t}\oplus\overline{\mathfrak{n}}$ and let us consider basis $(f_i)$, $(h_j)$ and $(e_l)$ of the $\mathfrak{o}$-Lie algebras $\mathfrak{n}$, $\mathfrak{t}$ and $\overline{\mathfrak{n}}$, respectively. Then $D^{(m)}(k)$ equals the $\mathfrak{o}$-subalgebra of $\Ub(\mathfrak{g})\otimes_{\mathfrak{o}}L$ generated as an $\mathfrak{o}$-module by the elements 
\begin{eqnarray}\label{Description_D(Gk)}
q_{\underline{v}}!\varpi^{k|\underline{v}|}\frac{\underline{f}^{\underline{v}}}{\underline{v}!}q_{\underline{v}'}!\varpi^{k|\underline{v}'|}{\underline{h}\choose \underline{v}'}q_{\underline{v}''}!\varpi^{k|\underline{v}''|}\frac{\underline{e}^{\underline{v}''}}{\underline{v}''!}.
\end{eqnarray}
An element of the preceding form has order $d=|v|+|v'|+|v''|$ and the $\mathfrak{o}$-spam of elements of order  less or equal than $d$ forms an $\mathfrak{o}$-submodule $D^{(m)}_d(\G(k))\subset D^{(m)}(\G(k))$. In this way $D^{(m)}(\G(k))$ becomes a filtered $\mathfrak{o}$-algebra, such that by (\ref{Description_D(Gk)}) and the well known Poincaré–Birkhoff–Witt theorem we have $D^{(m)}(\G(k))\otimes_{\mathfrak{o}}L = \Ub(\mathfrak{g})\otimes_{\mathfrak{o}}L$.
\justify
The preceding discussion also tells us that 
\begin{eqnarray*}
D^{(m)}(\G(0))_{\varpi^k} = D^{(m)}(\G(k)).
\end{eqnarray*}
\justify
Finally, let us introduce a family of certain rigid-analytic "wide-open" groups $\G(k)^{\circ}$, which will be important in our work. To do this, let us first consider  the formal completion $\mathfrak{G}(k)$ of the group scheme $\G(k)$ along its special fiber, which is a formal scheme of topological finite type over $\text{Spf}(\mathfrak{o})$. Now, we consider $\widehat{\mathfrak{G}}(k)^{\circ}$ be the completion of $\mathfrak{G}(k)^{\circ}$ along its unit section $\text{Spf}(\mathfrak{o})\rightarrow \mathfrak{G}(k)$, and we denote by $\G(k)^{\circ}$ its associated rigid-analytic space \cite[(0.2.6)]{Berthelot3}, which is a rigid-analytic group.
\justify
We recall for the reader  that in subsection \ref{Integral_models} we have introduced the sheaves $\Ab^{(m,k)}_{X}:=\Ob_{X}\otimes_{\mathfrak{o}}D^{(m)}(\G(k))$, which carries a structure of filtered $\Ob_X$-ring, such that $gr(\Ab^{(m,k)}_X)=\Ob_X\otimes_{\mathfrak{o}}\text{Sym}^{(m)}(\varpi^k\mathfrak{g})$.

\begin{prop}\label{canonical_lambda}
There exists a canonical surjective homomorphism of sheaves of filtered $\mathfrak{o}$-algebras
\begin{eqnarray*}
\Phi^{(m,k)}_X:\Ab^{(m,k)}_{X}\rightarrow \Db^{(m,k)}_X(\lambda).
\end{eqnarray*}
\end{prop}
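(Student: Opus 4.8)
The plan is to produce $\Phi^{(m,k)}_X$ in two stages: first construct the morphism at congruence level $k=0$ using the $\G$-linearization of the line bundle $\textbf{L}(\lambda)$, and then obtain the general case by transporting this morphism through the Rees-ring specialization at $\varpi^{k}$; surjectivity will be checked separately at the end on associated graded sheaves. For the level-$0$ construction I would imitate the proof of Proposition~\ref{morp.HS}, twisted by the equivariant structure recorded in \ref{line_bundles}. By Proposition~\ref{Action_line_bundle} the $\G$-linearization $\beta$ of $\textbf{L}(\lambda)$ over the action $\alpha$ is the same datum as a cocycle of isomorphisms $\rho_g^{*}\Lb(\lambda)\xrightarrow{\sim}\Lb(\lambda)$, i.e. a co-action of the group on the sheaf $\Lb(\lambda)$. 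Truncating the comorphism of $\beta$ at the sheaf of $m$-principal parts gives, for each $n$, an $\Ob_X$-linear morphism
\[
\rho^{(n)}_{\lambda,m}\colon\ \Pb^{n}_{X,(m)}\otimes_{\Ob_X}\Lb(\lambda)\ \longrightarrow\ \Lb(\lambda)\otimes_{\mathfrak o}P^{n}_{(m)}(\G),
\]
compatible when $n$ varies (for $\lambda=0$ this is the morphism $\rho^{(n)}_m$ of Proposition~\ref{morp.HS}). For $u\in D^{(m)}_n(\G)$ one then sets $\Phi^{(m)}(\lambda)(u):=(\mathrm{id}_{\Lb(\lambda)}\otimes u)\circ\rho^{(n)}_{\lambda,m}\in\mathcal{H}om_{\Ob_X}(\Pb^{n}_{X,(m)}\otimes_{\Ob_X}\Lb(\lambda),\Lb(\lambda))=\Db^{(m)}_{X,n}(\lambda)$. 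That $\Phi^{(m)}(\lambda)$ is a homomorphism of filtered $\mathfrak o$-algebras follows mutatis mutandis from \cite[Proposition 4.4.1]{HS1}, using compatibility of $\rho^{(n)}_{\lambda,m}$ with the comultiplications $\delta^{n,n'}$ and with the unit together with the cocycle identity of Proposition~\ref{Action_line_bundle}; extending $\Ob_X$-linearly one obtains $\Phi^{(m)}_X(\lambda)\colon\Ab^{(m)}_X\to\Db^{(m)}_X(\lambda)$, which is still an algebra map because the image of $\mathfrak g\subset D^{(m)}(\G)$ acts on $\Ob_X$ through the same derivations (the integral form of \eqref{Action_Lie}) as in the untwisted case, so the skew multiplication \eqref{skew_mult}/\eqref{smash} is respected.

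For the descent to level $k$ I would invoke the Rees-ring digression. One has $D^{(m)}(\G(k))=D^{(m)}(\G(0))_{\varpi^{k}}$ by \ref{cong_sub_groups}, and since twisting by an invertible sheaf is exact the order filtration identifies $\Db^{(m,k)}_X(\lambda)$ with the specialization $(\Db^{(m)}_X(\lambda))_{\varpi^{k}}$; formula \eqref{Filt_esp_of_Rees} applies on both sides because $gr\,\Ab^{(m)}_X\cong\Ob_X\otimes_{\mathfrak o}\text{Sym}^{(m)}(\mathfrak g)$ (Proposition~\ref{prop 1.4.1}) and $gr\,\Db^{(m)}_X(\lambda)\cong\text{Sym}^{(m)}(\Tb_X)$ (by \eqref{graded} and the Remark preceding Proposition~\ref{graded_lambda}) are $\varpi$-torsion free. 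Applying the functorial specialization $(-)_{\varpi^{k}}$, i.e. the commutative square displayed after \eqref{Filt_esp_of_Rees}, to $\Phi^{(m)}_X(\lambda)$ and using these identifications to rewrite $(\Ab^{(m)}_X)_{\varpi^{k}}=\Ab^{(m,k)}_X$ and $(\Db^{(m)}_X(\lambda))_{\varpi^{k}}=\Db^{(m,k)}_X(\lambda)$ then yields the filtered homomorphism $\Phi^{(m,k)}_X\colon\Ab^{(m,k)}_X\to\Db^{(m,k)}_X(\lambda)$.

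It remains to prove surjectivity, which is a local statement on stalks; there source and target carry exhaustive filtrations by locally free $\Ob_X$-modules and $\Phi^{(m,k)}_X$ is filtered, so by dévissage on the order $d$ it suffices to show that the induced morphism of associated graded sheaves is surjective. By Propositions~\ref{prop 1.4.1} and \ref{graded_lambda} this morphism is
\[
\Ob_X\otimes_{\mathfrak o}\text{Sym}^{(m)}(\varpi^{k}\Tb_X\text{-analogue for }\mathfrak g)\ \longrightarrow\ \text{Sym}^{(m)}(\varpi^{k}\Tb_X),
\]
more precisely $\text{Sym}^{(m)}$ applied to the morphism $\Ob_X\otimes_{\mathfrak o}\varpi^{k}\mathfrak g\to\varpi^{k}\Tb_X$ induced by the $\G$-action. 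This last morphism is surjective: multiplication by $\varpi^{k}$ changes nothing, and the anchor $\Ob_X\otimes_{\mathfrak o}\mathfrak g\to\Tb_X$ is surjective because every fibre of $X\to\mathrm{Spec}(\mathfrak o)$ is a flag variety on which the corresponding reductive group acts transitively, so the cokernel vanishes at every point and hence is zero by Nakayama. Since $\text{Sym}^{(m)}(-)$ takes surjections of locally free sheaves to surjections (\cite[Propositions 1.3.1, 1.3.3 and 1.3.6]{Huyghe1}), the graded map is onto, and one may also note that over the affine cover $\Sb$ of Proposition~\ref{algebraic_local_desc} surjectivity reduces directly to the untwisted assertion.

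The main obstacle is the surjectivity step: Steps one and two are essentially formal once the arithmetic machinery of \cite{HS1,Huyghe1} and the Rees-ring formalism are available, whereas surjectivity rests on the genuine geometric input that $X=\G/\B$ is $\G$-homogeneous, so that the symbols $\varpi^{k}\Tb_X$ are globally generated by the image of $\varpi^{k}\mathfrak g$. A secondary point deserving care in the first step is that the divided-power (level-$m$) integral structures on $\Ab^{(m)}_X$ and on $\Db^{(m)}_X(\lambda)$ are matched by $\Phi^{(m)}_X(\lambda)$; this is inherited verbatim from the untwisted construction, the only new ingredient being the $\G$-equivariant structure on $\Lb(\lambda)$.
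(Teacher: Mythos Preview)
Your proposal is correct and follows essentially the same architecture as the paper: build the map at congruence level $k=0$, transport it to arbitrary $k$ via Rees-ring specialization, and check surjectivity on associated gradeds using the surjection $\Ob_X\otimes_{\mathfrak o}\mathfrak g\twoheadrightarrow\Tb_X$. The only notable difference is that where the paper simply invokes \cite[Corollary 4.5.2]{HS1} for the level-$0$ morphism and \cite[Subsection 1.6]{Huyghe2} for surjectivity of the anchor map, you unpack both: you sketch the twisted analogue of Proposition~\ref{morp.HS} via the co-action on $\Lb(\lambda)$, and you give the Nakayama/transitivity argument for the anchor surjection. This is more self-contained but not a genuinely different route.
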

\begin{proof}
Let us star by showing the existence of such a morphism. By \cite[Corollary 4.5.2]{HS1}, there exists a morphism of sheaves of filtered $\mathfrak{o}$-algebras 
\begin{eqnarray}\label{fil}
\Ab_{X}^{(m,0)}\rightarrow \Db^{(m,0)}_{X}(\lambda).
\end{eqnarray} 
Let's first show that after specialising in $\varpi^k$ the Rees ring associated to the twisted order filtration of $\Db^{(m,0)}_{X_0,\lambda}$ we get $\Db^{(m,k)}_{X}(\lambda)$. To do that, we consider  $\Db^{(m,0)}_{X}$ filtered by the order of differential operators and we define the following homomorphisms of $\Ob_{X}$-modules
\begin{eqnarray}\label{rees}
\begin{tikzcd}[column sep=4em]
 \Lb(\lambda)\otimes_{\Ob_{X}}R\left(\Db^{(m,0)}_{X}\right)\otimes_{\Ob_{X}}\Lb(\lambda)^{\vee} \arrow[r, "\theta", shift left=1.5ex]
& R\left(\Db^{(m,0)}_{X}(\lambda)\right), \arrow[l, "\theta^{-1}"]
\end{tikzcd}
\end{eqnarray}
by
\begin{eqnarray*}
\theta\left(\alpha\otimes\sum_i P_i t^i\otimes\alpha^{\vee}\right)=\sum_i(\alpha\otimes P_i\otimes\alpha^{\vee})t^i
\hspace{0.4 cm}
\text{and}
\hspace{0.4 cm}
	\theta^{-1}\left(\sum_j(\alpha_j\otimes P_j\otimes\alpha_j^{\vee})t^j\right)=\sum_j\alpha_j\otimes P_j t^j\otimes\alpha_j^{\vee}
\end{eqnarray*} 
with ord$(P_i)=i$ for every $i$ in the sum (resp. ord$(P_j)=j$ for every $j$). It's clear that $\theta\circ\theta^{-1}=\theta^{-1}\circ\theta=id$ and therefore (\ref{rees}) is an isomorphism of $\Ob_{X}$-modules. We remark that an easy calculation shows that (\ref{rees}) is in fact an isomorphism of rings.\\
Let's denote by $\sigma_1: R\left(\Db^{(m,0)}_{X}(\lambda)\right)\rightarrow \Db^{(m,k)}_{X}(\lambda)$; $t\mapsto \varpi^{k}$ and by $\sigma_2:R\left(\Db^{(m,0)}_{X}\right)\rightarrow \Db^{(m,k)}_{X}$; $t\mapsto \varpi^{k}$, and let's consider the following diagrams 
\begin{eqnarray*}
	\begin{tikzcd}[column sep=7em]
 \Lb(\lambda)\otimes_{\Ob_{X}}R\left(\Db^{(m,0)}_{X}\right)\otimes_{\Ob_{X}}\Lb(\lambda)^{\vee} \arrow[rd, " id_{\Lb(\lambda)}\otimes\sigma_2\otimes id_{\Lb(\lambda)^{\vee}}"] \arrow[r,"\theta",shift left=1.5ex] & 
R\left(\Db^{(m,0)}_{X}(\lambda)\right) \arrow[l,"\theta^{-1}"] \arrow[d, "\sigma_1"]\\
& \Db^{(m,k)}_{X}(\lambda)
\end{tikzcd}
\end{eqnarray*}
It is straightforward to check that both diagrams are commutative and we can conclude that
\begin{eqnarray*}
	\left(\Db^{(m,0)}_{X_0}(\lambda)\right)_{\varpi^{k}}= \text{Im}(\sigma_1)=\text{Im}(id_{\Lb(\lambda)}\otimes\sigma_2\otimes id_{\Lb(\lambda)^{\vee}})
	= \Lb(\lambda)\otimes_{\Ob_{X}}\text{Im}(\sigma_2)\otimes_{\Ob_{X}}\Lb(\lambda)^{\vee}
 = \Db^{(m,k)}_{X}(\lambda).
\end{eqnarray*} 
On the other hand, taking the natural filtration of $\Ab^{(m,0)}_{X}$ we have
\begin{eqnarray*}
	R\left(\Ab_{X}^{(m,0)}\right)=\Ob_{X}\otimes_{\mathfrak{o}}R\left(D^{(m)}(\G(0))\right)
\end{eqnarray*} 
and therefore $(\Ab_{X}^{(m,0)})_{\varpi^k}= \Ab^{(m,k)}_{X}$. The above two calculations tell us that passing to the Rees rings and specialising in $\varpi^{k}$ the map (\ref{fil}), we get the desired homomorphism of filtered sheaves of $\mathfrak{o}$-algebras
\begin{eqnarray}\label{morph_with_cong_level_alg}
\Phi^{(m,k)}_{X} : \Ab^{(m,k)}_{X}\rightarrow \Db^{(m,k)}_{X}(\lambda).
\end{eqnarray} 
Let us finally show that this morphism is surjective. To do that, let us recall that the right $\G$-action on $X$ induces a canonical application 
\begin{eqnarray}\label{sur_map-H}
\Ob_{X}\otimes_{\mathfrak{o}}\varpi^k\mathfrak{g}\rightarrow \varpi^k\Tb_{X}
\end{eqnarray}
which is surjective by \cite[Subsection 1.6]{Huyghe2}. By using proposition \ref{graded_lambda} and $gr(\Ab^{(m,k)}_{X})=\Ob_{X}\otimes_{\mathfrak{o}}\text{Sym}^{(m)}(\mathfrak{g})$, we can conclude that $\Phi^{(m,k)}_X$ is surjective.
\end{proof}
\justify
Proposition \ref{algebraic_local_desc} and the same reasoning given in \cite[Proposition 2.2.2 (iii)]{HSS} imply the following meaningful result.\footnote{Of course, this is also an immediately consequence of proposition \ref{canonical_lambda} and \cite[proposition 1.3.6]{Huyghe1}.}

\begin{prop}\label{Noetherian_sections}
The sheaf $\Db^{(m,k)}_{X}(\lambda)$ is a sheaf of $\Ob_{X}$-rings with noetherian sections over all open affine subsets of $X$. 
\end{prop}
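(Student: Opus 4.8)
The goal is to show that $\Db^{(m,k)}_{X}(\lambda)$ has noetherian sections over affine opens of $X$. The plan is to reduce to the already-established situation for the untwisted sheaf $\Db^{(m,k)}_{X}$, exploiting the local triviality of the line bundle $\Lb(\lambda)$.

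First I would invoke Proposition \ref{algebraic_local_desc}: there is a covering $\Sb$ of $X$ by affine opens such that on every $U \in \Sb$ one has a ring isomorphism $\Db^{(m,k)}_{U}(\lambda) \simeq \Db^{(m,k)}_{U}$. So it suffices to prove that $\Db^{(m,k)}_{X}$ has noetherian sections over affine opens, and then observe that an arbitrary affine open $V \subseteq X$ can be covered by affine opens belonging to $\Sb$ (or, more carefully, that the question is local and $\Sb$ can be refined to a basis). For the untwisted sheaf, the argument is the one referenced from \cite[Proposition 2.2.2 (iii)]{HSS}: over an affine open $U$ with local coordinates, the explicit description
\begin{eqnarray*}
\Db^{(m,k)}_{X}(U)=\Big\{\sum_{\underline v}^{<\infty}\varpi^{k|\underline v|}a_{\underline v}\,\underline\partial^{<\underline v>}\ \big|\ a_{\underline v}\in\Ob_X(U)\Big\}
\end{eqnarray*}
together with Proposition \ref{graded_lambda} (or \cite[Proposition 1.3.4.2]{Huyghe1}) identifies $\mathrm{gr}_\bullet\big(\Db^{(m,k)}_{X}(U)\big)$ with $\mathrm{Sym}^{(m)}(\varpi^k\Tb_X)(U)$, which is a noetherian ring by \cite[Propositions 1.3.1, 1.3.3 and 1.3.6]{Huyghe1}; a filtered ring whose associated graded is noetherian is itself noetherian, hence $\Db^{(m,k)}_{X}(U)$ is noetherian. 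Transporting along the isomorphism of Proposition \ref{algebraic_local_desc} gives that $\Db^{(m,k)}_{U}(\lambda)$ is noetherian for $U \in \Sb$.

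Alternatively — and this is the route flagged in the footnote — one can bypass the local-coordinates computation entirely: Proposition \ref{canonical_lambda} furnishes a surjective filtered homomorphism $\Phi^{(m,k)}_X : \Ab^{(m,k)}_X \to \Db^{(m,k)}_X(\lambda)$, and on graded pieces this realizes $\mathrm{gr}_\bullet(\Db^{(m,k)}_X(\lambda)) \simeq \mathrm{Sym}^{(m)}(\varpi^k\Tb_X)$ as a quotient of $\mathrm{gr}(\Ab^{(m,k)}_X) = \Ob_X \otimes_{\mathfrak o} \mathrm{Sym}^{(m)}(\mathfrak g)$. Since the latter has noetherian sections over affine opens by Proposition \ref{prop 1.4.1}(iii), so does any quotient, and then the filtered-to-graded lifting argument concludes.

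The only genuinely delicate point is the reduction step: one must make sure that \emph{every} affine open of $X$ — not merely those in the distinguished covering $\Sb$ — inherits the noetherian property. This is handled by the standard fact that noetherianity of sections is a local property in the sense that if $\{U_i\}$ is an affine covering of an affine scheme $V$ by members of $\Sb$ and each $\Db^{(m,k)}_{U_i}(\lambda)$ is noetherian, then the descent/gluing properties of the coherent sheaf of rings $\Db^{(m,k)}_{X}(\lambda)$ force $\Db^{(m,k)}_{V}(\lambda)$ to be noetherian as well; this is exactly the content of the argument in \cite[Proposition 2.2.2 (iii)]{HSS}, which carries over verbatim because $\Db^{(m,k)}_X(\lambda)$ is, by construction as a tensor product of the coherent sheaf $\Db^{(m,k)}_X$ with line bundles, again a coherent sheaf of $\Ob_X$-rings. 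So the main obstacle is purely bookkeeping rather than conceptual, and I would present the proof essentially as: cite Proposition \ref{algebraic_local_desc}, reduce to the untwisted case, and quote \cite[Proposition 2.2.2 (iii)]{HSS}.
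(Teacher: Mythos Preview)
Your proposal is correct and matches the paper's approach exactly: the paper simply states that Proposition~\ref{algebraic_local_desc} together with the argument of \cite[Proposition 2.2.2 (iii)]{HSS} gives the result, and in a footnote offers precisely your alternative route via Proposition~\ref{canonical_lambda} and \cite[Proposition 1.3.6]{Huyghe1}. You have in fact supplied more detail than the paper does, including the reduction from the special covering $\Sb$ to arbitrary affine opens, which the paper leaves implicit in its citation of \cite{HSS}.
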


\subsection{Finiteness properties}
\justify
Throeught this section $\lambda\in X(\T)$ will denote an algebraic character. By abuse of notation, we will denote again by $\lambda$ the character $d\lambda \in \text{Hom}_{\mathfrak{o}-\text{mod}}(\mathfrak{t},\mathfrak{o})$ induced via differentiation. In this subsection we will show one important property about the $p$-torsion of the cohomology groups of coherent $\Db^{(m,k)}_{X,\lambda}$-modules, when the character $\lambda + \rho \in \text{Hom}_{L-\text{mod}}(\mathfrak{t}_L,L)$ is dominant and regular. We will follow the arguments of  \cite{Huyghe2}.
\justify
Let $Y$ be a projective scheme. There exists a very ample sheaf $\Ob(1)$ on $Y$ \cite[Chapter II, remark 5.16.1]{Hartshorne1}. Therefore, for any arbitrary $\Ob_{Y}$-module $\Eb$ we can consider the twist
\begin{eqnarray*}
\Eb(r) := \Eb\otimes_{\Ob_{X}}\Ob(r)
\end{eqnarray*} 
where $r\in\Z$ means the $r$-th tensor product of $\Ob(1)$  with itself. We recall to the reader that there exists $r_0\in\Z$, depending of $\Ob(1)$, such that for every $k\in\Z_{>0}$ and for every $s\le r_0$, $H^{k}(Y,\Ob(s))=0$ \cite[Chapter II, theorem 5.2 (b)]{Hartshorne1}.
\justify
Let us start the results of this subsection with the following proposition which states three important properties  of coherent $\Ab^{(m,k)}_{Y}$-modules \cite[Proposition A.2.6.1]{HS1}. This is a key result in our work. Let $\Eb$ be a coherent $\Ab^{(m,k)}_{Y}$-module.

\begin{prop}\label{Result_HS}
\begin{itemize}
\item[(i)] $H^{0}(X,\Ab^{(m,k)}_Y)=D^{(m)}(\G(k))$ is a noetherian $\mathfrak{o}$-algebra.
\item[(ii)] There exists a surjection on $\Ab^{(m,k)}_Y$-modules $\left(\Ab^{(m,k)}_{Y}(-r)\right)^{\oplus a}\rightarrow \Eb \rightarrow 0$ for suitable $r\in\Z$ and $a\in\N$.
\item[(iii)] For any $k\ge 0$ the group $H^{k}(X,\Eb)$ is a finitely generated $D^{(m)}(\G(k))$-module. 
\end{itemize}
\end{prop}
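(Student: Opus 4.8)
The plan is to prove the three assertions in order, each feeding the next; the whole argument is the arithmetic analogue, for the sheaf of rings $\Ab^{(m,k)}_{Y}=\Ob_{Y}\otimes_{\mathfrak{o}}D^{(m)}(\G(k))$, of the classical finiteness package behind Beilinson--Bernstein localization (compare also \cite{Huyghe2}). For (i) I would use that $D^{(m)}(\G(k))$ is a free $\mathfrak{o}$-module, with the explicit basis recalled in (\ref{Description_D(Gk)}), so that $\Ab^{(m,k)}_{Y}$ is an (a priori infinite) direct sum of copies of $\Ob_{Y}$ as an $\Ob_{Y}$-module; since $Y$ is noetherian, $H^{0}(Y,-)$ commutes with this direct sum, and as $H^{0}(Y,\Ob_{Y})=\mathfrak{o}$ for the flag scheme one gets $H^{0}(Y,\Ab^{(m,k)}_{Y})=D^{(m)}(\G(k))$. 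Noetherianity then follows because this ring is positively filtered with $gr(D^{(m)}(\G(k)))\simeq\text{Sym}^{(m)}(\varpi^{k}\mathfrak{g})$ (cf. \ref{cong_sub_groups}), the latter being a noetherian $\mathfrak{o}$-algebra by \cite[Propositions 1.3.1, 1.3.3 and 1.3.6]{Huyghe1}, and a positively filtered ring with noetherian associated graded is noetherian.

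For (ii) I would first note that $\Eb$, being a module over $\Ab^{(m,k)}_{Y}$, is quasi-coherent as an $\Ob_{Y}$-module, hence on the noetherian scheme $Y$ it is the filtered union of its coherent $\Ob_{Y}$-submodules. Choosing such a submodule $\Eb_{0}$ large enough that it contains, over each member of a finite affine cover of $Y$, a finite generating set of $\Eb$ as an $\Ab^{(m,k)}_{Y}$-module, we may assume $\Ab^{(m,k)}_{Y}\cdot\Eb_{0}=\Eb$. Serre's theorem on the projective $\mathfrak{o}$-scheme $Y$ then gives $r\in\Z$, $a\in\N$ and an $\Ob_{Y}$-linear surjection $\Ob_{Y}(-r)^{\oplus a}\twoheadrightarrow\Eb_{0}$; composing $\Ob_{Y}(-r)^{\oplus a}\to\Eb_{0}\hookrightarrow\Eb$ with the module action and applying $\Ab^{(m,k)}_{Y}\otimes_{\Ob_{Y}}(-)$ yields an $\Ab^{(m,k)}_{Y}$-linear map $\bigl(\Ab^{(m,k)}_{Y}(-r)\bigr)^{\oplus a}\to\Eb$ whose image is an $\Ab^{(m,k)}_{Y}$-submodule containing $\Eb_{0}$, hence equal to $\Eb$.

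For (iii) I would argue by descending induction on the cohomological degree $j$, the case $j>\dim Y$ being Grothendieck vanishing. In the inductive step, apply (ii) to get $0\to\Kb\to\bigl(\Ab^{(m,k)}_{Y}(-r)\bigr)^{\oplus a}\to\Eb\to0$ with $\Kb$ again coherent over $\Ab^{(m,k)}_{Y}$ (kernels of maps of coherent modules are coherent, because $\Ab^{(m,k)}_{Y}$ has noetherian sections over affine opens, Proposition \ref{prop 1.4.1}(iii)). The long exact cohomology sequence, the inductive hypothesis applied to $\Kb$ in degree $j+1$, and the noetherianity of $D^{(m)}(\G(k))$ from (i) then reduce everything to the statement that $H^{j}\bigl(Y,\Ab^{(m,k)}_{Y}(-r)\bigr)$ is finitely generated over $D^{(m)}(\G(k))$.

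This last point I expect to be the genuine obstacle, and I would attack it via the order filtration. Filter $\Ab^{(m,k)}_{Y}(-r)$ by the coherent $\Ob_{Y}$-submodules $F_{d}=\Ob_{Y}(-r)\otimes_{\mathfrak{o}}D^{(m)}_{d}(\G(k))$, so that the $d$-th graded piece is $\Ob_{Y}(-r)\otimes_{\mathfrak{o}}\text{Sym}^{(m)}_{d}(\varpi^{k}\mathfrak{g})$, each a finite free $\mathfrak{o}$-module. Since cohomology on the noetherian $Y$ commutes with the colimit over $d$, the group $H^{j}\bigl(Y,\Ab^{(m,k)}_{Y}(-r)\bigr)$ acquires an exhaustive filtration whose $d$-th graded piece is a subquotient of $H^{j}(Y,\Ob_{Y}(-r))\otimes_{\mathfrak{o}}\text{Sym}^{(m)}_{d}(\varpi^{k}\mathfrak{g})$, compatibly with the action of $gr(D^{(m)}(\G(k)))=\text{Sym}^{(m)}(\varpi^{k}\mathfrak{g})$; the compatibility of this filtration with the $D^{(m)}(\G(k))$-module structure coming from (i) is the delicate point, and it is forced precisely by the fact that on the graded pieces the skew multiplication (\ref{skew_mult}) degenerates to the naive action on the second tensor factor. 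Now $H^{j}(Y,\Ob_{Y}(-r))$ is finitely generated over $\mathfrak{o}$ by Serre finiteness on the projective $Y$, so the graded module $H^{j}(Y,\Ob_{Y}(-r))\otimes_{\mathfrak{o}}\text{Sym}^{(m)}(\varpi^{k}\mathfrak{g})$ is finitely generated over the noetherian ring $\text{Sym}^{(m)}(\varpi^{k}\mathfrak{g})$, hence so is any subquotient; thus $gr\bigl(H^{j}(Y,\Ab^{(m,k)}_{Y}(-r))\bigr)$ is finitely generated over $gr(D^{(m)}(\G(k)))$, and the standard fact that an exhaustively filtered module with finitely generated associated graded over a positively filtered noetherian ring is itself finitely generated completes the proof.
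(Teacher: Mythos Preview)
The paper does not supply its own proof of this proposition: it simply records the three assertions and cites \cite[Proposition A.2.6.1]{HS1}. So there is no argument in the present paper to compare against; your sketch is being weighed against the standard proof in the literature.

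Your argument is correct and is essentially that standard proof. A couple of minor remarks. First, in (i) you use $H^{0}(Y,\Ob_{Y})=\mathfrak{o}$, which is a property of the flag scheme (or more generally of a geometrically connected projective $\mathfrak{o}$-scheme); the paper's notation is sloppy here (it writes $X$ and $Y$ interchangeably in the statement), but the intended context is indeed the flag scheme, so this is fine. Second, for the base case in (iii) one can shortcut part of the filtration analysis by observing directly, as the paper does in the proof of Lemma~\ref{Coherent_A(m,k)}, that
\[
H^{j}\bigl(Y,\Ab^{(m,k)}_{Y}(-r)\bigr)\;\simeq\; H^{j}\bigl(Y,\Ob_{Y}(-r)\bigr)\otimes_{\mathfrak{o}}D^{(m)}(\G(k))
\]
as $\mathfrak{o}$-modules, since $D^{(m)}(\G(k))$ is free over $\mathfrak{o}$ and cohomology on a noetherian space commutes with such direct sums. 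Your filtration argument is still needed to identify the $D^{(m)}(\G(k))$-module structure (which, as you correctly point out, is induced by the skew product and only becomes the naive action after passing to the associated graded), so the gain is purely expository. Otherwise your descending-induction scheme and the use of Serre finiteness are exactly the expected route.
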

\justify
Inspired in proposition \ref{canonical_lambda}, in a first time we will be concentrated in coherent $\Ab^{(m,k)}_{Y}$-modules. The next two results will play an important role when we consider formal completions. 

\begin{lem}\label{Coherent_A(m,k)}
For every coherent $\Ab^{(m,k)}_{Y}$-module $\Eb$, there exists $r=r(\Eb)\in\Z$ such that $H^{k}(X,\Eb(s))=0$ for every $s\ge r$.
\end{lem}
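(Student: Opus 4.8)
The statement is the classical Serre vanishing theorem, adapted to the category of coherent $\Ab^{(m,k)}_Y$-modules, so the plan is to reduce it to the ordinary Serre vanishing for coherent $\Ob_Y$-modules by exploiting the structural results already at hand. The key input is Proposition \ref{Result_HS}: part (ii) gives, for any coherent $\Ab^{(m,k)}_Y$-module $\Eb$, a short exact sequence
\begin{eqnarray*}
0\rightarrow \Kb \rightarrow \left(\Ab^{(m,k)}_{Y}(-r_0)\right)^{\oplus a}\rightarrow \Eb \rightarrow 0
\end{eqnarray*}
for suitable $r_0\in\Z$ and $a\in\N$, where $\Kb$ is again coherent over $\Ab^{(m,k)}_Y$ (since $\Ab^{(m,k)}_Y$ has noetherian sections over affine opens by Proposition \ref{prop 1.4.1}(iii)). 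First I would twist this sequence by $\Ob(s)$ and pass to the associated long exact cohomology sequence, obtaining for each $j\ge 1$ a surjection $H^{j}(X,(\Ab^{(m,k)}_{Y}(-r_0))^{\oplus a}(s)) \twoheadrightarrow$ part of $H^{j}(X,\Eb(s))$ together with a connecting map from $H^{j+1}(X,\Kb(s))$.

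The plan is then a descending induction on the cohomological degree $j$, starting from $j = \dim X + 1$, where all cohomology vanishes for trivial reasons. The inductive step requires knowing that $H^{j}(X,\Ab^{(m,k)}_{Y}(-r_0)(s)) = 0$ for $s$ large; but $\Ab^{(m,k)}_Y = \Ob_Y\otimes_{\mathfrak{o}}D^{(m)}(\G(k))$ is, by Proposition \ref{prop 1.4.1}(i)–(ii), a locally free $\Ob_Y$-module equipped with a filtration whose graded pieces are $\Ob_Y\otimes_{\mathfrak{o}}\text{Sym}^{(m)}(\varpi^k\mathfrak{g})$, each term of which is a (countable) direct sum of copies of $\Ob_Y$. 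Twisting by $\Ob(-r_0)\otimes\Ob(s) = \Ob(s-r_0)$ and invoking ordinary Serre vanishing \cite[Chapter III, Theorem 5.2]{Hartshorne1} — more precisely the uniform statement that $H^{j}(X,\Ob(s-r_0))=0$ for all $s \ge$ some bound independent of the (finitely many relevant) graded pieces, using the order filtration truncated in the degree that actually contributes — gives the vanishing for $\Ab^{(m,k)}_{Y}(-r_0)(s)$ in positive degrees. Feeding this into the long exact sequence, the surjectivity onto $H^{j}(X,\Eb(s))$ together with the inductive vanishing of $H^{j+1}(X,\Kb(s))$ forces $H^{j}(X,\Eb(s)) = 0$ for $s$ sufficiently large, completing the induction and yielding a single $r=r(\Eb)$ that works for all $j\ge 1$ simultaneously (there are only finitely many nonzero degrees).

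The main obstacle is the bookkeeping around the filtration on $\Ab^{(m,k)}_Y$: it is not a coherent $\Ob_Y$-module (the fibers are infinite-dimensional over $\mathfrak{o}$), so one cannot apply Serre vanishing to it directly. The fix is to observe that cohomology commutes with the filtered colimit $\Ab^{(m,k)}_Y = \varinjlim_d \Ab^{(m,k)}_{Y,d}$ and that each $\Ab^{(m,k)}_{Y,d}$ is coherent over $\Ob_Y$ with graded pieces $\Ob_Y\otimes_{\mathfrak{o}}\text{Sym}^{(m)}_{\le d}(\varpi^k\mathfrak{g})$; applying ordinary Serre vanishing to each of these finitely many $\Ob_Y$-coherent pieces and then taking the colimit recovers the vanishing for $\Ab^{(m,k)}_Y(-r_0)(s)$. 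Alternatively, and more cleanly, one may cite \cite[Proposition A.2.6.1]{HS1} itself, whose proof already contains exactly this argument; the present lemma is then essentially a restatement tailored to what is needed for the formal-completion arguments later. I would favor giving the short self-contained induction above, since it makes the uniformity of $r(\Eb)$ across degrees transparent.
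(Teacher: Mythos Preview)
Your approach is correct and follows the same overall strategy as the paper: descending induction on the cohomological degree, using the resolution from Proposition~\ref{Result_HS}(ii) and the long exact sequence, exactly as in \cite[Proposition 2.2.1]{Huyghe2}, which the paper cites.

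The one place where you diverge is the treatment of the base case, namely the vanishing of $H^{j}(Y,\Ab^{(m,k)}_{Y}(s))$ for $s\gg 0$. You flag this as the ``main obstacle'' and propose to pass through the order filtration, handle each $\Ab^{(m,k)}_{Y,d}$ as a coherent $\Ob_Y$-module, and then take the colimit. This works, but the paper disposes of the base case in a single line: since $\Ab^{(m,k)}_{Y}(s)=\Ob_{Y}(s)\otimes_{\mathfrak{o}}D^{(m)}(\G(k))$ and $D^{(m)}(\G(k))$ is a free $\mathfrak{o}$-module, one has
\[
H^{j}\bigl(Y,\Ab^{(m,k)}_{Y}(s)\bigr)=H^{j}\bigl(Y,\Ob_{Y}(s)\bigr)\otimes_{\mathfrak{o}}D^{(m)}(\G(k)),
\]
because cohomology on a noetherian space commutes with arbitrary direct sums. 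Ordinary Serre vanishing then gives the result immediately. So the obstacle you identify is not really there; the filtration is unnecessary for this step.
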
 
\begin{proof}
Let us fix $r_0\in\Z$ such that $H^{k}(Y,\Ob(s))=0$ for every $k>0$ and $s\ge r_0$.  We have
\begin{eqnarray*}
H^{k}(Y,\Ab^{(m,k)}_Y(s))= H^{k}(Y,\Ob(s))\otimes_{\mathfrak{o}}D^{(m)}(\G(k)) = 0.
\end{eqnarray*}
The rest of the proof follows the same inductive argument given in \cite[Proposition 2.2.1]{Huyghe2}.
\end{proof}
\justify
Let us suppose now that $X:=\G/\B$ is the smooth flag $\mathfrak{o}$-scheme of $\G$. From proposition \ref{canonical_lambda} and lemma \ref{Coherent_A(m,k)} we have the following result.

\begin{lem}\label{Integral_resolution}
For every coherent $\Db^{(m,k)}_{X,\lambda}$-module $\Eb$, there exist $r=r(\Eb)\in\Z$, a natural number $a\in\N$ and an epimorphism of $\Db^{(m,k)}_{X,\lambda}$-modules
\begin{eqnarray*}
\left(\Db^{(m,k)}_{X,\lambda}(-r)\right)^{\oplus a}\rightarrow\Eb\rightarrow 0.
\end{eqnarray*}
\end{lem}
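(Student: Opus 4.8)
The plan is to reduce the twisted statement to the untwisted resolution result for $\Ab^{(m,k)}_X$-modules (Proposition \ref{Result_HS}(ii)) via the surjection of Proposition \ref{canonical_lambda}, using Lemma \ref{Coherent_A(m,k)} to control the vanishing of higher cohomology after twisting by a power of the very ample sheaf. First I would observe that $\Db^{(m,k)}_{X,\lambda}$ is a coherent sheaf of rings (Proposition \ref{Noetherian_sections}), so the notion of coherent $\Db^{(m,k)}_{X,\lambda}$-module makes sense and it suffices to produce, for a given coherent $\Eb$, finitely many global sections (after a twist) generating $\Eb$ as a $\Db^{(m,k)}_{X,\lambda}$-module. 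The key idea is that via $\Phi^{(m,k)}_X : \Ab^{(m,k)}_X \twoheadrightarrow \Db^{(m,k)}_{X,\lambda}$, a coherent $\Db^{(m,k)}_{X,\lambda}$-module $\Eb$ is in particular a coherent $\Ab^{(m,k)}_X$-module, so Proposition \ref{Result_HS}(ii) already gives a surjection $(\Ab^{(m,k)}_X(-r))^{\oplus a} \to \Eb \to 0$ of $\Ab^{(m,k)}_X$-modules for suitable $r,a$.

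Next I would upgrade this to a surjection of $\Db^{(m,k)}_{X,\lambda}$-modules. Applying $\Db^{(m,k)}_{X,\lambda}\otimes_{\Ab^{(m,k)}_X}(-)$ to the surjection and using right-exactness of the tensor product, together with the fact that $\Db^{(m,k)}_{X,\lambda}\otimes_{\Ab^{(m,k)}_X}\Eb = \Eb$ (since $\Eb$ is already a $\Db^{(m,k)}_{X,\lambda}$-module and the action factors through $\Phi^{(m,k)}_X$), I obtain an epimorphism
\begin{eqnarray*}
\left(\Db^{(m,k)}_{X,\lambda}(-r)\right)^{\oplus a} \rightarrow \Eb \rightarrow 0
\end{eqnarray*}
of $\Db^{(m,k)}_{X,\lambda}$-modules, since $\Db^{(m,k)}_{X,\lambda}\otimes_{\Ab^{(m,k)}_X}\Ab^{(m,k)}_X(-r) = \Db^{(m,k)}_{X,\lambda}(-r)$ as the twist is by the locally free rank one sheaf $\Ob(-r)$ which passes through the tensor product. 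This handles the existence of the resolution with the same $r$ as in the untwisted case.

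Regarding the cohomological vanishing statement implicit in the lemma (so that such a resolution is useful for computing $H^k$), I would invoke Lemma \ref{Coherent_A(m,k)} applied to $\Eb$ viewed as a coherent $\Ab^{(m,k)}_X$-module: there is $r=r(\Eb)$ with $H^k(X,\Eb(s))=0$ for all $s\ge r$ and all $k>0$; since the underlying cohomology of the abelian sheaf is insensitive to whether we regard $\Eb$ over $\Ab^{(m,k)}_X$ or over $\Db^{(m,k)}_{X,\lambda}$, the same vanishing holds on the twisted side. The main (mild) obstacle is bookkeeping: one must check that the generating sections of $\Eb(r)$ produced over $\Ab^{(m,k)}_X$ indeed generate $\Eb(r)$ over the larger acting ring $\Db^{(m,k)}_{X,\lambda}$ — but this is automatic because $\Ab^{(m,k)}_X \to \Db^{(m,k)}_{X,\lambda}$ is surjective, so any $\Ab^{(m,k)}_X$-generating set is a fortiori a $\Db^{(m,k)}_{X,\lambda}$-generating set. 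No deeper difficulty is expected; the result is essentially a formal consequence of Proposition \ref{canonical_lambda}, Proposition \ref{Result_HS} and Lemma \ref{Coherent_A(m,k)}, exactly as the footnote-style remark after Proposition \ref{canonical_lambda} suggests.
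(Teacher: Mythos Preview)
Your proposal is correct and follows essentially the same route as the paper: view the coherent $\Db^{(m,k)}_{X,\lambda}$-module $\Eb$ as a coherent $\Ab^{(m,k)}_X$-module via the surjection of Proposition~\ref{canonical_lambda}, obtain an $\Ab^{(m,k)}_X$-resolution, and then pass to $\Db^{(m,k)}_{X,\lambda}$. The paper's one-line proof cites Proposition~\ref{canonical_lambda} together with Lemma~\ref{Coherent_A(m,k)} (the vanishing statement, which underlies the $\Ab^{(m,k)}_X$-resolution in the style of \cite[Proposition 2.2.1--2.2.2]{Huyghe2}), whereas you invoke Proposition~\ref{Result_HS}(ii) directly---these are two ways of packaging the same argument, and your tensoring step $\Db^{(m,k)}_{X,\lambda}\otimes_{\Ab^{(m,k)}_X}(-)$ makes the passage from the $\Ab^{(m,k)}_X$-surjection to the $\Db^{(m,k)}_{X,\lambda}$-surjection explicit. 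One small remark: the lemma as stated asserts only the epimorphism, not a vanishing statement, so your final paragraph about cohomological vanishing is not part of this lemma (it feeds into Proposition~\ref{bounded_coh_Int} instead), but this does not affect the correctness of your argument.
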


\begin{prop}\label{bounded_coh_Int}
Let us suppose that $\lambda + \rho \in \mathfrak{t}^*_L$ is a dominant and regular character (cf. \ref{roots}).
\begin{itemize}
\item[(i)] Let us fix $r\in\Z$. For every positive integer $k\in\Z_{>0}$, the cohomology group $H^{k}(X,\Db^{(m,k)}_{X,\lambda}(r))$ has bounded $p$-torsion.
\item[(ii)] For every coherent $\Db^{(m,k)}_{X,\lambda}$-module $\Eb$, the cohomology group $H^k(X,\Eb)$ has bounded $p$-torsion for all $k>0$.
\end{itemize} 
\end{prop}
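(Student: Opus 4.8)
The plan is to reduce everything to a statement about the untwisted sheaves $\Db^{(m,k)}_X$ together with control over the line bundle $\La(\lambda)$, and then to import the $p$-torsion bounds from \cite{Huyghe2}. For part (i), the key observation is that, by Proposition~\ref{algebraic_local_desc}, there is an affine covering $\Sb$ of $X$ over which $\Db^{(m,k)}_{U}(\lambda)\simeq\Db^{(m,k)}_{U}$. Globally, though, the twist matters, so I would instead argue via the graded pieces: by the remark preceding Proposition~\ref{graded_lambda}, $\Db^{(m,k)}_{X,d}(\lambda)$ is filtered with $\Ob_X$-coherent quotients $\La(\lambda)\otimes_{\Ob_X}\bigl(\Db^{(m,k)}_{X,d}/\Db^{(m,k)}_{X,d-1}\bigr)\otimes_{\Ob_X}\La(\lambda)^{\vee}$, which are vector bundles on the flag variety. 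The cohomology $H^k\bigl(X,\Db^{(m,k)}_{X,d}(\lambda)(r)\bigr)$ is then squeezed, via the long exact sequences attached to the order filtration, between the cohomologies of these associated graded vector bundles. The point is that these graded bundles are, up to twist, the same vector bundles $\mathrm{Sym}^{(m)}_d(\varpi^k\Tb_X)$ that appear in the untwisted case (Proposition~\ref{graded_lambda}), so the dominant-regular hypothesis on $\lambda+\rho$ enters exactly as in \cite[Proposition 2.2.1 and its corollaries]{Huyghe2}: the relevant higher cohomology of $\La(\lambda+\mu)$ for $\mu$ a weight of $\mathrm{Sym}^{(m)}_d(\varpi^k\Tb_X)$ vanishes or is $\varpi$-torsion-bounded because $\lambda+\rho$ stays in the dominant regular chamber. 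Passing to the limit over $d$ (the cohomology of $\Db^{(m,k)}_X(\lambda)(r)$ being the colimit of the $H^k$ of the $\Db^{(m,k)}_{X,d}(\lambda)(r)$, since $X$ is noetherian and cohomology commutes with filtered colimits of sheaves) gives the bounded $p$-torsion in degree $>0$.

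For part (ii), I would combine part (i) with the resolution provided by Lemma~\ref{Integral_resolution}. Given a coherent $\Db^{(m,k)}_{X,\lambda}$-module $\Eb$, choose $r$, $a$ and a surjection $\bigl(\Db^{(m,k)}_{X,\lambda}(-r)\bigr)^{\oplus a}\twoheadrightarrow\Eb$; let $\Fb$ be the kernel, which is again a coherent $\Db^{(m,k)}_{X,\lambda}$-module by Proposition~\ref{Noetherian_sections} (the sheaf has noetherian sections over affines). The long exact cohomology sequence
\begin{eqnarray*}
\cdots\rightarrow H^{k}\bigl(X,\Db^{(m,k)}_{X,\lambda}(-r)^{\oplus a}\bigr)\rightarrow H^{k}(X,\Eb)\rightarrow H^{k+1}(X,\Fb)\rightarrow\cdots
\end{eqnarray*}
lets me run a descending induction on $k$: for $k$ larger than the dimension of $X$ all higher cohomology vanishes, and the inductive step bounds the $p$-torsion of $H^k(X,\Eb)$ by that of $H^k$ of a finite sum of twisted structure sheaves (handled by (i)) and that of $H^{k+1}(X,\Fb)$ (handled by the inductive hypothesis). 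Since a quotient of and an extension by $p$-torsion-bounded modules is again $p$-torsion-bounded, this closes the induction.

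The main obstacle I anticipate is part (i): making precise the claim that the dominant-regular condition on $\lambda+\rho$ forces the higher cohomology of $\La(\lambda)\otimes\mathrm{Sym}^{(m)}_d(\varpi^k\Tb_X)$ to have $p$-torsion bounded \emph{uniformly in $d$}. One must check that the weights of $\Tb_X$ (hence of its symmetric-algebra-of-finite-level pieces) are roots, so that $\lambda+\rho$ shifted by any such weight — and by $\rho$ — stays away from the walls in a controlled way; Kempf vanishing and the Borel--Weil--Bott bound over $L$ give the vanishing on the generic fibre, and one then needs an integral refinement, which is exactly the content imported from \cite{Huyghe2}. The subtlety is that $d$ ranges over all of $\N$, so the bound on the torsion exponent must not depend on $d$; this is where one invokes that $H^k\bigl(X,\Db^{(m,k)}_X(\lambda)(r)\bigr)$ is a \emph{finitely generated} $D^{(m)}(\G(k))$-module (Proposition~\ref{Result_HS}(iii) applied through Proposition~\ref{canonical_lambda}, which realizes $\Db^{(m,k)}_X(\lambda)$ as a quotient of $\Ab^{(m,k)}_X$), so the torsion is automatically bounded once it is bounded at each finite stage. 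Everything else is a routine manipulation of long exact sequences.
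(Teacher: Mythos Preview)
Your argument for part (ii) is correct and matches the paper's: descending induction using the resolution of Lemma~\ref{Integral_resolution}, exactly as in \cite[Corollary 2.2.4]{Huyghe2}.

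For part (i), however, your route through the order filtration has a genuine gap. By Proposition~\ref{graded_lambda} (and the remark preceding it) the graded pieces of $\Db^{(m,k)}_{X,\lambda}$ are canonically $\mathrm{Sym}^{(m)}_d(\varpi^k\Tb_X)$ --- the conjugation by $\La(\lambda)$ cancels, so the associated graded is \emph{independent of $\lambda$}. Consequently there is no line bundle $\La(\lambda+\mu)$ appearing in the graded analysis, and the dominant--regular hypothesis on $\lambda+\rho$ does not enter there at all. Worse, the graded pieces $\mathrm{Sym}^{(m)}_d(\varpi^k\Tb_X)(r)$ need not have vanishing higher cohomology over $L$ for arbitrary $r$ (already on $\mathbb{P}^1$ one sees this fail), so the long exact sequences of the filtration do not yield $H^k\otimes_{\mathfrak{o}}L=0$. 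Finite generation over $D^{(m)}(\G(k))$ (which you correctly extract from Propositions~\ref{canonical_lambda} and~\ref{Result_HS}) only tells you that the torsion \emph{submodule} is bounded; it does not tell you that the whole module is torsion.

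The paper's argument for (i) is both shorter and avoids this difficulty: one simply observes that $\Db^{(m,k)}_{X,\lambda,\Q}=\Db_\lambda$ is the usual sheaf of twisted differential operators on $X_L$, so $\Db^{(m,k)}_{X,\lambda,\Q}(r)$ is a coherent $\Db_\lambda$-module and classical Beilinson--Bernstein (with $\lambda+\rho$ dominant and regular) gives $H^k(X,\Db^{(m,k)}_{X,\lambda}(r))\otimes_{\mathfrak{o}}L=0$ for $k>0$. This is exactly where the hypothesis on $\lambda+\rho$ is used. Combined with the finite generation over $D^{(m)}(\G(k))$ that you already identified, one concludes bounded $p$-torsion. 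You should replace your filtration/weight argument by this direct appeal to \cite{BB} on the generic fibre.
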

\begin{proof}
To show $(i)$ we remark for the reader that by construction $\Db^{(m,k)}_{X,\lambda,\Q}=\Db_{\lambda}$ is the usual sheaf of twisted differential operators on the flag variety $X_L$ \cite[Part I, 5.17]{Jantzen}. As $\Db^{(m,k)}_{X,\lambda,\Q}(r)$ is a coherent $\Db_{\lambda}$-module, the classical Beilinson-Bernstein theorem \cite{BB} allows us to conclude that $H^k(X,\Db^{(m,k)}_{X,\lambda}(r))\otimes_{\mathfrak{o}}L=0$ for every positive integer $k\in\Z_{>0}$. This in particular implies that the sheaf $\Db^{(m,k)}_{X,\lambda}(r)$ has $p$-torsion cohomology groups $H^k(X,\Db^{(m,k)}_{X,\lambda}(r))$, for every $k>0$ and $r\in\Z$. Now, by proposition \ref{canonical_lambda}, we know that $\Db^{(m,k)}_{X,\lambda}(r)$ is in particular a coherent $\Ab_X^{(m,k)}$-module and hence, by the third part of  proposition \ref{Result_HS} we get that for every $k\ge 0$ the cohomology groups $H^k(X,\Db^{(m)}_{X,\lambda}(r))$ are finitely generated $D^{(m)}(\G(k))$-modules. Consequently, of finite $p$-torsion for every  integer $0<k\le \text{dim}(X)$ and $r\in\Z$.
\justify
By lemma \ref{Integral_resolution} we can use the same inductive reasoning that in \cite[Corollary 2.2.4]{Huyghe2} to show (ii).
\end{proof}

\subsection{Passing to formal completions}
\justify
We recall for the reader that throughout this work 
\begin{eqnarray*}
\mathfrak{X}:= \varinjlim_{j\in\N} X_j,
\hspace{0.7 cm}
X_j := X\times_{\text{Spec}(\mathfrak{o})}\text{Spec}(\mathfrak{o}/\varpi^{j+1})
\end{eqnarray*}
denote the formal completion of $X$ along its special fiber. 

\begin{defi}
We will denote by 
\begin{eqnarray*}
\widehat{\Da}^{(m,k)}_{\mathfrak{X},\lambda}:= \varprojlim_{j\in\N} \Db^{(m,k)}_{X,\lambda}/\varpi^{j+1}\Db^{(m,k)}_{X,\lambda}
\end{eqnarray*}
the $\varpi$-adic completion of $\Db^{(m,k)}_{X,\lambda}$ and we consider it as a sheaf on $\mathfrak{X}$. Following the notation given at the beginning of this work, the sheaf $\widehat{\Da}^{(m,k)}_{\mathfrak{X},\lambda,\Q}$ will denote our sheaf of level $m$ twisted differential operators with congruence level $k$ on the formal flag scheme $\mathfrak{X}$.  
\end{defi}

\begin{prop}\label{locally_hatD}
\begin{itemize}
\item[(i)] There exists a basis $\Ba$ of the topology of $\mathfrak{X}$, consisting of open affine subset, such that for every $\mathfrak{U}\in\Ba$ the ring $\widehat{\Da}^{(m,k)}_{\mathfrak{X},\lambda}(\mathfrak{U})$ is two-sided noetherian.
\item[(ii)] The sheaf of rings $\widehat{\Da}^{(m,k)}_{\mathfrak{X},\lambda,\Q}$ is coherent.
\end{itemize}
\end{prop}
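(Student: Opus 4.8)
The plan is to transfer the corresponding statements for the untwisted sheaf $\widehat{\Da}^{(m,k)}_{\mathfrak{X}}$, established in \cite{HSS} by the methods of \cite{Berthelot3}, to the twisted situation by means of the local triviality provided by Proposition \ref{algebraic_local_desc}; and, where a self-contained argument is wanted, to read off noetherianity directly from the $\varpi$-adic graded ring.

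First I would fix the basis $\Ba$: let $\Sb$ be the affine covering of $X$ from Proposition \ref{algebraic_local_desc} (or any refinement of it by affine opens on which $\Lb(\lambda)$ and $\Omega^1_X$ are free), and let $\Ba$ consist of the formal completions $\mathfrak{U}$ of the members $U\in\Sb$ together with their standard affine formal open subschemes; since the underlying topological space of $\mathfrak{X}$ is the special fibre of $X$, this is a basis of the topology of $\mathfrak{X}$. Fix such a $\mathfrak{U}$ lying over an affine open $U\subseteq X$ and put $A:=\Db^{(m,k)}_{X}(\lambda)(U)$. As $\Db^{(m,k)}_{X}(\lambda)=\varinjlim_{d}\Db^{(m,k)}_{X,d}(\lambda)$ is quasi-coherent over $\Ob_{X}$ and $\varpi$-torsion free, and $U$ is affine, the quotient sheaf $\Db^{(m,k)}_{X}(\lambda)/\varpi^{j+1}\Db^{(m,k)}_{X}(\lambda)$ has sections $A/\varpi^{j+1}A$ over $U$, so that
\[
\widehat{\Da}^{(m,k)}_{\mathfrak{X},\lambda}(\mathfrak{U})=\varprojlim_{j}A/\varpi^{j+1}A=\widehat{A}
\]
is the $\varpi$-adic completion of $A$. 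By Proposition \ref{Noetherian_sections} the ring $A$ is two-sided noetherian; since $\varpi$ is central and $A$ is $\varpi$-torsion free, the $\varpi$-adic graded ring is $\mathrm{gr}_{\varpi}(A)\simeq(A/\varpi A)[t]$ with $t$ central of degree one, which is two-sided noetherian by Hilbert's basis theorem, whence $\widehat{A}$ is two-sided noetherian by the filtered-completion criterion recalled in Remark \ref{Rees_rings_for_algebras} (see also \cite[Chapter 12]{MRS}). This proves (i); alternatively one can $\varpi$-adically complete the $\Ob_{U}$-linear ring isomorphism $\theta\colon\Db^{(m,k)}_{U}(\lambda)\xrightarrow{\sim}\Db^{(m,k)}_{U}$ of Proposition \ref{algebraic_local_desc} to obtain an isomorphism of sheaves of rings $\widehat{\Da}^{(m,k)}_{\mathfrak{X},\lambda}|_{\mathfrak{U}}\simeq\widehat{\Da}^{(m,k)}_{\mathfrak{X}}|_{\mathfrak{U}}$ and invoke \cite[Proposition 2.2.2]{HSS}.

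For (ii) I would use that coherence of a sheaf of rings is a local property, so it suffices to check that $\widehat{\Da}^{(m,k)}_{\mathfrak{X},\lambda,\Q}$ is coherent over each $\mathfrak{U}\in\Ba$. Tensoring the isomorphism $\widehat{\Da}^{(m,k)}_{\mathfrak{X},\lambda}|_{\mathfrak{U}}\simeq\widehat{\Da}^{(m,k)}_{\mathfrak{X}}|_{\mathfrak{U}}$ with $L$ reduces this to the coherence of $\widehat{\Da}^{(m,k)}_{\mathfrak{X},\Q}|_{\mathfrak{U}}$, which is known (\cite{HSS}, by the arguments of \cite{Berthelot3}). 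For a self-contained proof one instead runs the usual criterion directly: part (i) supplies noetherian sections over $\Ba$ after inverting $\varpi$, and for an inclusion $\mathfrak{V}\subseteq\mathfrak{U}$ of standard affine opens in $\Ba$ one has to show that $\widehat{\Da}^{(m,k)}_{\mathfrak{X},\lambda,\Q}(\mathfrak{V})$ is flat, on both sides, over $\widehat{\Da}^{(m,k)}_{\mathfrak{X},\lambda,\Q}(\mathfrak{U})$. Passing to the order filtration, whose $\varpi$-adically completed graded ring is the completed symmetric algebra of Proposition \ref{graded_lambda}, this flatness follows from the quasi-coherence of $\text{Sym}^{(m)}(\varpi^{k}\Tb_{X})$ over $\Ob_{X}$ together with the completed-tensor-product formalism used for $\widehat{\Da}^{(m,k)}_{\mathfrak{X}}$ in \cite{HSS}, \cite{Berthelot3}, and one concludes with the general descent-of-coherence lemma of loc. cit.

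The only step I expect to require genuine care is this flatness of the restriction maps in the self-contained proof of (ii), i.e. the compatibility between the $\varpi$-adic completion and the order filtration on the local rings $\widehat{\Da}^{(m,k)}_{\mathfrak{X},\lambda}(\mathfrak{U})$; but via Proposition \ref{algebraic_local_desc} this is subsumed in the already-established untwisted case, so the transport argument is the shortest route. The remaining ingredients — the identification of $\widehat{\Da}^{(m,k)}_{\mathfrak{X},\lambda}(\mathfrak{U})$ with a $\varpi$-adic completion, the Hilbert-basis step, and the locality of coherence — are routine.
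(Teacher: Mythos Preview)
Your proposal is correct and follows essentially the same route as the paper: both hinge on the local ring isomorphism of Proposition~\ref{algebraic_local_desc} to transport noetherianity from the untwisted completed sheaf $\widehat{\Da}^{(m,k)}_{\mathfrak{X}}$ (known from \cite[Proposition~2.2.2]{HSS}) to the twisted one. Your ``alternative'' for (i) --- $\varpi$-adically completing the isomorphism $\theta$ and invoking \cite{HSS} --- is exactly the paper's argument; for (ii) the paper is even terser than your primary route, applying Berthelot's coherence criterion \cite[Proposition~3.3.4]{Berthelot1} directly from (i) (together with $H^{0}(\mathfrak{U},\widehat{\Da}^{(m,k)}_{\mathfrak{X},\lambda,\Q})=H^{0}(\mathfrak{U},\widehat{\Da}^{(m,k)}_{\mathfrak{X},\lambda})\otimes_{\mathfrak{o}}L$), rather than passing through another local reduction to the untwisted sheaf.
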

\begin{proof}
To show (i) we can take an open affine subset $U\in\Sb$ and to consider $\mathfrak{U}$ its formal completion along the special fiber. We have
\begin{eqnarray*}
H^{0}(\mathfrak{U},\widehat{\Da}^{(m,k)}_{ \mathfrak{X},\lambda})
\simeq \widehat{H^{0}(U,\Db^{(m,k)}_{X,\lambda})}
\simeq \widehat{H^{0}(U,\Db^{(m,k)}_{X})}
\simeq H^{0}(\mathfrak{U},\widehat{\Da}^{(m,k)}_{\mathfrak{X}})
\end{eqnarray*}
The first and third isomorphisms are given by \cite[($0_I$, 3.2.6)]{Grothendieck1} and the second one arises from proposition \ref{algebraic_local_desc}. By \cite[Proposition 2.2.2 (v)]{HSS}  $H^{0}(\mathfrak{U},\widehat{\Da}^{(m,k)}_{\mathfrak{X}})$ is twosided noetherian. Therefore, we can take $\Ba$ as the set of affine open subsets of $\mathfrak{X}$ contained in the $\varpi$-adic completion of an affine open subset $U\in\Sb$. This proves (i). By \cite[proposition 3.3.4]{Berthelot1} we can conclude that (ii) is an immediately consequence of (i) because $H^{0}(\mathfrak{U},\widehat{\Da}_{\mathfrak{X},\lambda,\Q}^{(m,k)})=H^{0}(\mathfrak{U},\widehat{\Da}^{(m,k)}_{\mathfrak{X},\lambda})\otimes_{\mathfrak{o}}L$ \cite[(3.4.0.1)]{Berthelot1}.
\end{proof}
\justify
The objective of this subsection is to prove an analogue of proposition \ref{bounded_coh_Int} for coherent $\widehat{\Da}_{\mathfrak{X},\lambda,\Q}^{(m,k)}$-modules and to conclude that $H^{0}(\mathfrak{X},\bullet)$ is an exact functor over the category of coherent $\widehat{\Da}_{\mathfrak{X},\lambda,\Q}^{(m,k)}$-modules.

\begin{prop}\label{Properties_completion}
Let $\Eb$ be a coherent $\Db^{(m,k)}_{X,\lambda}$-module and $\widehat{\Eb}$ its $\varpi$-adic completion, which we consider as a sheaf on $\mathfrak{X}$.
\begin{itemize}
\item[(i)] For all $k\le 0$ one has $H^{k}(\mathfrak{X},\widehat{\Eb})=\varprojlim_{j} H^{k}(X,\Eb/\varpi^j\Eb)$.
\item[(ii)] For all $k>0$ one has $H^k(\mathfrak{X},\widehat{\Eb}) = H^k(X,\Eb)$.
\item[(iii)] The global section functor $H^{0}(\mathfrak{X},\bullet)$ satisfies $H^{0}(\mathfrak{X},\widehat{\Eb})=\varprojlim_j H^0(X,\Eb)/\varpi^j H^0(X,\Eb)$.
\end{itemize}
\end{prop}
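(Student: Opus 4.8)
Since $\mathfrak{X}$, $X$ and all reductions $X_{j}$ share the same underlying topological space, I will regard the $\varpi$-adic completion as an inverse limit $\widehat{\Eb}=\varprojlim_{j}\Eb_{j}$, with $\Eb_{j}:=\Eb/\varpi^{j}\Eb$, of sheaves on the noetherian space $X$ with surjective transition maps. For such a system there is, for every $k\ge 0$, a short exact sequence
\begin{eqnarray*}
0\longrightarrow {\varprojlim_{j}}^{1}\, H^{k-1}(X,\Eb_{j})\longrightarrow H^{k}(\mathfrak{X},\widehat{\Eb})\longrightarrow \varprojlim_{j}H^{k}(X,\Eb_{j})\longrightarrow 0 .
\end{eqnarray*}
Hence (i) is precisely the vanishing of all the ${\varprojlim}^{1}$-terms, while (ii) and (iii) amount to identifying $\varprojlim_{j}H^{k}(X,\Eb_{j})$ for $k>0$ and $k=0$ respectively. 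The strategy is therefore: (a) establish the ${\varprojlim}^{1}$-vanishing together with the identification $\varprojlim_{j}H^{k}(X,\Eb_{j})=\widehat{H^{k}(X,\Eb)}$ (the $\varpi$-adic completion); (b) read off (iii) from the case $k=0$; (c) deduce (ii) from the fact that $\widehat{H^{k}(X,\Eb)}=H^{k}(X,\Eb)$ when $k>0$.

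\textbf{Step (a).} I would first treat the case where $\Eb$ is coherent as an $\Ob_{X}$-module. As $X$ is projective over $\mathfrak{o}$, this is Grothendieck's theorem on formal functions, whose proof realizes $\bigoplus_{j}H^{k}(X,\Eb_{j})$ as a finitely generated graded module over the Rees algebra $\bigoplus_{j}\varpi^{j}\mathfrak{o}$ and thereby yields both the Mittag--Leffler property and the equality $\varprojlim_{j}H^{k}(X,\Eb_{j})=\widehat{H^{k}(X,\Eb)}$. For a general coherent $\Db^{(m,k)}_{X,\lambda}$-module $\Eb$, Proposition~\ref{canonical_lambda} makes it coherent over $\Ab^{(m,k)}_{X}$; writing $\Eb=\varinjlim_{d}F_{d}\Eb$ for its order filtration, each $F_{d}\Eb$ is $\Ob_{X}$-coherent, and since $H^{k}(X,-)$ commutes with filtered colimits on the noetherian space $X$ the $\Ob_{X}$-coherent case propagates; here the finiteness of $H^{k}(X,\Eb)$ over the noetherian algebra $D^{(m)}(\G(k))$ (Proposition~\ref{Result_HS}) is what legitimates commuting these colimits with the $\varpi$-adic limits. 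Alternatively one argues by descending induction on $k$, starting from $k>\dim X$ where every cohomology group vanishes, using the resolution $(\Db^{(m,k)}_{X,\lambda}(-r))^{\oplus a}\rightarrow\Eb\rightarrow 0$ of Lemma~\ref{Integral_resolution} and the long exact cohomology sequence; this is the inductive pattern of \cite[Proposition 2.2.1, Corollary 2.2.4]{Huyghe2}.

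\textbf{Steps (b) and (c).} For (iii), the case $k=0$ of Step (a) gives $\varprojlim_{j}H^{0}(X,\Eb_{j})=\widehat{H^{0}(X,\Eb)}=\varprojlim_{j}H^{0}(X,\Eb)/\varpi^{j}H^{0}(X,\Eb)$, and since ${\varprojlim}^{1}H^{-1}=0$ the exact sequence collapses to exactly this equality. For (ii), fix $k>0$. Because $\lambda+\rho$ is dominant and regular and $\Db^{(m,k)}_{X,\lambda,\Q}=\Db_{\lambda}$ is the classical sheaf of twisted differential operators on the flag variety $X_{L}$, the Beilinson--Bernstein vanishing theorem \cite{BB} gives $H^{k}(X_{L},\Eb\otimes_{\mathfrak{o}}L)=0$; by flatness of $L$ over $\mathfrak{o}$, $H^{k}(X,\Eb)$ is then $\varpi$-torsion, hence --- being finitely generated over the noetherian $D^{(m)}(\G(k))$ --- annihilated by some $\varpi^{N}$, which is exactly Proposition~\ref{bounded_coh_Int}(ii). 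A module killed by a power of $\varpi$ equals its own $\varpi$-adic completion, so $\varprojlim_{j}H^{k}(X,\Eb_{j})=\widehat{H^{k}(X,\Eb)}=H^{k}(X,\Eb)$; combined with the vanishing ${\varprojlim_{j}}^{1}H^{k-1}(X,\Eb_{j})=0$ from Step (a), the exact sequence gives $H^{k}(\mathfrak{X},\widehat{\Eb})=H^{k}(X,\Eb)$.

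\textbf{Main obstacle.} The genuinely delicate point is the ${\varprojlim}^{1}$-vanishing for $\Db^{(m,k)}_{X,\lambda}$-coherent --- as opposed to $\Ob_{X}$-coherent --- modules: neither $\Db^{(m,k)}_{X,\lambda}$ nor its coherent modules are $\Ob_{X}$-coherent, so the theorem on formal functions cannot be invoked directly and one has to control carefully the interaction between the filtered colimit over the order filtration and the $\varpi$-adic inverse limit. This is exactly where the noetherianity of $D^{(m)}(\G(k))$ and the finite generation over it of the cohomology groups (Proposition~\ref{Result_HS}) are indispensable, and it is why the argument is arranged to mirror Huyghe's inductive treatment in \cite{Huyghe2}.
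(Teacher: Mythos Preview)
Your overall architecture via the ${\varprojlim}^{1}$ short exact sequence is correct, and Steps~(b) and~(c) are fine once Step~(a) is in place. The issue is Step~(a).

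Your primary argument there --- passing to a good filtration $\Eb=\varinjlim_{d}F_{d}\Eb$ by $\Ob_{X}$-coherent submodules and invoking formal functions termwise --- has a genuine gap. First, the reductions do not match: $(F_{d}\Eb)/\varpi^{j}(F_{d}\Eb)$ is in general not the image of $F_{d}\Eb$ in $\Eb_{j}$, because $\varpi^{j}\Eb\cap F_{d}\Eb\neq\varpi^{j}F_{d}\Eb$ in general. Second, even granting that, the exchange $\varprojlim_{j}\varinjlim_{d}\simeq\varinjlim_{d}\varprojlim_{j}$ is not a formal consequence of $H^{k}(X,\Eb)$ being finitely generated over the noetherian ring $D^{(m)}(\G(k))$; finite generation of the colimit says nothing about compatibility of the two limit processes on the approximating systems. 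Your own ``Main obstacle'' paragraph names this interaction correctly, but the proposed justification does not close it.

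The paper bypasses the filtration entirely. It takes the torsion subsheaf $\Eb_{t}\subset\Eb$, observes (via Proposition~\ref{Noetherian_sections}) that $\Eb_{t}$ is again coherent over $\Db^{(m,k)}_{X,\lambda}$ and is killed by some $\varpi^{c}$, and passes to the $\varpi$-torsion-free quotient $\Gb:=\Eb/\Eb_{t}$. For $\Gb$ one has, for every $j$, the exact sequence $0\to\Gb\xrightarrow{\varpi^{j}}\Gb\to\Gb_{j}\to 0$ of $\Db^{(m,k)}_{X,\lambda}$-modules; combined with the bounded $\varpi$-torsion of $H^{k}(X,\Gb)$ for $k>0$ (Proposition~\ref{bounded_coh_Int}), the associated long exact sequences let one read off directly both the Mittag--Leffler property of $\{H^{k}(X,\Gb_{j})\}_{j}$ and the identification $\varprojlim_{j}H^{k}(X,\Gb_{j})\simeq\widehat{H^{k}(X,\Gb)}$, after which one returns to $\Eb$ via the short exact sequence $0\to\Eb_{t}\to\Eb\to\Gb\to 0$ (this is the pattern of \cite[Proposition~3.2]{Huyghe1}). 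The point is that no $\Ob_{X}$-coherent approximation or limit exchange is needed: one works with $\Gb$ as a $\Db^{(m,k)}_{X,\lambda}$-module throughout, and the single input replacing formal functions is torsion-freeness of $\Gb$ together with bounded torsion of its higher cohomology.

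Your alternative descending induction via Lemma~\ref{Integral_resolution} is closer to being viable, but note that the base case $\Db^{(m,k)}_{X,\lambda}(-r)$ is already $\varpi$-torsion-free with bounded-torsion higher cohomology (Proposition~\ref{bounded_coh_Int}(i)), so treating it amounts to running the paper's argument for that particular sheaf; once you see this, the torsion/torsion-free decomposition handles the general $\Eb$ in one stroke and the induction becomes unnecessary.
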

\begin{proof}
Let $\Eb_t$ denote the torsion subpresheaf of $\Eb$. As $X$ is a noetherian space and $\Db^{(m,k)}_{X,\lambda}$ has noetherian rings sections over open affine subsets of $X$ (proposition \ref{Noetherian_sections}), we can conclude that $\Eb_t$ is in fact a coherent $\Db^{(m,k)}_{X,\lambda}$-module. This is generated by a coherent $\Ob_X$-module which is annihilated by a power $\varpi^c$ of $\varpi$, and so is $\Eb_t$. The quotient $\Gb:=\Eb/\Eb_t$ is again a coherent $\Db^{(m,k)}_{X,\lambda}$-module and therefore we can assume, after possibly replacing $c$ by a larger number, that $\varpi^c\Eb_t=0$ and $\varpi^cH^k(X,\Eb)=\varpi^cH^k(X,\Gb)=0$ , for all $k>0$. From here on the proof of the proposition follows the same lines of reasoning that in \cite[proposition 3.2]{Huyghe1}.
\end{proof}
\justify
Next proposition is a natural result from lemmas \ref{Coherent_A(m,k)} and \ref{Integral_resolution}. The proof is exactly the same that in \cite[Proposition 4.2.2]{HPSS}.\footnote{We skip the proof here, but the reader can take a look to \cite[Proposition 4.1.2]{Sarrazola} when we have treated the case $k=0$. The proof for $k\in\Z_{>0}$ is exactly the same.}

\begin{prop}\label{Resolution_D}
Let $\Ea$ be a coherent $\widehat{\Da}^{(m,k)}_{\mathfrak{X},\lambda}$-module.
\begin{itemize}
\item[(i)] There exists $r_2=r_2(\Ea)\in\Z$ such that for all $r\ge r_2$ there is $a\in\Z$ and an epimorphism of $\widehat{\Da}^{(m,k)}_{\mathfrak{X},\lambda}$-modules
\begin{eqnarray*}
\left(\widehat{\Da}^{(m,k)}_{\mathfrak{X},\lambda}(-r)\right)^{\oplus a}\rightarrow\Ea\rightarrow 0
\end{eqnarray*} 
\item[(ii)] There exists $r_3 = r_3(\Ea)\in\Z$ such that for all $r\ge r_3$ we have $H^{k}(\mathfrak{X},\Ea)=0$, for all $k>0$. 
\end{itemize}
\end{prop}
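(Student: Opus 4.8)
The plan is to deduce both statements from the corresponding results on the algebraic scheme $X$ (Lemma \ref{Coherent_A(m,k)}, Lemma \ref{Integral_resolution}, Proposition \ref{bounded_coh_Int}) by reducing modulo powers of $\varpi$ and passing to the limit, exactly as in \cite[Proposition 4.2.2]{HPSS}. First I would address (i). A coherent $\widehat{\Da}^{(m,k)}_{\mathfrak{X},\lambda}$-module $\Ea$ is, in a neighbourhood of the special fiber, the $\varpi$-adic completion of a coherent $\Db^{(m,k)}_{X,\lambda}$-module $\Eb$ (this uses that $\widehat{\Da}^{(m,k)}_{\mathfrak{X},\lambda}$ has noetherian sections over the basis $\Ba$ of Proposition \ref{locally_hatD}, so that coherence is detected on each $X_j$ and one may glue an algebraic model; alternatively one argues directly with $\Eb := $ a coherent $\Db^{(m,k)}_{X,\lambda}$-module whose completion is $\Ea$). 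Lemma \ref{Integral_resolution} then yields $r_1\in\Z$, $a\in\N$ and an epimorphism $(\Db^{(m,k)}_{X,\lambda}(-r))^{\oplus a}\twoheadrightarrow\Eb$ for $r\ge r_1$; $\varpi$-adic completion is right exact on coherent modules over a noetherian adic ring, so completing gives the desired epimorphism $(\widehat{\Da}^{(m,k)}_{\mathfrak{X},\lambda}(-r))^{\oplus a}\twoheadrightarrow\Ea$. Choosing $r_2 := r_1$ concludes (i).

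For (ii), I would combine (i) with the vanishing result Lemma \ref{Coherent_A(m,k)} and the cohomology-comparison Proposition \ref{Properties_completion}(ii). By Lemma \ref{Coherent_A(m,k)} applied to the coherent $\Ab^{(m,k)}_X$-module (equivalently $\Db^{(m,k)}_{X,\lambda}$-module via Proposition \ref{canonical_lambda}) $\Eb$, there is $r_3\in\Z$ with $H^q(X,\Eb(s))=0$ for all $q>0$ and $s\ge r_3$; note $\widehat{\Da}^{(m,k)}_{\mathfrak{X},\lambda}(-r)$ is the completion of the coherent sheaf $\Db^{(m,k)}_{X,\lambda}(-r)$, and twisting/shifting by $\Ob(r)$ reduces $H^q(\mathfrak{X},\widehat{\Da}^{(m,k)}_{\mathfrak{X},\lambda}(-r))$ via Proposition \ref{Properties_completion}(ii) to $H^q(X,\Db^{(m,k)}_{X,\lambda}(-r))$, which vanishes for $q>0$ once $-r \le$ the threshold from Proposition \ref{bounded_coh_Int} — more precisely, since on the generic fiber $\Db^{(m,k)}_{X,\lambda,\Q}(-r)$ is a coherent $\Db_\lambda$-module on the flag variety, Beilinson-Bernstein (using $\lambda+\rho$ dominant regular) gives vanishing of higher cohomology rationally, and Proposition \ref{bounded_coh_Int}(i) upgrades this to actual vanishing of $H^q(\mathfrak{X},\widehat{\Da}^{(m,k)}_{\mathfrak{X},\lambda}(-r))$ for $q>0$ and $-r$ sufficiently negative. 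Then, given an arbitrary coherent $\Ea$, one resolves it as in (i) and runs the standard descending induction on $q$ (from $q = \dim X + 1$, where everything vanishes, downward): break the epimorphism $(\widehat{\Da}^{(m,k)}_{\mathfrak{X},\lambda}(-r))^{\oplus a}\twoheadrightarrow\Ea$ into a short exact sequence, use the long exact cohomology sequence, the inductive hypothesis on the kernel, and the vanishing just established for the free term. Taking $r_3$ large enough that all the finitely many thresholds encountered in the induction are met gives $H^q(\mathfrak{X},\Ea(r))=0$ for $q>0$ and $r\ge r_3$.

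The main obstacle I anticipate is purely bookkeeping: making sure that when the coherent sheaf $\Ea$ is replaced by its twist $\Ea(r)$ inside the inductive argument, one controls uniformly over the induction the shift needed for the free modules to have vanishing higher cohomology, and that the algebraic model $\Eb$ can indeed be chosen (so that Lemma \ref{Integral_resolution} and Lemma \ref{Coherent_A(m,k)} apply) — this is where the noetherianity from Proposition \ref{locally_hatD} and the identity $\widehat{\Da}^{(m,k)}_{\mathfrak{X},\lambda} = $ completion of $\Db^{(m,k)}_{X,\lambda}$ are used. Since \cite[Proposition 4.2.2]{HPSS} carries this out in the untwisted case and Proposition \ref{canonical_lambda}, Proposition \ref{graded_lambda} and Proposition \ref{algebraic_local_desc} show the twisted sheaves behave formally identically, the argument transfers verbatim, and I would simply cite it after indicating the inputs above.
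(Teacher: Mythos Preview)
Your proposal and the paper take the same approach: verify that the twisted inputs (Lemma \ref{Integral_resolution}, Lemma \ref{Coherent_A(m,k)}, Proposition \ref{bounded_coh_Int}, Proposition \ref{Properties_completion}) are in place and then invoke \cite[Proposition 4.2.2]{HPSS} verbatim. The paper in fact gives no further detail than this.

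One caveat on your sketch of how the cited argument runs. You assert that a coherent $\widehat{\Da}^{(m,k)}_{\mathfrak{X},\lambda}$-module $\Ea$ is globally the $\varpi$-adic completion of some coherent $\Db^{(m,k)}_{X,\lambda}$-module $\Eb$, and then apply Lemma \ref{Integral_resolution} to $\Eb$. That algebraization step is itself a formal-GAGA statement for $\Db$-modules which is not available a priori (the sheaf $\Db^{(m,k)}_{X,\lambda}$ is not coherent over $\Ob_X$, only a filtered union of such, so Grothendieck's existence theorem does not apply directly). The route in Huyghe/HPSS avoids this: one reduces $\Ea$ modulo $\varpi$ to obtain a coherent $\Db^{(m,k)}_{X_0,\lambda}$-module on the special fibre, applies the algebraic surjection there (the analogue of Lemma \ref{Integral_resolution} over $\mathbb{F}_q$, which follows from Proposition \ref{Result_HS} by base change) to produce finitely many global generators, lifts them to $H^0(\mathfrak{X},\Ea(r))$, and concludes surjectivity by the formal Nakayama lemma. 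Your part (ii) then goes through exactly as you describe, since for the free term $\widehat{\Da}^{(m,k)}_{\mathfrak{X},\lambda}(-r)$ the algebraic model \emph{is} obvious and Proposition \ref{Properties_completion}(ii) applies. Since you ultimately defer to the citation, this does not invalidate the proposal, but the intermediate sketch of (i) should be rerouted through reduction modulo $\varpi$ rather than through an unjustified global algebraic model.
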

\justify
The same inductive argument exhibited in proposition \cite[Proposition 3.4 (i)]{Huyghe1} shows

\begin{coro}\label{coro 3.1.3}
Let $\Ea$ be a coherent $\widehat{\Da}^{(m,k)}_{\mathfrak{X},\lambda}$-module. There exists $c=c(\Ea)\in\N$ such that for all $k>0$ the cohomology group $H^k(\mathfrak{X},\Ea)$ is annihilated by $\varpi^c$.
\end{coro}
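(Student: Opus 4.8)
The plan is a descending induction on the cohomological degree, following the argument of \cite[Proposition 3.4 (i)]{Huyghe1}. Two preliminary remarks are needed. First, since the formal scheme $\mathfrak{X}$ has the same underlying topological space as the noetherian scheme $X_{0}$, of Krull dimension $N:=\dim X$, Grothendieck vanishing gives $H^{j}(\mathfrak{X},\Ma)=0$ for every sheaf $\Ma$ of abelian groups and every $j>N$; this starts the induction. Second, for a fixed twist $r\in\Z$ the sheaf $\widehat{\Da}^{(m,k)}_{\mathfrak{X},\lambda}(-r)$ is the $\varpi$-adic completion of the coherent $\Db^{(m,k)}_{X,\lambda}$-module $\Db^{(m,k)}_{X,\lambda}(-r)$, so Proposition \ref{Properties_completion} (ii) gives $H^{j}(\mathfrak{X},\widehat{\Da}^{(m,k)}_{\mathfrak{X},\lambda}(-r))=H^{j}(X,\Db^{(m,k)}_{X,\lambda}(-r))$ for all $j>0$; by Proposition \ref{bounded_coh_Int} the right-hand group has bounded $p$-torsion, and since only the finitely many degrees $1\le j\le N$ occur, we obtain an integer $b=b(r)$ with $\varpi^{b}H^{j}(\mathfrak{X},\widehat{\Da}^{(m,k)}_{\mathfrak{X},\lambda}(-r))=0$ for all $j>0$.

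Next I would prove, by descending induction on $j\ge 1$, the statement: for every coherent $\widehat{\Da}^{(m,k)}_{\mathfrak{X},\lambda}$-module $\Ma$ the group $H^{j}(\mathfrak{X},\Ma)$ is annihilated by a power of $\varpi$. For $j>N$ this is the vanishing just recalled. Assume it for $j+1$ and let $\Ea$ be coherent. By Proposition \ref{Resolution_D} (i) choose $r\ge r_{2}(\Ea)$ and an epimorphism from $\Pa:=\bigl(\widehat{\Da}^{(m,k)}_{\mathfrak{X},\lambda}(-r)\bigr)^{\oplus a}$ onto $\Ea$; its kernel $\Na$ is again a coherent $\widehat{\Da}^{(m,k)}_{\mathfrak{X},\lambda}$-module, since $\widehat{\Da}^{(m,k)}_{\mathfrak{X},\lambda}$ has noetherian sections over a basis of affine opens (Proposition \ref{locally_hatD} (i)). The long exact cohomology sequence of $0\to\Na\to\Pa\to\Ea\to 0$ contains $H^{j}(\mathfrak{X},\Pa)\to H^{j}(\mathfrak{X},\Ea)\to H^{j+1}(\mathfrak{X},\Na)$. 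If $\varpi^{c'}$ annihilates $H^{j+1}(\mathfrak{X},\Na)$ (inductive hypothesis applied to $\Na$), then for any class $x\in H^{j}(\mathfrak{X},\Ea)$ the class $\varpi^{c'}x$ lies in the image of $H^{j}(\mathfrak{X},\Pa)$, which is annihilated by $\varpi^{b(r)}$; hence $\varpi^{b(r)+c'}$ annihilates $H^{j}(\mathfrak{X},\Ea)$. This completes the induction, giving for each $j$ with $1\le j\le N$ an exponent $c_{j}(\Ea)$ depending only on $\Ea$; setting $c(\Ea):=\max_{1\le j\le N}c_{j}(\Ea)$ proves the corollary.

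The argument uses only the coherence and noetherian statements already established together with Propositions \ref{Resolution_D}, \ref{Properties_completion} and \ref{bounded_coh_Int}; the hypothesis that $\lambda+\rho\in\mathfrak{t}^{*}_{L}$ is dominant and regular enters solely through Proposition \ref{bounded_coh_Int}. The only points needing a little care — the closest thing here to an obstacle — are the two identifications used in the first paragraph: that $\varpi$-adic completion commutes with the twist by $\Ob(-r)$ on the building blocks $\Pa$, so that Proposition \ref{Properties_completion} applies to them, and that the kernel $\Na$ is coherent, so that the inductive hypothesis can be fed back in. Both are routine consequences of the results quoted, and no further input is required.
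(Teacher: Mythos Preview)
Your proof is correct and follows exactly the descending-induction argument of \cite[Proposition 3.4 (i)]{Huyghe1} that the paper cites; the paper itself gives no further details beyond that reference, so you have in fact written out what the paper leaves to the reader.
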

\justify
Now, we want to extend the part (i) of the preceding proposition to the sheaves $\widehat{\Da}^{(m,k)}_{\mathfrak{X},\lambda,\Q}$. To do that, we need to show that the category of coherent $\widehat{\Da}^{(m,k)}_{\mathfrak{X},\lambda,\Q}$-modules admits integral models (definition \ref{defi I.models}).\\
Let $\text{Coh}(\widehat{\Da}^{(m,k)}_{\mathfrak{X},\lambda})$ be the category of coherent $\widehat{\Da}^{(m,k)}_{\mathfrak{X},\lambda}$-modules and let $\text{Coh}(\widehat{\Da}^{(m,k)}_{\mathfrak{X},\lambda})_\Q$ be the category of coherent $\widehat{\Da}^{(m,k)}_{\mathfrak{X},\lambda}$-modules  up to isogeny. This means that $\text{Coh}(\widehat{\Da}^{(m,k)}_{\mathfrak{X},\lambda})_\Q$ has the same class of objects as $\text{Coh}(\widehat{\Da}^{(m,k)}_{\mathfrak{X},\lambda})$ and, for any two objects $\Mb$ and $\Nb$ in Coh$(\widehat{\Da}^{(m,k)}_{\mathfrak{X},\lambda})_{\Q}$ one has
\begin{eqnarray*}
\text{Hom}_{\text{Coh}(\widehat{\Da}^{(m,k)}_{\mathfrak{X},\lambda})_{\Q}}(\Mb,\Nb)=\text{Hom}_{\text{Coh}(\widehat{\Da}^{(m,k)}_{\mathfrak{X},\lambda})}(\Mb,\Nb)\otimes_{\mathfrak{o}}L.
\end{eqnarray*}

\begin{prop}\label{prop 3.1.4}
The functor $\Mb\mapsto\Mb\otimes_{\mathfrak{o}}L$ induces an equivalence of categories between $\text{Coh}(\widehat{\Da}^{(m,k)}_{\mathfrak{X},\lambda})_\Q$ and $\text{Coh}(\widehat{\Da}^{(m,k)}_{\mathfrak{X},\lambda,\Q})$.
\end{prop}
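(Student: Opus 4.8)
The plan is to show that the functor $\Mb\mapsto\Mb\otimes_{\mathfrak{o}}L$ is well-defined, fully faithful and essentially surjective, the last point being the heart of the matter. First, for well-definedness: since $L$ is flat over $\mathfrak{o}$, the functor $-\otimes_{\mathfrak{o}}L$ is exact, so applying it to a local presentation $(\widehat{\Da}^{(m,k)}_{\mathfrak{X},\lambda})^{\oplus b}|_{\mathfrak{U}}\to(\widehat{\Da}^{(m,k)}_{\mathfrak{X},\lambda})^{\oplus a}|_{\mathfrak{U}}\to\Mb|_{\mathfrak{U}}\to 0$ over a member $\mathfrak{U}$ of the basis $\Ba$ of Proposition \ref{locally_hatD} produces a presentation of $(\Mb\otimes_{\mathfrak{o}}L)|_{\mathfrak{U}}$ over $\widehat{\Da}^{(m,k)}_{\mathfrak{X},\lambda,\Q}|_{\mathfrak{U}}=(\widehat{\Da}^{(m,k)}_{\mathfrak{X},\lambda}|_{\mathfrak{U}})\otimes_{\mathfrak{o}}L$. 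As $\mathfrak{X}$ is a noetherian (hence quasi-compact) formal scheme and $\widehat{\Da}^{(m,k)}_{\mathfrak{X},\lambda,\Q}$ is coherent by Proposition \ref{locally_hatD}(ii), the sheaf $\Mb\otimes_{\mathfrak{o}}L$ is a coherent $\widehat{\Da}^{(m,k)}_{\mathfrak{X},\lambda,\Q}$-module.

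For full faithfulness, let $\Mb,\Nb$ be coherent $\widehat{\Da}^{(m,k)}_{\mathfrak{X},\lambda}$-modules. The internal sheaf $\mathcal{H}om_{\widehat{\Da}^{(m,k)}_{\mathfrak{X},\lambda}}(\Mb,\Nb)$ is a coherent $\Ob_{\mathfrak{X}}$-module (over the noetherian rings of sections on $\Ba$, the $\text{Hom}$ of finitely presented modules is finitely generated), and because $\Mb$ is locally finitely presented and $L$ is flat over $\mathfrak{o}$, its formation commutes with $-\otimes_{\mathfrak{o}}L$, giving
\begin{eqnarray*}
\mathcal{H}om_{\widehat{\Da}^{(m,k)}_{\mathfrak{X},\lambda,\Q}}\big(\Mb\otimes_{\mathfrak{o}}L,\Nb\otimes_{\mathfrak{o}}L\big)\;\simeq\;\mathcal{H}om_{\widehat{\Da}^{(m,k)}_{\mathfrak{X},\lambda}}(\Mb,\Nb)\otimes_{\mathfrak{o}}L.
\end{eqnarray*}
Since $\mathfrak{X}$ is quasi-compact, $H^{0}(\mathfrak{X},-)$ commutes with filtered colimits of sheaves, and $-\otimes_{\mathfrak{o}}L$ is the filtered colimit of the multiplication-by-$\varpi$ system; taking global sections therefore yields
\begin{eqnarray*}
\text{Hom}_{\widehat{\Da}^{(m,k)}_{\mathfrak{X},\lambda,\Q}}\big(\Mb\otimes_{\mathfrak{o}}L,\Nb\otimes_{\mathfrak{o}}L\big)\;\simeq\;\text{Hom}_{\widehat{\Da}^{(m,k)}_{\mathfrak{X},\lambda}}(\Mb,\Nb)\otimes_{\mathfrak{o}}L,
\end{eqnarray*}
which is exactly $\text{Hom}_{\text{Coh}(\widehat{\Da}^{(m,k)}_{\mathfrak{X},\lambda})_{\Q}}(\Mb,\Nb)$ by the definition of the isogeny category. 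Hence the functor is fully faithful.

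For essential surjectivity, the plan is an integral-model (lattice) construction, in the spirit of Berthelot's treatment of coherent modules up to isogeny. Given a coherent $\widehat{\Da}^{(m,k)}_{\mathfrak{X},\lambda,\Q}$-module $\Na$, choose a finite cover $\mathfrak{X}=\bigcup_{i=1}^{n}\mathfrak{U}_{i}$ by members of $\Ba$. Over $\mathfrak{U}_{i}$ pick finitely many sections generating $\Na|_{\mathfrak{U}_{i}}$ and let $\mathcal{M}_{i}\subseteq\Na|_{\mathfrak{U}_{i}}$ be the coherent $\widehat{\Da}^{(m,k)}_{\mathfrak{X},\lambda}|_{\mathfrak{U}_{i}}$-submodule they generate; by flatness $\mathcal{M}_{i}\otimes_{\mathfrak{o}}L=\Na|_{\mathfrak{U}_{i}}$. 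On each quasi-compact overlap $\mathfrak{U}_{ij}:=\mathfrak{U}_{i}\cap\mathfrak{U}_{j}$ the coherent sheaf $(\mathcal{M}_{i}+\mathcal{M}_{j})/\mathcal{M}_{j}$ is killed by $-\otimes_{\mathfrak{o}}L$, hence by a power of $\varpi$; after enlarging we fix a single $c\in\N$ with $\varpi^{c}\mathcal{M}_{i}|_{\mathfrak{U}_{ij}}\subseteq\mathcal{M}_{j}|_{\mathfrak{U}_{ij}}$ for all $i,j$. Then define $\Mb\subseteq\Na$ by
\begin{eqnarray*}
\Mb(\mathfrak{V}):=\{\,s\in\Na(\mathfrak{V})\;:\;s|_{\mathfrak{V}\cap\mathfrak{U}_{i}}\in\mathcal{M}_{i}(\mathfrak{V}\cap\mathfrak{U}_{i})\ \text{for every}\ i\,\}.
\end{eqnarray*}
One checks $\varpi^{c}\mathcal{M}_{j}|_{\mathfrak{U}_{j}}\subseteq\Mb|_{\mathfrak{U}_{j}}\subseteq\mathcal{M}_{j}|_{\mathfrak{U}_{j}}$ (the first inclusion from the commensurability bound, the second by taking $i=j$), whence $\Mb|_{\mathfrak{U}_{j}}$ is coherent, being squeezed between two coherent modules with coherent quotient, and $\Mb|_{\mathfrak{U}_{j}}\otimes_{\mathfrak{o}}L=\Na|_{\mathfrak{U}_{j}}$ as $\varpi$ is invertible in $L$; these isomorphisms being compatible inside $\Na$, they glue to $\Mb\otimes_{\mathfrak{o}}L\simeq\Na$. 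The main obstacle is exactly this step: uniformly bounding the commensurability defect of the local lattices $\mathcal{M}_{i}$ on overlaps (which uses quasi-compactness of $\mathfrak{X}$ together with coherence) and checking that the glued subsheaf $\Mb$ is again coherent over $\widehat{\Da}^{(m,k)}_{\mathfrak{X},\lambda}$ and still recovers $\Na$ after inverting $\varpi$.
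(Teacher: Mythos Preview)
Your argument is correct and is essentially the standard lattice/integral-model construction that underlies Berthelot's general result. The paper, however, does not carry out this construction: it simply observes that $\widehat{\Da}^{(m,k)}_{\mathfrak{X},\lambda,\Q}$ satisfies the hypotheses of \cite[3.4.1]{Berthelot1} (namely, it arises as $\widehat{\Da}^{(m,k)}_{\mathfrak{X},\lambda}\otimes_{\mathfrak{o}}L$ with $\widehat{\Da}^{(m,k)}_{\mathfrak{X},\lambda}$ a $\varpi$-torsion-free sheaf of $\mathfrak{o}$-algebras having noetherian sections on a basis, cf.\ Proposition~\ref{locally_hatD}) and then invokes \cite[Proposition~3.4.5]{Berthelot1} directly. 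What you have written is, in effect, a sketch of the proof of that cited proposition specialised to the present situation; it buys self-containment at the price of length, while the paper's approach isolates the verification of Berthelot's standing hypotheses as the only thing left to check.
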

\begin{proof}
By definition, the sheaf $\widehat{\Da}^{(m,k)}_{\mathfrak{X},\lambda,\Q}$ satisfies \cite[conditions 3.4.1]{Berthelot1} and therefore \cite[proposition 3.4.5]{Berthelot1} allows to conclude the proposition.
\end{proof}
\justify
The proof of the next theorem follows exactly the same lines than in \cite[theorem 4.2.8]{HPSS}. 

\begin{theo}\label{Resolution_D_Q}
Let $\Ea$ be a coherent $\widehat{\Da}^{(m,k)}_{\mathfrak{X},\lambda,\Q}$-module. 
\begin{itemize}
\item[(i)] There is $r(\Ea)\in\Z$ such that, for every $r\ge r(\Ea)$ there exists $a\in\N$ and an  epimorphism of $\widehat{\Da}^{(m,k)}_{\mathfrak{X},\lambda,\Q}$-modules
\begin{eqnarray*}
\left(\widehat{\Da}^{(m,k)}_{\mathfrak{X},\lambda,\Q}(-r)\right)^{\oplus a}\rightarrow\Ea\rightarrow 0.
\end{eqnarray*}
\item[(ii)] For all $i>0$ one has $H^i(\mathfrak{X},\Ea)=0$.
\end{itemize}
\end{theo}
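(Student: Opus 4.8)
The plan is to deduce both parts from the integral statements established above --- Proposition \ref{Resolution_D} and Corollary \ref{coro 3.1.3} --- by transporting them through the isogeny equivalence of Proposition \ref{prop 3.1.4}. The only auxiliary input is that base change $-\otimes_{\mathfrak{o}}L$ along the flat morphism $\mathfrak{o}\rightarrow L$ is exact and commutes with each of the finitely many cohomology functors $H^{i}(\mathfrak{X},\bullet)$ on the noetherian formal scheme $\mathfrak{X}$; this is \cite[3.4]{Berthelot1}, and ultimately rests on $\mathfrak{X}$ being quasi-compact and quasi-separated, so that cohomology commutes with the filtered colimit $L=\varinjlim_{n}\varpi^{-n}\mathfrak{o}$ (for $i=0$ over an affine this is already the isomorphism $H^{0}(\mathfrak{U},\widehat{\Da}^{(m,k)}_{\mathfrak{X},\lambda,\Q})=H^{0}(\mathfrak{U},\widehat{\Da}^{(m,k)}_{\mathfrak{X},\lambda})\otimes_{\mathfrak{o}}L$ used in the proof of Proposition \ref{locally_hatD}). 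Given a coherent $\widehat{\Da}^{(m,k)}_{\mathfrak{X},\lambda,\Q}$-module $\Ea$, Proposition \ref{prop 3.1.4} furnishes a coherent $\widehat{\Da}^{(m,k)}_{\mathfrak{X},\lambda}$-module $\Ea_{0}$ with $\Ea_{0}\otimes_{\mathfrak{o}}L\simeq\Ea$; since $\Ob_{\mathfrak{X}}(-r)$ is invertible one also has $\widehat{\Da}^{(m,k)}_{\mathfrak{X},\lambda}(-r)\otimes_{\mathfrak{o}}L\simeq\widehat{\Da}^{(m,k)}_{\mathfrak{X},\lambda,\Q}(-r)$ for all $r\in\Z$, compatibly with these identifications.

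For (i), apply Proposition \ref{Resolution_D}(i) to $\Ea_{0}$: there is $r(\Ea):=r_{2}(\Ea_{0})\in\Z$ such that for every $r\ge r(\Ea)$ there are $a\in\N$ and an epimorphism $\big(\widehat{\Da}^{(m,k)}_{\mathfrak{X},\lambda}(-r)\big)^{\oplus a}\rightarrow\Ea_{0}\rightarrow 0$ of $\widehat{\Da}^{(m,k)}_{\mathfrak{X},\lambda}$-modules. Applying the exact functor $-\otimes_{\mathfrak{o}}L$ and the compatibility above turns this into an epimorphism $\big(\widehat{\Da}^{(m,k)}_{\mathfrak{X},\lambda,\Q}(-r)\big)^{\oplus a}\rightarrow\Ea\rightarrow 0$ of $\widehat{\Da}^{(m,k)}_{\mathfrak{X},\lambda,\Q}$-modules.

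For (ii), let $i>0$. Base change gives $H^{i}(\mathfrak{X},\Ea)\simeq H^{i}(\mathfrak{X},\Ea_{0})\otimes_{\mathfrak{o}}L$, and by Corollary \ref{coro 3.1.3} there is $c\in\N$ with $\varpi^{c}H^{i}(\mathfrak{X},\Ea_{0})=0$; a $\varpi$-torsion $\mathfrak{o}$-module has trivial base change to $L$, so $H^{i}(\mathfrak{X},\Ea)=0$. There is essentially no obstacle in the present argument: the substance lies entirely in the integral inputs --- Proposition \ref{Resolution_D} and Corollary \ref{coro 3.1.3}, whose proofs run through the Serre-type vanishing of Lemma \ref{Coherent_A(m,k)}, the resolution Lemma \ref{Integral_resolution}, the bounded $p$-torsion of Proposition \ref{bounded_coh_Int} (where the classical Beilinson--Bernstein theorem for dominant regular $\lambda+\rho$ enters), and the formal-completion comparison Proposition \ref{Properties_completion} --- together with the isogeny equivalence Proposition \ref{prop 3.1.4}. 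If one wished to bypass Corollary \ref{coro 3.1.3}, part (ii) can alternatively be read off from part (i): one first checks $H^{i}(\mathfrak{X},\widehat{\Da}^{(m,k)}_{\mathfrak{X},\lambda,\Q}(r))\simeq H^{i}(X,\Db^{(m,k)}_{X,\lambda}(r))\otimes_{\mathfrak{o}}L=0$ for all $i>0$ and $r\in\Z$ (using Proposition \ref{Properties_completion}(ii) and the Beilinson--Bernstein input in Proposition \ref{bounded_coh_Int}(i)), then resolves $\Ea$ by the acyclic free modules produced in (i) and performs the usual degree shift, which terminates because $\mathfrak{X}$ has cohomological dimension $\dim X$; the first route is however the shortest.
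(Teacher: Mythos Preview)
Your proof is correct and follows essentially the same approach as the paper: choose an integral model via Proposition~\ref{prop 3.1.4}, apply Proposition~\ref{Resolution_D} and tensor with $L$ for (i), and use Corollary~\ref{coro 3.1.3} together with the commutation of cohomology with $-\otimes_{\mathfrak{o}}L$ on the noetherian space $\mathfrak{X}$ for (ii). Your write-up is in fact more detailed than the paper's (you spell out the base-change justification and add an alternative dimension-shifting argument for (ii)), but the underlying strategy is identical.
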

\begin{proof}
By the preceding proposition, there exists a coherent $\widehat{\Da}^{(m,k)}_{\mathfrak{X},\lambda}$-module $\Fa$ such that $\Fa\otimes_{\mathfrak{o}}L\simeq\Ea$. Therefore, applying proposition \ref{Resolution_D} to $\Fa$ gives $(i)$. Moreover,  as $\mathfrak{X}$ is a noetherian space, corollary \ref{coro 3.1.3} allows us to conclude that
\begin{eqnarray*}
H^i(\mathfrak{X},\Ea)=H^i(\mathfrak{X},\Fa)\otimes_{\mathfrak{o}}L=0
\end{eqnarray*}
for every $k>0$ \cite[(3.4.0.1)]{Berthelot1}.
\end{proof}

\subsection{The arithmetic Beilinson-Bernstein theorem with congruence level}

\subsubsection{Calculation of global sections}
We recall for the reader that throughout this section $\lambda+\rho\in\mathfrak{t}^*_L$ denotes a dominant and regular character, which is induced by an algebraic character $\lambda\in X(\T)$. Insppired in the arguments exhibited in \cite{HS2}, in this subsection we propose to calculate the global sections of the sheaf $\widehat{\Da}^{(m,k)}_{\mathfrak{X},\lambda,\Q}$. 
\justify
Let us identify the universal enveloping algebra $\Ub(\mathfrak{t}_L)$ of the Cartan subalgebra $\mathfrak{t}_L$ with the symmetric algebra $S(\mathfrak{t}_L)$, and let $Z(\mathfrak{g}_L)$ denote the center of the universal enveloping algebra $\Ub(\mathfrak{g}_L)$ of $\mathfrak{g}_L$. The classical Harish-Chandra isomorphism $Z(\mathfrak{g}_L)\simeq S(\mathfrak{t}_L)^{W}$ (the subalgebra of Weyl invariants) \cite[theorem 7.4.5]{Dixmier}, allows us to define for every linear form $\lambda\in \mathfrak{t}_L^*$ a central character \cite[7.4.6]{Dixmier} 
\begin{eqnarray*}
\chi_\lambda : Z(\mathfrak{g}_L)\rightarrow L
\end{eqnarray*}   
which induces the central reduction $\Ub(\mathfrak{g}_L)_{\lambda}:=\Ub(\mathfrak{g}_L)\otimes_{Z(\mathfrak{g}_L),\chi_\lambda+\rho}L$. If Ker$(\chi_{\lambda+\rho})_{\mathfrak{o}}:=D^{(m)}(\G(k))\cap \text{Ker}(\chi_{\lambda+\rho})$, we can consider the central redaction
\begin{eqnarray*}
D^{(m)}(\G(k))_{\lambda}:=D^{(m)}(\G(k))/D^{(m)}(\G(k))\text{Ker}(\chi_{\lambda+\rho})_{\mathfrak{o}}
\end{eqnarray*}
and its $\varpi$-adic completion $\widehat{D}^{(m)}(\G(k))_{\lambda}$. It is clear that $D^{(m)}(\G(k))_{\lambda}$ is an integral model of $\Ub(\mathfrak{g}_L)_{\lambda}$. We will denote by $D^{\dag}(\G(k))$ the limit of the inductive system $\widehat{D}^{(m)}(\G(k))\otimes_{\mathfrak{o}}L \rightarrow \widehat{D}^{(m+1)}(\G(k))\otimes_{\mathfrak{o}}L$.

\begin{theo} \label{C.G sections}
The homomorphism of $\mathfrak{o}$-algebras $\Phi^{(m,k)}_{\lambda}: D^{(m)}(\G(k))\rightarrow  H^{0}(X,\Db^{(m,k)}_{X,\lambda})$, defined by taking global sections in (\ref{canonical_lambda}), induces an isomorphism of $\mathfrak{o}$-algebras
\begin{eqnarray*}
\widehat{D}^{(m)}(\G(k))_{\lambda}\otimes_{\mathfrak{o}}L \xrightarrow{\simeq} H^0\left(\mathfrak{X},\widehat{\Da}^{(m,k)}_{\mathfrak{X},\lambda,\Q}\right).
\end{eqnarray*}
\end{theo}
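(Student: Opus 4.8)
The plan is to prove an integral version of the statement over $X$, transport it to the generic fibre by means of the classical Beilinson--Bernstein theorem, and then pass to the $\varpi$-adic completion; this follows the strategy of \cite{HS2}, adapted to the twisted sheaves with congruence level. The first step is to show that $\Phi^{(m,k)}_{\lambda}$ annihilates the two-sided ideal $D^{(m)}(\G(k))\,\text{Ker}(\chi_{\lambda+\rho})_{\mathfrak{o}}$, so that it factors through the central reduction. Indeed, since $\Db^{(m,k)}_{X,\lambda}$ is a locally free $\Ob_X$-module and $X$ is separated over $\mathfrak{o}$, the module $H^{0}(X,\Db^{(m,k)}_{X,\lambda})$ is $\varpi$-torsion free, hence embeds into $H^{0}(X,\Db^{(m,k)}_{X,\lambda})\otimes_{\mathfrak{o}}L=H^{0}(X_L,\Db_{\lambda})$, using that $\Db^{(m,k)}_{X,\lambda,\Q}=\Db_{\lambda}$ (cf. the proof of Proposition \ref{bounded_coh_Int}); on the generic fibre the operator representation of $\Ub(\mathfrak{g}_L)$ on $\Db_{\lambda}$ sends $Z(\mathfrak{g}_L)$ to $\chi_{\lambda+\rho}$, so $\text{Ker}(\chi_{\lambda+\rho})$ goes to $0$. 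As $D^{(m)}(\G(k))\hookrightarrow\Ub(\mathfrak{g}_L)$, this yields the required vanishing and a homomorphism of $\mathfrak{o}$-algebras $\overline{\Phi}^{(m,k)}_{\lambda}\colon D^{(m)}(\G(k))_{\lambda}\to H^{0}(X,\Db^{(m,k)}_{X,\lambda})$.

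Next I would show that $\overline{\Phi}^{(m,k)}_{\lambda}\otimes_{\mathfrak{o}}L$ is an isomorphism. Tensoring the definition of $D^{(m)}(\G(k))_{\lambda}$ with $L$, and using $D^{(m)}(\G(k))\otimes_{\mathfrak{o}}L=\Ub(\mathfrak{g}_L)$ together with $\text{Ker}(\chi_{\lambda+\rho})_{\mathfrak{o}}\otimes_{\mathfrak{o}}L=\text{Ker}(\chi_{\lambda+\rho})$, identifies the source with $\Ub(\mathfrak{g}_L)_{\lambda}$. By flat base change over the noetherian scheme $X$ the target becomes $H^{0}(X_L,\Db_{\lambda})$, and since $\lambda+\rho$ is dominant (cf. \ref{roots}) the classical global sections theorem \cite{BB} identifies $H^{0}(X_L,\Db_{\lambda})$ with $\Ub(\mathfrak{g}_L)_{\lambda}$ through the operator representation; hence $\overline{\Phi}^{(m,k)}_{\lambda}\otimes_{\mathfrak{o}}L$ is an isomorphism.

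For the third step I would control the torsion and then complete. By Proposition \ref{canonical_lambda} the sheaf $\Db^{(m,k)}_{X,\lambda}$ is a quotient of $\Ab^{(m,k)}_{X}$, hence a coherent $\Ab^{(m,k)}_{X}$-module, so $H^{0}(X,\Db^{(m,k)}_{X,\lambda})$ is finitely generated over $D^{(m)}(\G(k))$ by Proposition \ref{Result_HS}(iii), a fortiori over the noetherian ring $D^{(m)}(\G(k))_{\lambda}$ (Proposition \ref{gr.Dist algebra}). Consequently the kernel and cokernel of $\overline{\Phi}^{(m,k)}_{\lambda}$ are finitely generated over $D^{(m)}(\G(k))_{\lambda}$ and, by the previous step, $\varpi$-torsion, hence annihilated by some fixed power $\varpi^{c}$. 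As $D^{(m)}(\G(k))_{\lambda}$ is noetherian, $\varpi$-adic completion is exact on finitely generated modules, so the completion of $\overline{\Phi}^{(m,k)}_{\lambda}$ still has kernel and cokernel killed by $\varpi^{c}$; these vanish after $\otimes_{\mathfrak{o}}L$, giving an isomorphism $\widehat{D}^{(m)}(\G(k))_{\lambda}\otimes_{\mathfrak{o}}L\xrightarrow{\simeq}\widehat{H^{0}(X,\Db^{(m,k)}_{X,\lambda})}\otimes_{\mathfrak{o}}L$. Finally, Proposition \ref{Properties_completion}(iii) applied to $\Eb=\Db^{(m,k)}_{X,\lambda}$ identifies $\widehat{H^{0}(X,\Db^{(m,k)}_{X,\lambda})}$ with $H^{0}(\mathfrak{X},\widehat{\Da}^{(m,k)}_{\mathfrak{X},\lambda})$, and since $\mathfrak{X}$ is noetherian one has $H^{0}(\mathfrak{X},\widehat{\Da}^{(m,k)}_{\mathfrak{X},\lambda})\otimes_{\mathfrak{o}}L=H^{0}(\mathfrak{X},\widehat{\Da}^{(m,k)}_{\mathfrak{X},\lambda,\Q})$ by \cite[(3.4.0.1)]{Berthelot1}; all the maps being ring homomorphisms, this is the asserted isomorphism of $\mathfrak{o}$-algebras.

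The step I expect to be the genuine obstacle is the first one, namely pinning down the $\rho$-shift: one must verify that the generic fibre of $\Db^{(m,k)}_{X,\lambda}$ is exactly the classical ring of $\lambda$-twisted differential operators on $X_L$ with $Z(\mathfrak{g}_L)$ acting through $\chi_{\lambda+\rho}$, in the normalisation compatible with the conventions fixed for $\Lb(\lambda)$ and its operator representation, so that the classical Beilinson--Bernstein global sections theorem applies on the nose. Once this compatibility is established, the remaining ingredients — noetherianity of the distribution algebras, finite generation of cohomology (Proposition \ref{Result_HS}), flat base change, and exactness of $\varpi$-adic completion — are routine bookkeeping.
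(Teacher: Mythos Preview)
Your proof is correct and follows essentially the same strategy as the paper: factor through the central reduction via the commutative square with the classical map $\Ub(\mathfrak{g}_L)\to H^0(X_L,\Db_\lambda)$, invoke Beilinson--Bernstein on the generic fibre, then complete and identify the result with global sections on $\mathfrak{X}$ via Proposition~\ref{Properties_completion}. The only difference is cosmetic: where the paper cites \cite[Lemma 3.3]{HS2} as a black box for the passage from ``isomorphism after $\otimes L$'' to ``isomorphism after completion and $\otimes L$'', you spell out the argument explicitly (finite generation from Proposition~\ref{Result_HS}, bounded torsion, exactness of completion over the noetherian ring $D^{(m)}(\G(k))_\lambda$). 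Your concern about the $\rho$-shift is legitimate bookkeeping but not a genuine obstacle; the paper dispatches it by citing \cite[(11.2.2)]{HTT} for the classical operator representation.
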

\begin{proof}
The key in the proof of the theorem is the following commutative diagram
\begin{eqnarray*}
\begin{tikzcd}
D^{(m)}(\G(k)) \arrow[r, "\Phi^{(m,k)}_{\lambda}"] \arrow[d,hook ]
& H^0(X,\Db^{(m,k)}_{X,\lambda}) \arrow[d,hook ] \\
\Ub(\mathfrak{g}_L) \arrow[r, "\Phi_{\lambda}"]
& H^{0}(X_L,\Db_{\lambda}).
\end{tikzcd}
\end{eqnarray*}
Here $\Phi_{\lambda}$ is the morphism in \cite[(11.2.2)]{HTT}\footnote{We recall for the reader  that $\Lb(\lambda)$ is a $\G$-equivariant line bundle, which implies the existence of this morphism \cite[Section 11.1]{HTT}.}. By the classical Beilinson-Bernsein theorem \cite{BB} and the preceding commutative diagram, we have that $\Phi^{(m,k)}_\lambda$ factors through the morphism $\overline{\Phi}^{(m,k)}_\lambda: D^{(m)}(\G(k))_{\lambda}\rightarrow H^{0}(X,\Db^{(m,k)}_{X,\lambda})$ which becomes an isomorphism after tensoring with $L$. By \cite[Lemma 3.3]{HS2} we have that $\overline{\Phi}^{(m)}_\lambda$ gives rise to an isomorphism
\begin{eqnarray*}
\widehat{D}^{(m)}(\G(k))_{\lambda}\otimes_{\mathfrak{o}}L\xrightarrow{\simeq} \widehat{H^0(X,\Db^{(m,k)}_{X,\lambda})}\otimes_{\mathfrak{o}}L,
\end{eqnarray*}
and proposition \ref{Properties_completion} together with the fact that $\mathfrak{X}$ is in particular a noetherian topological space end the proof of the theorem.
\end{proof}

\subsubsection{The localization functor}\label{Loc_m}
In this section we will introduce the localization functor. Let $E$ be a finitely generated $\widehat{D}^{(m)}(\G(k))_{\lambda}\otimes_{\mathfrak{o}}L$-module. We define $\La oc^{(m,k)}_{\mathfrak{X},\lambda}(E)$ as the associated sheaf to the presheaf on $\mathfrak{X}$ defined by
\begin{eqnarray*}
\mathfrak{U}\mapsto \widehat{\Da}^{(m,k)}_{\mathfrak{X},\lambda,\Q}(\mathfrak{U})\otimes_{\widehat{D}^{(m)}(\G(k))_{\lambda}\otimes_{\mathfrak{o}}L}E.
\end{eqnarray*}
It is clear that $\La oc^{(m,k)}_{\mathfrak{X},\lambda}$ is a functor from the category of finitely generated $\widehat{D}^{(m)}(\G(k))_{\lambda}\otimes_{\mathfrak{o}}L$-modules to the category of coherent $\widehat{\Da}^{(m,k)}_{\mathfrak{X},\lambda,\Q}$-modules.

\subsubsection{The arithmetic Beilinson-Bernstein theorem}
\justify
We are finally ready to prove one of the principal results of this work. To start with, we will enunciate the following proposition whose proof can be founded in \cite[Proposition 4.4.1]{Sarrazola}.

\begin{prop}\label{prop 3.3.1}
Let $\Ea$ be a coherent $\widehat{\Da}^{(m,k)}_{\mathfrak{X},\lambda,\Q}$-module. Then $\Ea$ is generated by its global sections as $\widehat{\Da}^{(m)}_{\mathfrak{X},\lambda,\Q}$-module. Furthermore, every coherent $\widehat{\Da}^{(m,k)}_{\mathfrak{X},\lambda,\Q}$-module admits a resolution by finite free $\widehat{\Da}^{(m,k)}_{\mathfrak{X},\lambda,\Q}$-modules.
\end{prop}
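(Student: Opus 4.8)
The plan is to establish the $\widehat{\Da}^{(m,k)}_{\mathfrak{X},\lambda,\Q}$-affinity of $\mathfrak{X}$ in the sense of Beilinson--Bernstein, from which both assertions follow: one proves that $H^{0}(\mathfrak{X},\bullet)$ is exact on coherent $\widehat{\Da}^{(m,k)}_{\mathfrak{X},\lambda,\Q}$-modules and that every such module is generated by its global sections, and the resolution by finite free modules then comes out formally. Exactness of $H^{0}(\mathfrak{X},\bullet)$ is immediate from Theorem \ref{Resolution_D_Q}(ii): in a short exact sequence $0\to\Ea'\to\Ea\to\Ea''\to 0$ of coherent modules the kernel $\Ea'$ is again coherent, since $\widehat{\Da}^{(m,k)}_{\mathfrak{X},\lambda,\Q}$ is a coherent sheaf of rings (Proposition \ref{locally_hatD}(ii)), so $H^{1}(\mathfrak{X},\Ea')=0$ and $H^{0}(\mathfrak{X},\bullet)$ stays right exact.

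For generation I would run the classical Beilinson--Bernstein argument through the evaluation morphism. Put $D:=H^{0}(\mathfrak{X},\widehat{\Da}^{(m,k)}_{\mathfrak{X},\lambda,\Q})$, a noetherian ring by Theorem \ref{C.G sections} and Proposition \ref{Result_HS}(i). For coherent $\Ea$ I would first note that $H^{0}(\mathfrak{X},\Ea)$ is finitely generated over $D$: by Theorem \ref{Resolution_D_Q}(i) there is an epimorphism $\big(\widehat{\Da}^{(m,k)}_{\mathfrak{X},\lambda,\Q}(-r)\big)^{\oplus a}\twoheadrightarrow\Ea$ with coherent kernel, whence, by Theorem \ref{Resolution_D_Q}(ii), a surjection $H^{0}\big(\mathfrak{X},\widehat{\Da}^{(m,k)}_{\mathfrak{X},\lambda,\Q}(-r)\big)^{\oplus a}\twoheadrightarrow H^{0}(\mathfrak{X},\Ea)$, and the source is finitely generated over $D$ because, through Proposition \ref{Properties_completion} and Proposition \ref{canonical_lambda}, it arises by $\varpi$-adic completion and inversion of $\varpi$ from the $D^{(m)}(\G(k))$-module $H^{0}(X,\Db^{(m,k)}_{X,\lambda}(-r))$, which is finitely generated by Proposition \ref{Result_HS}(iii). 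Now form the evaluation map $\mathrm{ev}\colon\widehat{\Da}^{(m,k)}_{\mathfrak{X},\lambda,\Q}\otimes_{D}H^{0}(\mathfrak{X},\Ea)\to\Ea$; its source is a quotient of a finite free module, hence coherent, and so is its cokernel $\mathcal{Q}$. The triangle identity for the adjunction $\big(\widehat{\Da}^{(m,k)}_{\mathfrak{X},\lambda,\Q}\otimes_{D}\bullet\big)\dashv H^{0}(\mathfrak{X},\bullet)$ shows that the unit $H^{0}(\mathfrak{X},\Ea)\to H^{0}\big(\mathfrak{X},\widehat{\Da}^{(m,k)}_{\mathfrak{X},\lambda,\Q}\otimes_{D}H^{0}(\mathfrak{X},\Ea)\big)$ is a section of $H^{0}(\mathrm{ev})$, so $H^{0}(\mathrm{ev})$ is surjective; applying the exact functor $H^{0}(\mathfrak{X},\bullet)$ to $0\to\mathrm{im}(\mathrm{ev})\to\Ea\to\mathcal{Q}\to 0$ then forces $H^{0}(\mathfrak{X},\mathcal{Q})=0$. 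Granting that a coherent $\widehat{\Da}^{(m,k)}_{\mathfrak{X},\lambda,\Q}$-module with vanishing global sections is zero (faithfulness of $H^{0}(\mathfrak{X},\bullet)$), we get $\mathcal{Q}=0$, so $\mathrm{ev}$ is surjective; choosing finitely many $D$-generators of $H^{0}(\mathfrak{X},\Ea)$ exhibits $\Ea$ as a quotient of a finite free $\widehat{\Da}^{(m,k)}_{\mathfrak{X},\lambda,\Q}$-module, and iterating --- kernels of maps between coherent modules over the coherent ring being coherent --- yields the finite free resolution.

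It remains to prove the faithfulness of $H^{0}(\mathfrak{X},\bullet)$ on coherent modules, which is where the assumption that $\lambda+\rho$ is dominant and regular is indispensable, and which I expect to be the main obstacle. Let $\mathcal{M}$ be a nonzero coherent $\widehat{\Da}^{(m,k)}_{\mathfrak{X},\lambda,\Q}$-module; I would show $H^{0}(\mathfrak{X},\mathcal{M})\ne 0$ by descending to the flag scheme. By Proposition \ref{prop 3.1.4}, $\mathcal{M}$ has a $\varpi$-torsion-free coherent $\widehat{\Da}^{(m,k)}_{\mathfrak{X},\lambda}$-model, which --- by algebraization, $X$ being proper over $\mathfrak{o}$ and $\widehat{\Da}^{(m,k)}_{\mathfrak{X},\lambda}$ having noetherian sections --- is the $\varpi$-adic completion of a nonzero coherent $\Db^{(m,k)}_{X,\lambda}$-module $\mathcal{M}_{0}$ on $X$, itself a coherent $\Ab^{(m,k)}_{X}$-module by Proposition \ref{canonical_lambda}. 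Inverting $\varpi$ makes $\mathcal{M}_{0}$ into a nonzero coherent $\Db_{\lambda}$-module on the flag variety $X_{L}$, so the classical Beilinson--Bernstein theorem (as used in the proof of Proposition \ref{bounded_coh_Int}) gives $H^{0}(X_{L},\mathcal{M}_{0}\otimes_{\mathfrak{o}}L)\ne 0$. Finally, the $p$-torsion boundedness of the higher cohomology (Proposition \ref{bounded_coh_Int}) together with the comparison of cohomology with $\varpi$-adic completions (Proposition \ref{Properties_completion}) transport this nonvanishing to $H^{0}(\mathfrak{X},\mathcal{M})$ after inverting $\varpi$, so $H^{0}(\mathfrak{X},\mathcal{M})\ne 0$. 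This transport --- tracking $\varpi$-torsion and preserving nonvanishing along $\varpi$-adic completion and the subsequent inversion of $\varpi$ --- is the technical heart of the argument, and it is exactly what the finiteness and $p$-torsion results of the preceding subsections are built to make possible.
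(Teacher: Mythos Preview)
The paper does not prove this proposition; it simply refers to \cite[Proposition~4.4.1]{Sarrazola}. So there is no in-paper argument to compare against, and your proposal must be judged on its own merits.

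Your overall architecture is the standard Beilinson--Bernstein one, and the first two ingredients are fine: exactness of $H^{0}(\mathfrak{X},\bullet)$ is exactly Theorem~\ref{Resolution_D_Q}(ii), and the finite free resolution follows formally once global generation is known, since the kernel of a surjection from a finite free module is again coherent over the coherent ring $\widehat{\Da}^{(m,k)}_{\mathfrak{X},\lambda,\Q}$.

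The gap is in your faithfulness argument. You assert that a $\varpi$-torsion-free coherent $\widehat{\Da}^{(m,k)}_{\mathfrak{X},\lambda}$-module ``by algebraization, $X$ being proper over $\mathfrak{o}$\ldots is the $\varpi$-adic completion of a nonzero coherent $\Db^{(m,k)}_{X,\lambda}$-module''. This is a non-commutative formal GAGA statement which is nowhere established in the paper; the results you cite (Propositions~\ref{prop 3.1.4}, \ref{Properties_completion}, \ref{Resolution_D}) only go in the other direction, comparing cohomology of completions of \emph{given} algebraic modules with the algebraic cohomology. One can try to extract algebraization from the resolution in Proposition~\ref{Resolution_D}(i), but the maps in a resolution $(\widehat{\Da}(-r_{1}))^{\oplus a_{1}}\to(\widehat{\Da}(-r_{0}))^{\oplus a_{0}}$ on $\mathfrak{X}$ live in $H^{0}(\mathfrak{X},\widehat{\Da}(r_{1}-r_{0}))^{\oplus a_{0}a_{1}}$, which by Proposition~\ref{Properties_completion}(iii) is the $\varpi$-adic \emph{completion} of the algebraic Hom, not the algebraic Hom itself; so lifting the resolution to $X$ is not automatic and needs a genuine argument (approximation plus a Nakayama-type step).

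A cleaner route, and the one actually taken in \cite{Sarrazola} following \cite{Huyghe2}, bypasses faithfulness altogether: one shows directly that $\widehat{\Da}^{(m,k)}_{\mathfrak{X},\lambda,\Q}(-r)$ is generated by its global sections for every $r$, using the classical Beilinson--Bernstein tensor-identity trick (filtrations of $V_{\mu}\otimes\Ob_{X}$ by line bundles $\Lb(\nu)$ for $\mu$ dominant, which is where dominance and regularity of $\lambda+\rho$ enter). Combined with Theorem~\ref{Resolution_D_Q}(i), this immediately gives global generation of any coherent $\Ea$. This avoids both the algebraization step and the evaluation-map detour, and is what you should aim for if you want a self-contained proof within the framework of this paper.
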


\begin{theo}\label{Affinity}
Let us suppose that $\lambda\in X(\T)$ is an algebraic character such that $\lambda + \rho\in\mathfrak{t}_L^*$ is a dominant and regular character of $\mathfrak{t}_L$. The functors $\La oc^{(m,k)}_{\mathfrak{X},\lambda}$ and $H^{0}(\mathfrak{X},\bullet)$ are quasi-inverse equivalence of categories between the abelian categories of finitely generated $\widehat{D}^{(m)}(\G(k))_{\lambda}\otimes_{\mathfrak{o}}L$-modules and coherent  $\widehat{\Da}^{(m)}_{\mathfrak{X},\lambda,\Q}$-modules.
\end{theo}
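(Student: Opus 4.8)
The plan is to follow the classical Beilinson--Bernstein strategy, now carried out in the formal-arithmetic setting, using the affinity results already established for the untwisted sheaf $\widehat{\Da}^{(m,k)}_{\mathfrak{X}}$ together with the global-sections computation in Theorem \ref{C.G sections}. Write $D:=\widehat{D}^{(m)}(\G(k))_{\lambda}\otimes_{\mathfrak{o}}L$ and $\Da:=\widehat{\Da}^{(m,k)}_{\mathfrak{X},\lambda,\Q}$ for brevity. First I would check that the two functors land in the right categories: by Proposition \ref{prop 3.1.4} (or directly) $\La oc^{(m,k)}_{\mathfrak{X},\lambda}(E)$ is a coherent $\Da$-module for $E$ finitely generated over $D$, and by Theorem \ref{Resolution_D_Q}(ii) together with Proposition \ref{Resolution_D} the higher cohomology of a coherent $\Da$-module vanishes, so $H^0(\mathfrak{X},\bullet)$ is exact; moreover $H^0(\mathfrak{X},\Ea)$ is finitely generated over $D$ because by Proposition \ref{prop 3.3.1} $\Ea$ is a quotient of a finite free $\Da$-module, $H^0(\mathfrak{X},\Da^{\oplus a})=D^{\oplus a}$ by Theorem \ref{C.G sections}, exactness of $H^0$, and $D$ is noetherian (Theorem \ref{C.G sections} identifies it with $\widehat{D}^{(m)}(\G(k))_\lambda\otimes_{\mathfrak{o}}L$, a noetherian ring).

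Next I would construct the adjunction morphisms. The counit $\La oc^{(m,k)}_{\mathfrak{X},\lambda}(H^0(\mathfrak{X},\Ea))\to\Ea$ is the canonical evaluation map, and the unit $E\to H^0(\mathfrak{X},\La oc^{(m,k)}_{\mathfrak{X},\lambda}(E))$ comes from $D=H^0(\mathfrak{X},\Da)$. To prove both are isomorphisms, the standard device is a five-lemma/dévissage argument: by Proposition \ref{prop 3.3.1} every coherent $\Da$-module has a (finite, since $D$ is noetherian and has finite global dimension after $\otimes L$, or at least a two-term) presentation by finite free modules $\Da^{\oplus b}\to\Da^{\oplus a}\to\Ea\to 0$; similarly every finitely generated $D$-module has a presentation $D^{\oplus b}\to D^{\oplus a}\to E\to 0$. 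Both functors are right-exact ($\La oc$ is a tensor product, $H^0$ is exact here), so applying them to these presentations and using that the unit and counit are manifestly isomorphisms on free modules reduces everything, via the five lemma, to the free case. This gives that $H^0(\mathfrak{X},\bullet)$ and $\La oc^{(m,k)}_{\mathfrak{X},\lambda}$ are quasi-inverse.

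The one genuine input that needs care is the exactness of $H^0(\mathfrak{X},\bullet)$ on \emph{all} short exact sequences of coherent $\Da$-modules (not just those appearing in a fixed resolution), since this is what makes the dévissage work: given $0\to\Ea'\to\Ea\to\Ea''\to 0$ I would use Theorem \ref{Resolution_D_Q}(ii) applied to $\Ea'$ to get $H^1(\mathfrak{X},\Ea')=0$, hence the long exact cohomology sequence collapses. Thus the real content is entirely in Theorem \ref{Resolution_D_Q}, which in turn rests on Corollary \ref{coro 3.1.3} (bounded $p$-torsion of higher cohomology) and ultimately on Proposition \ref{bounded_coh_Int}, where the hypothesis that $\lambda+\rho$ is dominant regular enters through the classical Beilinson--Bernstein vanishing for $\Db_\lambda$ on $X_L$. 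I expect the main obstacle to be purely bookkeeping: making sure the finite generation of $H^0(\mathfrak{X},\Ea)$ as a $D$-module is established \emph{before} one invokes it in the presentation argument (to avoid circularity one should first prove generation by global sections and the cohomology vanishing, then deduce finite generation, then run the dévissage). Once the categories and exactness are pinned down, the adjunction isomorphisms follow formally.
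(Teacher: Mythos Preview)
Your proposal is correct and follows essentially the same approach as the paper: the paper's proof simply cites \cite[Proposition 5.2.1]{Huyghe1} and says the argument carries over verbatim, and what you have written is precisely that standard argument (exactness of $H^{0}$ from the cohomology vanishing of Theorem \ref{Resolution_D_Q}, generation by global sections and finite free resolutions from Proposition \ref{prop 3.3.1}, then the five-lemma d\'evissage on presentations, with Theorem \ref{C.G sections} identifying global sections). Your remark about avoiding circularity in the order of steps is well taken and matches how the paper organizes the preliminary results before stating the theorem.
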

\begin{proof}
The proof of \cite[Proposition 5.2.1]{Huyghe1}  carries over word by word.
\end{proof}

\subsection{The sheaves $\Da^{\dag}_{\mathfrak{X},k}(\lambda)$}
\justify
In this section we will study the problem of passing to the inductive limit when $m$ varies, this means
\begin{eqnarray}
\Da^{\dag}_{\mathfrak{X},k}(\lambda) := \left( \varinjlim_{m\in\N} \widehat{\Da}^{(m,k)}_{\mathfrak{X},\lambda}\right)\otimes_{\mathfrak{o}} L
\hspace{0.5 cm}
\text{and}
\hspace{0.5 cm}
D^{\dag}(\G(k))_{\lambda} := \left(\varinjlim_{m\in\N} \widehat{D}^{(m)}(\G(k))_{\lambda} \right)\otimes_{\mathfrak{o}}L.
\end{eqnarray}
\justify
As in \ref{Loc_m} let us consider the following localization functor $\La oc^{\dag}_{\mathfrak{X},k}(\lambda)$ from the category of finitely presented $D^{\dag}(\G(k))_{\lambda}$-modules to the category of coherent $\Da^{\dag}_{\mathfrak{X},k}(\lambda)$. Let $E$ be a finitely presented $D^{\dag}(\G(k))_{\lambda}$-module, then $\La oc^{\dag}_{\mathfrak{X},k}(\lambda)(E)$ denotes the associated sheaf to the presheaf on $\mathfrak{X}$ defined by
\begin{eqnarray*}
\mathfrak{U}\subseteq\mathfrak{X} \mapsto \Da^{\dag}_{\mathfrak{X},k}(\lambda)\otimes_{D^{\dag}(\G(k))_{\lambda}}E.
\end{eqnarray*} 
As before, it is clear that $\La oc^{\dag}_{\mathfrak{X},k}(\lambda)$ is a functor from the category of finitely presented $D^{\dag}(\G(k))_{\lambda}$-modules to the category of coherent $\Da^{\dag}_{\mathfrak{X},k}(\lambda)$-modules.

\begin{Analytic_algebra}\label{Analytic_algebra}\textbf{Analytic distribution algebra}
\end{Analytic_algebra}
\justify
The wide-open rigid analytic groups, defined in \ref{cong_sub_groups}, play an important role in the work developed by Emerton in \cite{Emerton1}, to treat locally analytic representations of $p$-adic groups. The analytic distribution of $\G(k)^{\circ}$ is defined to be the continuous dual space of the space of rigid-analytic functions on $\G(k)^{\circ}$. This is,
\begin{eqnarray*}
\Db^{\text{an}}(\G(k)^{\circ}):= \left(\Ob_{\G(k)^{\circ}}(\G(k)^{\circ})\right)'_b = \text{Hom}^{\text{cont}}_L\left(\Ob_{\G(k)^{\circ}}(\G(k)^{\circ}),L\right)_b,
\end{eqnarray*}
this is a topological $L$-algebra of compact type. In \cite[Proposition 5.2.1]{HS1} Huyghe-Schmidt have shown that 
\begin{eqnarray*}
D^{\dag}(\G(k)) \simeq \Db^{\text{an}}(\G(k)^{\circ}).
\end{eqnarray*}
\justify
As $\mathfrak{X}$ is a noetherian space, theorem \ref{C.G sections} and the preceding relation tell us that
\begin{eqnarray}\label{Global_sect_dag}
H^0(\mathfrak{X},\Da^{\dag}_{\mathfrak{X},k}(\lambda))=D^{\dag}(\G(k))_{\lambda} = \Db^{\text{an}}(\G(k)^{\circ})_{\lambda} := \Db^{\text{an}}(\G(k)^{\circ})\big / \Db^{\text{an}}(\G(k)^{\circ}) (\text{Ker}(\chi_{\lambda+\rho})).
\end{eqnarray}
\justify
We will concentrate our efforts to prove the following Beilinson-Bernstein theorem for the sheaves $\Da^{\dag}_{\mathfrak{X},k}(\lambda)$.

\begin{theo}\label{BB_for_dag}
Let $\lambda\in X(\T)$ be an algebraic character, such that $\lambda+\rho\in\mathfrak{t}^*_L$ is dominant and regular. The functors $\mathcal{L}oc^{\dag}_{\mathfrak{X},k}(\lambda)$ and $H^{0}(\mathfrak{X},\bullet)$ are quasi-inverse equivalence of categories between the abelian categories of finitely presented (left) $D^{\dag}(\G(k))_{\lambda}$-modules and coherent $\Da^{\dag}_{\mathfrak{X},k}(\lambda)$-modules.
\end{theo}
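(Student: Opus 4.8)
The plan is to deduce the theorem from its finite-level counterpart, Theorem~\ref{Affinity}, by passing to the inductive limit over $m$. Throughout, write $\Da^{(m)} := \widehat{\Da}^{(m,k)}_{\mathfrak{X},\lambda,\Q}$ and $D^{(m)} := \widehat{D}^{(m)}(\G(k))_{\lambda}\otimes_{\mathfrak{o}}L$, so that $\Da^{\dag}_{\mathfrak{X},k}(\lambda) = \varinjlim_m \Da^{(m)}$, $D^{\dag}(\G(k))_{\lambda} = \varinjlim_m D^{(m)}$, and, by Theorem~\ref{C.G sections}, $H^{0}(\mathfrak{X},\Da^{(m)}) = D^{(m)}$ compatibly with the transition maps. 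The first task is to record two structural facts: the transition morphisms $D^{(m)}\to D^{(m+1)}$ are left and right flat with each $D^{(m)}$ noetherian (so that $D^{\dag}(\G(k))_{\lambda}$ is coherent and finitely presented modules over it form an abelian category), and the transition morphisms of sheaves $\Da^{(m)}\to\Da^{(m+1)}$ are flat with each $\Da^{(m)}$ coherent (Proposition~\ref{locally_hatD}), so that $\Da^{\dag}_{\mathfrak{X},k}(\lambda)$ is a coherent sheaf of rings. These are the twisted, congruence-level analogues of the corresponding facts in Berthelot's theory of $\Da^{\dag}$ and in \cite{HS1}; I would check them on the local description furnished by Proposition~\ref{algebraic_local_desc}, which reduces the twisted case to the untwisted one already handled in \cite{HPSS} and \cite{HS1}, or alternatively via Proposition~\ref{canonical_lambda}.

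Next I would prove the ``spreading-out'' statements. On the algebra side: every finitely presented $D^{\dag}(\G(k))_{\lambda}$-module $M$ is of the form $D^{\dag}(\G(k))_{\lambda}\otimes_{D^{(m)}} M_m$ for some $m$ and some finitely generated $D^{(m)}$-module $M_m$ (spread out a finite presentation, using flatness to see the base change remains finitely presented and to identify $\text{Hom}$-modules in the limit). On the geometric side: using Proposition~\ref{prop 3.3.1} (finite free resolutions) together with flatness of the transition maps and quasi-compactness of $\mathfrak{X}$, every coherent $\Da^{\dag}_{\mathfrak{X},k}(\lambda)$-module $\Ma$ has the form $\Da^{\dag}_{\mathfrak{X},k}(\lambda)\otimes_{\Da^{(m)}}\Ma_m$ with $\Ma_m$ coherent over $\Da^{(m)}$; moreover, since $\mathfrak{X}$ is noetherian (hence its cohomology commutes with filtered inductive limits of sheaves), $H^{i}(\mathfrak{X},\Ma) = \varinjlim_{m'\ge m} H^{i}\bigl(\mathfrak{X},\Da^{(m')}\otimes_{\Da^{(m)}}\Ma_m\bigr)$. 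In particular $H^{i}(\mathfrak{X},\Ma)=0$ for $i>0$ by Theorem~\ref{Resolution_D_Q}(ii) at each finite level, so $H^{0}(\mathfrak{X},-)$ is exact on coherent $\Da^{\dag}_{\mathfrak{X},k}(\lambda)$-modules, and $H^{0}(\mathfrak{X},\Ma) = \varinjlim_{m'} H^{0}(\mathfrak{X},\Ma_{m'})$ is finitely presented over $D^{\dag}(\G(k))_{\lambda}$.

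With these preparations the equivalence is formal. One checks that the two functors are compatible with the base-change maps: $\La oc^{\dag}_{\mathfrak{X},k}(\lambda)(M) \simeq \Da^{\dag}_{\mathfrak{X},k}(\lambda)\otimes_{\Da^{(m)}}\La oc^{(m,k)}_{\mathfrak{X},\lambda}(M_m)$ directly from the definition of the localization presheaf, while $H^{0}(\mathfrak{X},-)$ commutes with the colimit over $m'$ by the interchange above; also $\La oc^{\dag}_{\mathfrak{X},k}(\lambda)$ is exact because $\Da^{\dag}_{\mathfrak{X},k}(\lambda)$ is flat over $D^{\dag}(\G(k))_{\lambda}$ (reduce to finite level, where this follows from Theorem~\ref{C.G sections} and \cite{Huyghe1}). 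The unit $M\to H^{0}(\mathfrak{X},\La oc^{\dag}_{\mathfrak{X},k}(\lambda)(M))$ is then the colimit over $m'$ of the finite-level units $M_{m'}\to H^{0}(\mathfrak{X},\La oc^{(m',k)}_{\mathfrak{X},\lambda}(M_{m'}))$, each an isomorphism by Theorem~\ref{Affinity}, hence itself an isomorphism; dually, the counit $\La oc^{\dag}_{\mathfrak{X},k}(\lambda)(H^{0}(\mathfrak{X},\Ma))\to\Ma$ is obtained by applying the flat functor $\Da^{\dag}_{\mathfrak{X},k}(\lambda)\otimes_{\Da^{(m)}}(-)$ to the finite-level counit, which is an isomorphism by Theorem~\ref{Affinity}. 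This shows $\La oc^{\dag}_{\mathfrak{X},k}(\lambda)$ and $H^{0}(\mathfrak{X},-)$ are quasi-inverse equivalences.

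The main obstacle will be the second paragraph: establishing the spreading-out statements and, most delicately, the flatness of the transition maps $\Da^{(m)}\to\Da^{(m+1)}$ together with the interchange of cohomology with the filtered colimit over $m$. Once these are secured, the argument reduces cleanly to Theorem~\ref{Affinity}, following the pattern of \cite{HPSS} and of Berthelot's passage from level $m$ to $\dag$; the only genuinely new ingredient is the local triviality of the twist supplied by Proposition~\ref{algebraic_local_desc}, which lets the untwisted coherence and flatness results be imported verbatim.
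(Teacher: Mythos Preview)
Your proposal is correct and follows essentially the same route as the paper: you establish flatness of the transition maps $\Da^{(m)}\to\Da^{(m+1)}$ by reducing to the untwisted case via Proposition~\ref{algebraic_local_desc} (the paper does exactly this in Proposition~\ref{coh. dag}), you spread out coherent $\Da^{\dag}_{\mathfrak{X},k}(\lambda)$-modules to finite level (the paper records this as Lemma~\ref{Dag_to_m}, citing \cite[3.6.2]{Berthelot1}), and you get vanishing of higher cohomology by passing to the colimit from Theorem~\ref{Resolution_D_Q}. The only cosmetic difference is in the final step: the paper argues the unit and counit directly at the $\dag$-level by taking a finite presentation and using exactness of $H^{0}$ together with right-exactness of $\La oc^{\dag}$ (a five-lemma argument, as in \cite[Corollary 2.3.7]{Huyghe2}), whereas you identify the unit and counit as filtered colimits of the finite-level isomorphisms from Theorem~\ref{Affinity}; both are standard and equivalent.
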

\justify
Let us start by recalling the following proposition \cite[Proposition 3.6.1]{Berthelot1}.
\begin{prop}\label{coh_proj_lim} Let $Y$ be a topological space, and $\{\Db_{i}\}_{i\in J}$ be a filtered inductive system of coherent sheaves of rings on $Y$, such that for any $i\le j$ the morphisms $\Db_i\rightarrow\Db_j$ are flat. Then the sheaf $\Db^{\dag}:=\varinjlim_{i\in J}\Db_{i}$ is a coherent sheaf of rings.
\end{prop}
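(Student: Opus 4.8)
The plan is to check directly that $\Db^{\dag}$ satisfies the defining requirement of a coherent sheaf of rings, namely that it is coherent as a module over itself; since $\Db^{\dag}$ is trivially of finite type over itself, this reduces to showing that for every open $U\subseteq Y$, every $p\geq 0$ and every morphism $\psi:(\Db^{\dag}|_U)^{p}\rightarrow\Db^{\dag}|_U$ of left $\Db^{\dag}|_U$-modules, the kernel $\text{Ker}(\psi)$ is of finite type (the argument for right modules being identical). First I would record the two ingredients provided by the filtered colimit. Because $J$ is filtered, sheafification preserves stalks, and stalks commute with colimits, one has $(\Db^{\dag})_y=\varinjlim_{i}(\Db_i)_y$ for each $y\in Y$; and since filtered colimits of abelian sheaves are exact, the formation of $\varinjlim_i$ commutes stalkwise with kernels and with tensor products. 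Secondly, fixing any $i_0\in J$, the subsystem $(\Db_j)_{j\geq i_0}$ is cofinal, each transition map $\Db_{i_0}\rightarrow\Db_j$ is flat by hypothesis, and a filtered colimit of flat modules is flat (flatness being a stalkwise condition); hence $\Db^{\dag}$ is flat as a sheaf of left $\Db_{i_0}$-modules.

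Next I would carry out the local reduction. The morphism $\psi$ is determined by the sections $s_1,\dots,s_p\in\Db^{\dag}(U)$ that are the images of the standard basis. Since $\Db^{\dag}=\varinjlim_i\Db_i$ is the sheafification of the presheaf colimit $V\mapsto\varinjlim_i\Db_i(V)$, each $s_k$ is, on some neighbourhood of any given point, the image of a section of some $\Db_{i_k}$; using that $J$ is filtered and that there are only finitely many $s_k$, I may shrink $U$ to an open $V$ and choose a single index $i_0$ together with sections $t_1,\dots,t_p\in\Db_{i_0}(V)$ lifting $s_1|_V,\dots,s_p|_V$. Let $\psi_0:(\Db_{i_0}|_V)^{p}\rightarrow\Db_{i_0}|_V$ be the corresponding morphism and set $\mathcal{R}_0:=\text{Ker}(\psi_0)$. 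Since $\Db_{i_0}$ is a coherent sheaf of rings, $\mathcal{R}_0$ is of finite type over $\Db_{i_0}|_V$.

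Finally I would identify $\text{Ker}(\psi|_V)$ with $\Db^{\dag}\otimes_{\Db_{i_0}}\mathcal{R}_0$. Applying the functor $\Db^{\dag}\otimes_{\Db_{i_0}}(-)$, exact by the flatness established above, to the exact sequence $0\rightarrow\mathcal{R}_0\rightarrow(\Db_{i_0}|_V)^{p}\xrightarrow{\psi_0}\Db_{i_0}|_V$ yields an exact sequence $0\rightarrow\Db^{\dag}\otimes_{\Db_{i_0}}\mathcal{R}_0\rightarrow(\Db^{\dag}|_V)^{p}\xrightarrow{\psi}\Db^{\dag}|_V$, where the identifications $\Db^{\dag}\otimes_{\Db_{i_0}}(\Db_{i_0}|_V)^{p}=(\Db^{\dag}|_V)^{p}$, $\Db^{\dag}\otimes_{\Db_{i_0}}\Db_{i_0}=\Db^{\dag}$ and the fact that the induced map is $\psi|_V$ follow from $t_k\mapsto s_k|_V$ (checked on stalks via the first paragraph). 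Hence $\text{Ker}(\psi|_V)\cong\Db^{\dag}|_V\otimes_{\Db_{i_0}}\mathcal{R}_0$, and applying the right-exact functor $\Db^{\dag}\otimes_{\Db_{i_0}}(-)$ to a local surjection $(\Db_{i_0})^{q}\twoheadrightarrow\mathcal{R}_0$ produces a local surjection $(\Db^{\dag})^{q}\twoheadrightarrow\text{Ker}(\psi|_V)$; so $\text{Ker}(\psi)$ is of finite type near the chosen point, hence on all of $U$. As $U$ and $\psi$ were arbitrary, $\Db^{\dag}$ is a coherent sheaf of rings. The only genuinely delicate step is the bookkeeping of the first paragraph — the compatibility of $\varinjlim_i$ with sheafification, stalks, kernels and $\otimes$, together with the observation that the flatness hypothesis is exactly what makes $\Db^{\dag}\otimes_{\Db_{i_0}}(-)$ preserve the relevant short exact sequence; granting this, the argument is simply the sheaf-theoretic form of the classical fact that a filtered union of coherent rings along flat maps is coherent.
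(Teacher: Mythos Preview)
Your argument is correct and is precisely the standard direct verification: descend a finite presentation locally to some $\Db_{i_0}$, use coherence of $\Db_{i_0}$ to get a finitely generated kernel there, and transport it back via the flatness of $\Db^{\dag}$ over $\Db_{i_0}$. The paper itself does not prove this proposition; it simply quotes it from \cite[Proposition 3.6.1]{Berthelot1}, and Berthelot's proof there follows the same strategy you wrote out.
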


\begin{prop}\label{coh. dag}
The sheaf of rings $\Da^{\dag}_{\mathfrak{X},k}(\lambda)$ is coherent. 
\end{prop}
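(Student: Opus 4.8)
The plan is to present $\Da^{\dag}_{\mathfrak{X},k}(\lambda)$ as a filtered inductive limit of coherent sheaves of rings with flat transition morphisms, and then to apply proposition \ref{coh_proj_lim}. Since $\bullet\otimes_{\mathfrak{o}}L$ commutes with filtered inductive limits, one has
\begin{eqnarray*}
\Da^{\dag}_{\mathfrak{X},k}(\lambda)=\varinjlim_{m\in\N}\widehat{\Da}^{(m,k)}_{\mathfrak{X},\lambda,\Q},
\end{eqnarray*}
and by proposition \ref{locally_hatD} (ii) every sheaf $\widehat{\Da}^{(m,k)}_{\mathfrak{X},\lambda,\Q}$ is coherent. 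Hence the only point left to prove is that for each $m$ the canonical morphism $\widehat{\Da}^{(m,k)}_{\mathfrak{X},\lambda,\Q}\rightarrow\widehat{\Da}^{(m+1,k)}_{\mathfrak{X},\lambda,\Q}$ is flat.

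For the flatness I would work locally over the basis $\Ba$ of proposition \ref{locally_hatD}. If $\mathfrak{U}\in\Ba$ is the formal completion of some $U\in\Sb$, the chain of isomorphisms used in the proof of proposition \ref{locally_hatD} --- which rests on the local trivialisation $\theta\colon\Db^{(m,k)}_{U}(\lambda)\xrightarrow{\simeq}\Db^{(m,k)}_{U}$ of proposition \ref{algebraic_local_desc} --- identifies $\widehat{\Da}^{(m,k)}_{\mathfrak{X},\lambda}(\mathfrak{U})$ with the untwisted ring $\widehat{\Da}^{(m,k)}_{\mathfrak{X}}(\mathfrak{U})$. Since $\theta$ is given by the level-independent assignment $\varpi^{k|\underline{v}|}f\,\alpha\otimes\underline{\partial}^{<\underline{v}>}\otimes\alpha^{\vee}\mapsto\varpi^{k|\underline{v}|}f\,\underline{\partial}^{<\underline{v}>}$, these identifications are compatible with the transition maps as $m$ varies. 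After inverting $\varpi$, flatness of $\widehat{\Da}^{(m,k)}_{\mathfrak{X},\lambda,\Q}(\mathfrak{U})\rightarrow\widehat{\Da}^{(m+1,k)}_{\mathfrak{X},\lambda,\Q}(\mathfrak{U})$ thus reduces to the corresponding statement for the untwisted sheaves $\widehat{\Da}^{(m,k)}_{\mathfrak{X},\Q}$, which is established in the work of Berthelot \cite{Berthelot1} and, with congruence level, in \cite{HSS}. Because the rings at hand are noetherian over the members of $\Ba$ (proposition \ref{locally_hatD} (i)), checking flatness on these sections is enough to conclude that the sheaf morphism is flat, and proposition \ref{coh_proj_lim} applied to the system $(\widehat{\Da}^{(m,k)}_{\mathfrak{X},\lambda,\Q})_{m\in\N}$ then gives the coherence of $\Da^{\dag}_{\mathfrak{X},k}(\lambda)$.

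The step I expect to be the main obstacle is precisely the flatness of the level-transition morphisms: one must both check that the local trivialisations of proposition \ref{algebraic_local_desc} are compatible with the passage from level $m$ to level $m+1$, and invoke the (standard, but somewhat technical) reduction of sheaf-flatness to flatness on noetherian sections over a basis. Once the untwisted flatness result is granted, the remaining verifications are purely formal and mirror the untwisted treatment in \cite{HPSS}.
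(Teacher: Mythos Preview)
Your proposal is correct and follows essentially the same route as the paper: reduce to proposition \ref{coh_proj_lim}, and verify flatness of the transition maps $\widehat{\Da}^{(m,k)}_{\mathfrak{X},\lambda,\Q}\to\widehat{\Da}^{(m+1,k)}_{\mathfrak{X},\lambda,\Q}$ locally by using the trivialisation of proposition \ref{algebraic_local_desc} to pass to the untwisted sheaves, where the result is \cite[Proposition 2.2.11 (iii)]{HSS}. The paper packages the compatibility of the trivialisations with the level change into a single commutative square (exactly the point you flag as the main obstacle), so your argument and the paper's coincide.
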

\begin{proof}
The previous proposition tells us that we only need to show that the transition morphisms $\widehat{\Da}^{(m,k)}_{\mathfrak{X},\lambda,\Q}\rightarrow \widehat{\Da}^{(m+1,k)}_{\mathfrak{X},k,\Q}$ are flat. As this is a local property we can take $U\in\Sb$ (covering in proposition \ref{algebraic_local_desc}) and to verify this  property over the formal completion $\mathfrak{U}$. In this case, the argument used in the proof of the first part of proposition \ref{locally_hatD} give us the following commutative diagram  

\begin{eqnarray*}
\begin{tikzcd}
\widehat{\Da}^{(m,k)}_{\mathfrak{X},\lambda,\Q}(\mathfrak{U}) \arrow[r] \arrow[d,"\simeqd"]
& \widehat{ \Da}^{(m+1,k)}_{\mathfrak{X},\lambda,\Q}(\mathfrak{U}) \arrow[d,"\simeqd"] \\
\widehat{\Da}^{(m,k)}_{\mathfrak{X},\Q}(\mathfrak{U}) \arrow[r]
&  \widehat{\Da}^{(m+1,k)}_{\mathfrak{X},\Q}(\mathfrak{U})  
\end{tikzcd}
\end{eqnarray*}
The flatness theorem \cite[Proposition 2.2.11 (iii)]{HSS} states that the lower morphism is flat and so is the morphism on the top.
\end{proof}

\begin{lem}\label{Dag_to_m}
For every coherent $\Da^{\dag}_{\mathfrak{X},k}(\lambda)$-module $\Ea$ there exists $m\ge 0$, a coherent $\widehat{\Da}^{(m,k)}_{\mathfrak{X},\lambda,\Q}$-module $\Ea_m$ and an isomorphism of $\Da^{\dag}_{\mathfrak{X},k}(\lambda)$-modules
\begin{eqnarray*}
\tau: \Da^{\dag}_{\mathfrak{X},k}(\lambda)\otimes_{\widehat{\Da}^{(m,k)}_{\mathfrak{X},\lambda,\Q}}\Ea_{m}\xrightarrow{\simeq}\Ea.
\end{eqnarray*}
Moreover, if $(m',\Ea_{m'},\tau')$ is another such triple, then there exists $l\ge\text{max}\{m,m'\}$ and an isomorphism of $\widehat{\Da}^{(l,k)}_{\mathfrak{X},\lambda,\Q}$-modules
\begin{eqnarray*}
\tau_{l}:\widehat{\Da}^{(l,k)}_{\mathfrak{X},\lambda,\Q}\otimes_{\widehat{\Da}_{\mathfrak{X},\lambda,\Q}^{(m,k)}}\Ea_{m}\xrightarrow{\simeq} \widehat{\Da}^{(l,k)}_{\mathfrak{X},\lambda,\Q}\otimes_{\widehat{\Da}_{\mathfrak{X},\lambda,\Q}^{(m',k)}}\Ea_{m'}
\end{eqnarray*}
such that $\tau'\circ\left(id_{\Da^{\dag}_{\mathfrak{X},k}(\lambda)}\otimes\tau_l\right)=\tau$.
\end{lem}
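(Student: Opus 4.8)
The plan is to adapt the argument of \cite[Proposition 3.6.2]{Berthelot1} to the present twisted setting with congruence level. Throughout write $A^{(m)}:=\widehat{\Da}^{(m,k)}_{\mathfrak{X},\lambda,\Q}$ and $A^{\dag}:=\Da^{\dag}_{\mathfrak{X},k}(\lambda)=\varinjlim_m A^{(m)}$. The facts I would rely on are: each $A^{(m)}$ is coherent (Proposition \ref{locally_hatD}); the transition maps $A^{(m)}\to A^{(m+1)}$ are flat (established inside the proof of Proposition \ref{coh. dag}), so $A^{\dag}$ is flat over every $A^{(m)}$ and the functor $A^{\dag}\otimes_{A^{(m)}}(\bullet)$ is exact; $\mathfrak{X}$ is noetherian, hence quasi-compact, and admits the basis $\Ba$ of affine opens with noetherian sections from Proposition \ref{locally_hatD}.

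For the existence part I would first fix a finite cover $\mathfrak{U}_1,\dots,\mathfrak{U}_n$ of $\mathfrak{X}$ by opens of $\Ba$. On each $\mathfrak{U}_i$ the coherent $A^{\dag}|_{\mathfrak{U}_i}$-module $\Ea|_{\mathfrak{U}_i}$ admits a finite presentation $(A^{\dag}|_{\mathfrak{U}_i})^{b_i}\xrightarrow{\varphi_i}(A^{\dag}|_{\mathfrak{U}_i})^{a_i}\to\Ea|_{\mathfrak{U}_i}\to 0$, and each $\varphi_i$ is given by a finite matrix over $A^{\dag}(\mathfrak{U}_i)=\varinjlim_m A^{(m)}(\mathfrak{U}_i)$. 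Since there are finitely many matrix entries and finitely many indices, there is a single $m$ for which all $\varphi_i$ are defined over $A^{(m)}$; set $\Ea_{m,i}:=\operatorname{coker}(\varphi_i)$, a coherent $A^{(m)}|_{\mathfrak{U}_i}$-module with $A^{\dag}\otimes_{A^{(m)}}\Ea_{m,i}\simeq\Ea|_{\mathfrak{U}_i}$ by exactness of $A^{\dag}\otimes_{A^{(m)}}(\bullet)$. The gluing data of $\Ea$ then furnishes isomorphisms $A^{\dag}\otimes_{A^{(m)}}\Ea_{m,i}\xrightarrow{\simeq}A^{\dag}\otimes_{A^{(m)}}\Ea_{m,j}$ over $\mathfrak{U}_i\cap\mathfrak{U}_j$, and I would use the finite-level detection lemma below — enlarging $m$ once for each of the finitely many double and triple intersections — to descend these to a cocycle of isomorphisms $\Ea_{m,i}|_{\mathfrak{U}_i\cap\mathfrak{U}_j}\xrightarrow{\simeq}\Ea_{m,j}|_{\mathfrak{U}_i\cap\mathfrak{U}_j}$ of $A^{(m)}$-modules. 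Gluing produces a coherent $A^{(m)}$-module $\Ea_m$ together with the desired isomorphism $\tau\colon A^{\dag}\otimes_{A^{(m)}}\Ea_m\xrightarrow{\simeq}\Ea$.

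The technical heart is the following detection statement: if $f\colon\Fa\to\Gb$ is a morphism of coherent $A^{(m)}$-modules such that $A^{\dag}\otimes_{A^{(m)}}f$ is an isomorphism, then $A^{(l)}\otimes_{A^{(m)}}f$ is an isomorphism for all $l\gg m$. Here $\ker f$ and $\operatorname{coker}f$ are coherent over $A^{(m)}$ by coherence of $A^{(m)}$, and flatness gives $A^{\dag}\otimes_{A^{(m)}}\ker f=\ker(A^{\dag}\otimes f)=0$ and likewise for the cokernel, so it suffices to show that a coherent $A^{(m)}$-module $\mathcal{N}$ with $A^{\dag}\otimes_{A^{(m)}}\mathcal{N}=0$ is annihilated by $A^{(l)}\otimes_{A^{(m)}}(\bullet)$ for some $l$. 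Over $\mathfrak{U}\in\Ba$ the module $\mathcal{N}(\mathfrak{U})$ is finitely generated over the noetherian ring $A^{(m)}(\mathfrak{U})$ and $A^{\dag}(\mathfrak{U})\otimes_{A^{(m)}(\mathfrak{U})}\mathcal{N}(\mathfrak{U})=\varinjlim_l A^{(l)}(\mathfrak{U})\otimes\mathcal{N}(\mathfrak{U})=0$, so each of the finitely many generators dies at a finite level; taking the maximum over the generators and over the finite cover yields a uniform $l$. For the uniqueness part I would apply the same circle of ideas to $\tau'^{-1}\circ\tau\colon A^{\dag}\otimes_{A^{(m)}}\Ea_m\xrightarrow{\simeq}A^{\dag}\otimes_{A^{(m')}}\Ea_{m'}$: restricted to the finite cover this isomorphism and its inverse are matrices over $A^{\dag}$, so they are defined over $A^{(l)}$ for $l$ large, and after a further finite enlargement to make the local descents agree on intersections one obtains $\tau_l\colon\widehat{\Da}^{(l,k)}_{\mathfrak{X},\lambda,\Q}\otimes_{A^{(m)}}\Ea_m\xrightarrow{\simeq}\widehat{\Da}^{(l,k)}_{\mathfrak{X},\lambda,\Q}\otimes_{A^{(m')}}\Ea_{m'}$ with $A^{\dag}\otimes_{A^{(l)}}\tau_l=\tau'^{-1}\circ\tau$, i.e. $\tau'\circ(\operatorname{id}_{A^{\dag}}\otimes\tau_l)=\tau$.

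I expect the main obstacle to be exactly the gluing step in the existence argument: passing from the purely local modules $\Ea_{m,i}$ to a single global coherent $A^{(m)}$-module compatible with $\Ea$, while keeping the level $m$ finite. Quasi-compactness of $\mathfrak{X}$ is what makes this possible — only finitely many double and triple overlaps occur, so a finite chain of level-increases suffices both to descend the transition isomorphisms and to realize the cocycle identity at finite level — and the finite-level detection lemma is precisely the tool that licenses each such increase.
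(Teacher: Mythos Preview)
Your proposal is correct and follows essentially the same approach as the paper: both invoke \cite[Proposition 3.6.2 (ii)]{Berthelot1}, with the paper simply citing it after noting that $\mathfrak{X}$ is quasi-compact and separated and that the sheaves $\widehat{\Da}^{(m,k)}_{\mathfrak{X},\lambda,\Q}$ satisfy the conditions in \cite[3.4.1]{Berthelot1}, while you unpack Berthelot's argument and verify the same hypotheses (coherence of $A^{(m)}$, flatness of the transition maps, the noetherian basis $\Ba$) explicitly.
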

\begin{proof}
This is \cite[proposition 3.6.2 (ii)]{Berthelot1}. We remark that $\mathfrak{X}$ is quasi-compact and separated, and the sheaf $\hat{\Da}^{(m,k)}_{\mathfrak{X},\lambda,\Q}$ satisfies the conditions in \cite[3.4.1]{Berthelot1}.
\end{proof}

\begin{prop}
Let $\Ea$ be a coherent $\Da^{\dag}_{\mathfrak{X},k}(\lambda)$-module.
\begin{itemize}
\item[(i)] There exists an integer $r(\Ea)$ such that, for all $r\ge r(\Ea)$ there is $a\in\N$ and an epimorphism of $\Da^{\dag}_{\mathfrak{X},k}(\lambda)$-modules
\begin{eqnarray*}
\left(\Da^{\dag}_{\mathfrak{X},k}(\lambda)(-r)\right)^{\oplus a}\rightarrow \Ea\rightarrow 0.
\end{eqnarray*}
\item[(ii)] For all $i>0$ one has $H^{i}(\mathfrak{X},\Ea)=0$.
\end{itemize}
\end{prop}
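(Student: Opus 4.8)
The plan is to reduce everything to the finite level, where Theorem \ref{Resolution_D_Q} is available, and then pass to the inductive limit over $m$. For part (i), I would first apply Lemma \ref{Dag_to_m} to write $\Ea\simeq\Da^{\dag}_{\mathfrak{X},k}(\lambda)\otimes_{\widehat{\Da}^{(m,k)}_{\mathfrak{X},\lambda,\Q}}\Ea_m$ for some $m\ge 0$ and some coherent $\widehat{\Da}^{(m,k)}_{\mathfrak{X},\lambda,\Q}$-module $\Ea_m$. Theorem \ref{Resolution_D_Q}(i) then provides an integer $r(\Ea_m)$ such that for every $r\ge r(\Ea_m)$ there is an epimorphism $(\widehat{\Da}^{(m,k)}_{\mathfrak{X},\lambda,\Q}(-r))^{\oplus a}\rightarrow\Ea_m\rightarrow 0$. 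Applying the right exact functor $\Da^{\dag}_{\mathfrak{X},k}(\lambda)\otimes_{\widehat{\Da}^{(m,k)}_{\mathfrak{X},\lambda,\Q}}(-)$ and using that the line bundle twist commutes with this base change (associativity of $\otimes$ over $\Ob_{\mathfrak{X}}$), so that $\Da^{\dag}_{\mathfrak{X},k}(\lambda)\otimes_{\widehat{\Da}^{(m,k)}_{\mathfrak{X},\lambda,\Q}}\widehat{\Da}^{(m,k)}_{\mathfrak{X},\lambda,\Q}(-r)\simeq\Da^{\dag}_{\mathfrak{X},k}(\lambda)(-r)$, yields the desired epimorphism $(\Da^{\dag}_{\mathfrak{X},k}(\lambda)(-r))^{\oplus a}\rightarrow\Ea\rightarrow 0$, and one sets $r(\Ea):=r(\Ea_m)$.

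For part (ii) I would first reduce to a vanishing statement for the twisted free modules. Iterating part (i), and using that $\Da^{\dag}_{\mathfrak{X},k}(\lambda)$ is coherent (Proposition \ref{coh. dag}), so that kernels of maps of coherent modules are coherent, one builds a resolution
\begin{eqnarray*}
\cdots\rightarrow\left(\Da^{\dag}_{\mathfrak{X},k}(\lambda)(-r_1)\right)^{\oplus a_1}\rightarrow\left(\Da^{\dag}_{\mathfrak{X},k}(\lambda)(-r_0)\right)^{\oplus a_0}\rightarrow\Ea\rightarrow 0.
\end{eqnarray*}
Since the underlying space of $\mathfrak{X}$ is that of the noetherian scheme $X_0$ of dimension $d=\dim X$, Grothendieck vanishing gives $H^i(\mathfrak{X},-)=0$ for $i>d$; truncating the resolution after $d$ steps (the $d$-th syzygy is coherent) and chasing the long exact cohomology sequences of the resulting short exact sequences, it suffices to show $H^i(\mathfrak{X},\Da^{\dag}_{\mathfrak{X},k}(\lambda)(-r))=0$ for all $i>0$ and all $r\in\Z$. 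Here I would use that $\Da^{\dag}_{\mathfrak{X},k}(\lambda)(-r)=\varinjlim_{m}\widehat{\Da}^{(m,k)}_{\mathfrak{X},\lambda,\Q}(-r)$ and that cohomology on the noetherian space $\mathfrak{X}$ commutes with filtered colimits, so the cohomology in question is $\varinjlim_m H^i(\mathfrak{X},\widehat{\Da}^{(m,k)}_{\mathfrak{X},\lambda,\Q}(-r))$. Each term is the cohomology of the $\varpi$-adic completion of the coherent $\Db^{(m,k)}_{X,\lambda}$-module $\Db^{(m,k)}_{X,\lambda}(-r)$, tensored with $L$; Proposition \ref{Properties_completion}(ii) identifies it, for $i>0$, with $H^i(X,\Db^{(m,k)}_{X,\lambda}(-r))\otimes_{\mathfrak{o}}L$, and Proposition \ref{bounded_coh_Int}(i) says this group has bounded $p$-torsion, hence is killed by $\otimes_{\mathfrak{o}}L$.

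The main obstacle is concentrated in this last step, and it is the only place where the hypothesis that $\lambda+\rho$ is dominant and regular is used: it enters through the classical Beilinson--Bernstein theorem invoked in Proposition \ref{bounded_coh_Int} to force $H^{>0}(X,\Db^{(m,k)}_{X,\lambda}(-r))$ to be $p$-torsion, combined with the finiteness over $D^{(m)}(\G(k))$ from Proposition \ref{Result_HS} that makes this torsion bounded. Everything else --- the descent to finite level, the construction of the free resolution, and the limit and dévissage manipulations --- is formal.
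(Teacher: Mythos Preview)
Your argument for part (i) is exactly the paper's: descend to level $m$ via Lemma \ref{Dag_to_m}, apply Theorem \ref{Resolution_D_Q}(i) to $\Ea_m$, and tensor the resulting epimorphism up to the $\dag$-level.

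For part (ii) your argument is correct but takes a longer route than the paper. The paper observes that the same descent $\Ea\simeq\Da^{\dag}_{\mathfrak{X},k}(\lambda)\otimes_{\widehat{\Da}^{(m,k)}_{\mathfrak{X},\lambda,\Q}}\Ea_m$ exhibits $\Ea$ itself as the filtered colimit $\varinjlim_{l\ge m}\bigl(\widehat{\Da}^{(l,k)}_{\mathfrak{X},\lambda,\Q}\otimes_{\widehat{\Da}^{(m,k)}_{\mathfrak{X},\lambda,\Q}}\Ea_m\bigr)$, and each term here is already a coherent $\widehat{\Da}^{(l,k)}_{\mathfrak{X},\lambda,\Q}$-module, so Theorem \ref{Resolution_D_Q}(ii) kills its higher cohomology directly; commuting $H^i$ with the colimit on the noetherian space $\mathfrak{X}$ finishes. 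You instead first build a resolution by twisted free $\Da^{\dag}_{\mathfrak{X},k}(\lambda)$-modules, d\'evisse via Grothendieck vanishing to reduce to the case $\Ea=\Da^{\dag}_{\mathfrak{X},k}(\lambda)(-r)$, and only then pass to the colimit --- at which point you essentially re-derive Theorem \ref{Resolution_D_Q}(ii) for $\widehat{\Da}^{(m,k)}_{\mathfrak{X},\lambda,\Q}(-r)$ by hand through Propositions \ref{Properties_completion} and \ref{bounded_coh_Int}. Both approaches ultimately rest on the same finite-level input (and hence on the dominant-regular hypothesis in the same place), but the paper's avoids the resolution and the dimension-shifting d\'evissage entirely by applying the colimit trick to $\Ea$ rather than to the terms of a resolution.
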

\begin{proof}\footnote{This is exactly as in \cite[theorem 4.2.8]{HPSS}.}
Let $\Ea$ be a coherent $\Da^{\dag}_{\mathfrak{X},k}(\lambda)$-coherent module. The preceding proposition tells us that there exists $m\in\N$, a coherent $\widehat{\Da}^{(m,k)}_{\mathfrak{X},\lambda,\Q}$-module $\Ea_m$ and an isomorphism of $\Da^{\dag}_{\mathfrak{X},k}(\lambda)$-modules
\begin{eqnarray*}
\tau:\Da^{\dag}_{\mathfrak{X},k}(\lambda)\otimes_{\widehat{\Da}^{(m,k)}_{\mathfrak{X},\lambda,Q}}\Ea_m\xrightarrow{\simeq}\Ea.
\end{eqnarray*} 
Now we use proposition \ref{Resolution_D_Q} for $\Ea_m$ and we get the desired surjection in (i) after tensoring with $\Da^{\dag}_{\mathfrak{X},k}(\lambda)$. To show (ii) we may use the fact that, as $\mathfrak{X}$ is a noetherian topological space, cohomology commutes with direct limites. Therefore, given that $\widehat{\Da}^{(l,k)}_{\mathfrak{X},\lambda,\Q}\otimes_{{\widehat{\Da}^{(m,k)}_{\mathfrak{X},\lambda,\Q}}}\Ea_{m}$ is a coherent $\Da^{(l,k)}_{\mathfrak{X},\lambda,\Q}$-module for every $l\ge m$, we have for every $i>0$
\begin{eqnarray*}
H^{i}(\mathfrak{X},\Ea)=\varinjlim_{l\ge m}H^{i}\left(\mathfrak{X},\widehat{\Da}^{(l,k)}_{\mathfrak{X},\lambda,\Q}\otimes_{{\widehat{\Da}^{(m,k)}_{\mathfrak{X},\lambda,\Q}}}\Ea_{m}\right)=0.
\end{eqnarray*}
\end{proof}

\begin{prop}\label{generated_by_global_sect}
Let $\Ea$ be a coherent $\Da^{\dag}_{\mathfrak{X},k}(\lambda)$-module. Then $\Ea$ is generated by its global sections as $\Da^{\dag}_{\mathfrak{X},k}(\lambda)$-module. Moreover, $\Ea$ has a resolution by finite free $\Da^{\dag}_{\mathfrak{X},k}(\lambda)$-modules and $H^0(\mathfrak{X},\Ea)$ is a $D^{\dag}(\G(k))_{\lambda}\otimes_{\mathfrak{o}}L$-module of finite presentation. 
\end{prop}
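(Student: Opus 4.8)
The plan is to descend $\Ea$ to a single finite level $m$ and then transport the corresponding finite-level assertions along the flat morphism $\widehat{\Da}^{(m,k)}_{\mathfrak{X},\lambda,\Q}\to\Da^{\dag}_{\mathfrak{X},k}(\lambda)$. First I would apply Lemma \ref{Dag_to_m} to obtain an integer $m\ge 0$, a coherent $\widehat{\Da}^{(m,k)}_{\mathfrak{X},\lambda,\Q}$-module $\Ea_m$, and an isomorphism $\Ea\simeq\Da^{\dag}_{\mathfrak{X},k}(\lambda)\otimes_{\widehat{\Da}^{(m,k)}_{\mathfrak{X},\lambda,\Q}}\Ea_m$ of $\Da^{\dag}_{\mathfrak{X},k}(\lambda)$-modules. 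By Proposition \ref{prop 3.3.1}, $\Ea_m$ is generated by its global sections and admits a resolution by finite free $\widehat{\Da}^{(m,k)}_{\mathfrak{X},\lambda,\Q}$-modules, and by Theorem \ref{Affinity} the module $M_m:=H^{0}(\mathfrak{X},\Ea_m)$ is finitely generated over $\widehat{D}^{(m)}(\G(k))_{\lambda}\otimes_{\mathfrak{o}}L$ with $\Ea_m\simeq\La oc^{(m,k)}_{\mathfrak{X},\lambda}(M_m)$. Since $D^{(m)}(\G(k))$ is noetherian (Proposition \ref{Result_HS}(i)), so is its $\varpi$-adic completion and hence $\widehat{D}^{(m)}(\G(k))_{\lambda}\otimes_{\mathfrak{o}}L$; thus $M_m$ is in fact of finite presentation.

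Next I would recall, as in the proof of Proposition \ref{coh. dag}, that the transition maps $\widehat{\Da}^{(m,k)}_{\mathfrak{X},\lambda,\Q}\to\widehat{\Da}^{(m+1,k)}_{\mathfrak{X},\lambda,\Q}$ are flat, hence so is $\widehat{\Da}^{(m,k)}_{\mathfrak{X},\lambda,\Q}\to\Da^{\dag}_{\mathfrak{X},k}(\lambda)$. Applying $\Da^{\dag}_{\mathfrak{X},k}(\lambda)\otimes_{\widehat{\Da}^{(m,k)}_{\mathfrak{X},\lambda,\Q}}(-)$ to a finite free resolution of $\Ea_m$ then produces a resolution of $\Ea$ by finite free $\Da^{\dag}_{\mathfrak{X},k}(\lambda)$-modules; in particular the image of $M_m$ under $\Ea_m\to\Ea$ consists of global sections of $\Ea$ and generates it over $\Da^{\dag}_{\mathfrak{X},k}(\lambda)$, so $\Ea$ is generated by its global sections.

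Finally, for the identification of $H^{0}(\mathfrak{X},\Ea)$ I would write $\Ea=\varinjlim_{l\ge m}\widehat{\Da}^{(l,k)}_{\mathfrak{X},\lambda,\Q}\otimes_{\widehat{\Da}^{(m,k)}_{\mathfrak{X},\lambda,\Q}}\Ea_m$ and use that $\mathfrak{X}$ is a noetherian topological space, so that $H^{0}(\mathfrak{X},-)$ commutes with this filtered colimit. Each term is coherent over $\widehat{\Da}^{(l,k)}_{\mathfrak{X},\lambda,\Q}$ (base change of the finite free resolution), and I would identify it with $\La oc^{(l,k)}_{\mathfrak{X},\lambda}$ of the base change $\widehat{D}^{(l)}(\G(k))_{\lambda}\otimes_{\mathfrak{o}}L\otimes_{\widehat{D}^{(m)}(\G(k))_{\lambda}\otimes_{\mathfrak{o}}L}M_m$, so that Theorem \ref{Affinity} computes its global sections to be exactly this base change, compatibly in $l$. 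Passing to the colimit gives $H^{0}(\mathfrak{X},\Ea)\simeq D^{\dag}(\G(k))_{\lambda}\otimes_{\widehat{D}^{(m)}(\G(k))_{\lambda}\otimes_{\mathfrak{o}}L}M_m$, which is of finite presentation over $D^{\dag}(\G(k))_{\lambda}$ because $M_m$ is and finite presentation is preserved by base change. The step I expect to be the main obstacle is precisely the compatibility of the localization functors $\La oc^{(l,k)}_{\mathfrak{X},\lambda}$ with the flat base-change maps $\widehat{D}^{(m)}(\G(k))_{\lambda}\to\widehat{D}^{(l)}(\G(k))_{\lambda}$, i.e. the isomorphism $\widehat{\Da}^{(l,k)}_{\mathfrak{X},\lambda,\Q}\otimes_{\widehat{\Da}^{(m,k)}_{\mathfrak{X},\lambda,\Q}}\La oc^{(m,k)}_{\mathfrak{X},\lambda}(M_m)\simeq\La oc^{(l,k)}_{\mathfrak{X},\lambda}\bigl(\widehat{D}^{(l)}(\G(k))_{\lambda}\otimes_{\mathfrak{o}}L\otimes_{\widehat{D}^{(m)}(\G(k))_{\lambda}\otimes_{\mathfrak{o}}L}M_m\bigr)$ together with the resulting commutation of $H^{0}(\mathfrak{X},-)$ with these base changes; once this is in place, the remaining steps are routine applications of the results quoted above.
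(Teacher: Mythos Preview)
Your handling of the first two assertions coincides with the paper's: descend to a fixed level $m$ via Lemma \ref{Dag_to_m} and tensor up the finite free resolution of $\Ea_m$ supplied by Proposition \ref{prop 3.3.1}. For the finite-presentation claim, however, the paper takes a shorter route than your colimit/base-change argument. Once a resolution $(\Da^{\dag}_{\mathfrak{X},k}(\lambda))^{b}\to(\Da^{\dag}_{\mathfrak{X},k}(\lambda))^{a}\to\Ea\to 0$ is in hand, the preceding proposition (part (ii)) gives $H^{i}(\mathfrak{X},-)=0$ for $i>0$ on coherent $\Da^{\dag}_{\mathfrak{X},k}(\lambda)$-modules, so $H^{0}(\mathfrak{X},-)$ is exact there; applying it and using (\ref{Global_sect_dag}) immediately yields a finite presentation $(D^{\dag}(\G(k))_{\lambda})^{b}\to(D^{\dag}(\G(k))_{\lambda})^{a}\to H^{0}(\mathfrak{X},\Ea)\to 0$. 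Thus the compatibility of $\La oc^{(l,k)}_{\mathfrak{X},\lambda}$ with level-raising, which you flag as the main obstacle, is not needed at all. (It is in any case harmless: writing $\Ea_m=\widehat{\Da}^{(m,k)}_{\mathfrak{X},\lambda,\Q}\otimes_{\widehat{D}^{(m)}(\G(k))_{\lambda}\otimes_{\mathfrak{o}}L}M_m$, associativity of tensor products gives the desired identification.) Your argument is correct, just more elaborate than required.
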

\begin{proof}\footnote{This is exactly as in \cite[theorem 5.1]{Huyghe1}.}
Theorem \ref{Dag_to_m} gives us a coherent $\widehat{\Da}^{(m,k)}_{\mathfrak{X},\lambda,\Q}$-module $\Ea_m$ such that $\Ea\simeq\Da^{\dag}_{\mathfrak{X},k}(\lambda)\otimes_{\widehat{\Da}^{(m,k)}_{\mathfrak{X},\lambda,\Q}}\Ea_m$. Moreover, $\Ea_m$ has a resolution by finite free $\widehat{\Da}^{(m,k)}_{\mathfrak{X},\lambda,\Q}$-modules ( proposition \ref{prop 3.3.1}). Both results clearly imply the first and the second part of the lemma. The final part of the lemma is therefore a consequence of the first part and the acyclicity of the the functor $H^{0}(\mathfrak{X},\bullet)$.
\end{proof}

\begin{proof}[Proof of theorem \ref{BB_for_dag}]
All in all, we can follow the same arguments of \cite[corollary 2.3.7]{Huyghe2}. We start by taking 
\begin{eqnarray*}
(D^{\dag}(\G(k))_{\lambda}\otimes_{\mathfrak{o}}L)^{\oplus a}\rightarrow (D^{\dag}(\G(k))_{\lambda}\otimes_{\mathfrak{o}}L)^{\oplus b}\rightarrow E\rightarrow 0
\end{eqnarray*}
a finitely presented $D^{\dag}(\G(k))_{\lambda}\otimes_{\mathfrak{o}}L$-module. By localizing and applying the global sections functor, we obtain a commutative diagram
\begin{eqnarray*}
\begin{tikzcd}
(D^{\dag}(\G(k))_{\lambda}\otimes_{\mathfrak{o}}L)^{\oplus a} \arrow[d] \arrow[r] 
& (D^{\dag}(\G(k))_{\lambda}\otimes_{\mathfrak{o}}L)^{\oplus b} \arrow[d] \arrow[r] 
& E \arrow[d] \arrow[r] 
& 0\\
(D^{\dag}(\G(k))_{\lambda}\otimes_{\mathfrak{o}}L)^{\oplus a}  \arrow[r] 
& (D^{\dag}(\G(k))_{\lambda}\otimes_{\mathfrak{o}}L)^{\oplus b} \arrow[r] 
& H^0(\mathfrak{X},\mathscr Loc_{\mathfrak{X},k}^{\dag}(\lambda)(E)) \arrow[r] 
& 0.
\end{tikzcd} 
\end{eqnarray*}
which tells us that $E\rightarrow H^{0}(\mathfrak{X},\La oc^{\dag}_{\mathfrak{X},k}(\lambda)(E))$ is an isomorphism. To show that if $\Ea$ is coherent $\Da^{\dag}_{\mathfrak{X},k}(\lambda)$-module then the canonical morphism $\Da^{\dag}_{\mathfrak{X},k}(\lambda)\otimes_{D^{\dag}(\G(k))_{\lambda}\otimes_{\mathfrak{o}}L}H^{0}(\mathfrak{X},\Ea)\rightarrow\Ea$  is an isomorphism the reader can follow the same argument as before. As we have remarked, the second assertion follows because any equivalence between abelian categories is exact. 
\end{proof}

\section{Twisted differential operators on formal models of flag varieties}
Through out this section $X=\G/\B$ will denote the smooth flag $\mathfrak{o}$-scheme and $\lambda\in X(\T)=\text{Hom}(\T,\G_m)$ will always denote an algebraic character. As before, we will denote be $\Lb(\lambda)$ the (algebraic) line bundle on $X$ induced by $\lambda$ (subsection \ref{Linearization_alg_gps}). In this section we will generalize the construction given in \cite{HPSS} by introducing sheaves of twisted differential operators on an admissible blow-up of the smooth formal flag $\mathfrak{o}$-scheme $\mathfrak{X}$. The reader will figure out that some reasoning are inspired in the results of Huyghe-Patel-Strauch-Schmidt in \cite{HPSS}.

\subsection{Differential operators on admissible blow-ups}\label{diff_on_adm_bup}
\justify
 We start with the following definition.
\begin{defi}
Let $\Ib\subseteq\Ob_X$ be a coherent ideal sheaf. We say that a blow-up $\text{pr}: Y\rightarrow X$ along the closed subset $V(\Ib)$ is admissible if there is $k\in\N$ such that $\varpi^k\Ob_X\subseteq \Ib$.
\end{defi} 
\justify
Let us fix $\Ib\subseteq \Ob_X$ an open ideal and $\text{pr}:Y\rightarrow X$ an admissible blow-up along $V(\Ib)$. We point out to the reader that $\Ib$ is not uniquely determined by the space $Y$. In the sequel we will denote by
\begin{eqnarray*}
k_Y:= \underset{\Ib}{\text{min}}\;\text{min}\{k\in\N\;|\; \varpi^k\in\Ib\},
\end{eqnarray*}
where the first minimum runs over all open ideal sheaves $\Ib$ such that the blow-up along $V(\Ib)$ is isomorphic to $Y$.
\justify
Now, as $\Ib$ is an open ideal sheaf, the blow-up induces a canonical isomorphism $Y_L\simeq X_L$ between the generic fibers. Moreover, as $\varpi$ is invertible on $X_L$, we have $\Db^{(m,k)}_{X}|_{X_L}= \Db_{X}|_{X_L}=\Db_{X_L}$, the usual sheaf of (algebraic) differential operators on $X_L$. Therefore $\text{pr}^{-1}\left(\Db^{(m,k)}_X\right)|_{Y_L}=\Db_{Y_L}$. In particular, $\Ob_{Y_L}$ has a natural structure of (left) $\text{pr}^{-1}\left(\Db^{(m,k)}_X\right)|_{Y_L}$-module. The idea is to find those congruence levels $k\in\N$ such that the preceding structure extends to a module structure on $\Ob_{Y}$ over $\text{pr}^{-1}\left(\Db^{(m,k)}_X\right)$. Let us denote by
\begin{eqnarray}\label{Diff_blow_up}
\Db^{(m,k)}_Y:=\text{pr}^*\left(\Db^{(m,k)}_X\right)=\Ob_{Y}\otimes_{\text{pr}^{-1}\Ob_X}\text{pr}^{-1}\Db^{(m,k)}_X.
\end{eqnarray}
The problem to find those congruence levels was studied in \cite{HPSS} and \cite{HSS}. In fact, we have the following condition \cite[Corollary 2.1.18]{HPSS}.
 
\begin{prop}\label{mult_blow_up}
Let $k\ge k_Y$. The sheaf $\Db^{(m,k)}_Y$ is a sheaf of rings on $Y$. Moreover, it is locally free over  $\Ob_Y$.
\end{prop}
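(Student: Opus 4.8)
The plan is to build the ring structure on $\Db^{(m,k)}_Y$ one affine chart of $Y$ at a time and then glue; local freeness over $\Ob_Y$ is then automatic. Indeed $\Db^{(m,k)}_X$ is locally free over $\Ob_X$ --- clear from the local description $\Db^{(m,k)}_X(U)=\bigoplus_{\underline v}\varpi^{k|\underline v|}\Ob_X(U)\,\underline\partial^{<\underline v>}$, cf.\ also Proposition \ref{graded_lambda} --- so its pullback $\Db^{(m,k)}_Y=\Ob_Y\otimes_{\text{pr}^{-1}\Ob_X}\text{pr}^{-1}\Db^{(m,k)}_X$ is locally free over $\Ob_Y$.

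First I would fix an open ideal sheaf $\Ib\subseteq\Ob_X$ presenting $Y$ as the blow-up of $V(\Ib)$ with $\varpi^{k_Y}\in\Ib$, so that $\varpi^k\Ob_X\subseteq\Ib$ for all $k\ge k_Y$. Over an affine $U=\text{Spec}(A)\subseteq X$ with local coordinates $x_1,\dots,x_N$ and $\Ib(U)=(f_0,\dots,f_r)$, the preimage $\text{pr}^{-1}(U)$ is covered by the affine blow-up charts $V=\text{Spec}(B)$, $B=A[\Ib(U)/f]=\{a/f^n:n\ge 0,\ a\in\Ib(U)^n\}\subseteq A[f^{-1}]$, for $f\in\{f_0,\dots,f_r\}$, and $\Gamma(V,\Db^{(m,k)}_Y)=B\otimes_A\Db^{(m,k)}_X(U)$. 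Restricting $\Db^{(m,k)}_X$ to $X\setminus V(f)$ yields the localised ring $\Db^{(m,k)}_X(U)\otimes_A A[f^{-1}]$, which contains both $B$ and the free $A$-module $\Db^{(m,k)}_X(U)$. The crux is the claim that each basis element $\varpi^{k|\underline v|}\underline\partial^{<\underline v>}$ of $\Db^{(m,k)}_X(U)$ maps $B$ into $B$.

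To prove the claim I would apply the Leibniz rule $\underline\partial^{<\underline v>}(gh)=\sum_{\underline v'+\underline v''=\underline v}\stirlingii{\underline v}{\underline v'}\underline\partial^{<\underline v'>}(g)\,\underline\partial^{<\underline v''>}(h)$ of \cite[Proposition 2.2.4 (iv)]{Berthelot1} (used already in the proof of Proposition \ref{algebraic_local_desc}) to $g=a\in\Ib(U)^n$ and $h=f^{-n}$. Decomposing a generator of $\Ib(U)^n$ as a product of $n$ elements of $\Ib(U)$ and applying Leibniz again gives $\underline\partial^{<\underline w>}(a)\in\Ib(U)^{\max(n-|\underline w|,\,0)}$; and from $\underline\partial^{<\underline w>}(1)=0$ for $\underline w\ne 0$ one obtains inductively $\underline\partial^{<\underline w>}(f^{-n})\in f^{-(n+|\underline w|)}A$. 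Hence each summand of $\varpi^{k|\underline v|}\underline\partial^{<\underline v>}(a/f^n)$ lies in $\varpi^{k|\underline v|}\,\Ib(U)^{\max(n-|\underline v'|,\,0)}\,f^{-(n+|\underline v''|)}A$; since $\varpi^k\in\Ib(U)$ we have $\varpi^{k|\underline v|}\in\Ib(U)^{|\underline v'|+|\underline v''|}$, and combined with the inequality $|\underline v''|+\max(n,|\underline v'|)\ge n+|\underline v''|$ this places the summand in $\Ib(U)^{\,n+|\underline v''|}f^{-(n+|\underline v''|)}A\subseteq B$.

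Granting the claim, the same Leibniz expansion carried out inside $\Db^{(m,k)}_X(U)\otimes_A A[f^{-1}]$ shows, for $b\in B$,
\[
\varpi^{k|\underline v|}\underline\partial^{<\underline v>}\cdot b=\sum_{\underline v'+\underline v''=\underline v}\stirlingii{\underline v}{\underline v'}\bigl(\varpi^{k|\underline v'|}\underline\partial^{<\underline v'>}(b)\bigr)\cdot\bigl(\varpi^{k|\underline v''|}\underline\partial^{<\underline v''>}\bigr)\ \in\ B\cdot\Db^{(m,k)}_X(U),
\]
the first factor of each term lying in $B$ by the claim. So the $B$-submodule $B\cdot\Db^{(m,k)}_X(U)$ is closed under multiplication, hence a subring of $\Db^{(m,k)}_X(U)\otimes_A A[f^{-1}]$; as $\Db^{(m,k)}_X(U)$ is $A$-free it coincides with $B\otimes_A\Db^{(m,k)}_X(U)=\Gamma(V,\Db^{(m,k)}_Y)$. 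These chart-wise ring structures restrict correctly to $\text{pr}^{-1}\Db^{(m,k)}_X$ and to $\Ob_Y$, and they are compatible on overlaps --- over a common affine $U$ all of them sit inside $\Db^{(m,k)}_X(U)$ localised away from $f_0\cdots f_r$ --- so they glue to a sheaf-of-rings structure on $\Db^{(m,k)}_Y$. (Alternatively, one may just invoke \cite[Corollary 2.1.18]{HPSS}.) I expect the only real difficulty to be the claim above: one must keep track simultaneously of the drop in $\Ib$-adic order produced by the divided-power operators and of the growth of the denominators, and the bookkeeping works out precisely because each unit of differentiation order is absorbed by one factor $\varpi^k\in\Ib$ --- which is exactly the content of the hypothesis $k\ge k_Y$.
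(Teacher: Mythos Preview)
The paper does not supply its own proof of this proposition: it simply records the statement and attributes it to \cite[Corollary 2.1.18]{HPSS} (see the sentence immediately preceding Proposition~\ref{mult_blow_up}). Your write-up is therefore not competing against an argument in the paper but against a bare citation --- which you yourself note as the alternative at the end.

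Your self-contained argument is correct and is essentially the computation underlying the cited result. The key step --- showing that $\varpi^{k|\underline v|}\underline\partial^{<\underline v>}$ preserves the affine blow-up ring $B=A[\Ib(U)/f]$ --- is handled properly: the Leibniz formula of \cite[Proposition 2.2.4 (iv)]{Berthelot1} gives the two estimates $\underline\partial^{<\underline w>}(\Ib^n)\subseteq\Ib^{\max(n-|\underline w|,0)}$ and $\underline\partial^{<\underline w>}(f^{-n})\in f^{-(n+|\underline w|)}A$, and the factor $\varpi^{k|\underline v|}\in\Ib^{|\underline v|}$ (valid precisely because $k\ge k_Y$) compensates the loss so that every summand lands in $\Ib^{n+|\underline v''|}f^{-(n+|\underline v''|)}A\subseteq B$. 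The identification $B\cdot\Db^{(m,k)}_X(U)=B\otimes_A\Db^{(m,k)}_X(U)$ then follows from freeness of $\Db^{(m,k)}_X(U)$ over $A$, and gluing is immediate. One small point worth making explicit: to embed $B$ into $A[f^{-1}]$ you are implicitly using that $f$ is a non-zero-divisor on the blow-up chart (equivalently, that the chart has no $\varpi$-torsion, since $\varpi^k\in fB$); this is standard for admissible blow-ups but deserves a word. With that caveat, your argument stands on its own and in fact gives more than the paper, which is content to cite the reference.
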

\justify
Explicitly, if $\partial_1,\partial_2$ are both local sections of $\text{pr}^{-1}\left(\Db^{(m,k)}_X\right)$, and if $f_1, f_2$ are local sections of $\Ob_Y$, then 
\begin{eqnarray*}
(f_1 \otimes \partial_1)\bullet (f_2\otimes\partial_2)=f_1\partial_1(f_2)\otimes\partial_2 + f_1 f_2\otimes \partial_1\partial_2.
\end{eqnarray*} 
\justify
We have all the ingredients that allow us to construct the desired sheaves over $Y$, this is, to extend the sheaves of rings defined in the preceding chapter to an admissible blow-up $Y$ of $X$. Let $k\ge k_Y$ fix. Let us first recall that taking arbitrary sections $P,Q\in\Db^{(m,k)}_{X}$, $s,t \in \Lb(\lambda)$ and $s^{\vee},t^{\vee}\in\Lb(\lambda)^{\vee}$ (the last two not necessarily the duals of $s$ and $t$) over an arbitrary open subset $U\subset X$, the multiplicative structure of the sheaf $\Db^{(m,k)}_{X,\lambda}$ is defined by (cf. (\ref{mt}))
\begin{eqnarray*}
s\otimes P\otimes s^{\vee}\bullet t\otimes Q\otimes t^{\vee}=s\otimes P \left<s^{\vee},t\right>Q\otimes t^{\vee}.
\end{eqnarray*}
Now, if pr$:Y\rightarrow X$ denotes the projection, we put
\begin{eqnarray*}
\Db^{(m,k)}_Y(\lambda):= \text{pr}^{*}\left(\Db^{(m,k)}_{X,\lambda}\right)=  \text{pr}^*\Lb(\lambda)\otimes_{\Ob_Y}\text{pr}^*\Db^{(m,k)}_X\otimes_{\Ob_Y}\text{pr}^*\Lb (\lambda)^{\vee}.
\end{eqnarray*}
\justify
Proposition \ref{mult_blow_up} allows us to endow the sheaf of $\Ob_Y$-modules $\Db^{(m,k)}_Y(\lambda)$ with a multiplicative structure for every $k\ge k_Y$. On local sections we have
\begin{eqnarray*}
s\otimes P\otimes s^{\vee}\bullet t\otimes Q\otimes t^{\vee}=s\otimes P \left<s^{\vee},t\right>Q\otimes t^{\vee},
\end{eqnarray*}
where $s,t\in\text{pr}^{*}\Lb (\lambda)$, $s^{\vee}, t^{\vee}\in \text{pr}^*\Lb (\lambda)^{\vee}$ and $P,Q\in \Db^{(m,k)}_Y$, are local sections.
\justify
Let $\mathfrak{Y}$ be the completion of $Y$ along its special fiber $Y_{\mathbb{F}_q}=Y\times_{\text{Spec}(\mathfrak{o})}\text{Spec}(\mathfrak{o}/\varpi)$.

\begin{Notation}\label{notation_k_y}
In this work we will only consider formal blow-ups $\mathfrak{Y}$ arising from the formal completion along the special fiber of an admissible blow-up $Y\rightarrow X$ \cite[Proposition 2.2.9]{HPSS}. Under this assumption we will identify $k_Y = k_{\mathfrak{Y}}$.  
\end{Notation}

\begin{defi}
Let $\text{pr}:Y\rightarrow X$ be an admissible blow-up of the flag variety $X$ and let $k\ge k_Y$.  The sheaves 
\begin{eqnarray*}
\widehat{\Da}^{(m,k)}_{\mathfrak{Y},\Q}(\lambda):=\left( \varprojlim_{i\in\N} \Db^{(m,k)}_Y(\lambda)/\varpi^{i+1}\Db^{(m,k)}_Y(\lambda)\right)\otimes_{\mathfrak{o}}L\;\;\;\text{and}\;\;\; \Da^{\dag}_{\mathfrak{Y},k}(\lambda):= \varinjlim_{m\in\N} \widehat{\Da}^{(m,k)}_{\mathfrak{Y},\Q}(\lambda).
\end{eqnarray*}
are called sheaves of $\lambda$-twisted arithmetic differential operators on $\mathfrak{Y}$.
\end{defi}

\begin{prop}\label{fini_blow_up}
\begin{itemize}
\item[(i)] The sheaves $\Db^{(m,k)}_{Y}(\lambda)$ are filtered by the order of twisted differential operators and there is a canonical isomorphism of graded sheaves of algebras
\begin{eqnarray*}
\textrm{gr}\left(\Db^{(m,k)}_{Y}(\lambda)\right)\simeq \text{Sym}^{(m)}\left(\varpi^k\text{pr}^*\Tb_{X}\right),
\end{eqnarray*}
where $k\ge k_{Y}$.
\item[(ii)] There is a basis for the topology of $Y$, consisting of affine open subsets, such that for any open subset $U\in Y$ in this basis, the ring $\Db^{(m,k)}_{Y}(\lambda)\left(U\right)$ is noetherian. In particular, the sheaf of rings $\Db^{(m,k)}_{Y}(\lambda)$ is coherent.
\item[(iii)] The sheaf $\widehat{\Da}^{(m,k)}_{\mathfrak{Y},\Q}(\lambda)$ is coherent.
\end{itemize}
\end{prop}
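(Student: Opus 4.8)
The plan is to obtain all three assertions by pulling back, along the blow-up morphism $\text{pr}:Y\rightarrow X$, the corresponding statements already established over the flag scheme $X$ — namely Propositions \ref{graded_lambda}, \ref{algebraic_local_desc} and \ref{Noetherian_sections} — using that $\text{pr}^*\Lb(\lambda)$ and $\text{pr}^*\Lb(\lambda)^\vee$ are invertible $\Ob_Y$-modules and that, for $k\ge k_Y$, the pullback $\Db^{(m,k)}_Y=\text{pr}^*\Db^{(m,k)}_X$ is already a sheaf of rings, locally free over $\Ob_Y$ (Proposition \ref{mult_blow_up}).

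For (i) I would equip $\Db^{(m,k)}_Y(\lambda)=\text{pr}^*(\Db^{(m,k)}_{X,\lambda})$ with the order filtration defined as the pullback $\Db^{(m,k)}_{Y,d}(\lambda):=\text{pr}^*\Db^{(m,k)}_{X,d}(\lambda)$ of the order filtration of $\Db^{(m,k)}_{X,\lambda}$ recalled before Proposition \ref{graded_lambda}. The crucial observation is that the graded quotients of the latter are the locally free sheaves $\text{Sym}^{(m)}_d(\varpi^k\Tb_X)$ (Proposition \ref{graded_lambda}), so the short exact sequences $0\rightarrow\Db^{(m,k)}_{X,d-1}(\lambda)\rightarrow\Db^{(m,k)}_{X,d}(\lambda)\rightarrow\text{Sym}^{(m)}_d(\varpi^k\Tb_X)\rightarrow 0$ are locally split and therefore remain exact after the right-exact functor $\text{pr}^*$; this makes $(\Db^{(m,k)}_{Y,d}(\lambda))_d$ an exhaustive ring filtration, multiplicativity being read off from the local product formula of $\Db^{(m,k)}_Y(\lambda)$ (applying a differential operator to a function lowers the order to $0$). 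Since $\text{pr}^*$ then commutes with forming the associated graded, I would get
\[
\textrm{gr}\big(\Db^{(m,k)}_Y(\lambda)\big)\;\simeq\;\text{pr}^*\,\textrm{gr}\big(\Db^{(m,k)}_{X,\lambda}\big)\;\simeq\;\text{pr}^*\,\text{Sym}^{(m)}\big(\varpi^k\Tb_X\big)\;\simeq\;\text{Sym}^{(m)}\big(\varpi^k\text{pr}^*\Tb_X\big),
\]
the second isomorphism being Proposition \ref{graded_lambda} and the last one the compatibility of the level-$m$ symmetric algebra with pull-back (cf. Remark \ref{Sym-Locally} and \cite[Proposition 1.3.6]{Huyghe1}); a direct check shows it respects the algebra structure.

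For (ii) I would pick a basis of affine opens $U\subseteq Y$ on which simultaneously $\text{pr}^*\Lb(\lambda)$ is trivialized by a section $\alpha$ (with dual $\alpha^\vee$) and $\Db^{(m,k)}_Y(U)$ is noetherian — the latter being the blow-up analogue of Proposition \ref{Noetherian_sections}, available from \cite[Corollary 2.1.18]{HPSS} and the argument of \cite[Proposition 2.2.2 (iii)]{HSS}. Over such $U$ the assignment $\alpha\otimes P\otimes\alpha^\vee\mapsto P$ gives, exactly as in the proof of Proposition \ref{algebraic_local_desc}, a ring isomorphism $\Db^{(m,k)}_Y(\lambda)(U)\xrightarrow{\ \simeq\ }\Db^{(m,k)}_Y(U)$ — the middle factor disappears because $\langle\alpha^\vee,\alpha\rangle=1$ — so these sections are noetherian, and coherence of the sheaf of rings $\Db^{(m,k)}_Y(\lambda)$ then follows by the argument of \cite[Proposition 2.2.2]{HSS} (a sheaf of rings with a basis of affine opens having noetherian sections is coherent). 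For (iii) I would complete $\varpi$-adically over the same basis: by \cite[($0_I$, 3.2.6)]{Grothendieck1}, $H^0(\mathfrak{U},\widehat{\Da}^{(m,k)}_{\mathfrak{Y}}(\lambda))$ is the $\varpi$-adic completion of the noetherian ring $H^0(U,\Db^{(m,k)}_Y(\lambda))$, hence noetherian, and it stays noetherian after $\otimes_{\mathfrak{o}}L$; \cite[Proposition 3.3.4]{Berthelot1} then yields coherence of $\widehat{\Da}^{(m,k)}_{\mathfrak{Y},\Q}(\lambda)$, just as in Proposition \ref{locally_hatD} (ii).

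The only genuinely delicate point I anticipate is justifying, in (i), that $\text{pr}^*$ commutes with passage to the associated graded: this is precisely where the local freeness of the symmetric-power quotients $\text{Sym}^{(m)}_d(\varpi^k\Tb_X)$ is needed, since without it $\text{pr}^*$ need not preserve the filtration inclusions. Once that is in place, the rest is a routine transport along $\text{pr}$ of the already-proven statements over $X$, together with the observation that twisting by an invertible sheaf is locally invisible at the level of rings.
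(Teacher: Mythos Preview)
Your proposal is correct and follows essentially the same route as the paper: for (i) both pull back the order filtration and use that the graded quotients $\text{Sym}^{(m)}_d(\varpi^k\Tb_X)$ are locally free to preserve exactness under $\text{pr}^*$, and for (iii) both appeal to \cite[3.3.3--3.3.4]{Berthelot1}. The one genuine difference is in (ii): the paper deduces noetherianness of $\Db^{(m,k)}_Y(\lambda)(V)$ \emph{from} part (i), by observing that the associated graded $\text{Sym}^{(m)}(\varpi^k\text{pr}^*\Tb_X)(V)$ is noetherian via \cite[Proposition~1.3.6]{Huyghe1}, whereas you instead exhibit a local ring isomorphism $\Db^{(m,k)}_Y(\lambda)(U)\simeq\Db^{(m,k)}_Y(U)$ and import noetherianness from the already-known untwisted case. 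Your route is slightly quicker since it avoids invoking (i), but it relies on the untwisted statement from \cite{HSS} as an external input; the paper's route is more self-contained and makes (ii) a direct corollary of (i).
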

\begin{proof}
By \ref{graded}, we have an exact sequence of $\Ob_{X}$-modules
\begin{eqnarray*}
0\rightarrow \Db^{(m,k)}_{X,d-1}\rightarrow \Db^{(m,k)}_{X,d}\rightarrow \text{Sym}^{(m)}_{d}\left(\varpi^k\Tb_{X}\right)\rightarrow 0.  
\end{eqnarray*}
Taking the tensor product with $\Lb(\lambda)$ and $\Lb(\lambda)^{\vee}$ on the left and on the right, respectively, and applying $\text{pr}^*$ we obtain the exact sequence (since $\text{Sym}^{(m)}_d(\varpi^k\Tb_{X})$ is a locally free $\Ob_{X}$-module of finite rank)
\begin{eqnarray*}
0\rightarrow \Db^{(m,k)}_{Y,d-1}(\lambda)\rightarrow \Db^{(m,k)}_{Y,d}(\lambda)\rightarrow\textrm{pr}^*\Lb(\lambda) \otimes_{\Ob_{Y}}\text{Sym}^{(m)}_{d}\left(\varpi^k\text{pr}^*\Tb_{X}\right)\otimes_{\Ob_Y}\text{pr}^*\Lb(\lambda)^{\vee}\rightarrow 0,
\end{eqnarray*} 
which implies (i) because
\begin{eqnarray*}
\textrm{pr}^*\Lb(\lambda) \otimes_{\Ob_{Y}}\text{Sym}^{(m)}\left(\varpi^k\text{pr}^*\Tb_{X}\right)\otimes_{\Ob_Y}\text{pr}^*\Lb(\lambda)^{\vee}\simeq\text{Sym}^{(m)}\left(\varpi^k\text{pr}^*\Tb_{X}\right)
\end{eqnarray*}
by commutativity of the symmetric algebra.
\justify
Let $U\subseteq X$ be an affine open subset endowed with local coordinates $x_1,...,x_M$ and such that $\Lb (\lambda)|_{U}=s\Ob_{U}$ for some $s\in\Lb(\lambda)(U)$. Then, by lemma \ref{algebraic_local_desc} we have the following local description for $\Db^{(m,k)}_{Y}(\lambda)$ on $V=\textrm{pr}^{-1}(U)$ 
\begin{eqnarray*}
\Db^{(m,k)}_{Y}(\lambda)(V)=\left\{\sum_{\underline{v}}^{<\infty}\varpi^{k|\underline{v}|}a_{\underline{v}}\underline{\partial}^{\left<\underline{v}\right>}|\; \underline{v}=(v_1,...,v_M)\in\mathbb{N}^M\;and\; a_{\underline{v}}\in \Ob_{Y}(V)\right\}.
\end{eqnarray*}
By (i), the graded algebra $gr_{\bullet}\left(\Db^{(m,k)}_{Y}(\lambda)(V)\right)$ is isomorphic to $\text{Sym}^{(m)}\left(\varpi^k \text{pr}^*\Tb_{X}(V)\right)$ which is known to be noetherian \cite[Proposition 1.3.6]{Huyghe1}. Therefore, taking as a basis the set of affine open subsets of $Y$ that are contained in some $\textrm{pr}^{-1}(U)$ we get (ii). We also remark that, as $\Db^{(m,k)}_{Y}(\lambda)$ is $\Ob_Y$-quasi-coherent, and by (ii) in the actual proposition, it has noetherian sections over the affine open subsets of $Y$ (cf. \cite[Proposition 2.2.2 (iii)]{HSS}), it is certainly a sheaf of coherent rings \cite [proposition 3.1.3]{Berthelot1}. Finally, by definition, we see that  $\widehat{\Da}^{(m,k)}_{\mathfrak{Y}}(\lambda)$ satisfies the conditions (a) and (b) of 3.3.3 in \cite{Berthelot1} and hence \cite[Proposition 3.3.4]{Berthelot1} gives us (iii). 
\end{proof}
\justify
Let us briefly study the problem of passing to the inductive limit when $m$ varies.
\justify
Let $U\subset X$ such that $\Db^{(m,k)}_{X}(\lambda)|_{U}\simeq \Db^{(m,k)}_{X}|_{U}$ and let us take $V\subseteq Y$ an affine open subset such that $V\subseteq\text{pr}^{-1}(U)$. We have the commutative diagram
\begin{eqnarray*}
\begin{tikzcd}
V \arrow[r,"i_V",hook] \arrow[d, "pr"]
& Y \arrow[d, "pr"]\\
U \arrow[r, "i_U",hook]
& X,
\end{tikzcd}
\end{eqnarray*}
which implies that $\Db^{(m,k)}_{Y}(\lambda)|_{V}\simeq \Db^{(m,k)}_{Y}|_{V}$, as sheaves of rings. In particular, if $\mathfrak{V}$ denotes the formal $p$-adic completion of $V$ along the special fiber $V_{\mathbb{F}_q}$ we have the commutative diagram (cf. proposition \ref{coh. dag})
\begin{eqnarray}\label{trivial_line_bundle}
\begin{tikzcd}
\widehat{\Da}^{(m,k)}_{\mathfrak{Y},\Q}(\lambda)(\mathfrak{V}) \arrow[r] \arrow[d,"\simeqd"]
& \widehat{\Da}^{(m+1,k)}_{\mathfrak{Y},\Q}(\lambda)(\mathfrak{V}) \arrow[d,"\simeqd"]\\
\widehat{\Da}^{(m,k)}_{\mathfrak{Y},\Q}(\mathfrak{V}) \arrow[r]
& \widehat{\Da}^{(m+1,k)}_{\mathfrak{Y},\Q}(\mathfrak{V}).
\end{tikzcd}
\end{eqnarray}
Given that the morphism of sheaves $\widehat{\Da}^{(m,k)}_{\mathfrak{Y},\Q}\rightarrow \widehat{\Da}^{(m+1,k)}_{\mathfrak{Y},\Q}$ is left and right flat \cite[Proposition 2.2.11 (iii)]{HSS}, the preceding diagram allows us to conclude that the morphism $\widehat{\Da}^{(m,k)}_{\mathfrak{Y},\Q}(\lambda)\rightarrow \widehat{\Da}^{(m+1,k)}_{\mathfrak{Y},\Q}(\lambda)$ is also left and right flat. By proposition \ref{coh_proj_lim} we have the following result.

\begin{prop}
The sheaf of rings $\Da^{\dag}_{\mathfrak{Y},k}(\lambda)$ is coherent.
\end{prop}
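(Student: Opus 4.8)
The plan is simply to invoke Proposition \ref{coh_proj_lim} for the filtered inductive system $\bigl(\widehat{\Da}^{(m,k)}_{\mathfrak{Y},\Q}(\lambda)\bigr)_{m\in\N}$, whose inductive limit is by definition $\Da^{\dag}_{\mathfrak{Y},k}(\lambda)$. Two inputs are needed. First, every term of the system must be a coherent sheaf of rings on $\mathfrak{Y}$: this is exactly Proposition \ref{fini_blow_up} (iii). Second, for every $m$ the transition morphism $\widehat{\Da}^{(m,k)}_{\mathfrak{Y},\Q}(\lambda)\rightarrow\widehat{\Da}^{(m+1,k)}_{\mathfrak{Y},\Q}(\lambda)$ must be (left, and likewise right) flat: this has just been established, by working locally over an affine $V\subseteq\text{pr}^{-1}(U)$ on which $\Lb(\lambda)$ trivialises, where the commutative square (\ref{trivial_line_bundle}) identifies this morphism with the untwisted transition morphism $\widehat{\Da}^{(m,k)}_{\mathfrak{Y},\Q}\rightarrow\widehat{\Da}^{(m+1,k)}_{\mathfrak{Y},\Q}$, which is flat by \cite[Proposition 2.2.11 (iii)]{HSS}.

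With these two facts in hand, Proposition \ref{coh_proj_lim} yields immediately that $\Da^{\dag}_{\mathfrak{Y},k}(\lambda)=\varinjlim_{m}\widehat{\Da}^{(m,k)}_{\mathfrak{Y},\Q}(\lambda)$ is a coherent sheaf of rings, which is the assertion. The argument is the verbatim analogue on the admissible blow-up $\mathfrak{Y}$ of the one used for $\Da^{\dag}_{\mathfrak{X},k}(\lambda)$ in Proposition \ref{coh. dag}, and there is no genuine obstacle here: all the substance has already been packaged into the coherence statement of Proposition \ref{fini_blow_up} and into the flatness discussion preceding the proposition. The only minor point worth spelling out is that the flatness comparison through (\ref{trivial_line_bundle}) is carried out on a basis of affine opens of $\mathfrak{Y}$ — namely those formal opens $\mathfrak{V}$ with $\mathfrak{V}\subseteq\text{pr}^{-1}(\mathfrak{U})$ and $\Lb(\lambda)|_{\mathfrak{U}}$ free — so that flatness of the map of sheaves of rings can legitimately be checked there; this is exactly the basis produced in Proposition \ref{fini_blow_up} (ii), so the verification is immediate.
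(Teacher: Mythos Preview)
Your proposal is correct and follows exactly the paper's approach: the paper establishes the flatness of the transition maps via the commutative square (\ref{trivial_line_bundle}) together with \cite[Proposition 2.2.11 (iii)]{HSS}, and then states the proposition as an immediate consequence of Proposition \ref{coh_proj_lim}, using the coherence of each $\widehat{\Da}^{(m,k)}_{\mathfrak{Y},\Q}(\lambda)$ from Proposition \ref{fini_blow_up}(iii). Your write-up is in fact slightly more explicit than the paper's, which leaves the invocation of \ref{fini_blow_up}(iii) implicit.
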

\justify
As we will explain later, there exists a canonical epimorphism of sheaves of filtered $\mathfrak{o}$-algebras\footnote{We construct this morphism in (\ref{canonical_lambda_blow-up}). The arguments given there are independent and we won't introduce a circular argument.}
\begin{eqnarray*}
\Ab^{(m,k)}_{Y} := \Ob_{Y}\otimes_{\mathfrak{o}} D^{(m)}(\G(k)) \rightarrow \Db^{(m,k)}_{Y}(\lambda)
\end{eqnarray*}
which allows to conclude the following proposition exactly as we have done in the proof of proposition \ref{prop 3.3.1} (cf. \cite[Proposition 4.3.1]{HPSS}).

\begin{prop}\label{Globally_gene}
Let $\lambda\in\text{Hom}(\T,\G_m)$ be an algebraic character such that $\lambda+\rho\in\mathfrak{t}_L^*$ is a dominant and regular character of $\mathfrak{t}_L$.
\begin{itemize}
\item[(i)] Let $\Ea$ be a coherent $\widehat{\Da}^{(m,k)}_{\mathfrak{Y},\Q}(\lambda)$-module. Then $\Ea$ is generated by its global sections as $\widehat{\Da}^{(m,k)}_{\mathfrak{Y},\Q}(\lambda)$-module. Furthermore, $\Ea$ has a resolution by finite free $\widehat{\Da}^{(m,k)}_{\mathfrak{Y},\Q}(\lambda)$-modules.
\item[(ii)] Let $\Ea$ be a coherent $\Da^{\dag}_{\mathfrak{Y},k}(\lambda)$-module. Then $\Ea$ is generated by its global sections as $\Da^{\dag}_{\mathfrak{Y},k}(\lambda)$-module. Furthermore, $\Ea$ has a resolution by finite free $\Da^{\dag}_{\mathfrak{Y},k}(\lambda)$-modules. 
\end{itemize}
\end{prop}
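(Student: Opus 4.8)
The plan is to deduce both statements from the arithmetic Beilinson--Bernstein machinery already developed over $\mathfrak{X}$, transported along the projective morphism $\text{pr}\colon Y\rightarrow X$. The linchpin is the canonical epimorphism of filtered sheaves of $\mathfrak{o}$-algebras $\Ab^{(m,k)}_{Y}\rightarrow\Db^{(m,k)}_{Y}(\lambda)$ recalled above: since $\text{pr}^{*}\Ab^{(m,k)}_{X}=\Ob_{Y}\otimes_{\mathfrak{o}}D^{(m)}(\G(k))=\Ab^{(m,k)}_{Y}$ and $\text{pr}^{*}\Db^{(m,k)}_{X,\lambda}=\Db^{(m,k)}_{Y}(\lambda)$, this morphism is the pullback of $\Phi^{(m,k)}_{X}$ from Proposition~\ref{canonical_lambda}; its surjectivity is seen on the associated graded, where by Proposition~\ref{fini_blow_up}(i) and commutativity of the symmetric algebra it is the $\text{pr}^{*}$ of the surjection $\Ob_{X}\otimes_{\mathfrak{o}}\text{Sym}^{(m)}(\varpi^{k}\mathfrak{g})\rightarrow\text{Sym}^{(m)}(\varpi^{k}\Tb_{X})$. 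Completing $\varpi$-adically and tensoring with $L$, it follows that every coherent $\widehat{\Da}^{(m,k)}_{\mathfrak{Y},\Q}(\lambda)$-module is in particular a coherent module over $\widehat{\Ab}^{(m,k)}_{\mathfrak{Y},\Q}$, so it suffices to re-establish over the base $Y$ the analogues of Proposition~\ref{Result_HS} and of the whole chain of lemmas culminating in Proposition~\ref{prop 3.3.1} and Proposition~\ref{generated_by_global_sect}.

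First I would record the geometric input: $Y$, being a blow-up of the projective $\mathfrak{o}$-scheme $X$ along a coherent ideal, is itself projective over $\mathfrak{o}$, hence carries a very ample sheaf $\Ob_{Y}(1)$ with the usual Serre vanishing and finiteness of cohomology; moreover $X$ is smooth, hence normal, so $\text{pr}_{*}\Ob_{Y}=\Ob_{X}$ and therefore $H^{0}(Y,\Ab^{(m,k)}_{Y})=D^{(m)}(\G(k))$, a noetherian $\mathfrak{o}$-algebra. With this, the three assertions of Proposition~\ref{Result_HS} carry over verbatim: any coherent $\Ab^{(m,k)}_{Y}$-module $\Eb$ admits a surjection $(\Ab^{(m,k)}_{Y}(-r))^{\oplus a}\rightarrow\Eb$ (generate the underlying $\Ob_{Y}$-module and apply Serre), and all $H^{i}(Y,\Eb)$ are finitely generated $D^{(m)}(\G(k))$-modules by descending induction on $i$, which terminates because $Y$ is noetherian of finite dimension. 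The analogue of Lemma~\ref{Coherent_A(m,k)} (vanishing of $H^{i}(Y,\Eb(s))$ for $s\gg0$, using $H^{i}(Y,\Ab^{(m,k)}_{Y}(s))=H^{i}(Y,\Ob_{Y}(s))\otimes_{\mathfrak{o}}D^{(m)}(\G(k))=0$) and then, through the epimorphism above, the analogue of Lemma~\ref{Integral_resolution} for coherent $\Db^{(m,k)}_{Y}(\lambda)$-modules follow by the same inductive arguments.

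The regularity hypothesis enters exactly as in Proposition~\ref{bounded_coh_Int}: the blow-up induces an isomorphism $Y_{L}\simeq X_{L}$ under which $\Db^{(m,k)}_{Y}(\lambda)_{\Q}$ becomes a locally free module over the sheaf $\Db_{\lambda}$ of twisted differential operators on the flag variety $X_{L}$, so for any coherent $\Db^{(m,k)}_{Y}(\lambda)$-module $\Eb$ one has $H^{i}(Y,\Eb)\otimes_{\mathfrak{o}}L=H^{i}(Y_{L},\Eb_{\Q})=0$ for $i>0$ by the classical Beilinson--Bernstein theorem when $\lambda+\rho$ is dominant and regular; together with finiteness over $D^{(m)}(\G(k))$ this gives bounded $\varpi$-torsion of $H^{i}(Y,\Db^{(m,k)}_{Y}(\lambda)(r))$ and hence of $H^{i}(Y,\Eb)$ for all $i>0$. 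From here the passage to the formal completion $\mathfrak{Y}$ and then to $\Q$-coefficients is formal: the analogues of Proposition~\ref{Properties_completion}, Proposition~\ref{Resolution_D}, Proposition~\ref{prop 3.1.4} and Theorem~\ref{Resolution_D_Q} go through word for word (using that $\mathfrak{Y}$ is noetherian and that $\widehat{\Da}^{(m,k)}_{\mathfrak{Y},\Q}(\lambda)$ satisfies Berthelot's conditions \cite[3.4.1]{Berthelot1}, cf.\ Proposition~\ref{fini_blow_up}(iii)), and yield part (i): every coherent $\widehat{\Da}^{(m,k)}_{\mathfrak{Y},\Q}(\lambda)$-module is generated by its global sections, is $H^{i}(\mathfrak{Y},\bullet)$-acyclic for $i>0$, and admits a resolution by finite free $\widehat{\Da}^{(m,k)}_{\mathfrak{Y},\Q}(\lambda)$-modules.

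For part (ii) I would invoke the analogue of Lemma~\ref{Dag_to_m} on $\mathfrak{Y}$, valid since $\mathfrak{Y}$ is quasi-compact and separated and the sheaves $\widehat{\Da}^{(m,k)}_{\mathfrak{Y},\Q}(\lambda)$ satisfy \cite[3.4.1]{Berthelot1} with left- and right-flat transition maps (diagram~(\ref{trivial_line_bundle})): any coherent $\Da^{\dag}_{\mathfrak{Y},k}(\lambda)$-module $\Ea$ has the form $\Da^{\dag}_{\mathfrak{Y},k}(\lambda)\otimes_{\widehat{\Da}^{(m,k)}_{\mathfrak{Y},\Q}(\lambda)}\Ea_{m}$ for some $m$ and some coherent $\widehat{\Da}^{(m,k)}_{\mathfrak{Y},\Q}(\lambda)$-module $\Ea_{m}$; applying the exact functor $\Da^{\dag}_{\mathfrak{Y},k}(\lambda)\otimes_{\widehat{\Da}^{(m,k)}_{\mathfrak{Y},\Q}(\lambda)}\bullet$ to a finite free resolution of $\Ea_{m}$ from (i) gives the resolution of $\Ea$, and global generation of $\Ea$ follows from that of $\Ea_{m}$. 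I expect the only genuine points needing care to be the two facts on which the whole chain rests after transport: the surjectivity of $\Ab^{(m,k)}_{Y}\rightarrow\Db^{(m,k)}_{Y}(\lambda)$ (handled above on the associated graded) and the pair of facts that $Y$ is projective over $\mathfrak{o}$ with $\text{pr}_{*}\Ob_{Y}=\Ob_{X}$; once these are in place, every remaining step is a verbatim repetition of the corresponding argument over $\mathfrak{X}$.
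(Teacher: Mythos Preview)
Your proposal is correct and follows exactly the approach the paper indicates: the paper's ``proof'' consists solely of the remark preceding the proposition, namely that the canonical epimorphism $\Ab^{(m,k)}_{Y}\rightarrow\Db^{(m,k)}_{Y}(\lambda)$ ``allows to conclude the following proposition exactly as we have done in the proof of proposition~\ref{prop 3.3.1} (cf.\ \cite[Proposition 4.3.1]{HPSS})''. You have simply unpacked this terse indication, supplying the chain of analogues of Propositions~\ref{Result_HS}, \ref{bounded_coh_Int}, \ref{Properties_completion}, \ref{Resolution_D}, \ref{prop 3.1.4}, Theorem~\ref{Resolution_D_Q} and Lemma~\ref{Dag_to_m} on $Y$ and $\mathfrak{Y}$, together with the two geometric inputs (projectivity of $Y$ and $\text{pr}_{*}\Ob_{Y}=\Ob_{X}$) that make the transport work; this is precisely what the paper intends and what the cited reference~\cite[Proposition 4.3.1]{HPSS} carries out in the untwisted case.
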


\subsection{An Invariance theorem for admissible blow-ups}\label{section_inv_theorem}
\justify
Let $pr:\mathfrak{Y}\rightarrow\mathfrak{X}$ be an admissible blow-up along a closed subset $\textbf{V}(\Ia)$ defined by an open ideal sheaf $\Ia\subseteq \Ob_{\mathfrak{X}}$. Using (\ref{notation_k_y}), we can suppose that $\mathfrak{Y}$ is obtained as the formal completion of an admissible blow-up $Y\rightarrow X$ (we will abuse of the notation and we will denote again by $\text{pr}:Y\rightarrow X$  the canonical morphism of this (algebraic) blow-up) along a closed subset $\textbf{V}(\Ib)$ defined by an open ideal sheaf $\Ib\subseteq \Ob_{X}$, such that $\Ia$ is the restriction of the formal $\varpi$-adic completion of $\Ib$.  Let us denote by $Y_i:=Y\times_{\text{Spec}(\mathfrak{o})}\text{Spec}(\mathfrak{o}/\varpi^{i+1})$ the redaction module $\varpi^{i+1}$ and by $\gamma_i: Y_i\rightarrow Y$ the canonical closed embedding. In \cite{HSS} the authors have studied the cohomological properties of the sheaves  
\begin{eqnarray*}
\widehat{\Da}^{(m,k)}_{\mathfrak{Y},\Q}:=\varprojlim_{i\in\N} \gamma_{i}^*\Db^{(m,k)}_{Y}\otimes_{\mathfrak{o}}L\;\;\;\;\text{and}\;\;\;\;
\Da^{\dag}_{\mathfrak{Y},k}:= \varinjlim_{m\in\N} \widehat{\Da}^{(m,k)}_{\mathfrak{Y},\Q}.
\end{eqnarray*}
Let us consider the commutative diagram 
\begin{eqnarray*}
\begin{tikzcd}
Y_i \arrow[r, "pr_i"] \arrow[d, "\gamma_i"]
& X_i \arrow[d, "\gamma_i"]\\
Y \arrow[r, "pr"]
& X.
\end{tikzcd}
\end{eqnarray*}
Here $\text{pr}_{i}:Y_i\rightarrow X_i$ denotes the redaction of the morphism $pr$ module $\varpi^{i+1}$. We put $\underline{\La(\lambda)^{\vee}}:=\varprojlim_i \gamma_i^* \text{pr}^*\Lb(\lambda)^{\vee}$ and $\underline{\La(\lambda)}:=\varprojlim_i \gamma_i^* \text{pr}^*\Lb(\lambda)$. By using the preceding commutative diagram we have 
\begin{align*}
\gamma_{i}^{*}\Db^{(m,k)}_{Y}(\lambda) & = \gamma_i^*\left(\text{pr}^*\Lb (\lambda)\otimes_{\Ob_Y}\Db^{(m,k)}_{Y}\otimes_{\Ob_Y} \text{pr}^{*}\Lb(\lambda)^{\vee}\right)\\
& =  \gamma_i^{*}\left(\text{pr}^{*}\Lb(\lambda)\right)\otimes_{\Ob_{Y_i}} \gamma_i^*\Db^{(m,k)}_{Y}\otimes_{\Ob_{Y_i}}\gamma_i^*\left(\text{pr}^*\Lb(\lambda)^{\vee}\right).
\end{align*}
Taking the projective limit we get  the following description of the sheaves $\widehat{\Da}^{(m,k)}_{\mathfrak{Y},\Q}(\lambda)$
\begin{eqnarray*}
\widehat{\Da}^{(m,k)}_{\mathfrak{Y},\Q}(\lambda)= \underline{\La(\lambda)}_{\Q}\otimes_{\Ob_{\mathfrak{Y},\Q}}\widehat{\Da}^{(m,k)}_{\mathfrak{Y},\Q} \otimes_{\Ob_{\mathfrak{Y},\Q}}\underline{\La(\lambda)^{\vee}}_{\Q},
\end{eqnarray*} 
and by taking the inductive limit we get the characterization
\begin{eqnarray}\label{Blow_up_action_line_bundle}
\Da^{\dag}_{\mathfrak{Y},k}(\lambda)= \underline{\La(\lambda)}_{\Q}\otimes_{\Ob_{\mathfrak{Y},\Q}}\Da^{\dag}_{\mathfrak{Y},k} \otimes_{\Ob_{\mathfrak{Y},\Q}}\underline{\La(\lambda)^{\vee}}_{\Q}.
\end{eqnarray}
As in the preceding section, the sheaf $\underline{\La(\lambda)}_{\Q}$ is endowed with the following (left) $\Da^{\dag}_{\mathfrak{Y},k}(\lambda)$-action
\begin{eqnarray*}
\left(t\otimes P\otimes t^{\vee}\right)\bullet s\; :=\; \left(P\bullet < t^{\vee},s >\right)t\;\;\;\; (s,t \in \underline{\La(\lambda)}\;\;\text{and}\;\; t^{\vee}\in\underline{\La(\lambda)^{\vee}}).
\end{eqnarray*}
We end this first discussion by remarking that the relation $\text{pr}_i^*\;\circ\;\gamma_i^*\;=\; \gamma_i^{*}\;\circ\;\text{pr}^*$, which comes from the preceding commutative diagram, implies that
\begin{eqnarray}\label{Inverse_image_D_dag_X}
\Da^{\dag}_{\mathfrak{Y},k}(\lambda) = \text{pr}^* \Da^{\dag}_{\mathfrak{X},k}(\lambda).
\end{eqnarray}
\justify
Let us  suppose that $\pi: Y'\rightarrow Y$ is a morphism of admissible blow-ups (abusing of the notation, we will also denote by $\pi:\mathfrak{Y}'\rightarrow\mathfrak{Y}$ the respective morphism of formal admissible blow-ups in the sense of \cite[Part II, chapter 8, section 8.2, definition 3]{Bosch}). This means that we  have a commutative diagram
\begin{eqnarray*}
\begin{tikzcd}
Y' \arrow[rd, "pr' "] \arrow[r, "\pi "] & Y \arrow[d, "pr"] \\
& X.
\end{tikzcd}
\;\;\;\;\;\;
\text{resp.}
\;\;\;
\begin{tikzcd}
\mathfrak{Y}' \arrow[rd, "pr' "] \arrow[r, "\pi "] & \mathfrak{Y} \arrow[d, "pr"] \\
& \mathfrak{X}.
\end{tikzcd}
\end{eqnarray*}
Let $k\ge\text\{k_{Y'},k_Y\}$. Let us denote by $\Db^{(m,k)}_{X,i}(\lambda):=\Db^{(m,k)}_X(\lambda)/\varpi^{i+1}\Db^{(m,k)}_Y(\lambda)$ (we will use the same notations over $Y'_i$ and $Y_i$) and by $\pi_{i}: Y'_i\rightarrow Y_i$ the redaction module $\varpi^{i+1}$. The preceding commutative diagram implies that 
\begin{eqnarray}\label{rel_morph_blow_up}
\Db^{(m,k)}_{Y',i}(\lambda) = (pr'_i)^*\Db^{(m,k)}_{X_i}(\lambda) = \pi_i^*\Db^{(m,k)}_{Y_i}(\lambda).
\end{eqnarray}
In this way, she sheaf $\Db^{(m,k)}_{Y',i}(\lambda)$ can be endowed with a structure of right  $\pi_i^{-1}\Db^{(m,k)}_{Y_i}(\lambda)$-module. Passing to the projective limit, the sheaf $\widehat{\Da}^{(m,k)}_{\mathfrak{Y}'}(\lambda)$ is a sheaf of right $\pi^{-1}\widehat{\Da}^{(m,k)}_{\mathfrak{Y}}(\lambda)$-modules. So, passing to the inductive limit over $m$ we can conclude that $\Da^{\dag}_{\mathfrak{Y}',k}(\lambda)$ is a right $\pi^{-1}\Da^{\dag}_{\mathfrak{Y},k}(\lambda)$-module. For a $\Da^{\dag}_{\mathfrak{Y},k}(\lambda)$-module $\Ea$, we define
\begin{eqnarray*}
\pi^{!}\Ea := \Da^{\dag}_{\mathfrak{Y}',k}(\lambda)\otimes_{\pi^{-1}\Da^{\dag}_{\mathfrak{Y},k}(\lambda)}\pi^{-1}\Ea,
\end{eqnarray*}
with analogous definitions for $\widehat{\Da}^{(m,k)}_{\mathfrak{Y},\Q}(\lambda)$. 

\begin{theo}\label{Invariance}
Let $\pi:Y'\rightarrow Y$ be a morphism over $X$ of admissible blow-ups. Let $k\ge \text{max}\{k_{Y'}, k_Y\}$.
\begin{itemize}
\item[(i)] If  $\Ea$ is a coherent $\Da^{\dag}_{\mathfrak{Y}',k}(\lambda)$, then R$^j\pi_*\Ea =0$ for every $j>0$. Moreover, $\pi_{*}\Da^{\dag}_{\mathfrak{Y}',k}(\lambda)=\Da^{\dag}_{\mathfrak{Y},k}(\lambda)$, so $\pi_*$ induces an exact functor between coherent modules over $\Da^{\dag}_{\mathfrak{Y}',k}(\lambda)$ and $\Da^{\dag}_{\mathfrak{Y},k}(\lambda)$, respectively.
\item[(ii)] The formation $\pi^{!}$ is an exact functor from the category of coherent $\Da^{\dag}_{\mathfrak{Y},k}(\lambda)$-modules to the category of coherent $\Da^{\dag}_{\mathfrak{Y}',k}(\lambda)$-modules.
\item[(iii)] The functors $\pi_*$ and $\pi^{!}$ are quasi-inverse equivalences between the categories of coherent $\Da^{\dag}_{\mathfrak{Y}',k}(\lambda)$-modules and coherent $\Da^{\dag}_{\mathfrak{Y},k}(\lambda)$-modules.
\end{itemize}
\end{theo}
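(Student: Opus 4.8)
The plan is to reduce the statement to its untwisted counterpart — the invariance theorem for the sheaves $\Da^{\dag}_{\mathfrak{Y},k}$ proved in \cite{HPSS} (and \cite{HSS}) — using the line bundle $\underline{\La(\lambda)}_{\Q}$ as a conjugating bimodule. The bridge is (\ref{Blow_up_action_line_bundle}): since $\Da^{\dag}_{\mathfrak{Y},k}(\lambda)=\underline{\La(\lambda)}_{\Q}\otimes_{\Ob_{\mathfrak{Y},\Q}}\Da^{\dag}_{\mathfrak{Y},k}\otimes_{\Ob_{\mathfrak{Y},\Q}}\underline{\La(\lambda)^{\vee}}_{\Q}$ (and likewise on $\mathfrak{Y}'$), tensoring with the invertible sheaf $\underline{\La(\lambda)^{\vee}}_{\Q}$ conjugates the sheaf of rings $\Da^{\dag}_{\mathfrak{Y},k}(\lambda)$ onto $\Da^{\dag}_{\mathfrak{Y},k}$ and therefore defines an equivalence $\mathcal{T}_{\mathfrak{Y}}$ between coherent $\Da^{\dag}_{\mathfrak{Y},k}(\lambda)$-modules and coherent $\Da^{\dag}_{\mathfrak{Y},k}$-modules, and similarly an equivalence $\mathcal{T}_{\mathfrak{Y}'}$ on $\mathfrak{Y}'$. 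Because $\pi$ is a morphism over $X$ one has $\text{pr}'=\text{pr}\circ\pi$, hence $\underline{\La(\lambda)}_{\Q}$ on $\mathfrak{Y}'$ is canonically $\pi^{*}$ of $\underline{\La(\lambda)}_{\Q}$ on $\mathfrak{Y}$; the projection formula then furnishes natural isomorphisms $\pi_{*}\circ\mathcal{T}_{\mathfrak{Y}'}^{-1}\cong\mathcal{T}_{\mathfrak{Y}}^{-1}\circ\pi_{*}$, $R^{j}\pi_{*}\circ\mathcal{T}_{\mathfrak{Y}'}^{-1}\cong\mathcal{T}_{\mathfrak{Y}}^{-1}\circ R^{j}\pi_{*}$, and $\mathcal{T}_{\mathfrak{Y}'}\circ\pi^{!}\cong\pi^{!}_{(0)}\circ\mathcal{T}_{\mathfrak{Y}}$ with $\pi^{!}_{(0)}$ the untwisted inverse-image functor (the last isomorphism after checking that the two induced $(\pi^{-1}\Da^{\dag}_{\mathfrak{Y},k}(\lambda),\Da^{\dag}_{\mathfrak{Y}',k}(\lambda))$-bimodule structures match). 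Equivalently, one may argue locally: by Proposition \ref{algebraic_local_desc} and Proposition \ref{fini_blow_up} there is a basis of the topology of $X$ by affine opens $U$ over which $\Lb(\lambda)$ is trivial and $\Db^{(m,k)}_{Y}(\lambda)|_{\text{pr}^{-1}(U)}\simeq\Db^{(m,k)}_{Y}|_{\text{pr}^{-1}(U)}$, and since every assertion of the theorem is local on $Y$ one is reduced to the untwisted sheaves over the preimages $\pi^{-1}(\text{pr}^{-1}(U))$.

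Granting this, the three parts follow quickly. For (i): if $\Ea$ is coherent over $\Da^{\dag}_{\mathfrak{Y}',k}(\lambda)$ then $\mathcal{T}_{\mathfrak{Y}'}\Ea$ is coherent over $\Da^{\dag}_{\mathfrak{Y}',k}$, so the untwisted theorem gives $R^{j}\pi_{*}\mathcal{T}_{\mathfrak{Y}'}\Ea=0$ for $j>0$ and $\pi_{*}\mathcal{T}_{\mathfrak{Y}'}\Ea$ coherent over $\Da^{\dag}_{\mathfrak{Y},k}$; applying $\mathcal{T}_{\mathfrak{Y}}^{-1}$ and the projection-formula compatibility yields $R^{j}\pi_{*}\Ea=0$ for $j>0$ and $\pi_{*}\Ea$ coherent over $\Da^{\dag}_{\mathfrak{Y},k}(\lambda)$. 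The identity $\pi_{*}\Da^{\dag}_{\mathfrak{Y}',k}(\lambda)=\Da^{\dag}_{\mathfrak{Y},k}(\lambda)$ comes from the untwisted identity $\pi_{*}\Da^{\dag}_{\mathfrak{Y}',k}=\Da^{\dag}_{\mathfrak{Y},k}$ together with (\ref{Blow_up_action_line_bundle}) and the projection formula, and exactness of $\pi_{*}$ on coherent modules is then formal from the vanishing of $R^{1}\pi_{*}$. For (ii): $\pi^{!}$ is always right exact, and under the identification above it is a composite of the exact functor $\pi^{!}_{(0)}$ (exact by the untwisted theorem) with the equivalences $\mathcal{T}$, hence exact, and it preserves coherence for the same reason. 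For (iii): $\pi^{!}$ is left adjoint to $\pi_{*}$ on coherent modules, with unit $\mathrm{id}\to\pi_{*}\pi^{!}$ and counit $\pi^{!}\pi_{*}\to\mathrm{id}$; under $\mathcal{T}$ these correspond to the untwisted unit and counit, which are isomorphisms, so $\pi_{*}$ and $\pi^{!}$ are quasi-inverse equivalences. One can also verify directly that the unit and counit are isomorphisms on $\Da^{\dag}_{\mathfrak{Y},k}(\lambda)$ and $\Da^{\dag}_{\mathfrak{Y}',k}(\lambda)$ using (i), and extend to an arbitrary coherent module via a two-term finite free presentation (Proposition \ref{Globally_gene}) and the five lemma, the twists $(-r)$ being handled by the projection formula since $\Da^{\dag}_{\mathfrak{Y}',k}(\lambda)(-r)=\Da^{\dag}_{\mathfrak{Y}',k}(\lambda)\otimes_{\Ob_{\mathfrak{Y}'}}\text{pr}'^{*}\Ob(-r)$ and $\text{pr}'^{*}\Ob(-r)=\pi^{*}\text{pr}^{*}\Ob(-r)$.

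Alternatively, and in the spirit of the rest of the paper, one can simply repeat the proof of the untwisted invariance theorem from \cite{HPSS} word for word, since every ingredient it uses now has a twisted analogue at hand: the local description and coherence of $\Da^{\dag}_{\mathfrak{Y},k}(\lambda)$ with $\mathrm{gr}\simeq\text{Sym}^{(m)}(\varpi^{k}\text{pr}^{*}\Tb_{X})$ (Proposition \ref{fini_blow_up}), the identification $\Da^{\dag}_{\mathfrak{Y},k}(\lambda)=\text{pr}^{*}\Da^{\dag}_{\mathfrak{X},k}(\lambda)$ of (\ref{Inverse_image_D_dag_X}), the global generation and finite free resolutions of coherent modules (Proposition \ref{Globally_gene}), and the arithmetic Beilinson--Bernstein theorem on $\mathfrak{X}$ (Theorem \ref{BB_for_dag}). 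In either presentation the one genuinely delicate point — the main obstacle — is the exactness of $\pi^{!}$, that is, the flatness of $\Da^{\dag}_{\mathfrak{Y}',k}(\lambda)$ over $\pi^{-1}\Da^{\dag}_{\mathfrak{Y},k}(\lambda)$. This is where the geometry of admissible blow-ups really enters, through the passage to the finite-level sheaves $\widehat{\Da}^{(m,k)}_{\mathfrak{Y},\Q}(\lambda)$, their good filtrations, and the identification of the associated graded with a symmetric algebra on the blow-up; everything else is bookkeeping with the line bundle twist, the projection formula, and the compatibility of $\pi_{*}$ and $\pi^{!}$ with the inductive limit over $m$ and with $\varpi$-adic completion.
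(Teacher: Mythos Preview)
Your proposal is correct, and among the several routes you outline, the paper follows precisely your local-trivialization argument: it takes the basis $\Ba$ of affines $\Ub\subseteq\mathfrak{X}$ from Proposition~\ref{locally_hatD} over which $\Lb(\lambda)$ is trivial, observes that on $\Vb=\text{pr}^{-1}(\Ub)$ and $\Vb'=\pi^{-1}(\Vb)$ the twisted sheaves coincide with the untwisted ones, and then invokes the untwisted invariance theorem \cite[Theorem 2.3.8 and Corollary 2.2.15]{HSS} to obtain $R^{j}\pi_{*}\Da^{\dag}_{\mathfrak{Y}',k}(\lambda)=0$ for $j>0$, $\pi_{*}\Da^{\dag}_{\mathfrak{Y}',k}(\lambda)=\Da^{\dag}_{\mathfrak{Y},k}(\lambda)$, and that the unit and counit $\Ea\to\pi_{*}\pi^{!}\Ea$, $\pi^{!}\pi_{*}\Fa\to\Fa$ are isomorphisms. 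For a general coherent $\Ea$ in part (i) the paper runs the descending induction on $j$ via a finite free presentation (Proposition~\ref{Globally_gene}) exactly as you suggest.

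Your primary presentation via the global Morita equivalence $\mathcal{T}_{\mathfrak{Y}}$ and the projection formula is a more conceptual repackaging of the same reduction; it makes the compatibility with $\pi_{*}$, $R^{j}\pi_{*}$ and $\pi^{!}$ visible at once rather than open-set by open-set, but the content is identical. The paper is terser on part (ii): it only records that $\pi^{!}$ preserves coherence (by right exactness and $\pi^{!}\Da^{\dag}_{\mathfrak{Y},k}(\lambda)=\Da^{\dag}_{\mathfrak{Y}',k}(\lambda)$) and then immediately verifies the unit/counit isomorphisms locally, so exactness of $\pi^{!}$ is effectively obtained a posteriori from the equivalence in (iii) rather than by a separate flatness argument as you sketch.
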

\justify
We remark for the reader that this theorem has an equivalent version for the sheaves $\widehat{\Da}^{(m,k)}_{\mathfrak{Y},\Q}(\lambda)$ and  $\widehat{\Da}^{(m,k)}_{\mathfrak{Y}',\Q}(\lambda)$.
\begin{proof}
 Let us first assume that $\Ea=\Da^{\dag}_{\mathfrak{Y}',k}(\lambda)$. Let us consider the covering $\Ba$ of $\mathfrak{X}$, defined in proposition \ref{locally_hatD} and let us take $\Ub\in\Ba$. We put $\Vb':=\text{pr}'^{-1}(\Ub)$ and $\Vb:=\text{pr}^{-1}(\Ub)$. By assumption $\Vb' = \pi^{-1}(\Vb)$ in such a way that 
 \begin{eqnarray*}
 R^j\pi_*\left(\Da^{\dag}_{\mathfrak{Y}',k}(\lambda)\right)|_{\Vb'} =  R^j\pi_*\left(\Da^{\dag}_{\mathfrak{Y}',k}(\lambda)|_{\Vb}\right) =  R^j\pi_*\left(\Da^{\dag}_{\mathfrak{Y}',k}|_{\Vb}\right) =  R^j\pi_*\left(\Da^{\dag}_{\mathfrak{Y}',k}\right)|_{\Vb'}.
 \end{eqnarray*}
 No we can use \cite[Theorem 2.3.8 (i)]{HSS} to conclude  that $R^j\pi_*\Da^{\dag}_{\mathfrak{Y}',k}(\lambda) = 0$ for every $j>0$. Furthermore, by (\ref{rel_morph_blow_up}) there exists a canonical map 
\begin{eqnarray*}
\Da^{\dag}_{\mathfrak{Y},k}(\lambda)\rightarrow \pi_*\Da^{\dag}_{\mathfrak{Y}',k}(\lambda)
\end{eqnarray*}
which is in fact an isomorphism by the preceding reasoning and \cite[Theorem 2.3.8 (i)]{HSS}.
\justify
To handle with the second part let us consider the following assertion for every $j\ge 1$. Let $a_j$: for any coherent $\Da^{\dag}_{\mathfrak{Y}',k}(\lambda)$-module $\Ea$ and for all $l\ge j$, $R^l\pi_*\Ea=0$. The assertion is true for $j=\text{dim}(\mathfrak{Y})+1$. Let us suppose that $a_{j+1}$ is true and let us take  a coherent $\Da^{\dag}_{\mathfrak{Y}',k}(\lambda)$-module $\Ea$. By proposition \ref{Globally_gene} there exists $b\in\N$ and a short exact sequence of coherent $\Da^{\dag}_{\mathfrak{Y}',k}(\lambda)$-modules
\begin{eqnarray*}
0\rightarrow \Fa\rightarrow\left(\Da^{\dag}_{\mathfrak{Y}',k}(\lambda)\right)^{\oplus b}\rightarrow\Ea\rightarrow 0.
\end{eqnarray*}
Since $R^j\pi_*\Da^{\dag}_{\mathfrak{Y}',k}(\lambda)=0$, the long exact sequence for $\pi_*$ gives us 
\begin{eqnarray*}
R^j\pi_*\Ea\simeq R^{j+1}\pi_*\Fa,
\end{eqnarray*}
which is 0 by induction hypothesis. This ends the proof of (i).
\justify
Let us show (ii)  for the sheaves $\Da^{\dag}_{\mathfrak{Y},k}(\lambda)$. The case for the sheaves $\widehat{\Da}^{(m,k)}_{\mathfrak{Y},\Q}(\lambda)$ being equal. Given that $\pi^!\Da^{\dag}_{\mathfrak{Y},k}(\lambda)=\Da^{\dag}_{\mathfrak{Y}',k}(\lambda)$, and since the tensor product is right exact, we can conclude that $\pi^!$ preserves coherence. 
\justify
Now, we have a morphism $\pi^{-1}\Ea\rightarrow \pi^!\Ea$ sending $m\mapsto 1\otimes m$. This maps induces the morphism $\Ea\rightarrow \pi_*\pi^!\Ea$. To show that this is an isomorphism is a local question on $\mathfrak{Y}$. If $\Vb\subseteq\mathfrak{Y}$ is the formal completion of an affine open subset $V\subseteq \text{pr}^{-1}(U)$, and $U\subseteq X$ is an affine open subset such that $\Db^{(m,k)}_{X}(\lambda)|_U\simeq \Db^{(m,k)}_X|_U$ (lemma \ref{algebraic_local_desc}), then by (\ref{trivial_line_bundle}) and \cite[Corollary 2.2.15]{HSS} we can conclude that the previous map is in fact an isomorphism over $\Vb$\footnote{This is the same reasoning that we have used in (i).}. Finally, if $\Fa$ is a coherent $\Da^{\dag}_{\mathfrak{Y}',k}(\lambda)$-module, then we have the map $\pi^!\pi_*\Fa\rightarrow\Fa$, sending $P\otimes m\mapsto Pm$. To see that this is an isomorphism we can use the preceding reasoning.
\end{proof}
\justify
Let us recall that if $\lambda\in\text{Hom}(\T,\G_m)$ is an algebraic character such that $\lambda+\rho\in\mathfrak{t}_L^*$ is a dominant and regular character of $\mathfrak{t}_L$, then by (\ref{Global_sect_dag}) we have
\begin{eqnarray*}
H^0\left(\mathfrak{X},\Da^{\dag}_{\mathfrak{X},k}(\lambda)\right)=D^{\dag}(\G(k))_{\lambda} = \Db^{\text{an}}(\G(k)^{\circ})_{\lambda} := \Db^{\text{an}}(\G(k)^{\circ})\big/\Db^{\text{an}}(\G(k)^{\circ}) (\text{Ker}).
\end{eqnarray*}
\justify
The previous theorem implies

\begin{coro}\label{equal_global_sections}
Let $\lambda\in\text{Hom}(\T,\G_m)$ be an algebraic character such that $\lambda+\rho\in\mathfrak{t}_L^*$ is a dominant and regular character of $\mathfrak{t}_L$. In the situation of the preceding theorem we have
\begin{eqnarray*}
H^0\left(\mathfrak{Y},\Da^{\dag}_{\mathfrak{Y},k}(\lambda)\right)=H^0\left(\mathfrak{X},\Da^{\dag}_{\mathfrak{X},k}(\lambda)\right)=D^{\dag}(\G(k))_{\lambda}= H^0\left(\mathfrak{Y}',\Da^{\dag}_{\mathfrak{Y}',k}(\lambda)\right).
\end{eqnarray*}
\end{coro}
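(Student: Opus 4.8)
The plan is to deduce the corollary directly from Theorem~\ref{Invariance}(i), applied to the structure morphisms $\text{pr}\colon\mathfrak{Y}\to\mathfrak{X}$ and $\text{pr}'\colon\mathfrak{Y}'\to\mathfrak{X}$ themselves. First I would note that $\mathfrak{X}$ is the trivial admissible blow-up of $X$ (the blow-up along $\Ob_X$), so that $k_{\mathfrak{X}}=0$ and the hypothesis $k\ge\max\{k_{\mathfrak{Y}},k_{\mathfrak{Y}'}\}$ indeed places both $\text{pr}$ and $\text{pr}'$ in the setting of that theorem. Since $\Da^\dag_{\mathfrak{Y},k}(\lambda)$ is a coherent module over itself, part~(i) then gives $R^j\text{pr}_*\Da^\dag_{\mathfrak{Y},k}(\lambda)=0$ for all $j>0$ together with $\text{pr}_*\Da^\dag_{\mathfrak{Y},k}(\lambda)=\Da^\dag_{\mathfrak{X},k}(\lambda)$, and similarly with $\text{pr}'$ in place of $\text{pr}$.

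Next I would feed this into the Grothendieck spectral sequence for the composition $\Gamma(\mathfrak{Y},-)=\Gamma(\mathfrak{X},-)\circ\text{pr}_*$: the vanishing of the higher direct images of $\Da^\dag_{\mathfrak{Y},k}(\lambda)$ collapses it to the edge isomorphism
\[
H^0\!\left(\mathfrak{Y},\Da^\dag_{\mathfrak{Y},k}(\lambda)\right)=H^0\!\left(\mathfrak{X},\text{pr}_*\Da^\dag_{\mathfrak{Y},k}(\lambda)\right)=H^0\!\left(\mathfrak{X},\Da^\dag_{\mathfrak{X},k}(\lambda)\right),
\]
and the same reasoning with $\text{pr}'$ gives $H^0(\mathfrak{Y}',\Da^\dag_{\mathfrak{Y}',k}(\lambda))=H^0(\mathfrak{X},\Da^\dag_{\mathfrak{X},k}(\lambda))$. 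Finally I would invoke (\ref{Global_sect_dag}), i.e.\ Theorem~\ref{C.G sections} combined with $D^\dag(\G(k))\simeq\Db^{\text{an}}(\G(k)^\circ)$ and the fact that $\mathfrak{X}$ is noetherian, to identify the common value with $D^\dag(\G(k))_\lambda$, which strings all the equalities together.

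I do not expect a genuine obstacle here; the only subtlety is the bookkeeping observation that $\mathfrak{X}$, viewed as the trivial admissible blow-up of $X$, together with $\text{pr}$ and $\text{pr}'$, legitimately belongs to the framework of Theorem~\ref{Invariance}, after which the acyclicity of $\text{pr}_*$ and $\text{pr}'_*$ on the coherent sheaves in play is exactly what that theorem supplies. As an alternative that sidesteps $\text{pr}'$ entirely, one can argue by transitivity: Theorem~\ref{Invariance}(i) for $\pi\colon\mathfrak{Y}'\to\mathfrak{Y}$ yields $\pi_*\Da^\dag_{\mathfrak{Y}',k}(\lambda)=\Da^\dag_{\mathfrak{Y},k}(\lambda)$ and vanishing of $R^j\pi_*$, whence $H^0(\mathfrak{Y}',\Da^\dag_{\mathfrak{Y}',k}(\lambda))=H^0(\mathfrak{Y},\Da^\dag_{\mathfrak{Y},k}(\lambda))$, and one finishes as above.
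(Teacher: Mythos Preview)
Your proposal is correct and follows the same route the paper has in mind: the corollary is stated immediately after Theorem~\ref{Invariance} with the single sentence ``The previous theorem implies,'' and your argument is precisely the natural unpacking of that implication---apply part~(i) with $\mathfrak{X}$ as the trivial blow-up to get $\text{pr}_*\Da^{\dag}_{\mathfrak{Y},k}(\lambda)=\Da^{\dag}_{\mathfrak{X},k}(\lambda)$ (and likewise for $\text{pr}'$), then invoke~(\ref{Global_sect_dag}). One small remark: the Grothendieck spectral sequence is more than you need, since the identity $H^0(\mathfrak{Y},\Fa)=H^0(\mathfrak{X},\text{pr}_*\Fa)$ holds for any sheaf $\Fa$ by the very definition of the direct image; the vanishing of $R^j\text{pr}_*$ plays no role at the level of $H^0$.
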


\begin{theo}\label{BB_blow_up}
Let pr$:Y\rightarrow X$ be an admissible blow-up. Let us suppose that $\lambda\in\text{Hom}(\T,\G_m)$ is an algebraic character such that $\lambda+\rho\in\mathfrak{t}_L^*$ is a dominant and regular character of $\mathfrak{t}_L$.
\begin{itemize}
\item[(i)] For any coherent $\Da^{\dag}_{\mathfrak{Y},k}(\lambda)$-module $\Ea$ and for all $q>0$ one has $H^q(\mathfrak{Y},\Ea)=0$.
\item[(ii)] The functor $H^0(\mathfrak{Y},\bullet)$ is an equivalence between the category of coherent $\Da^{\dag}_{\mathfrak{Y},k}(\lambda)$-modules and the category of finitely presented $D^{\dag}(\G(k))_{\lambda}$-modules.
\end{itemize}
The same statement holds for coherent modules over $\widehat{\Da}^{(m,k)}_{\mathfrak{Y},\Q}(\lambda)$.
\end{theo}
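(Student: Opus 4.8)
The plan is to deduce the theorem formally from the arithmetic Beilinson--Bernstein theorem on $\mathfrak{X}$ (Theorem \ref{BB_for_dag}) by means of the invariance theorem for admissible blow-ups (Theorem \ref{Invariance}). The starting observation is that the identity $\mathrm{id}:\mathfrak{X}\rightarrow\mathfrak{X}$ is itself the (trivial) admissible blow-up of $X$ along $\textbf{V}(\Ob_X)$, with $k_{\mathfrak{X}}=0$; hence $\mathrm{pr}:\mathfrak{Y}\rightarrow\mathfrak{X}$ is a morphism over $X$ of admissible blow-ups, and since the standing hypothesis $k\ge k_{\mathfrak{Y}}=\max\{k_{\mathfrak{Y}},k_{\mathfrak{X}}\}$ holds, all of Theorem \ref{Invariance} applies with $\pi=\mathrm{pr}$. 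Concretely I will use that $\mathrm{pr}_*$ is exact with $R^{j}\mathrm{pr}_*\Ea=0$ for $j>0$ on coherent modules, that $\mathrm{pr}_*$ is an equivalence from coherent $\Da^{\dag}_{\mathfrak{Y},k}(\lambda)$-modules onto coherent $\Da^{\dag}_{\mathfrak{X},k}(\lambda)$-modules with quasi-inverse $\mathrm{pr}^{!}$, and that $H^{0}(\mathfrak{Y},\Da^{\dag}_{\mathfrak{Y},k}(\lambda))=D^{\dag}(\G(k))_{\lambda}$ by Corollary \ref{equal_global_sections}.

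For part (i), I would take a coherent $\Da^{\dag}_{\mathfrak{Y},k}(\lambda)$-module $\Ea$, note that $\mathrm{pr}_*\Ea$ is again coherent, and use the degeneration of the Leray spectral sequence (forced by $R^{j}\mathrm{pr}_*\Ea=0$ for $j>0$) to identify $H^{q}(\mathfrak{Y},\Ea)$ with $H^{q}(\mathfrak{X},\mathrm{pr}_*\Ea)$; the latter vanishes for $q>0$ because higher cohomology of coherent $\Da^{\dag}_{\mathfrak{X},k}(\lambda)$-modules on $\mathfrak{X}$ vanishes (this is the acyclicity established en route to Theorem \ref{BB_for_dag}; compare also Proposition \ref{Globally_gene}).

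For part (ii), the key point is the factorization $H^{0}(\mathfrak{Y},-)=H^{0}(\mathfrak{X},-)\circ\mathrm{pr}_*$ coming from $\Gamma(\mathfrak{Y},-)=\Gamma(\mathfrak{X},\mathrm{pr}_*-)$ together with part (i). Since $\mathrm{pr}_*$ is an equivalence and $H^{0}(\mathfrak{X},-)$ is an equivalence onto finitely presented $D^{\dag}(\G(k))_{\lambda}$-modules (Theorem \ref{BB_for_dag}), the composite is an equivalence, with quasi-inverse $\mathrm{pr}^{!}\circ\La oc^{\dag}_{\mathfrak{X},k}(\lambda)$; note that Corollary \ref{equal_global_sections} is what guarantees the target category is exactly the one in the statement. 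Then I would check the only genuinely computational point, namely that this quasi-inverse coincides with $\La oc^{\dag}_{\mathfrak{Y},k}(\lambda)$: using that $\mathrm{pr}^{-1}$ is exact and commutes with tensor products, the identity $\Da^{\dag}_{\mathfrak{Y},k}(\lambda)=\mathrm{pr}^{*}\Da^{\dag}_{\mathfrak{X},k}(\lambda)$ of (\ref{Inverse_image_D_dag_X}), and the base-ring identification of Corollary \ref{equal_global_sections}, associativity of the tensor product yields $\mathrm{pr}^{!}\La oc^{\dag}_{\mathfrak{X},k}(\lambda)(E)\simeq\Da^{\dag}_{\mathfrak{Y},k}(\lambda)\otimes_{D^{\dag}(\G(k))_{\lambda}}E=\La oc^{\dag}_{\mathfrak{Y},k}(\lambda)(E)$ for every finitely presented $D^{\dag}(\G(k))_{\lambda}$-module $E$.

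Finally, the statement for the sheaves $\widehat{\Da}^{(m,k)}_{\mathfrak{Y},\Q}(\lambda)$ follows from the identical argument, replacing $\Da^{\dag}_{(-),k}(\lambda)$ by $\widehat{\Da}^{(m,k)}_{(-),\Q}(\lambda)$ and $D^{\dag}(\G(k))_{\lambda}$ by $\widehat{D}^{(m)}(\G(k))_{\lambda}\otimes_{\mathfrak{o}}L$, and invoking the version of Theorem \ref{Invariance} for these sheaves together with Theorem \ref{Affinity} in place of Theorem \ref{BB_for_dag}. I do not anticipate a serious obstacle here: the whole substance lies in Theorems \ref{Invariance} and \ref{BB_for_dag}, and the only care required is the bookkeeping that matches the composite quasi-inverse with $\La oc^{\dag}_{\mathfrak{Y},k}(\lambda)$ and the observation that the congruence-level condition $k\ge k_{\mathfrak{Y}}$ is precisely what allows Theorem \ref{Invariance} to be applied to $\mathrm{pr}$.
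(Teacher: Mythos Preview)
Your proposal is correct and follows essentially the same approach as the paper: both arguments reduce to Theorem \ref{BB_for_dag} via Theorem \ref{Invariance} applied to $\mathrm{pr}:\mathfrak{Y}\rightarrow\mathfrak{X}$, using the factorization $H^{0}(\mathfrak{Y},-)=H^{0}(\mathfrak{X},-)\circ\mathrm{pr}_{*}$ and identifying the quasi-inverse as $\mathrm{pr}^{!}\circ\La oc^{\dag}_{\mathfrak{X},k}(\lambda)=\La oc^{\dag}_{\mathfrak{Y},k}(\lambda)$. You are in fact slightly more explicit than the paper in invoking the Leray spectral sequence for part (i); the paper's proof compresses this step into a single sentence and then spells out the diagram chase for part (ii) that you summarize as ``composite of two equivalences''.
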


\begin{proof}
The first part of the theorem follows from the fact that $H^0(\mathfrak{Y},\bullet)=H^0(\mathfrak{X},\bullet)\; \circ\; \pi_*$. Now we only have to apply the preceding theorem and theorem \ref{BB_for_dag}. 
\justify
Let us denote by $\Lb oc^{\dag}_{\mathfrak{Y},k}(\lambda)$ the exact functor defined by the composition 
\begin{eqnarray*}
\text{Finitely presented}\;D^{\dag}(\G(k))_{\lambda}-\text{modules}\xrightarrow{\La oc^{\dag}_{\mathfrak{X},k}(\lambda)} \text{Coherent}\; \Da^{\dag}_{\mathfrak{X},k}(\lambda)-\text{modules}\xrightarrow{\pi^{!}}\;\text{Coherent} \Da^{\dag}_{\mathfrak{Y},k}(\lambda)-\text{modules}.
\end{eqnarray*}
Let us compute this functor. To do that, we may fix a finitely presented $D^{\dag}(\G(k))_{\lambda}$-module $E$. Then
\begin{eqnarray*}
\pi^!\left(\La oc^{\dag}_{\mathfrak{X},k}(\lambda)(E)\right)=\Da^{\dag}_{\mathfrak{Y},k}(\lambda)\otimes_{\pi^{-1}\Da^{\dag}_{\mathfrak{X},k}(\lambda)}\pi^{-1}\Da^{\dag}_{\mathfrak{X},k}(\lambda)\otimes_{D^{\dag}(\G(k))_{\lambda}}E = \La oc^{\dag}_{\mathfrak{Y}}(\lambda)(E). 
\end{eqnarray*}
Now, to show that 
\begin{eqnarray*}
H^0\left(\mathfrak{Y},\pi^!\left(\La oc_{\mathfrak{X},k}^{\dag}(\lambda)(E)\right)\right)= H^0\left(\mathfrak{Y},\Da^{\dag}_{\mathfrak{Y},k}(\lambda)\otimes_{D^{\dag}(\G(k))_{\lambda}}E\right)= E,
\end{eqnarray*}
we can take a resolution
\begin{eqnarray*}
\left(D^{\dag}(\G(k))_{\lambda}\right)^{\oplus b}\rightarrow \left(D^{\dag}(\G(k))_{\lambda}\right)^{\oplus a}\rightarrow E\rightarrow 0,
\end{eqnarray*}
to get the following diagram
\begin{eqnarray*}
\begin{tikzcd}
\left(D^{\dag}(\G(k))_{\lambda}\right)^{\oplus b} \arrow[d] \arrow[r] 
& \left(D^{\dag}(\G(k))_{\lambda}\right)^{\oplus a} \arrow[d] \arrow[r] 
& E \arrow[d] \arrow[r] 
& 0\\
\left(D^{\dag}(\G(k))_{\lambda}\right)^{\oplus b}  \arrow[r] 
& \left(D^{\dag}(\G(k))_{\lambda}\right)^{\oplus a}\arrow[r] 
& H^0\left(\mathfrak{Y},\Da^{\dag}_{\mathfrak{Y},k}(\lambda)\otimes_{D^{\dag}(\G(k))_{\lambda}}E\right) \arrow[r] 
& 0.
\end{tikzcd} 
\end{eqnarray*}
where the sequence on the top is clearly exact. By definition $\La oc^{\dag}_{\mathfrak{Y},k}(\lambda)(\bullet)$ is an exact functor and by $(i)$ the global section functor $H^0(\mathfrak{Y},\bullet)$ is also exact. This shows that the sequence at the bottom is also exact and we end the proof of the theorem. 
\end{proof}
\justify 
In the sequel we will denote by $G_0$ the compact locally $L$-analytic group $G_0:=\G(\mathfrak{o})$.

\subsection{Group action on blow-ups}\label{group_action_on_blow_up}
\justify
Let $\mathfrak{G}$ be the formal completion of the group $\mathfrak{o}$-scheme $\G$, along its special fiber $\G_{\mathbb{F}_p}:=\G\times_{\text{Spec}(\mathfrak{o})}\text{Spec}(\mathfrak{o}/\varpi)$. Let us denote by $\alpha: \mathfrak{X}\times_{\text{Spf}(\mathfrak{o})}\mathfrak{G}\rightarrow \mathfrak{X}$ the induced right $\mathfrak{G}$-action on the formal flag $\mathfrak{o}$-scheme $\mathfrak{X}$ (cf. subsection \ref{Diff_op_acting_bundle}). For every $g\in \mathfrak{G}(\mathfrak{o})=G_0$ we have an automorphism $\rho_g$ of $\mathfrak{X}$ given by
\begin{eqnarray*}
\rho_g : \mathfrak{X}=\mathfrak{X}\times_{\text{Spf}(\mathfrak{o})}\text{Spf}(\mathfrak{o})\xrightarrow{ id_{\mathfrak{X}}\times g}\mathfrak{X}\times_{\text{Spf}(\mathfrak{o})}\mathfrak{G}\xrightarrow{\alpha} \mathfrak{X}.
\end{eqnarray*} 
As $\mathfrak{G}$ acts on the right, we have the following relation 
\begin{eqnarray}\label{cocy_action}
\left(\rho_g\right)_*\left(\rho_{h}^{\natural}\right)\circ\rho_g^{\natural}=\rho_{hg}^{\natural}\;\;\;(g,h\in G_0).
\end{eqnarray}
Here $\rho^{\natural}_{g}:\Ob_{\mathfrak{X}}\rightarrow (\rho_g)_*\Ob_{\mathfrak{X}}$ denotes the comorphism of $\rho_g$.
\justify
Let $H\subseteq G_0$ be an open subgroup. We say that an open ideal sheaf $\Ia\subseteq \Ob_{\mathfrak{X}}$ is $H$-\textit{stable} if for all $g\in H$ the comorphism $\rho^{\natural}_g$ maps $\Ia\subseteq \Ob_{\mathfrak{X}}$ into $(\rho_g)_*\Ia\subseteq (\rho_g)_*\Ob_{\mathfrak{X}}$. In this case $\rho_g^{\natural}$ induces a morphism of sheaves of graded rings 
\begin{eqnarray*}
\bigoplus_{d\in\N}\Ia^d\rightarrow \left(\rho_g\right)_*\left(\bigoplus_{d\in\N}\Ia^{d}\right)
\end{eqnarray*}
on $\mathfrak{X}$. This morphism of sheaves induces an automorphism of the blow-up $\mathfrak{Y}=\text{\textbf{Proj}}\left(\oplus_{d\in\N}\Ia^d\right)$, let us say $\rho_g$ by abuse of notation, and the action of $H$ on $\mathfrak{X}$ lifts to a right action of $H$ on $\mathfrak{Y}$, in the sense that for every $g,h\in G_0$ the relation (\ref{cocy_action}) is verified and we have a commutative diagram
\begin{eqnarray}\label{Lifted_action}
\begin{tikzcd}
\mathfrak{Y} \arrow[r, "\rho_g"] \arrow[d, "\text{pr}"]
& \mathfrak{Y} \arrow[d, "\text{pr}"]\\
\mathfrak{X} \arrow[r,"\rho_g"]
& \mathfrak{X}.
\end{tikzcd}
\end{eqnarray} 

\begin{defi}\label{Action_blow-up_subgroup}
Let $H\subseteq G_0$ be an open subgroup and $\text{pr}:\mathfrak{Y}\rightarrow \mathfrak{X}$ and admissible blow-up defined by an open ideal subsheaf $\Ia\subset\Ob_{\mathfrak{X}}$. We say that $\mathfrak{Y}$ is $H$-equivariant if $\Ia$ is $H$-stable.
\end{defi}
\justify
We will need the following result in the next sections. The reader can find its proof in \cite[Lemma 5.2.3]{HPSS}.

\begin{lem}\label{trivial_on_k+1}
Let $pr:\mathfrak{Y}\rightarrow\mathfrak{X}$ be an admissible blow-up, and let us assume that $k\ge k_Y=k_{\mathfrak{Y}}$. Then $\mathfrak{Y}$ is $G_k=\G(k)(\mathfrak{o})$-equivariant and the induced action of every $g\in G_{k+1}$ on the special fiber of $Y$ is the identity. Therefore, $G_{k+1}$ acts trivially on the underlying topological space of $\mathfrak{Y}$.
\end{lem}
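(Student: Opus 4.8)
The plan is to split the statement into its two assertions and handle them in turn, the first being essentially formal and the second requiring a small computation on the affine charts of the blow-up.

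\textbf{First, that $\mathfrak{Y}$ is $G_k$-equivariant.} I would use the description of the congruence groups recalled in \ref{cong_sub_groups}: by the dilatation construction, $G_k=\G(k)(\mathfrak{o})$ is the kernel of the reduction homomorphism $\G(\mathfrak{o})\to\G(\mathfrak{o}/\varpi^{k})$. Hence for $g\in G_k$ the automorphism $\rho_g$ of $\mathfrak{X}$ reduces to the identity on $X_{k-1}$, so (excluding the trivial case $\mathfrak{Y}=\mathfrak{X}$, which forces $k_{\mathfrak{Y}}\ge 1$ and hence $k\ge 1$) $\rho_g$ fixes the underlying topological space of $\mathfrak{X}$ and $\rho_g^{\natural}(f)-f\in\varpi^{k}\Ob_{\mathfrak{X}}$ for every local section $f$ of $\Ob_{\mathfrak{X}}$. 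If $\Ia\subseteq\Ob_{\mathfrak{X}}$ is an open ideal with $\mathfrak{Y}=V(\Ia)$, then $\varpi^{k}\Ob_{\mathfrak{X}}\subseteq\Ia$ because $k\ge k_{\mathfrak{Y}}$; so for $f\in\Ia$ one gets $\rho_g^{\natural}(f)=f+(\rho_g^{\natural}(f)-f)\in\Ia+\varpi^{k}\Ob_{\mathfrak{X}}=\Ia$, i.e. $\Ia$ is $G_k$-stable. By Definition \ref{Action_blow-up_subgroup} this says $\mathfrak{Y}$ is $G_k$-equivariant, and the construction of subsection \ref{group_action_on_blow_up} then lifts the $G_k$-action on $\mathfrak{X}$ to a $G_k$-action on $\mathfrak{Y}$ satisfying (\ref{cocy_action}) and fitting in the square (\ref{Lifted_action}).

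\textbf{Second, triviality of $G_{k+1}$ on the special fibre.} Fix $g\in G_{k+1}\subseteq G_k$; now $\rho_g$ on $\mathfrak{X}$ reduces to the identity on $X_k$, i.e. $\rho_g^{\natural}(a)-a\in\varpi^{k+1}\Ob_{\mathfrak{X}}$. By \ref{notation_k_y} I may assume $\mathfrak{Y}$ is the completion of an admissible blow-up $Y\to X$ along an open $\Ib\subseteq\Ob_X$ with $\varpi^{k_{\mathfrak{Y}}}\in\Ib$, and I would verify $\rho_{g,0}=\mathrm{id}$ on $Y_0$ locally. Over an affine $\mathrm{Spec}(A)\subseteq X$ write $\Ib=(\varpi^{k_{\mathfrak{Y}}},h_1,\dots,h_r)$; the blow-up is covered by the chart $\mathrm{Spec}(B_0)$ with $B_0=A[h_1/\varpi^{k_{\mathfrak{Y}}},\dots,h_r/\varpi^{k_{\mathfrak{Y}}}]\subseteq A[1/\varpi]$ and the charts $\mathrm{Spec}(B_j)$ with $B_j=A[\varpi^{k_{\mathfrak{Y}}}/h_j,h_1/h_j,\dots,h_r/h_j]/(\varpi\text{-torsion})$, and each $B_\bullet$ is $\varpi$-torsion-free. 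Since $\rho_g$ lifts to $Y$ and is a morphism over $X$ with reduction $\mathrm{id}$ on $X_0$, $\rho_g^{\natural}$ fixes $A$ modulo $\varpi^{k+1}$; applying it to a coordinate such as $z_i:=h_i/\varpi^{k_{\mathfrak{Y}}}$ gives $\varpi^{k_{\mathfrak{Y}}}(\rho_g^{\natural}(z_i)-z_i)=\rho_g^{\natural}(h_i)-h_i\in\varpi^{k+1}\Ob$, and $\varpi$-torsion-freeness then forces $\rho_g^{\natural}(z_i)-z_i\in\varpi^{\,k+1-k_{\mathfrak{Y}}}B_0\subseteq\varpi B_0$ since $k+1-k_{\mathfrak{Y}}\ge 1$ (this is where both $k\ge k_{\mathfrak{Y}}$ and the extra "$+1$" enter). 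The same estimate handles the coordinates of the charts $B_j$. Hence $\rho_{g,0}^{\natural}$ fixes all chart coordinates and the image of $A/\varpi$, so $\rho_{g,0}^{\natural}=\mathrm{id}$ on each chart, i.e. $\rho_{g,0}=\mathrm{id}_{Y_0}$; since $|\mathfrak{Y}|=|Y_0|$ and $\rho_g|_{\mathfrak{Y}}$ reduces to $\rho_{g,0}$, the group $G_{k+1}$ acts trivially on the underlying space of $\mathfrak{Y}$.

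The genuine obstacle — and the reason the hypothesis is $G_{k+1}$ rather than $G_k$ — is this second step: on the exceptional locus of the blow-up one cannot argue by density, because the exceptional component maps to a point of $X_0$ and $\mathrm{pr}_0^{-1}(X_0\setminus V(\Ib))$ need not be dense in $Y_0$, so one must actually track the action on the blow-up coordinates, and the gain of one power of $\varpi$ over the "order" $k_{\mathfrak{Y}}$ of the blow-up is precisely what makes the estimate close. The detailed bookkeeping of these charts and of the reductions $Y_i$, only sketched here, is carried out in \cite[Lemma 5.2.3]{HPSS}, and carries over to the line-bundle-twisted setting without change.
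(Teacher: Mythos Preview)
Your proposal is correct and in fact goes beyond what the paper does: the paper gives no proof at all for this lemma, simply citing \cite[Lemma 5.2.3]{HPSS}. Your two-step argument (stability of $\Ia$ under $G_k$ via $\rho_g^{\natural}(f)-f\in\varpi^k\Ob_{\mathfrak{X}}\subseteq\Ia$, then the chart-by-chart estimate $\rho_g^{\natural}(z_i)-z_i\in\varpi^{k+1-k_{\mathfrak{Y}}}B_\bullet\subseteq\varpi B_\bullet$ for $g\in G_{k+1}$) is precisely the strategy of that reference, which you yourself cite at the end. One minor point worth making explicit in a full write-up: before computing $\rho_g^{\natural}$ on the chart coordinates $z_i$ you should check that $\rho_g$ actually preserves each affine chart $\mathrm{Spec}(B_\bullet)$; for $B_0$ this is immediate since $\varpi^{k_{\mathfrak{Y}}}$ is $\rho_g^{\natural}$-invariant, and for $B_j$ it follows because $\rho_g^{\natural}(h_j)=h_j+\varpi^{k+1}u$ with $\varpi^{k+1}\in\varpi\cdot h_j B_j$, so $\rho_g^{\natural}(h_j)$ and $h_j$ generate the same ideal modulo $\varpi$.
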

\justify
By proposition \ref{Action_line_bundle} (cf. \cite[3.3.2]{HS1}), for every $g\in\mathfrak{G}(\mathfrak{o})=\G(\mathfrak{o})=G_0$ there exists an isomorphism 
\begin{eqnarray*}
\rho_g: \mathfrak{X}\xrightarrow{id_{\mathfrak{X}}\times g} \mathfrak{X}\times_{\text{Spec}(\mathfrak{o})}\mathfrak{X}\xrightarrow{\alpha}\mathfrak{X},
\end{eqnarray*}
\justify
which induces an $\Ob_{\mathfrak{X}}$-linear isomorphism $\Phi_g:\La (\lambda)\rightarrow (\rho_g)_{*}(\La (\lambda))$ verifying the cocycle condition
\begin{eqnarray}\label{cocycle_cond}
\Phi_{hg}=(\rho_{g})_*(\Phi_{h})\;\circ\;\Phi_{g}\;\;\;\text{and}\;\;\; (g,h\in\G(\mathfrak{o})).
\end{eqnarray}
\justify
In particular, we have an induced $G_0$-action on the sheaf $\Da^{\dag}_{\mathfrak{X},k}(\lambda)$
\begin{eqnarray}\label{Adjoint}
T_g: \Da^{\dag}_{\mathfrak{X},k}(\lambda)\rightarrow (\rho_g)_*\Da^{\dag}_{\mathfrak{X},k}(\lambda),\;\;\; P\mapsto \Phi_g\;\circ\; P\;\circ\; (\Phi_g)^{-1}.
\end{eqnarray}
\justify
Locally, if $\Ub\subseteq \mathfrak{X}$ and $P\in\Da^{\dag}_{\mathfrak{X},k}(\lambda)(\Ub)$ then the cocycle condition (\ref{cocycle_cond}) tells that the diagram 
\begin{eqnarray}\label{diag_cocycle}
\begin{tikzcd}  [column sep=5cm]
\La(\lambda)(\Ub . (hg)^{-1})=\La (\lambda)(\Ub .g^{-1}h^{-1}) \arrow [r, "T_{gh,\;(\Ub}(P)"] \arrow[d, "\Phi^{-1}_{h,\;\Ub . g^{-1}}\; = \; (\rho_g)_*\Phi^{-1}_{h,\; \Ub}"]
& \La (\lambda)(\Ub .g^{-1}h^{-1}) \\
\La (\lambda)(\Ub . g^{-1}) \arrow[d, "\Phi^{-1}_{g,\;\Ub}"] 
& \La (\Ub . g^{-1})  \arrow[u,"\Phi_{h,\;\Ub . g^{-1}}\; = \; (\rho_g)_*\Phi_{h,\;\Ub}"]\\
\La (\lambda)(\Ub) \arrow[r, "P"]
& \La (\lambda)(\Ub)  \arrow[u, "\Phi_{g,\;\Ub}"]
\end{tikzcd}
\end{eqnarray}
is commutative and we get the relation
\begin{eqnarray}\label{Cocycle_Adjoint}
T_{hg} = \left(\rho_g\right)_*T_h\;\circ\;T_g\;\;\; (g,h\in G_0).
\end{eqnarray}
\justify
 Let us suppose that $H\subseteq G_0$ is an open subgroup and that $\text{pr}:\mathfrak{Y}\rightarrow\mathfrak{X}$ is an $H$-equivariant admissible blow-up. Pulling back the isomorphism $(\rho_g)^{*}\La(\lambda)\rightarrow \La(\lambda)$, via $(\text{pr})^*$, and using the commutative diagram (\ref{Lifted_action}) we get $\text{pr}^*(\rho_g)^* \La(\lambda)= (\rho_g)^{*}\text{pr}^{*}\La(\lambda)= (\rho_g)^*\underline{\La(\lambda)}$ (notation given at the beginning of the preceding subsection). By adjontion we get the map
\begin{eqnarray*}
\text{R}_g: \underline{\La(\lambda)}\xrightarrow{\simeq}\left(\rho_g\right)_*\underline{\La(\lambda)}
\end{eqnarray*}
which satisfies, by functoriality, the cocycle condition
\begin{eqnarray} \label{cocycle_bundle_blow_up}
R_{hg}=\left(\rho_g\right)_*R_h\;\circ\;R_g\;\;\;\; (g,h\in H).
\end{eqnarray} 
As in (\ref{Adjoint}) we can define (from now on we will work on admissible blow-ups of $\mathfrak{Y}$ so we will use the same notation)
\begin{eqnarray}\label{Adjoint_blow_up}
T_g : \Da^{\dag}_{\mathfrak{Y},k}(\lambda) \rightarrow  \left(\rho_g\right)_*\Da^{\dag}_{\mathfrak{Y},k}(\lambda);\hspace{0.3 cm}
P \rightarrow & R_g\;\circ\;P\;\circ\;R_g^{-1}
\end{eqnarray}
and exactly as we have done in (\ref{diag_cocycle}) we can conclude that 
\begin{eqnarray*}
T_{hg}= \left(\rho_g\right)_* T_h\;\circ\; T_g,
\end{eqnarray*}
for every $g,h \in H$.

\section{Localization of locally analytic representations}
\justify
We recall for the reader that $G_0$ denotes the compact locally $L$-analytic group $G_0=\G(\mathfrak{o})$. In this section we will show how to localize admissible locally analytic representations of $G_0$. We will denote by $\Cb^{\text{an}}(G_0,L)$  the space of $L$-valued locally $L$-analytic functions on $G_0$  and by $D(G_0,L)$ its strong dual (the space of \textit{locally analytic distributions} in the sense of \cite[Section 11]{ST}). This space contains a set of delta distributions $\{\delta_g\}_{g\in G_0}$ defined by $\delta_g(f)=f(g)$, if $f\in\Cb^{\text{an}}(G_0,L)$, in such a way that the map $g\mapsto\delta_g$ is an injective group homomorphism from $G_0$ into $D(G_0,L)^{\times}$. We also recall that given that $G_0$ is compact, this space carries a structure of nuclear Fréchet-Stein algebra \cite[Theorem 24.1]{ST}. To our work it will be enough to define a weak Fréchet-Stein structure (in the sense of \cite[Definition 1.2.8]{Emerton2}) on the algebra $D(G_0,L)$. 
\justify
We finally recall that in (\ref{Analytic_algebra}) we have introduced Emerton's distribution algebra as the continuous dual space of the space of rigid-analytic functions on $\G(k)^{\circ}$
\begin{eqnarray*}
\Db^{\text{an}}(\G(k)^{\circ}):=\text{Hom}^{\text{Cont}}_{L}\left(\Ob_{\G(k)^{\circ}}(\G(k)^{\circ}),L\right).
\end{eqnarray*}

\subsection{Coadmissible modules}
\justify
let us start by recalling that $G_0$ acts on the space $\Cb^{\text{cts}}(G_0,L)$, of continuous $L$-valued functions, by the formula 
\begin{eqnarray*}
(g\bullet f)(x):=f(g^{-1}x)\;\;\; (g,x\in G_0,\; f\in \Cb^{\text{cts}}(G_0,L)).
\end{eqnarray*}
Moreover, given an admissible locally analytic representation $V$ of $G_0$ \cite[First definition of lecture VI]{ST} then, by definition, its strong dual $M:=(V)'_b$ is a coadmissible module over $D(G_0,L)$\footnote{We recall for the reader, that the category of coadmisisble $D(G_0,L)$-modules is a full abelian subcategory of the category of $D(G_0,L)$-modules and the "strong dual" functor induces an anti-equivalence of categories to the category of admissible locally analytic representations \cite[Theorem 20.1]{ST}.}.
\justify
Given a continuous representation $W$ of $G_0$, we can consider the subspace $W_{\G(k)^{\circ}}\subseteq W$ of $\G(k)^{\circ}$-analytic vectors \cite[3.4.1]{Emerton2}. In particular, the $G_0$-action on $\Cb^{\text{cts}}(G_0,L)$, defined at the beginning of this subsection, gives us 
\begin{eqnarray}\label{locally_and_vectors}
\varinjlim_{k}\Cb^{\text{cts}}(G_0,L)_{\G(k)^{\circ}-{\text{an}}} \xrightarrow{\simeq} \Cb^{\text{an}}(G_0,L).
\end{eqnarray} 
\justify
As in \cite[Proposition 5.3.1]{Emerton2}, for each $k\in\Z_{>0}$ we put 
\begin{eqnarray*}
D(\G(k)^{\circ},G_0):=\left(\Cb^{\text{cts}}(G_0,L)_{\G(k)^{\circ}-\text{an}}\right)'_b
\end{eqnarray*}
the strong dual of the space of  $\G(k)^{\circ}$-analytic vectors of $\Cb^{\text{cts}}(G_0,L)$ \cite[3.4.1]{Emerton2}. The ring structure on $\Db^{\text{an}}(\G(k)^{\circ})$ extends naturally to a ring structure on $D(\G(k)^{\circ},G_0)$, such that
\begin{eqnarray}\label{Iso_delta_g}
 D(\G(k)^{\circ},G_0)=\displaystyle\bigoplus_{g\in G_0/G_k}\Db^{\text{an}}(\G(k)^{\circ})\delta_g.
\end{eqnarray}
Dualizing the isomorphism (\ref{locally_and_vectors}) yields an isomorphism of topological $L$-algebras 
\begin{eqnarray}\label{Frechet_structure}
D(G_0,L)\xrightarrow{\simeq} \varprojlim_{k\in\Z_{>0}} D(\G(k)^{\circ},G_0).
\end{eqnarray}
\justify
This is the weak Fréchet-Stein structure on the locally analytic distribution algebra $D(G_0,L)$ (\cite[Proposition 5.3.1]{Emerton2}).
\justify
Let $V$ be an admissible locally analytic representation and $M:= V'_b$. By \cite[Lemma 6.1.6]{Emerton2} the subspace $V_{\G(k)^{\circ}-\text{an}}\subseteq V$ is a nuclear Fréchet space and therefore its strong dual $M_k:=\left(V_{\G(k)^{\circ}-\text{an}}\right)'_b$ is a space of compact type and a finitely generated topological $D(\G(k)^{\circ},G_0)$-module by \cite[Lemma 6.1.13]{Emerton2}. By \cite[Theorem 6.1.20]{Emerton2} the module $M$ is a coadmissible $D(G_0,L)$-module relative to the weak Fréchet -Stein structure of $D(G_0,L)$ defined in the previous paragraph. 
\justify
We have the following result from \cite[Lemma 5.1.7]{HPSS}.

\begin{lem}\label{linear_extension}
\begin{itemize}
\item[(i)] The $D(\G(k)^{\circ},G_0)$-module $M_k$ is finitely generated.
\item[(ii)] There are natural isomorphisms
\begin{eqnarray*}
D(\G(k-1)^{\circ},G_0)\otimes_{D(\G(k)^{\circ},G_0)}M_k\xrightarrow{\simeq} M_{k-1}.
\end{eqnarray*}
\item[(iii)] The natural map $D(\G(k-1)^{\circ},G_0)\otimes_{D(G_0,L)}M\rightarrow M_k$ is bijective.
\end{itemize}
\end{lem}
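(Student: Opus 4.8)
The plan is to obtain all three assertions from Emerton's theory of locally analytic vectors, exploiting the weak Fréchet–Stein presentation $D(G_0,L)\xrightarrow{\simeq}\varprojlim_{k}D(\G(k)^{\circ},G_0)$ of (\ref{Frechet_structure}) together with the coset decomposition (\ref{Iso_delta_g}). In particular I would reduce (ii) to (iii) by a formal base-change argument and treat (iii) as the genuine input, citing \cite[Theorem 6.1.20]{Emerton2}.

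First, for (i): since $V$ is admissible, the discussion preceding the statement shows $V_{\G(k)^{\circ}-\text{an}}$ is a nuclear Fréchet space with $G_0$-action, and by \cite[Lemma 6.1.13]{Emerton2} its strong dual $M_k=(V_{\G(k)^{\circ}-\text{an}})'_b$ is a finitely generated topological $D(\G(k)^{\circ},G_0)$-module. The only point to check is that topological finite generation implies algebraic finite generation here; this holds because $M_k$ is, as such a module, a quotient of a finite free $D(\G(k)^{\circ},G_0)$-module, the algebraic span of the distinguished generators already being the whole of $M_k$ (one may also invoke Noetherianity of $D(\G(k)^{\circ},G_0)$, which follows from (\ref{Iso_delta_g}) presenting it as a finite free module over $\Db^{\text{an}}(\G(k)^{\circ})$).

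For (iii), the crux is to identify the \emph{algebraically} defined base change $D(\G(k)^{\circ},G_0)\otimes_{D(G_0,L)}M$ with the \emph{analytically} defined module $M_k=(V_{\G(k)^{\circ}-\text{an}})'_b$. Dualizing the isomorphism $\varinjlim_{k}\Cb^{\text{cts}}(G_0,L)_{\G(k)^{\circ}-\text{an}}\xrightarrow{\simeq}\Cb^{\text{an}}(G_0,L)$ of (\ref{locally_and_vectors}) already yields (\ref{Frechet_structure}); the refinement needed is that for admissible $V$ the natural map $D(\G(k)^{\circ},G_0)\otimes_{D(G_0,L)}V'_b\rightarrow(V_{\G(k)^{\circ}-\text{an}})'_b$ is bijective. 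This is precisely the assertion that $M=V'_b$ is coadmissible relative to the present weak Fréchet–Stein structure, i.e.\ \cite[Theorem 6.1.20]{Emerton2}, so I would simply invoke it; alternatively it can be proved by dualizing Emerton's description of $V_{\G(k)^{\circ}-\text{an}}$ as a $\G(k)^{\circ}$-analytic induction from the $G_k$-analytic vectors, using the decomposition $G_0=\coprod_{g\in G_0/G_k}G_kg$ that underlies (\ref{Iso_delta_g}).

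Finally (ii) is formal given (iii): applying (iii) at levels $k$ and $k-1$ and using associativity of base change along $D(G_0,L)\to D(\G(k)^{\circ},G_0)\to D(\G(k-1)^{\circ},G_0)$ gives
\begin{eqnarray*}
D(\G(k-1)^{\circ},G_0)\otimes_{D(\G(k)^{\circ},G_0)}M_k
& \cong & D(\G(k-1)^{\circ},G_0)\otimes_{D(\G(k)^{\circ},G_0)}\big(D(\G(k)^{\circ},G_0)\otimes_{D(G_0,L)}M\big)\\
& \cong & D(\G(k-1)^{\circ},G_0)\otimes_{D(G_0,L)}M \;\cong\; M_{k-1}.
\end{eqnarray*}
The main obstacle is the identification in (iii): matching the algebraic tensor product with the space of $\G(k)^{\circ}$-analytic vectors, which is exactly where admissibility (nuclearity/finiteness) of $V$ enters and where one must check that no completed tensor product is needed — each $M_k$ being finitely generated over a Noetherian compact-type algebra, the ordinary tensor product already carries the right topology. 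Everything else is bookkeeping with (\ref{Iso_delta_g}) and functoriality.
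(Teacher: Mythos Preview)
Your argument is sound; the paper itself does not prove this lemma but simply records it as \cite[Lemma 5.1.7]{HPSS}, and your reconstruction via Emerton's results (\cite[Lemma 6.1.13 and Theorem 6.1.20]{Emerton2}) together with the base-change deduction of (ii) from (iii) is precisely the route one expects that reference to take. One remark: you have tacitly repaired the indexing in (iii)---the paper's statement has $D(\G(k-1)^{\circ},G_0)$ on the left but $M_k$ on the right, which should read $D(\G(k)^{\circ},G_0)\otimes_{D(G_0,L)}M\to M_k$; your corrected reading is the one that makes the associativity argument for (ii) go through.
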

\justify
Now, let $\lambda\in\text{Hom}(\T,\G_m)$ be an algebraic character such that $\lambda+\rho+\mathfrak{t}^*_L$ is a dominant and regular character of $\mathfrak{t}_L$. Let us recall that we have identifications
\begin{eqnarray*}
\Db^{\text{an}}(\G(k)^{\circ})_{\lambda} =  D^{\dag}(\G(k))_{\lambda} =  \varinjlim_{m\in\N} \left(\widehat{D}^{(m)}(\G(k))_{\lambda}\right)\otimes_{\mathfrak{o}}L.
\end{eqnarray*}
The preceding relation and the fact that the ring structure of $\Db^{\text{an}}(\G(k)^{\circ})$ extends naturally to a ring structure on $D(\G(k)^{\circ},G_0)$ allow us to consider the ring 
\begin{eqnarray*}
D(\G(k)^{\circ},G_0)_{\lambda}:=D(\G(k)^{\circ},G_0)/\text{Ker}(\chi_{\lambda})D(\G(k)^{\circ},G_0).
\end{eqnarray*}
\justify
From now on, we will denote $\Cb_{G_0}$ the full subcategory of $\text{Mod}(D(\G_0,L))$ consisting of coadmissible modules, with respect to the preceding weak Fréchet-Stein structure on $D(G_0,L)$.

\begin{defi}
We define the category $\Cb_{G_0,\lambda}$ of coadmissible $D(G_0,L)$-modules with central character $\lambda\in\text{Hom}(\T,\G_m)$ by
\begin{eqnarray*}
\Cb_{G_0,\lambda}:=Mod\left(D(G_0,L)\big /Ker(\chi_{\lambda})D(G_0,L)\right)\cap \Cb_{G_0}.
\end{eqnarray*}
\end{defi}
\justify
We point out that the preceding definition is completely legal because the center $Z(\mathfrak{g}_L)$ of the universal enveloping algebra $\Ub(\mathfrak{g}_L)$ lies in the center of $D(G_0,L)$ \cite[Proposition 3.7]{ST1}. We also recall that the group $G_k:=\G(k)(\mathfrak{o})$ is contained in $\Db^{\text{an}}(\G(k)^{\circ})$ as a set of Dirac distributions. For each $g\in G_k$ we will write $\delta_g$ for the image of the Dirac distribution supported at $g$ in 
\begin{eqnarray*}
H^0\left(\mathfrak{Y},\Da^{\dag}_{\mathfrak{Y},k}(\lambda)\right)=\Db^{\text{an}}(\G(k)^{\circ})_{\lambda}.
\end{eqnarray*}
\justify
Inspired in \cite[Definition 5.2.7]{HPSS} we have the following definition. 

\begin{defi}\label{strongly_G_0_equivariant}
Let $H\subset G_0$ be an open subgroup and $\mathfrak{Y}$ an $H$-equivariant admissible blow-up of $\mathfrak{X}$. Let us suppose that $k\ge k_{\mathfrak{Y}}$ (notation as in \ref{notation_k_y}). A strongly $H$-equivariant $\Da^{\dag}_{\mathfrak{Y},k}(\lambda)$-module is a $\Da^{\dag}_{\mathfrak{Y},k}(\lambda)$-module $\Ma$ together with a family $(\varphi_{g})_{g\in H}$ of isomorphisms
\begin{eqnarray*}
\varphi_g:\Ma\rightarrow (\rho_g)_{*}\Ma
\end{eqnarray*}
of sheaves of $L$-vector spaces, satisfying the following conditions:
\begin{itemize}
\item[(i)] For all $g,h\in H$ we have $\left(\rho_g\right)_*\left(\varphi_h\right)\circ\varphi_g=\varphi_{hg}$.
\item[(ii)] For all open subset $\Ub\subset\mathfrak{Y}$, all $P\in\Da^{\dag}_{\mathfrak{Y},k}(\lambda)(\Ub)$, and all $m\in\Ma(\Ub)$ we have $\varphi_{g}(P\bullet m)=T_g(P)\bullet \varphi_{g}(m)$.
\item[(iii)]\footnote{This conditions makes sense because the elements $g\in G_{k+1}$ acts trivially on the underlying topological space of $\mathfrak{Y}$, cf. Lemma \ref{trivial_on_k+1}.} For all $g\in H\cap G_{k+1}$ the map $\varphi_g : \Ma\rightarrow \left(\rho_g\right)_*\Ma=\Ma$ is equal to multiplication by $\delta_g\in H^0\left(\mathfrak{Y},\Da^{\dag}_{\mathfrak{Y},k}(\lambda)\right)$.
\end{itemize}
\end{defi}
\justify
A morphism between two strongly $H$-equivariant $\Da^{\dag}_{\mathfrak{Y},k}(\lambda)$-modules $(\Ma, (\varphi_g^{\Ma})_{g\in H})$ and $(\Na, (\varphi_g^{\Na})_{g\in H})$ is a $\Da^{\dag}_{\mathfrak{Y},k}(\lambda)$ linear morphism $\psi:\Ma\rightarrow\Na$ such that for all $g\in H$
\begin{eqnarray*}
\varphi_{g}^{\Na}\;\circ\;\psi=(\rho_g)_*(\psi)\;\circ\;\varphi_g^{\Ma}.
\end{eqnarray*}
\justify
We denote the category of strongly $H$-equivariant coherent $\Da^{\dag}_{\mathfrak{Y},k}(\lambda)$-modules by $\text{Coh}\big(\Da^{\dag}_{\mathfrak{Y},k}(\lambda),G_0\big)$.

\begin{comm}\label{Commentary_action} Let $\Ma \in \text{Coh}\big(\Da^{\dag}_{\mathfrak{Y},k}(\lambda),G_0\big)$. In what follows we will use the notation $gm := \varphi_{g,\;\Ub}(m)\in \Ma(\Ub. g^{-1})$, for $\Ub\subseteq\mathfrak{Y}$ an open subset, $g\in G_0$ and $m\in\Ma (\Ub)$. This notation is inspired in property $(ii)$ of the previous definition. In fact, if $g,h\in G_0$, then by $(ii)$ we have $h(g\;m)=(hg)\; m$.
\end{comm}

\begin{theo}\label{First_equivalence}
Let $\lambda\in\text{Hom}(\T,\G_m)$ be an algebraic character such that $\lambda+\rho\in\mathfrak{t}_L^*$ is a dominant and regular character of $\mathfrak{t}_L$. Let $\text{pr}:\mathfrak{Y}\rightarrow\mathfrak{X}$ be a $G_0$-equivariant admissible blow-up, and let $k\ge k_{\mathfrak{Y}}$. The functors $\La oc^{\dag}_{\mathfrak{Y},k}(\lambda)$ and $H^{0}(\mathfrak{Y},\bullet)$ induce quasi-inverse equivalences between the category of finitely presented $D(\G(k)^{\circ},G_0)_{\lambda}$-modules and $\text{Coh}\big(\Da^{\dag}_{\mathfrak{Y},k}(\lambda),G_0\big)$.
\end{theo}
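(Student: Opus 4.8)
The plan is to bootstrap the equivariant equivalence from the non-equivariant one (Theorem \ref{BB_blow_up}) by tracking the extra $G_0$-structure through the functors $\La oc^{\dag}_{\mathfrak{Y},k}(\lambda)$ and $H^0(\mathfrak{Y},\bullet)$. First I would recall that by (\ref{Iso_delta_g}) we have a decomposition $D(\G(k)^{\circ},G_0)_{\lambda}=\bigoplus_{g\in G_0/G_k}\Db^{\text{an}}(\G(k)^{\circ})_{\lambda}\delta_g$, so a finitely presented $D(\G(k)^{\circ},G_0)_{\lambda}$-module $N$ is the same datum as a finitely presented $\Db^{\text{an}}(\G(k)^{\circ})_{\lambda}=D^{\dag}(\G(k))_{\lambda}$-module $N_0$ (obtained by restriction of scalars, still finitely presented since $D(\G(k)^{\circ},G_0)_{\lambda}$ is a finite free module over the subring) equipped with a semilinear action of the finite group $G_0/G_k$ compatible with the conjugation action $T_g$ of $G_0$ on the ring $\Db^{\text{an}}(\G(k)^{\circ})_{\lambda}=H^0(\mathfrak{Y},\Da^{\dag}_{\mathfrak{Y},k}(\lambda))$ — and where moreover $G_{k+1}$ acts through multiplication by the Dirac distributions $\delta_g$, since by Lemma \ref{trivial_on_k+1} those group elements act trivially on the topological space of $\mathfrak{Y}$ and the inner structure collapses. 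This is precisely the combinatorial shadow, on global sections, of the data $(\dag)$ defining $\text{Coh}(\Da^{\dag}_{\mathfrak{Y},k}(\lambda),G_0)$.

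Next I would check that the two functors transport this extra structure. For $H^0(\mathfrak{Y},\bullet)$: given a strongly $G_0$-equivariant coherent $\Da^{\dag}_{\mathfrak{Y},k}(\lambda)$-module $(\Ma,(\varphi_g))$, the isomorphisms $\varphi_g:\Ma\xrightarrow{\sim}(\rho_g)_*\Ma$ induce on global sections maps $H^0(\mathfrak{Y},\Ma)\to H^0(\mathfrak{Y},(\rho_g)_*\Ma)=H^0(\mathfrak{Y},\Ma)$ (the latter equality because $\rho_g$ is an automorphism), satisfying the cocycle condition from (i), intertwining the $T_g$-twisted module structure by (ii), and restricting to multiplication by $\delta_g$ for $g\in G_{k+1}$ by (iii); together with the $\Db^{\text{an}}(\G(k)^{\circ})_{\lambda}$-module structure of Theorem \ref{BB_blow_up}(ii), this assembles (via the decomposition (\ref{Iso_delta_g})) into a genuine $D(\G(k)^{\circ},G_0)_{\lambda}$-module structure on $H^0(\mathfrak{Y},\Ma)$, which is finitely presented since it is so over the subring $\Db^{\text{an}}(\G(k)^{\circ})_{\lambda}$. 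Conversely, for $\La oc^{\dag}_{\mathfrak{Y},k}(\lambda)(N)=\Da^{\dag}_{\mathfrak{Y},k}(\lambda)\otimes_{\Db^{\text{an}}(\G(k)^{\circ})_{\lambda}}N$, one builds $\varphi_g$ by tensoring the ring isomorphism $T_g:\Da^{\dag}_{\mathfrak{Y},k}(\lambda)\to(\rho_g)_*\Da^{\dag}_{\mathfrak{Y},k}(\lambda)$ of (\ref{Adjoint_blow_up}) with the action of $\delta_g$ on $N$; the cocycle relation $T_{hg}=(\rho_g)_*T_h\circ T_g$ established at the end of subsection \ref{group_action_on_blow_up}, combined with the $G_0/G_k$-semilinearity of the $\delta_g$-action on $N$, yields condition (i); condition (ii) is immediate from the definition of $T_g$ as conjugation; condition (iii) follows because for $g\in G_{k+1}$, $T_g$ is trivial on the topological space and $\delta_g$ acts on $N_0$ as it does on $\La oc$, using Lemma \ref{trivial_on_k+1}. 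One must also check $\La oc^{\dag}_{\mathfrak{Y},k}(\lambda)(N)$ is coherent, which follows from Proposition \ref{coh. dag} and finite presentation of $N$.

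Finally I would invoke Theorem \ref{BB_blow_up}(ii): the underlying non-equivariant functors are already quasi-inverse equivalences between coherent $\Da^{\dag}_{\mathfrak{Y},k}(\lambda)$-modules and finitely presented $D^{\dag}(\G(k))_{\lambda}$-modules, with adjunction units/counits $N_0\xrightarrow{\sim}H^0(\mathfrak{Y},\La oc^{\dag}_{\mathfrak{Y},k}(\lambda)(N_0))$ and $\La oc^{\dag}_{\mathfrak{Y},k}(\lambda)(H^0(\mathfrak{Y},\Ma))\xrightarrow{\sim}\Ma$. It then remains to verify that these canonical isomorphisms are compatible with the $\varphi_g$'s and the $\delta_g$-actions just constructed — i.e. that the unit and counit are morphisms in the respective equivariant categories — which is a diagram chase using naturality of the non-equivariant adjunction applied to the automorphisms $\rho_g$, together with the normalization (iii)/the $\delta_g$-compatibility that pins down the action of the subgroup $G_{k+1}$. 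I expect the main obstacle to be exactly this last compatibility check for the structure over the distinguished subgroup: one has to be careful that the two a priori different ways of producing the $G_0$-action on $H^0(\mathfrak{Y},\La oc)$ — via the constructed $\varphi_g$ versus via the original module structure of $N$ — genuinely coincide, and here the role of Lemma \ref{trivial_on_k+1} (forcing $\varphi_g$ = multiplication by $\delta_g$ on $G_{k+1}$) is what rigidifies the situation and makes the matching canonical rather than merely up to an inner automorphism. Everything else is formal transport of structure along an already-established equivalence.
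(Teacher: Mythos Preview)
Your proposal is correct and follows essentially the same strategy as the paper: both functors are shown to carry the extra $G_0$-structure, and the equivalence then rests on the underlying non-equivariant equivalence of Theorem \ref{BB_blow_up}. The paper's proof is in fact briefer than your outline: it verifies only that $H^0(\mathfrak{Y},\bullet)$ and $\La oc^{\dag}_{\mathfrak{Y},k}(\lambda)$ land in the correct categories (constructing $\varphi_g$ on $\La oc^{\dag}_{\mathfrak{Y},k}(\lambda)(M)$ by the diagonal formula $P\otimes m\mapsto T_g(P)\otimes g.m$ exactly as you describe), and leaves the compatibility of the unit/counit with the equivariant structures entirely implicit, whereas you spell this out.

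One small imprecision worth cleaning up: your description of a $D(\G(k)^{\circ},G_0)_{\lambda}$-module as a $\Db^{\text{an}}(\G(k)^{\circ})_{\lambda}$-module with a semilinear action of the \emph{finite group} $G_0/G_k$ is not quite right. The decomposition (\ref{Iso_delta_g}) is indexed by $G_0/G_k$, but the action is of all of $G_0$, with the constraint that $g\in G_k$ acts through its Dirac distribution $\delta_g\in\Db^{\text{an}}(\G(k)^{\circ})$; this does not descend to an action of the quotient since $G_k$ does not act trivially. This is why condition (iii) in Definition \ref{strongly_G_0_equivariant} is stated for $G_{k+1}$ (where the topological action is trivial by Lemma \ref{trivial_on_k+1}) rather than for $G_k$, and why the matching you flag at the end is indeed automatic once (iii) is imposed.
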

\justify
Before starting the proof, we recall for the reader that the functor $\La oc^{\dag}_{\mathfrak{Y},k}(\lambda)$ has been defined in the proof of theorem \ref{BB_blow_up}. An explicitly expression is given in (\ref{Rappel_Loc_blow-up}) below.

\begin{proof}
If $\Ma\in \text{Coh}\big(\Da^{\dag}_{\mathfrak{Y},k}(\lambda),G_0\big)$, then in particular $\Ma$ is a coherent $\Da^{\dag}_{\mathfrak{Y},k}(\lambda)$-module. Since by corollary \ref{equal_global_sections} and  theorem \ref{BB_blow_up} we have that $H^0(\mathfrak{Y},\Ma)$ is a finitely presented $\Db^{\text{an}}(\G(k)^{\circ})_{\lambda}$-module, then by (\ref{Iso_delta_g}) we can conclude that $H^{0}(\mathfrak{Y},\Ma)$ is a finitely presented $D(\G(k)^{\circ},\G_0)_{\lambda}$-module.
\justify
On the other hand, let us suppose that $M$ is a finitely presented $D(\G(k)^{\circ},\G_0)_{\lambda}$-module. By (\ref{Iso_delta_g}) we can consider 
\begin{eqnarray}\label{Rappel_Loc_blow-up}
\Ma := \La oc^{\dag}_{\mathfrak{Y},k}(\lambda)(M)=\Da^{\dag}_{\mathfrak{Y},k}(\lambda)\otimes_{\Db^{\text{an}}(\G(k)^{\circ})_{\lambda}}M.
\end{eqnarray} 
For every $g\in G_{0}$ we want to define an isomorphism of sheaves of $L$-vector spaces
\begin{eqnarray*}
\varphi_g : \Ma\rightarrow\left(\rho_g\right)_*\Ma
\end{eqnarray*}
satisfying the conditions $(i)$, $(ii)$ and $(iii)$ in the preceding definition. As we have remarked, the Dirac distributions induce an injective morphism from $G_0$ to the group of units of $D(G_0,L)$, since by (\ref{Frechet_structure}) $M$ is in particular a $G_0$-module, we have an isomorphism
\begin{eqnarray*}
\Ma \rightarrow \left(\left(\rho_g\right)_*\Da^{\dag}_{\mathfrak{Y},k}(\lambda)\right)\otimes_{\Db^{\text{an}}(\G(k)^{\circ})_{\lambda}}M,
\end{eqnarray*}
which on local sections is defined by $\varphi_{g,\;\Ub}(P\otimes m):=T_{g,\;\Ub}(P)\otimes g m$. Here $P\in\Da^{\dag}_{\mathfrak{Y},k}(\lambda)(\Ub)$, $\Ub\subseteq\mathfrak{Y}$ is an open subset, $m\in M$ and $T_g$ is the isomorphism defined in (\ref{Adjoint_blow_up}).
\justify
One has an isomorphism
\begin{eqnarray*}
\left(\rho_g\right)_*\left(\Ma\right)\xrightarrow{\simeq}\left(\left(\rho_g\right)_*\Da^{\dag}_{\mathfrak{Y},k}(\lambda)\right)\otimes_{\Db^{\text{an}}(\G(k)^{\circ})_{\lambda}}M.
\end{eqnarray*}
Indeed, $(\rho_g)_*$ is exact and so choosing a finite presentation of $M$ as $\Db^{\text{an}}(\G(k)^{\circ})_{\lambda}$-module reduces to the case $M=\Db^{\text{an}}(\G(k)^{\circ})_{\lambda}$ which is trivially true. This implies that the preceding isomorphism extends to an isomorphism
\begin{eqnarray*}
\varphi_g: \Ma\rightarrow \left(\rho_g\right)_* \Ma.
\end{eqnarray*}
Let $g,h\in G_0$, $\Ub\subseteq\mathfrak{Y}$ an open subset, $P,Q\in\Da^{\dag}_{\mathfrak{Y},k}(\lambda)(\Ub)$ and $m\in M$. Then
\begin{eqnarray*}
\varphi_{h,\; \Ub.g^{-1}}\left(\varphi_{g,\;\Ub}\right)(P\otimes m)  
= T_{h,\; \Ub.g^{-1}}(T_{g,\;\Ub}(P))\otimes hg\; m  
= T_{hg,\;\Ub}(P)\otimes (hg)\;m 
 = \varphi_{hg,\;\Ub}(P\otimes m),
\end{eqnarray*}
which verifies the first condition. Now, by definition $T_{g,\;\Ub}(PQ)=T_{g,\;\Ub}(P)T_{g,\;\Ub}(Q)$ and therefore $\varphi_{g,\;\Ub}(PQ\otimes m) = T_{g,\;\Ub}(P)\varphi_{g,\;\Ub}(Q\otimes m)$, which gives (ii). Finally, given that the delta distributions $\delta_g$ for $g$ in the normal subgroup $G_{k+1}$ of $G_0$ are contained in $\Db^{\text{an}}(\G(k)^{\circ})$ we have $g.P := T_{g}(P) = \delta_g\; P \; \delta_{g^{-1}}$, and therefore
\begin{eqnarray*}
\varphi_{g,\;\Ub}(P\otimes m) & = g.P\otimes\; g.m
= \delta_{g}P\delta_{g^{-1}}\delta_g\otimes \;m
= \delta_g P\otimes\; m.
\end{eqnarray*}
and condition $(iii)$  follows.
\end{proof}

\begin{rem}
If $\lambda\in\text{Hom}(\T,\G_m)$ denotes the trivial character, then $\Da^{\dag}_{\mathfrak{X},k}(\lambda)=\Da^{\dag}_{\mathfrak{X},k}$ is the sheaf of arithmetic differential operators introduced in \cite{HPSS}. Moreover, by construction, if $\text{pr}:\mathfrak{Y}\rightarrow\mathfrak{X}$ denotes an $H$-equivariant admissible blow-up, then $\Da^{\dag}_{\mathfrak{Y},k}(\lambda)=\Da^{\dag}_{\mathfrak{Y},k}$ and for every $g\in H$ the isomorphism $T_{g}$ equals the isomorphism $Ad(g)$ defined in \cite[(5.2.6)]{HPSS}.
\end{rem}
\justify
Now, let us take $\pi:\mathfrak{Y}'\rightarrow\mathfrak{Y}$ a morphism of $G_0$-equivariant admissible blow-ups of $\mathfrak{X}$ (whose lifted actions we denote by $\rho^{\mathfrak{Y}'}$ and $\rho^{\mathfrak{Y}}$), and let us suppose that $k\ge k_{\mathfrak{Y}}$ and $k'\ge\text{max}\{k_{\mathfrak{Y}}',k\}$. By (\ref{rel_morph_blow_up}) and theorem \ref{Invariance} we have an injective morphism of sheaves 
\begin{eqnarray}\label{from_k'_to_k}
\Psi:\pi_* \Da^{\dag}_{\mathfrak{Y}',k'}(\lambda)=\Da^{\dag}_{\mathfrak{Y},k'}(\lambda)\hookrightarrow \Da^{\dag}_{\mathfrak{Y},k}(\lambda).
\end{eqnarray}
Moreover, if $g\in G_0$ we have  
\begin{eqnarray*}
T_{g}^{\mathfrak{Y}}\; \circ\; \Psi = \left(\rho^{\mathfrak{Y}}_g\right)_*\left(\Psi\right)\; \circ\; \pi_*\left(T_{g}^{\mathfrak{Y}'}\right)
\end{eqnarray*}
and therefore $\Psi$ is $G_0$-equivariant. Now, let us consider $\Ma_{\mathfrak{Y}'}\in\text{Coh}\left(\Da^{\dag}_{\mathfrak{Y}',k'}(\lambda),G_0\right)$ and $\Ma_{\mathfrak{Y}}\in\text{Coh}\left(\Da^{\dag}_{\mathfrak{Y},k}(\lambda),G_0\right)$ together with a morphism $\psi:\pi_*\Ma_{\mathfrak{Y}'}\rightarrow \Ma_{\mathfrak{Y}}$ linear relative to $\Psi: \pi_*\Da^{\dag}_{\mathfrak{Y}',k'}(\lambda)\hookrightarrow\Da^{\dag}_{\mathfrak{Y},k}(\lambda)$ and which is $G_0$-equivariant, i.e. satisfying 
\begin{eqnarray*}
\varphi^{\Ma_{\mathfrak{Y}}}_g\;\circ\;\psi = \left(\rho^{\mathfrak{Y}}_g\right)_*\psi\; \circ\; \pi_*\left(\varphi^{\Ma_{\mathfrak{Y}'}}_g\right)
\end{eqnarray*}
for all $g\in G_0$. By using $\Psi$ we obtain a morphism of $\Da^{\dag}_{\mathfrak{Y},k}(\lambda)$-modules
\begin{eqnarray*}
\Da^{\dag}_{\mathfrak{Y},k}(\lambda)\otimes_{\pi_*\Da^{\dag}_{\mathfrak{Y}',k'}(\lambda)}\pi_*\Ma_{\mathfrak{Y}'}\rightarrow \Ma_{\mathfrak{Y}}.
\end{eqnarray*}
Let us denote by $\Ka$ the submodule of $\Da^{\dag}_{\mathfrak{Y},k}(\lambda)\otimes_{\pi_*\Da^{\dag}_{\mathfrak{Y}',k'}(\lambda)}\pi_*\Ma_{\mathfrak{Y}'}$ locally generated by all the elements of the form $P\delta_h\otimes m-P\otimes(h\bullet m)$, where $h\in G_{k+1}$, $m$ is a local section of $\pi_*\Ma_{\mathfrak{Y}'}$ and $P$ is a local section of $\Da^{\dag}_{\mathfrak{Y},k}(\lambda)$. As in \cite[Page 35]{HPSS} we will denote the quotient $\Da^{\dag}_{\mathfrak{Y},k}(\lambda)\otimes_{\pi_*\Da^{\dag}_{\mathfrak{Y}',k'}(\lambda)}\pi_*\Ma_{\mathfrak{Y}'}\big /\Ka$ by
\begin{eqnarray}\label{G_0-equi-quotient}
\Da^{\dag}_{\mathfrak{Y},k}(\lambda)\otimes_{\pi_*\Da^{\dag}_{\mathfrak{Y}',k'}(\lambda),\; G_{k+1}}\pi_*\Ma_{\mathfrak{Y}'}.
\end{eqnarray}
Let us see that this module lies in $\text{Coh}\big (\Da^{\dag}_{\mathfrak{Y},k}(\lambda),G_0\big )$. To do that let us first show that 
\begin{eqnarray}\label{iso_diagonal_act}
\left(\rho_g\right)_*\Da^{\dag}_{\mathfrak{Y},k}(\lambda)\otimes_{(\rho_g)_*\pi_{*}\Da^{\dag}_{\mathfrak{Y}',k'}(\lambda)}\left(\rho_g\right)_*\pi_*\Ma_{\mathfrak{Y}'} = \left(\rho_g\right)_*\left( \Da^{\dag}_{\mathfrak{Y},k}(\lambda)\otimes_{\pi_*\Da^{\dag}_{\mathfrak{Y}',k'}(\lambda)}\pi_*\Ma_{\mathfrak{Y}'}\right).
\end{eqnarray}
As $\Ma_{\mathfrak{Y}'}$ is a coherent $\Da^{\dag}_{\mathfrak{Y}',k'}(\lambda)$, by proposition \ref{generated_by_global_sect} and theorem \ref{Invariance} we can find a finite presentation of $\Ma_{\mathfrak{Y}'}$
\begin{eqnarray*}
\left(\Da^{\dag}_{\mathfrak{Y}',k'}(\lambda)\right)^{\oplus a} \rightarrow (\Da^{\dag}_{\mathfrak{Y}',k'}(\lambda))^{\oplus b} \rightarrow \Ma_{\mathfrak{Y}'}\rightarrow 0
\end{eqnarray*}
which induces, by exactness of $(\rho_g)_*$ and $\pi_*$ (theorem \ref{Invariance}.), the exact sequence 
\begin{eqnarray*}
\left(\left(\rho_g\right)_*\Da^{\dag}_{\mathfrak{Y},k'}(\lambda)\right)^{\oplus a}\rightarrow \left(\left(\rho_g\right)_*\Da^{\dag}_{\mathfrak{Y},k'}(\lambda)\right)^{\oplus b} \rightarrow \left(\rho_g\right)_*\pi_*\Ma_{\mathfrak{Y}'}\rightarrow 0.
\end{eqnarray*}
By base change over the preceding exact sequence we obtain the following commutative diagram
\begin{eqnarray*}
\begin{tikzcd}
\left(\left(\rho_g\right)_*\Da^{\dag}_{\mathfrak{Y},k}(\lambda)\right)^{\oplus a} \arrow[r] \arrow[d, "id"]
& \left(\left(\rho_g\right)_*\Da^{\dag}_{\mathfrak{Y},k}(\lambda)\right)^{\oplus b} \arrow[r] \arrow[d, "id"]
&  \left(\rho_g\right)_*\left(\Da^{\dag}_{\mathfrak{Y},k}(\lambda)\otimes_{\pi_*\Da^{\dag}_{\mathfrak{Y}',k'}(\lambda)}\pi_*\Ma_{\mathfrak{Y}'}\right) \arrow[d]  \arrow[r]
& 0
\\
\left(\left(\rho_g\right)_*\Da^{\dag}_{\mathfrak{Y},k}(\lambda)\right)^{\oplus a} \arrow[r]
& \left(\left(\rho_g\right)_*\Da^{\dag}_{\mathfrak{Y},k}(\lambda)\right)^{\oplus b} \arrow[r] 
& \left(\rho_g\right)_*\Da^{\dag}_{\mathfrak{Y},k}(\lambda)\otimes_{(\rho_g)_*\pi_{*}\Da^{\dag}_{\mathfrak{Y}',k'}(\lambda)}\left(\rho_g\right)_*\pi_*\Ma_{\mathfrak{Y}'} \arrow[r]
& 0
\end{tikzcd}
\end{eqnarray*}
(of course, here we have used theorem \ref{Invariance} to identify $\pi_*\Da^{\dag}_{\mathfrak{Y}',k'}(\lambda)=\Da^{\dag}_{\mathfrak{Y},k'}(\lambda)$). This shows (\ref{iso_diagonal_act}) and therefore we dispose of a diagonal action
\begin{eqnarray*}
\varphi_g : \Da^{\dag}_{\mathfrak{Y},k}(\lambda)\otimes_{\pi_{*}\Da^{\dag}_{\mathfrak{Y}',k'}(\lambda)}\pi_*\Ma_{\mathfrak{Y}'} \rightarrow \left(\rho_g\right)_*\left( \Da^{\dag}_{\mathfrak{Y},k}(\lambda)\otimes_{\pi_{*}\Da^{\dag}_{\mathfrak{Y}',k'}(\lambda)}\pi_*\Ma_{\mathfrak{Y}'}\right)
 \end{eqnarray*}
defined on simple tensor products by 
\begin{eqnarray} \label{act_quot}
g\bullet (P\otimes m):= g\bullet P\otimes g\bullet m,
\end{eqnarray}
for $g\in G_0$, and $P$ and $m$ local sections of $\Da^{\dag}_{\mathfrak{Y},k}(\lambda)$ and $\pi_*\Ma_{\mathfrak{Y}'}$, respectively (in order to soft the notation we use the accord introduced in the commentary \ref{Commentary_action} after the definition \ref{strongly_G_0_equivariant}). Now to see that (\ref{G_0-equi-quotient}) is a strongly $G_0$-equivariant $\Da^{\dag}_{\mathfrak{Y},k}(\lambda)$-module, we only need to check that $\varphi_g (\Ka)\subset \Ka$. This is, the diagonal action fix the submodule $\Ka$. We  have
\begin{align*}
g\bullet (P\delta_{h}\otimes m - P\otimes h\bullet m) & = g\bullet (P\delta_h)\otimes g\bullet m - g\bullet P \otimes g\bullet (h\bullet m)\\
& = (g\bullet P)(g\bullet \delta_h)\otimes g\bullet m - g\bullet P\otimes (ghg^{-1})\bullet (g\bullet m) \\
& = (g\bullet P)\delta_{ghg^{-1}}\otimes g\bullet m - g\bullet P\otimes (ghg^{-1})\bullet (g\bullet m),
\end{align*}
as $G_{k+1}$ is a normal subgroup we can conclude that $ghg^{-1}\in G_{k+1}$ and $G_0$ fix $\Ka$. 
Moreover, since the target of the preceding morphism is strongly $G_0$-equivariant, this factors through the quotient and we thus obtain a morphism of $\Da^{\dag}_{\mathfrak{Y},k}(\lambda)$-modules
\begin{eqnarray}\label{morph_for_equi}
\overline{\psi}:\Da^{\dag}_{\mathfrak{Y},k}(\lambda)\otimes_{\pi_*\Da^{\dag}_{\mathfrak{Y}',k'}(\lambda),\; G_{k+1}}\pi_*\Ma_{\mathfrak{Y}'}\rightarrow \Ma_{\mathfrak{Y}}.
\end{eqnarray}
By construction $\overline{\psi}\in\text{Coh}\big (\Da^{\dag}_{\mathfrak{Y},k}(\lambda),G_0\big )$. 

\section{Admissible blow-ups and formal models}\label{ABUFM}
\justify
The following discussion is given in \cite[3.1.1 and 5.2.13]{HPSS}. Let us start by considering the generic fiber $X_L:=X\times_{\text{Spec}(\mathfrak{o})}\text{Spec}(L)$ of the flag scheme $X$ (the flag \textit{variety}). For the rest of this work $X^{\text{rig}}$ will denote the rigid-analytic space associated via the GAGA functor to $X_L$ \cite[Part I, chapter 5, section 5.4, Definition and proposition 3]{Bosch}. Any admissible formal $\mathfrak{o}$-scheme $\mathfrak{Y}$ (in the sense of \cite[Part II, chapter 7, section 7.4, Definitions 1 and 4]{Bosch}) whose associated rigid-analytic space is isomorphic to $X^{\text{rig}}$ will be called a \text{formal model} of $\mathbb{X}^{\text{rig}}$.  For any two formal models $\mathfrak{Y}_1$ and $\mathfrak{Y}_2$ there exists a formal model $\mathfrak{Y}'$ and admissible formal blow-up morphisms $\mathfrak{Y}'\rightarrow\mathfrak{Y}_1$ and $\mathfrak{Y}'\rightarrow\mathfrak{Y}_2$ \cite[Part II, chapter 8, section 8.2, remark 10]{Bosch}.
\justify
Now, let us denote by $\Fb_{\mathfrak{X}}$ the set of  admissible formal blow-ups $\mathfrak{Y}\rightarrow\mathfrak{X}$. This set is ordered by $\mathfrak{Y}'\succeq \mathfrak{Y}$ if the blow-up morphism $\mathfrak{Y}'\rightarrow\mathfrak{X}$ factors as the composition of a morphism $\mathfrak{Y}'\rightarrow\mathfrak{Y}$ and the blow-up morphism $\mathfrak{Y}\rightarrow\mathfrak{X}$. In this case, the morphism $\mathfrak{Y}'\rightarrow\mathfrak{Y}$ is unique \cite[Part II, chapter 8, section 8.2, proposition 9]{Bosch}, and it is itself a blow-up morphism \cite[Chapter 8, section 8.1.3, proposition 1.12 (d) and theorem 1.24]{Liu}. By \cite[Part II, chapter 8, section 8.2, remark 10]{Bosch} the set $\Fb_{\mathfrak{X}}$ is directed and it is cofinal in the set of all formal models. Furthermore, any formal model $\mathfrak{Y}$ of $X^{\text{rig}}$ is dominated by one which is a $G_0$-equivariant admissible blow-up of $\mathfrak{X}$ \cite[Proposition 5.2.14]{HPSS}. In particular, if $\mathfrak{X}_{\infty}$ denotes the projective limit of all formal models of $\mathbb{X}^{\text{rig}}$, then
\begin{eqnarray*}
\mathfrak{X}_{\infty}= \varprojlim_{\Fb_{\mathfrak{X}}}\mathfrak{Y}.
\end{eqnarray*}
\justify
We will be interested in the following directed subset of $\Fb_{\mathfrak{X}}$.

\begin{defi}\label{Defi_F_underline}
We denote by $\underline{\Fb}_{\mathfrak{X}}$ the set of pairs $(\mathfrak{Y},k)$, where $\mathfrak{Y}\in\Fb_{\mathfrak{X}}$ and $k\in\N$ satisfies $k\ge k_{\mathfrak{Y}}$. This set is ordered by $(\mathfrak{Y}',k')\succeq (\mathfrak{Y},k)$ if and only if $\mathfrak{Y}\succeq\mathfrak{Y}$ and $k'\ge k$.
\end{defi}
\justify
We will need the following auxiliary result.

\begin{lem}\label{rel_coadmissible}
Let $\mathfrak{Y}'$, $\mathfrak{Y}\in\Fb_{\mathfrak{X}}$ be $G_0$-equivariant admissible blow-ups (definition  \ref{Action_blow-up_subgroup}). Suppose $(\mathfrak{Y}',k')\succeq (\mathfrak{Y},k)$ with canonical morphism $\pi:\mathfrak{Y}'\rightarrow\mathfrak{Y}$ over $\mathfrak{X}$ and let $M$ be a coherent $D(\G(k')^{\circ},G_0)_{\lambda}$-module with localization $\Ma = \La oc^{\dag}_{\mathfrak{Y}',k'}(\lambda)(M)\in \text{Coh}\big (\Da^{\dag}_{\mathfrak{Y}',k'}(\lambda),G_0\big )$. Then there exists a canonical isomorphism in $\text{Coh}\big (\Da^{\dag}_{\mathfrak{Y},k}(\lambda),G_0\big )$ given by
\begin{eqnarray*}
\Da^{\dag}_{\mathfrak{Y},k}(\lambda)\otimes_{\pi_*\Da^{\dag}_{\mathfrak{Y}',k'}(\lambda),\; G_{k+1}}\pi_*\Ma\xrightarrow{\simeq} \La oc^{\dag}_{\mathfrak{Y},k}(\lambda)\left(D(\G(k)^{\circ},G_0)\otimes_{D(\G(k')^{\circ},G_0)}M\right).
\end{eqnarray*}
\end{lem}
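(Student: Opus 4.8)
The plan is as follows. Both sides will be recognized as objects of $\text{Coh}\big(\Da^{\dag}_{\mathfrak{Y},k}(\lambda),G_0\big)$, a canonical morphism between them in this category will be constructed, and it will be checked to become an isomorphism after applying $H^{0}(\mathfrak{Y},\bullet)$; since by Theorem \ref{First_equivalence} that functor is an equivalence onto the category of finitely presented $D(\G(k)^{\circ},G_0)_{\lambda}$-modules, this suffices. Write $N:=D(\G(k)^{\circ},G_0)\otimes_{D(\G(k')^{\circ},G_0)}M=D(\G(k)^{\circ},G_0)_{\lambda}\otimes_{D(\G(k')^{\circ},G_0)_{\lambda}}M$ (the two agree because $M$ has central character $\lambda$); it is finitely presented over $D(\G(k)^{\circ},G_0)_{\lambda}$, hence over $\Db^{\text{an}}(\G(k)^{\circ})_{\lambda}$ by $(\ref{Iso_delta_g})$, so $\La oc^{\dag}_{\mathfrak{Y},k}(\lambda)(N)$ makes sense and lies in $\text{Coh}\big(\Da^{\dag}_{\mathfrak{Y},k}(\lambda),G_0\big)$ by Theorem \ref{First_equivalence}. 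On the other side, $\Ma=\La oc^{\dag}_{\mathfrak{Y}',k'}(\lambda)(M)\in\text{Coh}\big(\Da^{\dag}_{\mathfrak{Y}',k'}(\lambda),G_0\big)$ by the same theorem, so the $G_{k+1}$-twisted tensor product $(\ref{G_0-equi-quotient})$ lies in $\text{Coh}\big(\Da^{\dag}_{\mathfrak{Y},k}(\lambda),G_0\big)$, as noted when $(\ref{G_0-equi-quotient})$ was introduced.

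First I would unwind the left-hand side. By Theorem \ref{BB_blow_up}, $\Ma=\Da^{\dag}_{\mathfrak{Y}',k'}(\lambda)\otimes_{\Db^{\text{an}}(\G(k')^{\circ})_{\lambda}}M$ with the equivariance $\varphi_{g}(P\otimes m)=T_{g}(P)\otimes\delta_{g}m$ of the proof of Theorem \ref{First_equivalence}. Choosing a finite presentation of $M$ over $\Db^{\text{an}}(\G(k')^{\circ})_{\lambda}$ and using exactness of $\pi_{*}$ on coherent modules together with $\pi_{*}\Da^{\dag}_{\mathfrak{Y}',k'}(\lambda)=\Da^{\dag}_{\mathfrak{Y},k'}(\lambda)$ (Theorem \ref{Invariance}), one gets $\pi_{*}\Ma=\Da^{\dag}_{\mathfrak{Y},k'}(\lambda)\otimes_{\Db^{\text{an}}(\G(k')^{\circ})_{\lambda}}M$. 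Then, using right-exactness of the tensor product and the inclusion $\Psi\colon\Da^{\dag}_{\mathfrak{Y},k'}(\lambda)\hookrightarrow\Da^{\dag}_{\mathfrak{Y},k}(\lambda)$ of $(\ref{from_k'_to_k})$, the left-hand side of the lemma equals $\big(\Da^{\dag}_{\mathfrak{Y},k}(\lambda)\otimes_{\Db^{\text{an}}(\G(k')^{\circ})_{\lambda}}M\big)\big/\Ka$, with $\Ka$ the coherent subsheaf locally generated by the sections $P\delta_{h}\otimes m-P\otimes\delta_{h}m$, $h\in G_{k+1}$.

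To build the morphism I would apply the construction of $(\ref{morph_for_equi})$ with $\Ma_{\mathfrak{Y}'}=\Ma$ and $\Ma_{\mathfrak{Y}}=\La oc^{\dag}_{\mathfrak{Y},k}(\lambda)(N)$: it is enough to exhibit a $\Psi$-linear, $G_0$-equivariant morphism $\psi\colon\pi_{*}\Ma\to\La oc^{\dag}_{\mathfrak{Y},k}(\lambda)(N)$. Using the descriptions above, $\psi$ is $Q\otimes m\mapsto\Psi(Q)\otimes(1\otimes m)$, assembled from $\Psi$, the ring map $\Db^{\text{an}}(\G(k')^{\circ})_{\lambda}\to\Db^{\text{an}}(\G(k)^{\circ})_{\lambda}$, and the base-change map $M\to N$, $m\mapsto 1\otimes m$; well-definedness and $\Psi$-linearity are immediate (the image of $\Db^{\text{an}}(\G(k')^{\circ})_{\lambda}$ in $D(\G(k)^{\circ},G_0)_{\lambda}$ moves across the tensor defining $N$), and $G_0$-equivariance follows from the $G_0$-equivariance of $\Psi$ recorded after $(\ref{from_k'_to_k})$ together with the fact that $\delta_{g}\mapsto\delta_{g}$ under the transition map $D(\G(k')^{\circ},G_0)_{\lambda}\to D(\G(k)^{\circ},G_0)_{\lambda}$. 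This produces a morphism $\overline{\psi}$ in $\text{Coh}\big(\Da^{\dag}_{\mathfrak{Y},k}(\lambda),G_0\big)$.

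It remains to see that $\overline{\psi}$ is an isomorphism. Applying $H^{0}(\mathfrak{Y},\bullet)$ — exact on coherent modules by Theorem \ref{BB_blow_up}, hence commuting (after fixing finite presentations) with the right-exact operations $\otimes$ and with passage to the quotient by a coherent subsheaf — and using $H^{0}(\mathfrak{Y},\pi_{*}\Ma)=H^{0}(\mathfrak{Y}',\Ma)=M$ together with $H^{0}(\mathfrak{Y},\Da^{\dag}_{\mathfrak{Y},k}(\lambda))=\Db^{\text{an}}(\G(k)^{\circ})_{\lambda}$ and $H^{0}(\mathfrak{Y},\Da^{\dag}_{\mathfrak{Y},k'}(\lambda))=\Db^{\text{an}}(\G(k')^{\circ})_{\lambda}$ (Theorem \ref{BB_blow_up} and Corollary \ref{equal_global_sections} applied at levels $k$ and $k'$ to the blow-up $\mathfrak{Y}\to\mathfrak{X}$), one finds that $H^{0}(\mathfrak{Y},\bullet)$ of the left-hand side is $\big(\Db^{\text{an}}(\G(k)^{\circ})_{\lambda}\otimes_{\Db^{\text{an}}(\G(k')^{\circ})_{\lambda}}M\big)\big/\big\langle\delta_{h}\otimes m-1\otimes\delta_{h}m:h\in G_{k+1}\big\rangle$, that $H^{0}(\mathfrak{Y},\bullet)$ of the right-hand side is $N$, and that $H^{0}(\mathfrak{Y},\overline{\psi})$ is the tautological map $a\otimes m\mapsto a\otimes m$. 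Thus the lemma reduces to the purely algebraic identity
\[
\big(\Db^{\text{an}}(\G(k)^{\circ})_{\lambda}\otimes_{\Db^{\text{an}}(\G(k')^{\circ})_{\lambda}}M\big)\big/\big\langle\delta_{h}\otimes m-1\otimes\delta_{h}m:h\in G_{k+1}\big\rangle\;\xrightarrow{\ \simeq\ }\;D(\G(k)^{\circ},G_0)_{\lambda}\otimes_{D(\G(k')^{\circ},G_0)_{\lambda}}M,
\]
which I expect to be the main obstacle. I would deduce it from the block decomposition $(\ref{Iso_delta_g})$ of $D(\G(k)^{\circ},G_0)$ and of $D(\G(k')^{\circ},G_0)$ over their respective subalgebras $\Db^{\text{an}}$, the compatibility of these decompositions with the transition map, and the fact — underlying condition (iii) of Definition \ref{strongly_G_0_equivariant} — that $\delta_{h}\in\Db^{\text{an}}(\G(k)^{\circ})$ for $h\in G_{k+1}$: intuitively, passing from level $k'$ to level $k$ on the analytic factor already absorbs the Dirac distributions supported on $G_{k+1}$, the discrete index set $G_0/G_{k}$ is unchanged, and the base change over the crossed-product algebras collapses onto a base change over $\Db^{\text{an}}(\G(k)^{\circ})_{\lambda}$ modulo exactly the stated relations. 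Granting this identity, $\overline{\psi}$ induces an isomorphism on $H^{0}(\mathfrak{Y},\bullet)$ and is therefore itself an isomorphism in $\text{Coh}\big(\Da^{\dag}_{\mathfrak{Y},k}(\lambda),G_0\big)$ by Theorem \ref{First_equivalence}; naturality of all the maps involved gives the canonicity asserted in the statement.
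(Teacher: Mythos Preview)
Your argument is correct and reaches the same destination as the paper, but you travel in the opposite direction through the equivalence of Theorem~\ref{BB_blow_up}/Theorem~\ref{First_equivalence}. The paper starts on the algebraic side: it records the exact sequence
\[
\Db^{\text{an}}(\G(k)^{\circ})_{\lambda}^{M\times\Sigma}\xrightarrow{f_M}\Db^{\text{an}}(\G(k)^{\circ})_{\lambda}\otimes_{\Db^{\text{an}}(\G(k')^{\circ})_{\lambda}}M\xrightarrow{\text{can}_M}D(\G(k)^{\circ},G_0)_{\lambda}\otimes_{D(\G(k')^{\circ},G_0)_{\lambda}}M\to 0
\]
(this is exactly your ``purely algebraic identity'', taken from \cite[Claim~1 in the proof of Lemma~5.2.12]{HPSS}), then applies $\La oc^{\dag}_{\mathfrak{Y},k}(\lambda)$ and identifies $\La oc^{\dag}_{\mathfrak{Y},k}(\lambda)$ of the middle term with $\Da^{\dag}_{\mathfrak{Y},k}(\lambda)\otimes_{\pi_*\Da^{\dag}_{\mathfrak{Y}',k'}(\lambda)}\pi_*\Ma$ (your computation of $\pi_*\Ma$), so that the localized cokernel is by definition the $G_{k+1}$-twisted tensor product. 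You instead build the morphism $\overline{\psi}$ on the sheaf side first, push it down through $H^{0}$, and invoke the equivalence to promote the resulting algebraic isomorphism back to sheaves. What this buys you is that the $G_0$-equivariance is visible throughout and the appeal to Theorem~\ref{First_equivalence} replaces the explicit cokernel identification; what the paper's route buys is that no candidate morphism has to be manufactured and checked---the isomorphism comes for free as $\La oc^{\dag}_{\mathfrak{Y},k}(\lambda)(\text{can}_M)$. Two small comments: your heuristic ``the discrete index set $G_0/G_k$ is unchanged'' is not literally accurate (the index for $D(\G(k')^{\circ},G_0)$ is $G_0/G_{k'}$, which is larger), though your conclusion is correct and is precisely the cited claim; and the reduction of $\Ka$ to a \emph{finite} generating family (representatives $\Sigma$ for $G_{k+1}/G_{k'+1}$ together with generators $m_1,\dots,m_a$ of $M$) should be made explicit so that the ``$H^{0}$ commutes with the image'' step is genuinely between coherent sheaves.
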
 
\begin{proof} The proof follows word for word the reasoning given in \cite[Lemma 5.2.12]{HPSS} when $\lambda\in\text{Hom}(\T,\G_m)$ is equal to the trivial character. We will only indicate to the reader how the isomorphism is obtained. Let $\Sigma$ be a system of representatives in $G_{k+1}$ for the cosets in $G_{k+1}/G_{k'+1}$. By (\ref{Iso_delta_g}) we have a canonical map
\begin{eqnarray}\label{can_map_dist_alg}
\Db^{\text{an}}(\G(k)^{\circ})_{\lambda}\rightarrow D(\G(k)^{\circ},G_0)_{\lambda}
\end{eqnarray}
which is compatible with variation in $k$. Now, let us take $M$ a $D(\G(k')^{\circ},G_0)_{\lambda}$-module and let us consider the free $\Db^{\text{an}}(\G(k)^{\circ})_{\lambda}$-module
\begin{eqnarray*}
\Db^{\text{an}}(\G(k)^{\circ})_{\lambda}^{M\times\Sigma}:= \displaystyle\bigoplus_{(m,h)\in M\times\Sigma} \Db^{\text{an}}(\G(k)^{\circ})_{\lambda}\; e_{m,h},
\end{eqnarray*}
whose formation is functorial in $M$ and it comes with a linear map
\begin{eqnarray*}
\begin{matrix}
f_M : & \Db^{\text{an}}(\G(k)^{\circ})_{\lambda}^{M\times\Sigma} & \rightarrow & \Db^{\text{an}}(\G(k)^{\circ})_{\lambda}\otimes_{\Db^{\text{an}}(\G(k')^{\circ})_{\lambda}} M \\
        & \lambda_{m,h}e_{m,h} & \mapsto & (\lambda_{m,h}\delta_h)\otimes m - \lambda_{m,h}\otimes (\delta_h . m).
\end{matrix}
\end{eqnarray*}
which fits into an exact sequence
\begin{eqnarray*}
 \Db^{\text{an}}(\G(k)^{\circ})_{\lambda}^{M\times\Sigma} \xrightarrow{f_M} \Db^{\text{an}}(\G(k)^{\circ})_{\lambda}\otimes_{\Db^{\text{an}}(\G(k')^{\circ})_{\lambda}} M \xrightarrow{\text{can}_M} D(\G(k)^{\circ},G_0)_{\lambda}\otimes_{D(\G(k')^{\circ},G_0)_{\lambda}}M \rightarrow 0,
\end{eqnarray*}
\justify
 if $M$ is a finitely presented $D(\G(k')^{\circ},G_0)_{\lambda}$-module \cite[Claim 1 in the proof of lemma 5.2.12]{HPSS}.
\justify
Now, let $M$ be a finitely presented $\Db^{\text{an}}(\G(k')^{\circ})_{\lambda}$-module and $\Ma := \Lb oc^{\dag}_{\mathfrak{Y'},k'}(\lambda)(M)$. Then the natural morphism
\begin{eqnarray}\label{Iso_loc_an_alg}
\Lb oc^{\dag}_{\mathfrak{Y},k}(\lambda)\left(\Db^{\text{an}}(\G(k)^{\circ})_{\lambda}\otimes_{\Db^{\text{an}}(\G(k')^{\circ})_{\lambda}}M\right)\rightarrow \Da^{\dag}_{\mathfrak{Y},k}(\lambda)\otimes_{\pi_*\Da^{\dag}_{\mathfrak{Y}',k'}(\lambda)}\pi_*\Ma
\end{eqnarray}
is bijective. In fact, by theorem \ref{Invariance} we know that the functor $\pi_*$ is exact on coherent $\Da^{\dag}_{\mathfrak{Y}',k'}(\lambda)$-modules, so taking a finite presentation of $M$ we reduce to the case $M=\Db^{\text{an}}(\G(k')^{\circ})_{\lambda}$ which is clear.
\justify
Finally, let us take $M$ a finitely presented $D(\G(k')^{\circ},G_0)_{\lambda}$-module. Let $m_1, ..., m_a$ be generators for $M$ as a $\Db^{\text{an}}(\G(k')^{\circ})_{\lambda}$-module. We have a sequence of $\Db^{\text{an}}(\G(k)^{\circ})_{\lambda}$-modules
\begin{eqnarray*}
\displaystyle\bigoplus_{(i,h)} \Db^{\text{an}}(\G(k)^{\circ})_{\lambda}\; e_{m_i,h}\xrightarrow{f_a} \Db^{\text{an}}(\G(k)^{\circ})_{\lambda}\otimes_{\Db^{\text{an}}(\G(k')^{\circ})_{\lambda}}M \xrightarrow{\text{can}_M} D(\G(k)^{\circ},G_0)_{\lambda}\otimes_{D(\G(k')^{\circ},G_0)_{\lambda}}M\rightarrow 0
\end{eqnarray*}
where $f_a$ denotes the restriction of the map $f_M$ to the free submodule of $ \Db^{\text{an}}(\G(k)^{\circ})_{\lambda}^{M\times\Sigma}$ generated by the finitely many vectors $e_{m_i,h}$, with $1\le i\le a$ and $h\in\Sigma$. Since $\text{im}(f_a)=\text{im}(f_{M})$ the sequence is exact. Since it consists of finitely presented $ \Db^{\text{an}}(\G(k)^{\circ})_{\lambda}$-modules, we can apply the localisation functor $\Lb oc^{\dag}_{\mathfrak{Y},k}(\lambda)$ to it. As 
\begin{eqnarray*}
\Lb oc^{\dag}_{\mathfrak{Y},k}(\lambda)\left(\displaystyle\bigoplus_{(i,h)} \Db^{\text{an}}(\G(k)^{\circ})_{\lambda}\; e_{m_i,h}\right) = \Da^{\dag}_{\mathfrak{Y},k}(\lambda)\otimes_ {\Db^{\text{an}}(\G(k)^{\circ})_{\lambda}}\displaystyle\bigoplus_{(i,h)} \Db^{\text{an}}(\G(k)^{\circ})_{\lambda}\; e_{m_i,h} = \Da^{\dag}_{\mathfrak{Y},k}(\lambda)^{\oplus a|\Sigma|}
\end{eqnarray*}
then (\ref{Iso_loc_an_alg}) gives us the exact sequence
\begin{eqnarray*}
\begin{matrix}
 \Da^{\dag}_{\mathfrak{Y},k}(\lambda)^{\oplus a|\Sigma|} & \rightarrow &  \Da^{\dag}_{\mathfrak{Y},k}(\lambda)\otimes_{\pi_*\Da_{\mathfrak{Y}',k'}(\lambda)}\pi_*\Ma & \rightarrow &  \La oc^{\dag}_{\mathfrak{Y},k}(\lambda)\left(D(\G(k)^{\circ},G_0)_{\lambda}\otimes_{D(\G(k')^{\circ},G_0)_{\lambda}}M\right) & \rightarrow & 0\\
 e_{m_i,h}\otimes P & \mapsto & (P\delta_h\otimes m_i - P\otimes \delta_h m)
\end{matrix}
\end{eqnarray*}
where $\Ma := \Lb oc^{\dag}_{\mathfrak{Y}',k'}(\lambda)(M)$. The cokernel of the first map in this sequence equals by definition 
\begin{eqnarray*}
\Da^{\dag}_{\mathfrak{Y},k}(\lambda)\otimes_{\pi_*\Da^{\dag}_{\mathfrak{Y}',k'}(\lambda),\; G_{k+1}}\pi_*\Ma,
\end{eqnarray*}
and we get the desired isomorphism.
\end{proof}
\justify
Now, let $\Ia$ be an open ideal sheaf on $\mathfrak{X}$, and let $g\in G_0$. Then $\Ja:=(\rho^{\natural}_g)^{-1}((\rho_g)_*(\Ia))$ is again an  open ideal sheaf on $\mathfrak{X}$. Let $\mathfrak{Y}$ be the blow-up of $\Ia$ and $\mathfrak{Y}.g$ the blow-up of $\Ja$, with canonical morphism $\text{pr} _g:\mathfrak{Y}.g\rightarrow\mathfrak{X}$. We have the following result from \cite[lemma 5.2.16]{HPSS}.

\begin{lem}\label{blow-up_action}
There exists a morphism $\rho_g:\mathfrak{Y}\rightarrow \mathfrak{Y}.g$ such that the following diagram is commutative
\begin{eqnarray*}
\begin{tikzcd}
\mathfrak{Y} \arrow[r, "\rho_g"] \arrow[d, "pr"]
& \mathfrak{Y}.g \arrow[d, "\text{pr}_g"]\\
\mathfrak{X} \arrow[r, "\rho_g"]
& \mathfrak{X}.
\end{tikzcd}
\end{eqnarray*}
Moreover, we have $k_{\mathfrak{Y}.g}=k_{\mathfrak{Y}}$ and for any two elements $g,h\in G_0$, we have a canonical isomorphism $(\mathfrak{Y}.g).h\simeq \mathfrak{Y}.(gh)$, and the morphism $\mathfrak{Y}\rightarrow \mathfrak{Y}.g\rightarrow (\mathfrak{Y}.g).h\simeq \mathfrak{Y}.(gh)$ is equal to $\rho_{gh}$. This gives a right action of the group $G_0$ on the family $\Fb_{\mathfrak{X}}$.
\end{lem}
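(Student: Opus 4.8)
The plan is to deduce everything from the universal property of the (formal) blow-up together with the fact that $\rho_g:\mathfrak{X}\to\mathfrak{X}$ is an automorphism of $\mathfrak{o}$-schemes, so that $\Ja$ is nothing but $\Ia$ transported along $\rho_g$. Concretely, since the comorphism $\rho_g^{\natural}:\Ob_{\mathfrak{X}}\to(\rho_g)_*\Ob_{\mathfrak{X}}$ is an isomorphism of sheaves of rings, the equality $\Ja=(\rho_g^{\natural})^{-1}\big((\rho_g)_*\Ia\big)$ means exactly that the inverse image ideal of $\Ja$ under $\rho_g$ equals $\Ia$; hence $(\rho_g\circ\mathrm{pr})^{-1}\Ja\cdot\Ob_{\mathfrak{Y}}=\mathrm{pr}^{-1}\Ia\cdot\Ob_{\mathfrak{Y}}$, which is invertible because $\mathfrak{Y}=\text{\textbf{Proj}}\big(\oplus_{d\in\N}\Ia^{d}\big)$ is the blow-up of $\Ia$. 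Applying the universal property of $\mathrm{pr}_g:\mathfrak{Y}.g\to\mathfrak{X}$ to the morphism $\rho_g\circ\mathrm{pr}:\mathfrak{Y}\to\mathfrak{X}$ then produces a unique morphism $\rho_g:\mathfrak{Y}\to\mathfrak{Y}.g$ with $\mathrm{pr}_g\circ\rho_g=\rho_g\circ\mathrm{pr}$, i.e. the commutative square. (As in Notation \ref{notation_k_y}, one may equivalently carry this out for the algebraic blow-up $Y\to X$ via \cite[Proposition 2.2.9]{HPSS} and then complete along special fibres; the universal property and its compatibility with the flat, in particular iso, base change $\rho_g$ are \cite[Chapter 8]{Liu}, \cite[Part II, Chapter 8]{Bosch}.)

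Next, $\rho_g$ is an isomorphism: performing the same construction with $g^{-1}$ in place of $g$ yields a morphism $\rho_{g^{-1}}:\mathfrak{Y}.g\to(\mathfrak{Y}.g).g^{-1}$, and using the cocycle identity (\ref{cocy_action}) for the comorphisms $\rho^{\natural}$ one checks that the open ideal defining $(\mathfrak{Y}.g).g^{-1}$ is again $\Ia$, so $(\mathfrak{Y}.g).g^{-1}=\mathfrak{Y}$ canonically; the composite $\mathfrak{Y}\to\mathfrak{Y}.g\to\mathfrak{Y}$ lies over $\rho_{g^{-1}}\circ\rho_g=\mathrm{id}_{\mathfrak{X}}$, hence equals $\mathrm{id}_{\mathfrak{Y}}$ by uniqueness in the universal property, and likewise in the other order. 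For $k_{\mathfrak{Y}.g}=k_{\mathfrak{Y}}$, note that $\Ia\mapsto\Ja$ is a bijection between the set of open ideal sheaves whose blow-up is $\mathfrak{Y}$ and the analogous set for $\mathfrak{Y}.g$ (it is induced by the automorphism $\rho_g$, which carries one blow-up isomorphically onto the other); moreover $\rho_g$ is a morphism of $\mathfrak{o}$-schemes, so $\rho_g^{\natural}$ fixes the image of $\varpi$, whence $\varpi^{k}\in\Ia\iff\varpi^{k}\in\Ja$. Taking minima over these two sets gives $k_{\mathfrak{Y}}=k_{\mathfrak{Y}.g}$.

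For the transitivity, $(\mathfrak{Y}.g).h$ is by definition the blow-up of $\Ka:=(\rho_h^{\natural})^{-1}\big((\rho_h)_*\Ja\big)$ with $\Ja=(\rho_g^{\natural})^{-1}\big((\rho_g)_*\Ia\big)$. Substituting and using the exactness of pushforward along the homeomorphism $\rho_h$ together with the cocycle relation (\ref{cocy_action}) (equivalently $(\rho_h\circ\rho_g)^{\natural}=(\rho_h)_*(\rho_g^{\natural})\circ\rho_h^{\natural}$), one computes $\Ka=(\rho_{gh}^{\natural})^{-1}\big((\rho_{gh})_*\Ia\big)$, which is precisely the ideal defining $\mathfrak{Y}.(gh)$; this gives the canonical isomorphism $(\mathfrak{Y}.g).h\simeq\mathfrak{Y}.(gh)$. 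Since the first map lies over $\rho_g$ and the second over $\rho_h$, the composite $\mathfrak{Y}\xrightarrow{\rho_g}\mathfrak{Y}.g\xrightarrow{\rho_h}(\mathfrak{Y}.g).h\simeq\mathfrak{Y}.(gh)$ lies over $\rho_h\circ\rho_g=\rho_{gh}$ (the $\mathfrak{G}$-action being on the right), so by uniqueness in the universal property of the blow-up $\mathfrak{Y}.(gh)$ it coincides with the morphism $\rho_{gh}$ of the first step. As $\mathfrak{Y}.e=\mathfrak{Y}$ for the identity $e$ (then $\Ja=\Ia$), the assignment $(\mathfrak{Y},g)\mapsto\mathfrak{Y}.g$ is a right $G_0$-action on $\Fb_{\mathfrak{X}}$; it is moreover compatible with blow-up morphisms $\mathfrak{Y}'\to\mathfrak{Y}$ since every step above is functorial.

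The one genuinely delicate point is the ideal-sheaf bookkeeping in the first and third steps: one must keep straight the interplay of $(\rho_g^{\natural})^{-1}$, pushforward along the homeomorphism $\rho_g$, and the inverse image ideal sheaf, and apply (\ref{cocy_action}) in the correct order. Conceptually nothing happens — everything is transport of structure along an automorphism and functoriality of the blow-up — but getting the directions and the order of composition right is where the care lies. Apart from that, the only external inputs are the universal property of the blow-up and its behaviour under isomorphism base change, and the proof otherwise follows \cite[Lemma 5.2.16]{HPSS} verbatim (the twist by $\lambda$ plays no role here).
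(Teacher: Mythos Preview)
Your proof is correct and is essentially the argument the paper has in mind: the paper does not give its own proof here but simply cites \cite[Lemma 5.2.16]{HPSS}, and what you have written is precisely the standard transport-of-structure argument via the universal property of the blow-up that underlies that reference. Your remark that the twist by $\lambda$ plays no role is also on point.
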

\justify
Let $\text{pr}:\mathfrak{Y}\rightarrow\mathfrak{X}$ be an admissible blow-up and let us denote by $\underline{\La(\lambda)}$ the invertible sheaf on $\mathfrak{Y}$ induced by pulling back the invertible sheaf on $\mathfrak{X}$ induced by the character $\lambda$. This is $\underline{\La(\lambda)}:=\text{pr}^*\La(\lambda)$. Furthermore, for $g\in G_0$ if $\rho_g:\mathfrak{Y}\rightarrow \mathfrak{Y}.g$ is the morphism given by the previous lemma and $\text{pr}_g :\mathfrak{Y}.g\rightarrow\mathfrak{X}$ is the blow-up morphism, then we will denote 
\begin{eqnarray*}
\underline{\La_g(\lambda)} := pr_g^* \La (\lambda).
\end{eqnarray*}
The notation being fixed, we prevent the reader that in order to simplify the notation, in the rest of this work we will avoid to underline these sheaves if the context is clear and there is not risk to any confusion.
\justify
Let us recall that in subsection \ref{group_action_on_blow_up} we have built for any $g\in G_0$ an $\Ob_{\mathfrak{X}}$-linear isomorphism $\Phi_g : \La(\lambda)\rightarrow (\rho_g)_*\La(\lambda)$, being $\rho_g := \alpha\;\circ\; (id_{\mathfrak{X}}\times g)$ the translation morphism ($\alpha$ the right $\mathfrak{G}$-action on $\mathfrak{X}$). By pulling back this morphism and using the commutative diagram in the previous lemma ($\rho_g^*\;\circ\; pr_g^*\;=\; pr^*\;\circ\; \rho_g^*$) we have an $\Ob_{\mathfrak{Y}}$-linear isomorphism $(\rho_g)^* pr_g^* \La(\lambda)\rightarrow pr^*\La(\lambda)$. By adjointness and following the accord established in the previous paragraph, we get an $\Ob_{\mathfrak{Y}.g}$-liner morphism  
\begin{eqnarray*}
R_g : \La_g(\lambda)\rightarrow (\rho_g)_*\La(\lambda).
\end{eqnarray*}
By construction $R_g$ satisfies the cocycle condition (\ref{cocycle_bundle_blow_up}). This means that for every $g,h\in G_0$ we have
\begin{eqnarray}\label{cocycle_cond_action}
R_{hg} = \La_{hg}(\lambda)\xrightarrow{R_g}(\rho_g)_* \La_h(\lambda) \xrightarrow{(\rho_g)_* R_h} (\rho_{hg})_* \La(\lambda).
\end{eqnarray}
In particular $R_g$ is an isomorphism for every $g\in G_0$.
\justify
Exactly as we have done in (\ref{Adjoint_blow_up}), and given that by construction $\Da^{\dag}_{\mathfrak{Y},k}(\lambda)$ acts on $\La (\lambda)$ (resp. $\Da^{\dag}_{\mathfrak{Y}.g,k}(\lambda)$ acts on $\La_{g}(\lambda)$), we can build an isomorphism 
\begin{eqnarray*}
\begin{matrix}
T_g : & \Da^{\dag}_{\mathfrak{Y}.g,k}(\lambda) & \rightarrow & (\rho_g)_*\Da^{\dag}_{\mathfrak{Y},k}(\lambda) \\
        & P & \mapsto & R_g\;\circ\; P\;\circ\; R_g^{-1}.
\end{matrix}
\end{eqnarray*}
\justify
Satisfying the following cocycle condition
\begin{eqnarray}\label{cocyle_action}
T_{hg} = (\rho_g)_*T_h\;\circ\; T_g \hspace{1cm} (g,h\in G_0).
\end{eqnarray} 
\justify
From the previous lemma we get \cite[Corollary 5.2.18]{HPSS}

\begin{coro}\label{blow-up_action_F}
Let us suppose that $(\mathfrak{Y}',k')\succeq (\mathfrak{Y},k)$ for $\mathfrak{Y},\; \mathfrak{Y}'\in \Fb_{\mathfrak{X}}$ and let $\pi:\mathfrak{Y}'\rightarrow\mathfrak{Y}$ be the unique morphism over $\mathfrak{X}$. Let $g\in G_0$. Then $(\mathfrak{Y}'.g, k')\succeq (\mathfrak{Y}.g,k)$ and if we denote by $\pi .g:\mathfrak{Y}'.g\rightarrow\mathfrak{Y}.g$ the unique morphism over $\mathfrak{X}$, we have a commutative diagram
\begin{eqnarray*}
\begin{tikzcd}
\mathfrak{Y}' \arrow[r, "\rho_g "] \arrow[d, "\pi "]
& \mathfrak{Y}'.g \arrow[d, "\pi .g "]\\
\mathfrak{Y} \arrow[r, "\rho_g "]
& \mathfrak{Y}.g.
\end{tikzcd}
\end{eqnarray*}
\end{coro}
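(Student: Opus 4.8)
The plan is to derive the corollary as a formal consequence of Lemma~\ref{blow-up_action} together with the uniqueness of lifts along a blow-up morphism, exactly in the spirit of \cite[Corollary~5.2.18]{HPSS}.

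First I would check the order relation $(\mathfrak{Y}'.g,k')\succeq(\mathfrak{Y}.g,k)$. Write $\mathfrak{Y}$ as the formal blow-up of an open ideal sheaf $\Ia\subseteq\Ob_{\mathfrak{X}}$; since $\mathfrak{Y}'\succeq\mathfrak{Y}$, the morphism $\pi:\mathfrak{Y}'\rightarrow\mathfrak{Y}$ is itself a blow-up morphism, and one may realize $\mathfrak{Y}'\rightarrow\mathfrak{X}$ as the blow-up of a product $\Ia\cdot\Kb$ dominating $\Ia$. Applying the automorphism $\rho_g$ of $\mathfrak{X}$, the ideal $\Ja:=(\rho_g^{\natural})^{-1}((\rho_g)_*\Ia)$ defines $\mathfrak{Y}.g$, and $(\rho_g^{\natural})^{-1}((\rho_g)_*(\Ia\cdot\Kb))$ defines $\mathfrak{Y}'.g$; because $\rho_g^{\natural}$ and $(\rho_g)_*$ are isomorphisms of sheaves of rings, this last ideal equals $\Ja\cdot(\rho_g^{\natural})^{-1}((\rho_g)_*\Kb)$, so $\mathfrak{Y}'.g$ dominates $\mathfrak{Y}.g$, i.e. $\mathfrak{Y}'.g\succeq\mathfrak{Y}.g$. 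Moreover $k_{\mathfrak{Y}.g}=k_{\mathfrak{Y}}\le k$ and $k_{\mathfrak{Y}'.g}=k_{\mathfrak{Y}'}\le k'$ by Lemma~\ref{blow-up_action}, and $k'\ge k$ by hypothesis; hence both pairs lie in $\underline{\Fb}_{\mathfrak{X}}$ with $(\mathfrak{Y}'.g,k')\succeq(\mathfrak{Y}.g,k)$, and the unique morphism $\pi.g:\mathfrak{Y}'.g\rightarrow\mathfrak{Y}.g$ over $\mathfrak{X}$ exists.

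Next I would establish commutativity of the square by composing both composite morphisms $\mathfrak{Y}'\rightarrow\mathfrak{Y}.g$ with the blow-up morphism $\mathfrak{Y}.g\rightarrow\mathfrak{X}$ and checking that both become equal to $\rho_g$ composed with the blow-up morphism $\mathfrak{Y}'\rightarrow\mathfrak{X}$: for the path $\mathfrak{Y}'\xrightarrow{\rho_g}\mathfrak{Y}'.g\xrightarrow{\pi.g}\mathfrak{Y}.g$ this uses that $\pi.g$ lies over $\mathfrak{X}$ and the commutative square of Lemma~\ref{blow-up_action} applied to $\mathfrak{Y}'$; for the path $\mathfrak{Y}'\xrightarrow{\pi}\mathfrak{Y}\xrightarrow{\rho_g}\mathfrak{Y}.g$ it uses that $\pi$ lies over $\mathfrak{X}$ and the square of Lemma~\ref{blow-up_action} applied to $\mathfrak{Y}$. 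Since $\mathfrak{Y}.g\rightarrow\mathfrak{X}$ is a blow-up morphism, two morphisms $\mathfrak{Y}'\rightarrow\mathfrak{Y}.g$ that induce the same morphism to $\mathfrak{X}$ after composition with it must coincide by the universal property of blow-ups (uniqueness of lifts, \cite[Part~II, Chapter~8, Section~8.2, Proposition~9]{Bosch}); this yields $(\pi.g)\circ\rho_g=\rho_g\circ\pi$, which is the asserted diagram.

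The argument is essentially formal, and the only delicate point is the first step: tracking the defining ideal sheaves under the twist $\mathfrak{Y}\rightsquigarrow\mathfrak{Y}.g$ of Lemma~\ref{blow-up_action} and verifying that it respects the domination partial order. Since $\rho_g$ is an isomorphism of $\mathfrak{X}$, this reduces to transporting the factorization of one blow-up through another along $\rho_g$, while the invariance $k_{\mathfrak{Y}.g}=k_{\mathfrak{Y}}$ of the minimal congruence level is precisely what keeps the condition $k\ge k_{\mathfrak{Y}.g}$ (resp. $k'\ge k_{\mathfrak{Y}'.g}$) intact; everything else is the uniqueness of blow-up lifts.
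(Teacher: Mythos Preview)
Your proposal is correct and matches the paper's approach: the paper does not give a detailed proof but simply states that the corollary follows from Lemma~\ref{blow-up_action} and refers to \cite[Corollary~5.2.18]{HPSS}, and your argument is precisely the natural unpacking of that deduction---transport the domination relation through the isomorphism $\rho_g$ of $\mathfrak{X}$ (using $k_{\mathfrak{Y}.g}=k_{\mathfrak{Y}}$ from the lemma) and then invoke uniqueness of lifts along a blow-up to obtain the commutative square.
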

\justify
Based on \cite[Definition 5.2.19]{HPSS} we have the following definition.

\begin{defi}\label{defi_coadmisible_G0_eq}
A coadmissible $G_0$-equivariant $\Da(\lambda)$-module on $\Fb_{\mathfrak{X}}$ consists of a family $\Ma:=\left(\Ma_{\mathfrak{Y},k}\right)_{(\mathfrak{Y},k)}$ of coherent $\Da^{\dag}_{\mathfrak{Y},k}(\lambda)$-modules $\Ma_{\mathfrak{Y},k}$ for all $(\mathfrak{Y},k)\in \underline{\Fb}_{\mathfrak{X}}$, with the following properties:
\begin{itemize}
\item[(a)] For any $g\in G_0$ with morphism $\rho_g:\mathfrak{Y}\rightarrow \mathfrak{Y}.g$, there exists an isomorphism
\begin{eqnarray*}
\varphi_g : \Ma_{\mathfrak{Y}.g,k}\rightarrow \left(\rho_g\right)_*\Ma_{\mathfrak{Y},k}
\end{eqnarray*}
of sheaves of $L$-vector spaces, satisfying the following properties:
\begin{itemize}
\item[(i)] For all $g,h\in G_0$ we have $(\rho_g)_*(\varphi_h)\;\circ\;\varphi_g=\varphi_{hg}$.
\item[(ii)] For all open subset $\Ub\subseteq \mathfrak{Y}.g$, all $P\in\Da^{\dag}_{\mathfrak{Y}.g,k}(\lambda)(\Ub)$, and all $m\in\Ma_{\mathfrak{Y}.g,k}(\Ub)$ one has $\varphi_{g}(P\bullet m)= T_{g,\;\Ub}(P)\bullet \varphi_{g,\;\Ub} (m)$.
\item[(iii)]\footnote{As is remarked in \cite[Definition 5.2.19 (iii)]{HPSS}, if $g\in G_{k+1}$, then $\mathfrak{Y}.g = \mathfrak{Y}$ and $g$ acts trivially on the underlying topological space $|\mathfrak{Y}|$.} For all $g\in G_{k+1}$ the map $\varphi_g: \Ma_{\mathfrak{Y}.g,k}=\Ma_{\mathfrak{Y},k}\rightarrow (\rho_g)_*  \Ma_{\mathfrak{Y},k}=\Ma_{\mathfrak{Y},k}$ is equal to multiplication by $\delta_g\in H^{0}\big (\mathfrak{Y},\Da^{\dag}_{\mathfrak{Y},k}(\lambda)\big )$.
\end{itemize} 
\item[(b)] Suppose $\mathfrak{Y},\mathfrak{Y}' \in\Fb_{\mathfrak{X}}$ are both $G_0$-equivariant, and assume further that $(\mathfrak{Y}',k')\succeq (\mathfrak{Y},k)$, and that $\pi:\mathfrak{Y}'\rightarrow\mathfrak{Y}$ is the unique morphism over $\mathfrak{X}$. We require the existence of a transition morphism $\psi_{\mathfrak{Y}',\mathfrak{Y}}:\pi_*\Ma_{\mathfrak{Y}',k'}\rightarrow \Ma_{\mathfrak{Y},k}$, linear relative to the canonical morphism $\Psi: \pi_*\Da^{\dag}_{\mathfrak{Y}',k'}(\lambda)\rightarrow\Da^{\dag}_{\mathfrak{Y},k}(\lambda)$. By using the commutative diagram in the preceding corollary, we required
\begin{eqnarray*}
\varphi_g\;\circ\; \psi_{\mathfrak{Y}'.g,\mathfrak{Y}.g} = (\rho_g)_*(\psi_{\mathfrak{Y}',\mathfrak{Y}})\;\circ\; (\pi.g)_*(\varphi_g). 
\end{eqnarray*}
The  morphism induced by $\psi_{\mathfrak{Y}',\mathfrak{Y}}$ 
\begin{eqnarray}
\overline{\psi}_{\mathfrak{Y}',\mathfrak{Y}}:\Da^{\dag}_{\mathfrak{Y},k}(\lambda)\otimes_{\pi_*\Da^{\dag}_{\mathfrak{Y}',k'}(\lambda),\; G_{k+1}}\pi_*\Ma_{\mathfrak{Y}'}\rightarrow \Ma_{\mathfrak{Y}}
\end{eqnarray}
is required to be an isomorphism of $\Da^{\dag}_{\mathfrak{Y},k}(\lambda)$-modules. Additionally, the morphisms $\psi_{\mathfrak{Y}',\mathfrak{Y}}$ are required to satisfy the transitivity condition $\psi_{\mathfrak{Y}',\mathfrak{Y}}\;\circ\; \pi_* (\psi_{\mathfrak{Y}'',\mathfrak{Y}'})=\psi_{\mathfrak{Y}'',\mathfrak{Y}}$ for $(\mathfrak{Y}'',k'')\succeq (\mathfrak{Y}',k')\succeq (\mathfrak{Y},k)$ in $\underline{\Fb}_{\mathfrak{X}}$. Moreover, $\psi_{\mathfrak{Y},\mathfrak{Y}}= id_{\Ma_{\mathfrak{Y},k}}$.
\end{itemize}
\end{defi}
\justify
A morphism $\Ma \rightarrow \Na$ between such modules consists of morphisms $\Ma_{\mathfrak{Y},k}\rightarrow\Na_{\mathfrak{Y},k}$ of $\Da^{\dag}_{\mathfrak{Y},k}(\lambda)$-modules, which is compatible with the extra structures imposed in $(a)$ and $(b)$. We denote the resulting category by $\Cb_{\mathfrak{X},\lambda}^{G_0}$.
\justify
Let us build now the bridge to the category $\Cb_{G_0,\lambda}$ of coadmissible $D(G_0,L)_{\lambda}$-modules. Given such a module $M$ we have its associated admissible locally analytic $G_0$-representation $V:=M'_b$ together with its subspace of $\G(k)^{\circ}$-analytic vectors $V_{\G(k)^{\circ}-\text{an}}\subseteq V$. As we have remarked, this is stable under the $G_0$-action and its dual $M_k:=\left(V_{\G(k)^{\circ}-\text{an}}\right)'_b$ is a finitely presented $D(\G(k)^{\circ},G_0)_{\lambda}$-module. In this situation we produce a coherent $\Da^{\dag}_{\mathfrak{Y},k}(\lambda)$-module
\begin{eqnarray*}
\La oc^{\dag}_{\mathfrak{Y},k}(\lambda)(M_k)=\Da^{\dag}_{\mathfrak{Y},k}(\lambda)\otimes_{\Db^{\text{an}}(\G(k)^{\circ})_{\lambda}}M_k
\end{eqnarray*}  
for any element $(\mathfrak{Y},k)\in\underline{\Fb}_{\mathfrak{X}}$. We will denote the resulting family by
\begin{eqnarray*}
\La oc^{G_0}_{\lambda}(M):= \left(\La oc^{\dag}_{\mathfrak{Y},k}(\lambda)(M_k)\right)_{(\mathfrak{Y},k)\in\underline{\Fb}_{\mathfrak{X}}}.
\end{eqnarray*}
\justify
On the other hand, let $\Ma$ be an arbitrary coadmissible $G_0$-equivariant arithmetic $\Da(\lambda)$-module on $\Fb_{\mathfrak{X}}$. The transition morphisms $\psi_{\mathfrak{Y}',\mathfrak{Y}}:\pi_*\Ma_{\mathfrak{Y}',k'}\rightarrow\Ma_{\mathfrak{Y},k}$ induce maps $H^0\left(\mathfrak{Y}',\Ma_{\mathfrak{Y}',k'}\right)\rightarrow H^0\left(\mathfrak{Y},\Ma_{\mathfrak{Y},k}\right)$ on global sections. We let
\begin{eqnarray*}
\Gamma(\Ma):= \varprojlim_{(\mathfrak{Y},k)\in\underline{\Fb}_{\mathfrak{X}}}  H^0\left(\mathfrak{Y},\Ma_{\mathfrak{Y},k}\right).
\end{eqnarray*}
The projective limit is taken in the sense of abelian groups. We have the following theorem. Except for some technical details the proof follows word for word the reasoning given in \cite[Theorem 5.2.23]{HPSS}.

\begin{theo}\label{second_equivalence}
Let us suppose that $\lambda\in\text{Hom}(\T,\G_m)$ is an algebraic character such that $\lambda+\rho\in\mathfrak{t}^*_L$ is a dominant and regular character of $\mathfrak{t}_L$.
The functors $\La oc^{G_0}_{\lambda}$ and $\Gamma(\bullet)$ induce quasi-inverse equivalences between the categories $\Cb_{G_0,\lambda}$ (of coadmissible $D(G_0,L)_{\lambda}$-modules) and $\Cb^{G_0}_{\mathfrak{X},\lambda}$.
\end{theo}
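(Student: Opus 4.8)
The plan is to assemble the $k$-level equivalence of Theorem \ref{First_equivalence} into a statement over the whole projective system $\underline{\Fb}_{\mathfrak{X}}$, using the weak Fr\'echet-Stein structure (\ref{Frechet_structure}) of $D(G_0,L)$ on the representation side and the comparison isomorphism of Lemma \ref{rel_coadmissible} on the geometric side. I would proceed in four steps: show that $\La oc^{G_0}_{\lambda}$ lands in $\Cb^{G_0}_{\mathfrak{X},\lambda}$; show that $\Gamma(\bullet)$ lands in $\Cb_{G_0,\lambda}$; and construct the two adjunction isomorphisms $M\xrightarrow{\simeq}\Gamma(\La oc^{G_0}_{\lambda}(M))$ and $\La oc^{G_0}_{\lambda}(\Gamma(\Ma))\xrightarrow{\simeq}\Ma$. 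A preliminary remark to record is that, for a fixed $k$, the module $H^0(\mathfrak{Y},\Ma_{\mathfrak{Y},k})$ is independent, up to canonical isomorphism, of the $G_0$-equivariant blow-up $\mathfrak{Y}$ with $(\mathfrak{Y},k)\in\underline{\Fb}_{\mathfrak{X}}$ --- this follows from Theorem \ref{Invariance} together with condition (b) of Definition \ref{defi_coadmisible_G0_eq} --- so the projective limit defining $\Gamma$ may be computed along the cofinal subsystem indexed by $k$ alone.

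For the first step, let $M\in\Cb_{G_0,\lambda}$, put $V:=M'_b$ and $M_k:=(V_{\G(k)^{\circ}-\text{an}})'_b$. By Lemma \ref{linear_extension}(i) each $M_k$ is a finitely presented $D(\G(k)^{\circ},G_0)_{\lambda}$-module, so Theorem \ref{First_equivalence} furnishes $\Ma_{\mathfrak{Y},k}:=\La oc^{\dag}_{\mathfrak{Y},k}(\lambda)(M_k)\in\text{Coh}\big(\Da^{\dag}_{\mathfrak{Y},k}(\lambda),G_0\big)$, which supplies property (a) of Definition \ref{defi_coadmisible_G0_eq}. For property (b), when $(\mathfrak{Y}',k')\succeq(\mathfrak{Y},k)$ with $\pi\colon\mathfrak{Y}'\to\mathfrak{Y}$, Lemma \ref{linear_extension}(ii) gives $D(\G(k)^{\circ},G_0)_{\lambda}\otimes_{D(\G(k')^{\circ},G_0)_{\lambda}}M_{k'}\simeq M_{k}$, and plugging this into Lemma \ref{rel_coadmissible} produces the required isomorphism $\Da^{\dag}_{\mathfrak{Y},k}(\lambda)\otimes_{\pi_*\Da^{\dag}_{\mathfrak{Y}',k'}(\lambda),\,G_{k+1}}\pi_*\Ma_{\mathfrak{Y}',k'}\xrightarrow{\simeq}\Ma_{\mathfrak{Y},k}$; transitivity of the $\psi_{\mathfrak{Y}',\mathfrak{Y}}$ and their $G_0$-equivariance are inherited from functoriality of the localization functor and of Lemma \ref{rel_coadmissible}. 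Conversely, for $\Ma\in\Cb^{G_0}_{\mathfrak{X},\lambda}$ I would set $M_k:=H^0(\mathfrak{Y},\Ma_{\mathfrak{Y},k})$; by Theorem \ref{BB_blow_up}(ii) and (\ref{Iso_delta_g}) this is a finitely presented $D(\G(k)^{\circ},G_0)_{\lambda}$-module, and applying the exact functor $H^0(\mathfrak{Y},\bullet)$ (Theorem \ref{BB_blow_up}(i)) to the isomorphism (b), in combination with Lemma \ref{rel_coadmissible}, yields $D(\G(k)^{\circ},G_0)_{\lambda}\otimes_{D(\G(k')^{\circ},G_0)_{\lambda}}M_{k'}\simeq M_k$. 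Hence $(M_k)_k$ is a coherent module in Emerton's sense \cite{Emerton2} for the weak Fr\'echet-Stein structure (\ref{Frechet_structure}) on $D(G_0,L)_{\lambda}$, and $\Gamma(\Ma)=\varprojlim_k M_k$ is a coadmissible $D(G_0,L)_{\lambda}$-module.

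It then remains to check the two adjunctions. For $M\xrightarrow{\simeq}\Gamma(\La oc^{G_0}_{\lambda}(M))$ one uses Theorem \ref{BB_blow_up}(ii) to get $H^0(\mathfrak{Y},\La oc^{\dag}_{\mathfrak{Y},k}(\lambda)(M_k))=M_k$, whence $\Gamma(\La oc^{G_0}_{\lambda}(M))=\varprojlim_k M_k=M$, the last equality being the coadmissibility of $M$. For $\La oc^{G_0}_{\lambda}(\Gamma(\Ma))\xrightarrow{\simeq}\Ma$ the key point is to identify, for $\Ma\in\Cb^{G_0}_{\mathfrak{X},\lambda}$ and $M:=\Gamma(\Ma)$ with $V:=M'_b$, the module $(V_{\G(k)^{\circ}-\text{an}})'_b$ with $H^0(\mathfrak{Y},\Ma_{\mathfrak{Y},k})$; granting this, Theorem \ref{First_equivalence} supplies $\La oc^{\dag}_{\mathfrak{Y},k}(\lambda)\big((V_{\G(k)^{\circ}-\text{an}})'_b\big)\simeq\Ma_{\mathfrak{Y},k}$ compatibly with the equivariant structures and transition maps, which is precisely the assertion. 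That identification follows from Lemma \ref{linear_extension}(iii), which computes $(V_{\G(k)^{\circ}-\text{an}})'_b$ as $D(\G(k)^{\circ},G_0)\otimes_{D(G_0,L)}M$, together with the base-change isomorphisms established in the previous step.

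The main obstacle, which is where the technical details beyond the argument of \cite[Theorem 5.2.23]{HPSS} enter, is the bookkeeping in this last step: one has to verify that the transition isomorphisms (b) --- which involve the quotient $\otimes_{\pi_*\Da^{\dag}_{\mathfrak{Y}',k'}(\lambda),\,G_{k+1}}$ by the relations $P\delta_h\otimes m-P\otimes(h\bullet m)$ for $h\in G_{k+1}$ --- correspond exactly, under $H^0(\mathfrak{Y},\bullet)$ and Lemma \ref{rel_coadmissible}, to the defining base-change relations of a coadmissible module for (\ref{Frechet_structure}), so that $\varprojlim_k M_k$ recovers $M$ with the correct subspaces of $\G(k)^{\circ}$-analytic vectors. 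The twist by $\lambda$ and the passage to central reductions $D(\G(k)^{\circ},G_0)_{\lambda}$ enter only formally, since the twisted Beilinson-Bernstein equivalences (Theorems \ref{BB_blow_up} and \ref{First_equivalence}), the invariance theorem (Theorem \ref{Invariance}) and the comparison lemma (Lemma \ref{rel_coadmissible}) are already available; once these are in place the remaining verifications are routine and parallel to \emph{loc.\ cit.}
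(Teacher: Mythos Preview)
Your proposal is correct and follows essentially the same four-step architecture as the paper's own proof, invoking the same key inputs (Theorem \ref{First_equivalence}, Theorem \ref{BB_blow_up}, Theorem \ref{Invariance}, Lemma \ref{rel_coadmissible}, and Lemma \ref{linear_extension}) at the same places. Your explicit preliminary remark about the independence of $H^{0}(\mathfrak{Y},\Ma_{\mathfrak{Y},k})$ from the choice of $\mathfrak{Y}$ for fixed $k$ is a point the paper uses only implicitly in Step~2; otherwise the two arguments coincide.
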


\begin{proof}
Let us take $M\in\Cb_{G_0,\lambda}$ and $\Ma\in\Cb^{G_0}_{\mathfrak{X},\lambda}$. As in \cite[Proof of theorem 5.2.23]{HPSS} we will organise the proof in four steps.
\justify
\textbf{Step 1.} We have $\La oc^{G_0}_{\lambda}(M)\in\Cb^{G_0}_{\mathfrak{X},\lambda}$ and $\La oc^{G_0}_{\lambda}(M)$ is functorial in $M$.
\justify
\textit{Proof.} Let us start by defining 
\begin{eqnarray*}
\varphi_g: \La oc^{\dag}_{\mathfrak{Y}.g,k}(\lambda)(M_k)\rightarrow \left(\rho_g\right)_* \La oc^{\dag}_{\mathfrak{Y},k}(\lambda)(M_k)\;\;\;\; (g\in G_0)
\end{eqnarray*}
satisfying $(i)$, $(ii)$ and $(iii)$ in the preceding definition. Let $\tilde{\varphi}_g:M_k\rightarrow M_k$ denote the map dual to the map $V_{\G(k)^{\circ}-\text{an}}\rightarrow V_{\G(k)^{\circ}-\text{an}}$ given by $w\mapsto g^{-1}w$. By definition $\tilde{\varphi}_h\;\circ\;\tilde{\varphi}_g=\tilde{\varphi}_{hg}$. Let $\Ub\subseteq \mathfrak{Y}.g$ be an open subset and $P\in\Da^{\dag}_{\mathfrak{Y}.g,k}(\lambda)(\Ub)$, $m\in M_k$. We define
\begin{eqnarray*}
\varphi_{g,\;\Ub}(P\otimes m):= T_{g,\;\Ub}(P)\otimes \tilde{\varphi}_g(m).
\end{eqnarray*}
Given that $(\rho_g)_*$ is exact we can choose a finite presentation of $M_k$ as a $\Db^{\text{an}}(\G(k)^{\circ})_{\lambda}$-module to conclude that we have a canonical isomorphism
\begin{eqnarray*}
\left(\rho_g\right)_*\left(\La oc^{\dag}_{\mathfrak{Y},k}(\lambda)(M_k) \right)\xrightarrow{\simeq} \left(\left(\rho_g\right)_* \Da^{\dag}_{\mathfrak{Y},k}(\lambda)\right)\otimes_{\Db^{\text{an}}(\G(k)^{\circ})_{\lambda}}M_k.
\end{eqnarray*}
This means that the above definition extends to a map
\begin{eqnarray*}
\varphi_{g}: \Da^{\dag}_{\mathfrak{Y}.g,k}(\lambda)\otimes_{\Db^{\text{an}}(\G(k)^{\circ})_{\lambda}}M_k\rightarrow \left(\rho_g\right)_*\left(\La oc^{\dag}_{\mathfrak{Y},k}(\lambda)(M_k) \right).
\end{eqnarray*}
The family $\{\varphi_g\}_{g\in G_0}$ satisfies $(i)$, $(ii)$ and $(iii)$ in $(a)$. Let us verify condition (b). We suppose that $\mathfrak{Y}'$, $\mathfrak{Y}$ are $G_0$-equivariant and that $(\mathfrak{Y}',k)\succeq (\mathfrak{Y},k)$ with canonical morphism $\pi: \mathfrak{Y}'\rightarrow\mathfrak{Y}$ over $\mathfrak{X}$. As $\pi_*$ is exact we have an isomorphism
\begin{eqnarray*}
\pi_{*}\left(\La oc^{\dag}_{\mathfrak{Y}',k'}(\lambda)(M_{k'})\right)\xrightarrow{\simeq} \pi_*\left(\Da^{\dag}_{\mathfrak{Y}',k'}(\lambda)\right)\otimes_{\Db^{\text{an}}(\G(k)^{\circ})_{\lambda}}M_{k'}.
\end{eqnarray*}
(This is an argument already given in the text for the functor $(\rho_g)_*$). On the other hand, we have that $\G(k')^{\circ}\subseteq \G(k)^{\circ}$ and we have a map $\tilde{\psi}_{\mathfrak{Y}',\mathfrak{Y}}: M_{k'}\rightarrow M_k$ obtained as the dual map of the natural inclusion $V_{\G(k)^{\circ}-{\text{an}}}\hookrightarrow V_{\G(k')^{\circ}-\text{an}}$. Let $\Ub\subseteq\mathfrak{Y}$ be an open subset and $P\in\pi_*\Da^{\dag}_{\mathfrak{Y}',k'}(\lambda)(\Ub)$, $m\in M_{k'}$. We define
\begin{eqnarray*}
\psi_{\mathfrak{Y}',\mathfrak{Y}}(P\otimes m):= \Psi_{\mathfrak{Y}',\mathfrak{Y}}(P)\otimes \tilde{\psi}_{\mathfrak{Y}',\mathfrak{Y}}(m),
\end{eqnarray*}
where $\Psi$ is the canonical injection $\pi_*\Da^{\dag}_{\mathfrak{Y}',k'}(\lambda)\hookrightarrow \Da^{\dag}_{\mathfrak{Y},k}(\lambda)$. By using the preceding isomorphism we can conclude that this morphisms extends naturally to a map
\begin{eqnarray*}
\psi_{\mathfrak{Y}',\mathfrak{Y}}:\pi_{*}\left(\La oc^{\dag}_{\mathfrak{Y}',k'}(\lambda)(M_{k'})\right)\rightarrow \La oc^{\dag}_{\mathfrak{Y},k}(\lambda)(M_k).
\end{eqnarray*}
The cocycle condition translates into the diagram
\begin{eqnarray}\label{coc_2_eq}
\begin{tikzcd} [column sep=huge]
\left(\rho_{g}^{\mathfrak{Y}}\right)_*\pi_*\left(\La oc^{\dag}_{\mathfrak{Y}',k'}(\lambda)(M_{k'})\right) =(\pi .g)_*\left(\rho_g^{\mathfrak{Y}}\right)_*\left(\La oc^{\dag}_{\mathfrak{Y}',k'}(\lambda)(M_{k'})\right)\arrow[r, "\left(\rho^{\mathfrak{Y}}_g\right)_*\psi_{\mathfrak{Y}',\mathfrak{Y}}"] 
& \left(\rho_g^{\mathfrak{Y}}\right)_*\left(\La oc^{\dag}_{\mathfrak{Y},k}(\lambda)(M_{k})\right)\\
(\pi .g)_{*}\left(\La oc^{\dag}_{\mathfrak{Y}'.g,k'}(\lambda)(M_{k'})\right) \arrow[r, "\psi_{\mathfrak{Y}',\mathfrak{Y}}"] \arrow[u,"(\pi .g)_{*}\varphi_g"]
& \La oc^{\dag}_{\mathfrak{Y}.g,k}(\lambda)(M_{k}) \arrow[u, "\varphi_g"]
\end{tikzcd}
\end{eqnarray}
By construction, the diagrams
\begin{eqnarray}\label{diag_first_eq}
\begin{tikzcd} [column sep=huge]
(\pi .g)_*\left(\rho^{\mathfrak{Y}'}_g\right)_*\Da^{\dag}_{\mathfrak{Y}',k'}(\lambda)=\left(\rho^{\mathfrak{Y}}_g\right)_* \pi_*\Da^{\dag}_{\mathfrak{Y}',k'}(\lambda)\arrow[r, "(\rho^{\mathfrak{Y}}_g)_*\Psi_{\mathfrak{Y}',\mathfrak{Y}}"] 
& \left(\rho^{\mathfrak{Y}}_g\right)_*\Da^{\dag}_{\mathfrak{Y},k}(\lambda)\\
(\pi_g)_*\Da^{\dag}_{\mathfrak{Y}'.g,k'}(\lambda) \arrow[u, "(\pi . g)_* T_{g}"] \arrow[r, "\Psi_{\mathfrak{Y}'.g,\mathfrak{Y}.g}"]
& \Da^{\dag}_{\mathfrak{Y}.g,k}(\lambda) \arrow[u, "T_{g}"]
\end{tikzcd}
\hspace{0.7 cm}
\begin{tikzcd}
M_{k'} \arrow[r, "\tilde{\psi}_{\mathfrak{Y}',\mathfrak{Y}}"] \arrow[d, "\tilde{\varphi}_g"]
& M_k \arrow[d, "\tilde{\varphi}_g"]\\
M_{k'} \arrow[r, "\tilde{\psi}_{\mathfrak{Y}',\mathfrak{Y}}"]
& M_k
\end{tikzcd}
\end{eqnarray}
are commutative and therefore (\ref{coc_2_eq}) is also a commutative diagram. The transitivity properties are clear. Let us see that the induced morphism $\overline{\psi}_{\mathfrak{Y}',\mathfrak{Y}}$ is in fact an isomorphism. The morphism $\overline{\psi}_{\mathfrak{Y}',\mathfrak{Y}}$ corresponds under the isomorphism of lemma \ref{rel_coadmissible} to the linear extension
\begin{eqnarray*}
D(\G(k)^{\circ},G_0)\otimes_{D(\G(k')^{\circ},G_0)}M_{k'}\rightarrow M_{k}
\end{eqnarray*}
of $\tilde{\psi}_{\mathfrak{Y}',\mathfrak{Y}}$ via functoriality of $\La oc^{\dag}_{\mathfrak{Y},k}(\lambda)$. By lemma \ref{linear_extension} this linear extension is an isomorphism and hence, so is $\overline{\psi}_{\mathfrak{Y}',\mathfrak{Y}}$. We conclude that $\La oc^{G_0}_{\lambda}(M)\in \Cb^{G_0}_{\mathfrak{X},\lambda}$. Given a morphism $M\rightarrow N$ in $\Cb_{G_0,\lambda}$, we get, by definition, morphisms $M_k\rightarrow N_k$ for any $k\in\Z_{>0}$ compatible with $\tilde{\varphi}_g$ and $\tilde{\psi}_{\mathfrak{Y}',\mathfrak{Y}}$. By functoriality of $\La oc^{\dag}_{\mathfrak{Y},k}(\lambda)$, they give rise to linear maps 
\begin{eqnarray*}
\La oc^{\dag}_{\mathfrak{Y},k}(\lambda)(M_k)\rightarrow \La oc^{\dag}_{\mathfrak{Y},k}(\lambda)(N_k)
\end{eqnarray*} 
which are compatible with the maps $\varphi_g$ and $\psi_{\mathfrak{Y}',\mathfrak{Y}}$.
\justify
\textbf{Step 2.} $\Gamma(\Ma)$ is an object in $\Cb_{G_0,\lambda}$. 
\justify
\textit{Proof.} For $k\in\N$ we choose $(\mathfrak{Y},k)\in\Fb_{\mathfrak{X}}$ and we put $N_k:= H^0(\mathfrak{Y},\Ma_{(\mathfrak{Y},k)})$. By (\ref{morph_for_equi}), lemma \ref{rel_coadmissible} and the fact that $\Ma\in\Cb^{G_0}_{\mathfrak{X},\lambda}$ we get linear isomorphisms
\begin{eqnarray*}
D(\G(k)^{\circ},G_0)\otimes_{D(\G(k')^{\circ},G_0)}N_{k'}\rightarrow N_{k}
\end{eqnarray*}
for $k'\ge k$. This implies that the modules $N_k$ form a $\left(D(\G(k)^{\circ},G_0)\right)_{k\in\N}$-sequence and the projective limit is a coadmissible module.
\justify
\textbf{Step 3.} $\Gamma\;\circ\; \La oc^{G_0}_{\lambda}(M)\simeq M$.
\justify
\textit{Proof.} If $V:= M'_b$, then we have by definition compatible isomorphisms 
\begin{eqnarray*}
H^0\left(\mathfrak{Y},\La oc^{G_0}_{\lambda}(M)_{(\mathfrak{Y},k)}\right)= H^0\left(\mathfrak{Y},\La oc^{\dag}_{\mathfrak{Y},k}(\lambda)(M_k)\right)= \left(V_{\G(k)^{\circ}-\text{an}}\right)'_b,
\end{eqnarray*}
which imply that the coadmissible modules $\Gamma\;\circ\; \La oc^{G_0}_{\lambda}(M)$ and $M$ have isomorphic $\left(D(\G(k)^{\circ},G_0)\right)_{k\in\N}$-sequences.
\justify
\textbf{Step 4.} $\La oc^{G_0}_{\lambda} \;\circ\; \Gamma(\Ma)\simeq\Ma$.
\justify
\textit{Proof.} Let $N:=\Gamma(\Ma)$ and $V:=N'_b$ the corresponding admissible representation. Let $\Na:=\La oc^{G_0}_{\lambda}(N)$. According to the lemma \ref{linear_extension} 
\begin{eqnarray*}
N_k:= D(\G(k)^{\circ},G_0)\otimes_{D(G_0,L)}N_{k'}\rightarrow N
\end{eqnarray*}
produces a $\left(D(\G(k)^{\circ},G_0)\right)_{k\in\N}$-sequence for the coadmissible module $N$ which is isomorphic to its constituting sequence $\left(H^0(\mathfrak{Y},\Ma_{\mathfrak{Y},k})\right)_{(\mathfrak{Y},k)\in\Fb_{\mathfrak{X}}}$ from step 2. Now let $(\mathfrak{Y},k)\in\Fb_{\mathfrak{X}}$. We have the following isomorphisms
\begin{eqnarray*}
\Na_{\mathfrak{Y},k}= \La oc^{\dag}_{\mathfrak{Y},k}(\lambda)(N_k)\simeq  \La oc^{\dag}_{\mathfrak{Y},k}(\lambda)\left(H^0(\mathfrak{Y},\Ma_{\mathfrak{Y},k})\right)\simeq \Ma_{\mathfrak{Y},k}.
\end{eqnarray*}
By $T_{g}$-linearity the action maps $\varphi_g^{\Ma_{\mathfrak{Y},k}}$ and $\varphi_g^{\Na_{\mathfrak{Y},k}}$, constructed in step 1, are the same. Similarly if $(\mathfrak{Y}',k')\succeq (\mathfrak{Y},k)$ are $G_0$-equivariant then the transition maps $\psi^{\Ma_{\mathfrak{Y}',\mathfrak{Y}}}$ and $\psi^{\Na_{\mathfrak{Y}',\mathfrak{Y}}}$ coincide, by $\Psi_{\mathfrak{Y}',\mathfrak{Y}}$-linearity. Hence $\Na\simeq\Ma$ in $\Cb^{G_0}_{\mathfrak{X},\lambda}$.
\end{proof}

\subsection{Coadmissible $G_0$-equivariant $\Da(\lambda)$-modules on the Zariski-Riemann space}\label{G_0-ZR}
\justify
Let us recall that $\mathfrak{X}_{\infty}$ denotes the projective limit of all formal models of $X^{\text{rig}}$ (the rigid-analytic space associated by the GAGA functor to the flag variety $X_L$). The set $\Fb_{\mathfrak{X}}$ of admissible formal blow-ups $\mathfrak{Y}\rightarrow\mathfrak{X}$ is ordered by setting $\mathfrak{Y}'\succeq\mathfrak{Y}$ if the blow-up morphism $\mathfrak{Y}'\rightarrow \mathfrak{X}$ factors as $\mathfrak{Y}'\xrightarrow{\pi}\mathfrak{Y}\rightarrow\mathfrak{X}$, with $\pi$ a blow-up morphism. The set $\Fb_{\mathfrak{X}}$ is directed in the sense that any two elements have a common upper bound, and it is cofinal in the set of all formal models. In particular, $\mathfrak{X}_{\infty}=\varprojlim_{\Fb_{\mathfrak{X}}}\mathfrak{Y}$. The space $\mathfrak{X}_{\infty}$ is also known as the Zariski-Riemann space \cite[Part II, chapter 9, section 9.3]{Bosch}\footnote{In this reference this space is denoted by $\big<\mathfrak{X} \big >$, cf. \cite[subsection 3.2]{HSS}.}. In this subsection we indicate how to realize coadmissible $G_0$-equivariant $\Da(\lambda)$-modules on $\Fb_{\mathfrak{X}}$ as sheaves on the Zariski-Riemann space $\mathfrak{X}_{\infty}$. We start with the following proposition whose proof can be found in \cite[Proposition 5.2.14]{HPSS}.

\begin{prop}
Any formal model $\mathfrak{Y}$ of $X^{\text{rig}}$ s dominated by one which is a $G_0$-equivariant admissible blow-up of $\mathfrak{X}$.
\end{prop}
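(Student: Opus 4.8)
The plan is to follow the argument of \cite[Proposition 5.2.14]{HPSS}. First I would reduce to the case of an admissible formal blow-up of $\mathfrak{X}$: as recalled at the beginning of this section, any two formal models of $X^{\text{rig}}$ admit a common admissible formal blow-up and $\mathfrak{X}$ is itself a formal model, so every formal model is dominated by one lying in $\Fb_{\mathfrak{X}}$; hence it suffices to show that every $\mathfrak{Y}\in\Fb_{\mathfrak{X}}$ is dominated by a $G_0$-equivariant admissible blow-up of $\mathfrak{X}$. Write $\mathfrak{Y}$ as the formal blow-up of $\mathfrak{X}$ along $V(\Ia)$ for an open ideal sheaf $\Ia\subseteq\Ob_{\mathfrak{X}}$, and fix $k\ge k_{\mathfrak{Y}}$. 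By Lemma \ref{trivial_on_k+1} the blow-up $\mathfrak{Y}$ is $G_k$-equivariant, so after replacing $\Ia$ by a suitable presenting ideal we may assume that $\Ia$ is $G_k$-stable in the sense of Definition \ref{Action_blow-up_subgroup}.

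Next I would average $\Ia$ over the finite quotient $G_0/G_k$. For $g\in G_0$ write $\Ia^{(g)}:=(\rho_g^{\natural})^{-1}\big((\rho_g)_*\Ia\big)\subseteq\Ob_{\mathfrak{X}}$, the open ideal sheaf defining $\mathfrak{Y}.g$ (cf. the discussion preceding Lemma \ref{blow-up_action}), and similarly $\Jb^{(g)}$ for an arbitrary ideal sheaf $\Jb$; a routine computation with the cocycle relation (\ref{cocy_action}) gives $(\Jb^{(g)})^{(h)}=\Jb^{(gh)}$, and combined with the $G_k$-stability of $\Ia$ this shows $\Ia^{(g)}=\Ia$ for $g\in G_k$, so that $\Ia^{(g)}$ depends only on the class of $g$ in $G_0/G_k$. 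Since $G_k$ is open in the compact group $G_0$, only finitely many ideal sheaves $\Ia_1=\Ia,\Ia_2,\dots,\Ia_r$ arise in this way, and for every $h\in G_0$ the operation $\Jb\mapsto\Jb^{(h)}$ permutes $\{\Ia_1,\dots,\Ia_r\}$. Put
\[
\Ka:=\Ia_1\cdot\Ia_2\cdots\Ia_r\subseteq\Ob_{\mathfrak{X}}.
\]
Then $\Ka$ is again an open ideal sheaf (it contains a power of $\varpi$), and since $\rho_h^{\natural}$ is an automorphism it carries products of ideals to products, whence $\Ka^{(h)}=\Ka$ for all $h\in G_0$; in particular $\Ka$ is $G_0$-stable. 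Therefore the formal blow-up $\mathfrak{Z}\to\mathfrak{X}$ along $V(\Ka)$ is a $G_0$-equivariant admissible blow-up of $\mathfrak{X}$.

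Finally I would check that $\mathfrak{Z}$ dominates $\mathfrak{Y}$. Writing $\Ka=\Ia\cdot(\Ia_2\cdots\Ia_r)$ and using that blowing up a product of ideal sheaves factors through blowing up each factor --- equivalently, the universal property of (formal) blow-ups, since $\Ia\cdot\Ob_{\mathfrak{Z}}$ is invertible (\cite[Part II, chapter 8]{Bosch}, \cite[Chapter 8]{Liu}) --- the canonical morphism $\mathfrak{Z}\to\mathfrak{X}$ factors as $\mathfrak{Z}\to\mathfrak{Y}\to\mathfrak{X}$, so $\mathfrak{Z}\succeq\mathfrak{Y}$. Composing with the reduction to $\Fb_{\mathfrak{X}}$ from the first step concludes the proof. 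I expect the only genuine point to be the finiteness of the $G_0$-orbit of $\Ia$ inside $\Fb_{\mathfrak{X}}$: this is precisely where Lemma \ref{trivial_on_k+1} is needed, as it supplies the $G_k$-equivariance that forces the stabiliser of $\mathfrak{Y}$ to be an open --- hence finite-index --- subgroup of the compact group $G_0$; once this is in hand, the openness and $G_0$-stability of the product ideal and the associativity of blow-ups are all formal.
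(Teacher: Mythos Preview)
Your proposal is correct and follows exactly the approach the paper takes: the paper's own proof consists solely of the citation to \cite[Proposition 5.2.14]{HPSS}, and your sketch is a faithful reproduction of that argument (reduce to $\Fb_{\mathfrak{X}}$ by cofinality, invoke Lemma~\ref{trivial_on_k+1} to obtain a $G_k$-stable presenting ideal and hence a finite $G_0$-orbit of ideals, take their product to get a $G_0$-stable open ideal, and use the universal property of blow-ups to see that the resulting $G_0$-equivariant blow-up dominates $\mathfrak{Y}$).
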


\begin{rem}\label{G0_eq_system}
As $\Fb_{\mathfrak{X}}$ is cofinal in the set of all formal models, the preceding proposition tells us that the set of all $G_0$-equivariant admissible blow-ups of $\mathfrak{X}$ is also cofinal in the set of all formal models of $\mathfrak{X}$. From now on, we will assume that if $\mathfrak{Y}\in\Fb_{\mathfrak{X}}$, then $\mathfrak{Y}$ also $G_0$-equivariant, and we will denoted by $\rho_{g}^{\mathfrak{Y}}:\mathfrak{Y}\rightarrow\mathfrak{Y}$ the morphism induced by every $g\in G_0$.
\end{rem}
\justify
For every $\mathfrak{Y}\in\Fb_{\mathfrak{X}}$ we denote by $\text{sp}_{\;\mathfrak{Y}}:\mathfrak{X}_{\infty}\rightarrow\mathfrak{Y}$ the canonical projection map. Let $\mathfrak{Y}'\succeq\mathfrak{Y}$ with blow-up morphism $\pi' :\mathfrak{Y}'\rightarrow\mathfrak{Y}$ and $g\in G_0$. Let us consider the following commutative diagram coming from the $G_0$-equivariance of the family $\Fb_{\mathfrak{X}}$
\begin{eqnarray*}
\begin{tikzcd} [column sep=huge]
\mathfrak{X}_{\infty} \arrow[r, "sp_{\;\mathfrak{Y}}"] \arrow[d, "sp_{\;\mathfrak{Y}'}"]
& \mathfrak{Y} \arrow[r, "\rho_g^{\mathfrak{Y}}"]
& \mathfrak{Y} \\
\mathfrak{Y}' \arrow[r, "\rho_{g}^{\mathfrak{Y}'}"] \arrow[ur, "\pi"]
& \mathfrak{Y}'. \arrow[ur, "\pi' "]
\end{tikzcd}
\end{eqnarray*} 
This diagram allows to define a continuous function 
\begin{eqnarray}
\begin{matrix}\label{G_0-action_on_ZR}
\rho_g : & \mathfrak{X}_{\infty}  & \rightarrow & \mathfrak{X}_{\infty}\\
           & (a_{\mathfrak{Y}})_{\mathfrak{Y}\in\Fb_{\mathfrak{X}}} & \mapsto & (\rho_g^{\mathfrak{Y}} (a_{\mathfrak{Y}}))_{\mathfrak{Y}\in\Fb_{\mathfrak{X}}}.
\end{matrix}
\end{eqnarray}
which defines a $G_0$-action on the space $\mathfrak{X}_{\infty}$.
\justify
Let $\Ub\subset \mathfrak{Y}$ be an open subset and let us take $V:= \text{sp}_{\;\mathfrak{Y}}^{-1}(\Ub)\subset \mathfrak{X}_{\infty}$. Using the relation $\text{sp}_{\;\mathfrak{Y}}=\text{sp}_{\;\mathfrak{Y}'}  \;\circ \; \pi$ we see that
\begin{eqnarray*}
\text{sp}_{\;\mathfrak{Y}'}(V) = \text{sp}_{\;\mathfrak{Y}'}(\text{sp}_{\;\mathfrak{Y}}^{-1}(\Ub))
 = \text{sp}_{\;\mathfrak{Y}'}(\text{sp}_{\;\mathfrak{Y}'}^{-1}(\pi'^{-1}(\Ub)))
 = \pi'^{-1}(\Ub),
\end{eqnarray*}
which implies that $\text{sp}_{\;\mathfrak{Y}'}(V)$ is an open subset of $\mathfrak{Y}'$. Now, let us suppose that $\mathfrak{Y}''\xrightarrow{\pi''}\mathfrak{Y}'\xrightarrow{\pi '}\mathfrak{Y}$ are morphisms over $\mathfrak{Y}$. The commutative diagram 
\begin{eqnarray*}
\begin{tikzcd}[column sep=large, row sep=normal]
& \mathfrak{X}_{\infty}\supseteq V:=\text{sp}_{\;\mathfrak{Y}}^{-1}(\Ub) \arrow{ddl}[swap]{\text{sp}_{\;\mathfrak{Y}''}}\arrow{ddr}{\text{sp}_{\;\mathfrak{Y}}}\arrow{d}{\text{sp}_{\mathfrak{Y}'}} & \\
& \mathfrak{Y}' \arrow{dr}[swap]{\pi'}  & \\
\mathfrak{Y}'' \arrow{rr}[swap]{} \arrow{ur}{\pi''} & & \mathfrak{Y}\supseteq\Ub 
\end{tikzcd}
\end{eqnarray*}
implies that 
\begin{eqnarray}\label{sp_V}
\pi''^{-1}(\text{sp}_{\;\mathfrak{Y}'}(V)) = \pi''^{-1}(\pi''(\text{sp}_{\;\mathfrak{Y}''}(V))) = \text{sp}_{\;\mathfrak{Y}''}(V). 
\end{eqnarray}
In this situation, the morphism $\Psi_{\mathfrak{Y}'',\mathfrak{Y}'}: \pi''_{*}\Da^{\dag}_{\mathfrak{Y}'',k''}(\lambda)\rightarrow \Da^{\dag}_{\mathfrak{Y}',k'}(\lambda)$ (defined in (\ref{from_k'_to_k}) ) induces the ring homomorphism 
\begin{eqnarray*}
\Da^{\dag}_{\mathfrak{Y}'',k''}(\lambda)(\text{sp}_{\;\mathfrak{Y}''}(V)) = \pi ''_*\Da^{\dag}_{\mathfrak{Y}'',k''}(\lambda)(\text{sp}_{\;\mathfrak{Y}'}(V)) \xrightarrow{\Psi_{\mathfrak{Y}'',\mathfrak{Y}'}} \Da^{\dag}_{\mathfrak{Y}',k'}(\lambda)(\text{sp}_{\;\mathfrak{Y}'}(V))
\end{eqnarray*}
and we can form the projective limit as in \cite[(5.2.25)]{HPSS}
\begin{eqnarray*}
\Da(\lambda)(V):= \varprojlim_{\mathfrak{Y}'\rightarrow\mathfrak{Y}} \Da^{\dag}_{\mathfrak{Y}',k'}(\lambda) (\text{sp}_{\;\mathfrak{Y}'}(V)).
\end{eqnarray*}
By definition, the open subsets of the form $V:=\text{sp}_{\;\mathfrak{Y}}^{-1}(\Ub)$ form a basis for the topology of $\mathfrak{X}_{\infty}$ and $\Da(\lambda)$ is a presheaf on this basis. The associated sheaf on $\mathfrak{X}_{\infty}$ to this presheaf will also be denoted by $\Da(\lambda)$.
 \justify
Since $ \left(\rho_{g}^{\mathfrak{Y}'} \right)_* \; \circ \; \pi''_{*}\ = \pi''_{*}\  \; \circ \;  \left(\rho_g^{\mathfrak{Y}''}\right)_*$, then relation (\ref{sp_V}), and commutativity of  the first diagram in (\ref{diag_first_eq})  tells us that
\begin{eqnarray*}
\begin{tikzcd}[row sep=large, column sep=large]
\Da^{\dag}_{\mathfrak{Y}'',k''}(\lambda)\left(\text{sp}_{\;\mathfrak{Y}''}(V)\right) = \pi''_{*}\Da^{\dag}_{\mathfrak{Y}'',k''}(\lambda)\left(\text{sp}_{\;\mathfrak{Y}'}(V)\right)  \arrow[r, "\Psi_{\text{sp}_{\;\mathfrak{Y}'}(V)}"] \arrow[d, "T_{g,\;\text{sp}_{\;\mathfrak{Y}''}(V)}^{\mathfrak{Y}''}"]
& \Da^{\dag}_{\mathfrak{Y}',k'}(\lambda)\left(\text{sp}_{\;\mathfrak{Y}'}(V)\right) \arrow[d, "T_{g,\;\text{sp}_{\mathfrak{Y}'}(V)}^{\mathfrak{Y}'}"]\\
\Da^{\dag}_{\mathfrak{Y}'',k''}(\lambda)\left(\left(\rho_g^{\mathfrak{Y}''}\right)^{-1}\left(\text{sp}_{\;\mathfrak{Y}''}(V)\right)\right) = \left(\rho^{\mathfrak{Y}'}_{g}\right)_*\pi''_*\Da^{\dag}_{\mathfrak{Y}'',k''}(\lambda)\left(\text{sp}_{\;\mathfrak{Y}'}(V)\right) \arrow[r, "\Psi_{\text{sp}_{\mathfrak{Y}'}(\rho_g^{-1}(V))}"]
& \left(\rho_{g}^{\mathfrak{Y}'}\right)_*\Da^{\dag}_{\mathfrak{Y}',k'}(\lambda)\left(\text{sp}_{\;\mathfrak{Y}'}(V)\right)
\end{tikzcd}
\end{eqnarray*}
is also a commutative diagram. Let us identify 
\begin{eqnarray*}
\Da(\lambda)(V) = \left\{P:=\left(P_{\mathfrak{Y}',k'}\right)_{(\mathfrak{Y}',k')\in\Fb_{\mathfrak{X}}}\in \displaystyle\prod_{(\mathfrak{Y}',k')\Fb_{\mathfrak{X}}} \Da^{\dag}_{\mathfrak{Y'},k'}(\lambda)\left(\text{sp}_{\;\mathfrak{Y}'}(V)\right)\; | \; \Psi_{\mathfrak{Y}'',\mathfrak{Y}'}(P_{\mathfrak{Y}'',k''}) = P_{\mathfrak{Y}',k'}\right\}
\end{eqnarray*}
and let us consider the sequence 
\begin{eqnarray*}
g.P :=\left(  T_{g,\;\text{sp}_{\;\mathfrak{Y}''}(V)}^{\mathfrak{Y}''}(P_{\mathfrak{Y}'',k''})\right) _{(\mathfrak{Y}'',k'')\in\Fb_{\mathfrak{X}}}\in \displaystyle\prod_{(\mathfrak{Y}'',k'')\in\Fb_{\mathfrak{X}}}\Da^{\dag}_{\mathfrak{Y}'',k''}(\lambda)\left(\left(\rho^{\mathfrak{Y}''}_{g}\right)^{-1}\text{sp}_{\;\mathfrak{Y}''}(V)\right).
\end{eqnarray*}
Using the commutativity of the preceding diagram we see that 
\begin{eqnarray*}
\Psi_{\text{sp}_{\mathfrak{Y}'}(\rho_g^{-1}(V))}\left(  T_{g,\;\text{sp}_{\;\mathfrak{Y}''}(V)}^{\mathfrak{Y}''}(P_{\mathfrak{Y}'',k''})\right) = T_{g,\;\text{sp}_{\mathfrak{Y}'}(V)}^{\mathfrak{Y}'}\left(\Psi_{\text{sp}_{\;\mathfrak{Y}'}(V)}  (P_{\mathfrak{Y}'',k''})\right)
= T_{g,\;\text{sp}_{\mathfrak{Y}'}(V)}^{\mathfrak{Y}'} (P_{\mathfrak{Y}',k'})
\end{eqnarray*}
and therefore, for $g\in G_0$, the actions $T^{\mathfrak{Y}}_g$ assemble to an action 
\begin{eqnarray*}
T_g : \Da(\lambda) \xrightarrow{\simeq} (\rho_g)_* \Da(\lambda).
\end{eqnarray*}
This action is on the left, in the sense that if $g,h\in G_0$ then $(\rho_g)_* T_h\;\circ\; T_g = T_{hg}$. Let us suppose now that $\Ma =(\Ma_{\mathfrak{Y},k})\in \Ca^{G_0}_{\mathfrak{X},\lambda}$. We have the transition maps $\psi_{\mathfrak{Y}'',\mathfrak{Y}'}:\pi''_*\Ma_{\mathfrak{Y}'',k''}\rightarrow \Ma_{\mathfrak{Y}',k'}$ which are linear relative to the morphism (\ref{from_k'_to_k}). As before, we have the map
\begin{eqnarray*}
\Ma_{\mathfrak{Y}'',k''}\left(\text{sp}_{\;\mathfrak{Y}''}(V) \right) = \pi''_* \Ma_{\mathfrak{Y}'',k''}\left(\text{sp}_{\;\mathfrak{Y}'}(V)\right)\xrightarrow{\psi_{\text{sp}_{\;\mathfrak{Y}'}(V)}} \Ma_{\mathfrak{Y}',k'}\left(\text{sp}_{\;\mathfrak{Y}'}(V)\right)
\end{eqnarray*}
which allows us to define $\Ma_{\infty}$ as the sheaf on $\mathfrak{X}_{\infty}$ associated to the presheaf 
\begin{eqnarray*}
\Ma_{\infty}(V):= \varprojlim_{\mathfrak{Y}'\rightarrow\mathfrak{Y}} \Ma_{\mathfrak{Y}',k'}\left(\text{sp}_{\;\mathfrak{Y}'}(V)\right).
\end{eqnarray*}
By definition, we have the following commutative diagram
\begin{eqnarray*}
\begin{tikzcd}[row sep=large, column sep=large]
\Ma_{\mathfrak{Y}'',k''}\left(\text{sp}_{\;\mathfrak{Y}''}(V)\right) = \pi''_{*}\Ma_{\mathfrak{Y}'',k''}\left(\text{sp}_{\;\mathfrak{Y}'}(V)\right)  \arrow[r, "\psi_{\text{sp}_{\;\mathfrak{Y}'}(V)}"] \arrow[d, "\varphi_{g,\;\text{sp}_{\;\mathfrak{Y}''}(V)}^{\mathfrak{Y}''}"]
& \Ma_{\mathfrak{Y}',k'}\left(\text{sp}_{\;\mathfrak{Y}'}(V)\right) \arrow[d, "\varphi_{g,\;\text{sp}_{\mathfrak{Y}'}(V)}^{\mathfrak{Y}'}"]\\
\Ma^{\dag}_{\mathfrak{Y}'',k''}\left(\left(\rho_g^{\mathfrak{Y}''}\right)^{-1}\left(\text{sp}_{\;\mathfrak{Y}''}(V)\right)\right) = \left(\rho^{\mathfrak{Y}'}_{g}\right)_*\pi''_*\Ma_{\mathfrak{Y}'',k''}\left(\text{sp}_{\;\mathfrak{Y}'}(V)\right) \arrow[r, "\psi_{\text{sp}_{\mathfrak{Y}'}(\rho_g^{-1}(V))}"]
& \left(\rho_{g}^{\mathfrak{Y}'}\right)_*\Ma_{\mathfrak{Y}',k'}\left(\text{sp}_{\;\mathfrak{Y}'}(V)\right).
\end{tikzcd}
\end{eqnarray*}
Identifying 
\begin{eqnarray*}
\Ma_{\infty}(V) = \left\{m:=\left(m_{\mathfrak{Y}',k'}\right)_{(\mathfrak{Y}',k')\in\Fb_{\mathfrak{X}}}\in \displaystyle\prod_{(\mathfrak{Y}',k')\Fb_{\mathfrak{X}}} \Ma_{\mathfrak{Y'},k'}\left(\text{sp}_{\;\mathfrak{Y}'}(V)\right)\; | \; \psi_{\mathfrak{Y}'',\mathfrak{Y}'}(m_{\mathfrak{Y}'',k''}) = m_{\mathfrak{Y}',k'}\right\}
\end{eqnarray*} 
we see as before that if 
\begin{eqnarray*}
g.m := \left(  \varphi_{g,\;\text{sp}_{\;\mathfrak{Y}''}(V)}^{\mathfrak{Y}''}(m_{\mathfrak{Y}'',k''})\right) _{(\mathfrak{Y}'',k'')\in\Fb_{\mathfrak{X}}}\in \displaystyle\prod_{(\mathfrak{Y}'',k'')\in\Fb_{\mathfrak{X}}}\Ma_{\mathfrak{Y}'',k''}\left(\left(\rho^{\mathfrak{Y}''}_{g}\right)^{-1}\text{sp}_{\;\mathfrak{Y}''}(V)\right),
\end{eqnarray*}
then the preceding commutative diagram implies that
\begin{eqnarray*}
\psi_{\text{sp}_{\mathfrak{Y}'}(\rho_g^{-1}(V))}\left(  \varphi_{g,\;\text{sp}_{\;\mathfrak{Y}''}(V)}^{\mathfrak{Y}''}(m_{\mathfrak{Y}'',k''})\right) = \varphi_{g,\;\text{sp}_{\mathfrak{Y}'}(V)}^{\mathfrak{Y}'}\left(\psi_{\text{sp}_{\;\mathfrak{Y}'}(V)}  (m_{\mathfrak{Y}'',k''})\right)
= \varphi_{g,\;\text{sp}_{\mathfrak{Y}'}(V)}^{\mathfrak{Y}'} (m_{\mathfrak{Y}',k'}),
\end{eqnarray*}
and therefore we get a family $(\varphi_{g})_{g\in G_0}$ of isomorphisms
\begin{eqnarray} \label{G_0-equi}
\varphi_g : \Ma_{\infty}\rightarrow (\rho_g)_*\Ma_{\infty}
\end{eqnarray}
of sheaves of $L$-vector spaces. By definition \ref{defi_coadmisible_G0_eq} we have that if $g,h\in G_0$ then $\varphi_{hg}=(\rho_g)_*\varphi_h\;\circ\;\varphi_g$. Furthermore, under the preceding identifications, if $P=(P_{\;\mathfrak{Y}',k'})\in\Da(\lambda)(V)$ and $m=(m_{\;\mathfrak{Y}',k'})\in\Ma_{\infty}(V)$, then
$P.m = (P_{\;\mathfrak{Y}',k'}.m_{\;\mathfrak{Y}',k'})_{(\mathfrak{Y}',k')\in\Fb_{\mathfrak{X}}}$ and therefore
\begin{align*}
\varphi_{g,\; V} (P.m) = \left(\varphi^{\mathfrak{Y}'}_{g,\;\text{sp}_{\mathfrak{Y}'}(V)}(P_{\;\mathfrak{Y}',k'}.m_{\;\mathfrak{Y}',k'})\right)_{(\mathfrak{Y}',k')\in\Fb_{\mathfrak{X}}} 
&= \left(T^{\mathfrak{Y}'}_{g,\;\text{sp}_{\mathfrak{Y}'}(V)}(P_{\mathfrak{Y}',k'}).\varphi^{\mathfrak{Y}'}_{g,\;\text{sp}_{\mathfrak{Y}'}(V)}(m_{\mathfrak{Y}',k'})\right)_{(\mathfrak{Y}',k')\in\Fb_{\mathfrak{X}}}\\
 &= T_{g,\; V} (P).\varphi_{g,\; V}(m).
\end{align*}
In particular,  $\Ma_{\infty}$ is a $G_0$-equivariant $\Da(\lambda)$-module on the topologial $G_0$-space $\mathfrak{X}_{\infty}$. Let us see that the formation of $\Ma_{\infty}$ is functorial. Let $\gamma: \Ma\rightarrow \Na$ be a morphism in $\Ca^{G_0}_{\mathfrak{X},\lambda}$. Then, in particular we have the following commutative diagram
\begin{eqnarray*}
\begin{tikzcd} [column sep=large, row sep=large]
\pi''_*\Ma_{\mathfrak{Y}'',k''} \arrow[r, "\psi^{\Ma}_{\mathfrak{Y}'',\mathfrak{Y}'}"] \arrow[d, "\pi''_*(\gamma_{\mathfrak{Y}'',k''})"]
& \Ma_{\mathfrak{Y}',k'} \arrow[d, "\gamma_{\mathfrak{Y}',k'}"]\\
\pi_{*}\Na_{\mathfrak{Y},k} \arrow[r, "\psi^{\Na}_{\mathfrak{Y}'',\mathfrak{Y}'}"]
& \Na_{\mathfrak{Y},k}.
\end{tikzcd}
\end{eqnarray*}
Let $m=(m_{\mathfrak{Y},k})_{(\mathfrak{Y},k)\in\Fb_{\mathfrak{X}}}\in \Ma_{\infty}(V)$ and 
\begin{eqnarray*}
s:=\left(\gamma_{\mathfrak{Y}'',k''}(m_{\mathfrak{Y}'',k''})\right)_{(\mathfrak{Y''},k'')\in\Fb_{\mathfrak{X}}}\in \displaystyle\prod_{(\mathfrak{Y}'',k'')\in\Fb_{\mathfrak{X}}}\Na_{\mathfrak{Y}'',k''}\left(\text{sp}_{\;\mathfrak{Y}''}(V)\right).
\end{eqnarray*}
Commutativity in the preceding diagram implies that 
\begin{eqnarray*}
\psi^{\Na}_{\text{sp}_{\mathfrak{Y}''}(V)}\left(s_{\mathfrak{Y}'',k''}\right) = \psi^{\Na}_{\text{sp}_{\;\mathfrak{Y}''}(V)} \left(\gamma_{\text{sp}_{\mathfrak{Y}''}(V)}\left(m_{\mathfrak{Y}'',k''}\right)\right)
= \gamma_{\text{sp}_{\mathfrak{Y}'}(V)}\left(\psi^{\Ma}_{\text{sp}_{\;\mathfrak{Y}'}(V)}(m_{\mathfrak{Y}'',k''})\right)
=  \gamma_{\text{sp}_{\mathfrak{Y}'}(V)} \left(m_{\mathfrak{Y}',k'}\right)
= s_{\mathfrak{Y}',k'},
\end{eqnarray*}
therefore $s\in\Na_{\infty}(V)$ and $\gamma$ induces a morphism $\gamma_{\infty}:\Ma_{\infty}\rightarrow \Na_{\infty}$.This shows that the preceding  construction is functorial. The next proposition is the twisted analogue of \cite[Proposition 5.2.29]{HPSS}.

\begin{prop}\label{G_0-equivalence}
Let $\lambda\in \text{Hom}(\T,\G_m)$ be an algebraic character which induces, via derivation, a dominant and regular character of $\mathfrak{t}^*_L$. The functor $\Ma\rightsquigarrow\Ma_{\infty}$ from the category $\Ca^{G_0}_{\mathfrak{X},\lambda}$ to $G_0$-equivariant $\Da(\lambda)$-modules is a faithful functor.
\end{prop}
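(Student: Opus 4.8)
The plan is to show faithfulness, i.e. that if $\gamma, \gamma' : \Ma \to \Na$ are two morphisms in $\Ca^{G_0}_{\mathfrak{X},\lambda}$ with $\gamma_{\infty} = \gamma'_{\infty}$, then $\gamma = \gamma'$. Since a morphism in $\Ca^{G_0}_{\mathfrak{X},\lambda}$ is by definition a compatible family $(\gamma_{\mathfrak{Y},k})_{(\mathfrak{Y},k)\in\underline{\Fb}_{\mathfrak{X}}}$ of morphisms of $\Da^{\dag}_{\mathfrak{Y},k}(\lambda)$-modules, it suffices to show that each $\gamma_{\mathfrak{Y},k}$ is recovered from $\gamma_{\infty}$. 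First I would fix $(\mathfrak{Y},k)\in\underline{\Fb}_{\mathfrak{X}}$ and an open subset $\Ub\subseteq\mathfrak{Y}$, and set $V := \text{sp}_{\;\mathfrak{Y}}^{-1}(\Ub)\subseteq\mathfrak{X}_{\infty}$. By construction the open sets of this form are a basis for $\mathfrak{X}_{\infty}$, so it is enough to track the data on such $V$. On these, $\Ma_{\infty}(V) = \varprojlim_{\mathfrak{Y}'\to\mathfrak{Y}}\Ma_{\mathfrak{Y}',k'}(\text{sp}_{\;\mathfrak{Y}'}(V))$, and the projection to the component indexed by $(\mathfrak{Y},k)$ itself is the transition-structure map; similarly for $\Na_{\infty}$. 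The morphism $\gamma_{\infty, V}$ is, by the functoriality computation preceding the proposition, precisely the map induced on these inverse limits by the family $(\gamma_{\mathfrak{Y}',k'})$, compatible with the transition maps $\psi_{\mathfrak{Y}'',\mathfrak{Y}'}$.

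Next I would use the key structural input from Definition \ref{defi_coadmisible_G0_eq}(b): for $(\mathfrak{Y}',k')\succeq(\mathfrak{Y},k)$ with morphism $\pi:\mathfrak{Y}'\to\mathfrak{Y}$, the induced morphism $\overline{\psi}_{\mathfrak{Y}',\mathfrak{Y}} : \Da^{\dag}_{\mathfrak{Y},k}(\lambda)\otimes_{\pi_*\Da^{\dag}_{\mathfrak{Y}',k'}(\lambda),\,G_{k+1}}\pi_*\Ma_{\mathfrak{Y}',k'}\xrightarrow{\simeq}\Ma_{\mathfrak{Y},k}$ is an isomorphism. This says $\Ma_{\mathfrak{Y},k}$ is determined (as an object with its transition structure) by $\Ma_{\mathfrak{Y}',k'}$ for any single $(\mathfrak{Y}',k')\succeq(\mathfrak{Y},k)$; dually, the transition map $\psi_{\mathfrak{Y}',\mathfrak{Y}} : \pi_*\Ma_{\mathfrak{Y}',k'}\to\Ma_{\mathfrak{Y},k}$ is surjective (it factors through the quotient defining $\overline{\psi}$, which is onto). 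Consequently, the global section of $\Ma_{\infty}$ over $V$, being the inverse limit over the cofinal system, surjects onto a dense/cofinal piece, and more importantly the canonical map $\Ma_{\infty}(V)\to\Ma_{\mathfrak{Y},k}(\Ub)$ has image which generates $\Ma_{\mathfrak{Y},k}$ as a $\Da^{\dag}_{\mathfrak{Y},k}(\lambda)$-module — indeed, since the transition morphisms $\psi_{\mathfrak{Y}',\mathfrak{Y}}$ are surjective and the system is Mittag-Leffler (each $\Ma_{\mathfrak{Y}',k'}$ coherent, sections over quasi-compact opens finitely generated over noetherian rings, using Proposition \ref{fini_blow_up}(ii) and Corollary \ref{equal_global_sections}), the projection $\Ma_{\infty}(V)\to\Ma_{\mathfrak{Y},k}(\text{sp}_{\;\mathfrak{Y}}(V)) = \Ma_{\mathfrak{Y},k}(\Ub)$ is surjective.

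With surjectivity of $\Ma_{\infty}(V)\twoheadrightarrow\Ma_{\mathfrak{Y},k}(\Ub)$ in hand, the argument finishes quickly: given $x\in\Ma_{\mathfrak{Y},k}(\Ub)$, lift it to $\widetilde{x}\in\Ma_{\infty}(V)$; then $\gamma_{\mathfrak{Y},k,\Ub}(x)$ is the $(\mathfrak{Y},k)$-component of $\gamma_{\infty,V}(\widetilde{x})$, by the compatibility of $\gamma_{\infty}$ with the transition structure. Hence $\gamma_{\infty,V}=\gamma'_{\infty,V}$ forces $\gamma_{\mathfrak{Y},k,\Ub}=\gamma'_{\mathfrak{Y},k,\Ub}$ on every $\Ub$ in a basis of $\mathfrak{Y}$, so $\gamma_{\mathfrak{Y},k}=\gamma'_{\mathfrak{Y},k}$; since $(\mathfrak{Y},k)$ was arbitrary, $\gamma=\gamma'$. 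I expect the main obstacle to be the rigorous verification that $\Ma_{\infty}(V)\to\Ma_{\mathfrak{Y},k}(\Ub)$ is surjective: this requires controlling the inverse limit over the cofinal system $\Fb_{\mathfrak{X}}$ of $G_0$-equivariant blow-ups, which is where one needs the Mittag-Leffler property together with the fact from Definition \ref{defi_coadmisible_G0_eq}(b) that the induced maps $\overline{\psi}_{\mathfrak{Y}',\mathfrak{Y}}$ are isomorphisms (so that $\psi_{\mathfrak{Y}',\mathfrak{Y}}$ itself is surjective on sections over quasi-compact opens). Once this surjectivity is established the rest is formal, exactly as in the untwisted case \cite[Proposition 5.2.29]{HPSS}.
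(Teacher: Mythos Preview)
Your approach is workable in spirit but takes a much harder route than the paper, and contains a genuine gap in the surjectivity step.

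The paper's argument is a two-line application of Theorem~\ref{second_equivalence}: one observes that taking $V=\mathfrak{X}_{\infty}$ gives $H^{0}(\mathfrak{X}_{\infty},\Ma_{\infty})=\varprojlim_{(\mathfrak{Y},k)}H^{0}(\mathfrak{Y},\Ma_{\mathfrak{Y},k})=\Gamma(\Ma)$, and then notes that $\Gamma$ is an equivalence of categories (hence faithful), so $f_{\infty}=h_{\infty}$ implies $\Gamma(f)=H^{0}(\mathfrak{X}_{\infty},f_{\infty})=H^{0}(\mathfrak{X}_{\infty},h_{\infty})=\Gamma(h)$ and therefore $f=h$. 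No local analysis, no Mittag--Leffler, no surjectivity of transition maps is needed: everything has already been packaged into Theorem~\ref{second_equivalence}.

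Your direct attempt runs into trouble at the claim that $\psi_{\mathfrak{Y}',\mathfrak{Y}}:\pi_{*}\Ma_{\mathfrak{Y}',k'}\to\Ma_{\mathfrak{Y},k}$ is surjective. The isomorphism $\overline{\psi}_{\mathfrak{Y}',\mathfrak{Y}}$ in Definition~\ref{defi_coadmisible_G0_eq}(b) does \emph{not} say this: $\psi$ factors as
\[
\pi_{*}\Ma_{\mathfrak{Y}',k'}\xrightarrow{\,m\mapsto 1\otimes m\,}\Da^{\dag}_{\mathfrak{Y},k}(\lambda)\otimes_{\pi_{*}\Da^{\dag}_{\mathfrak{Y}',k'}(\lambda)}\pi_{*}\Ma_{\mathfrak{Y}',k'}\twoheadrightarrow(\text{quotient by }\Ka)\xrightarrow[\simeq]{\overline{\psi}}\Ma_{\mathfrak{Y},k},
\]
and the first arrow is typically not onto because the ring map $\Psi:\pi_{*}\Da^{\dag}_{\mathfrak{Y}',k'}(\lambda)\hookrightarrow\Da^{\dag}_{\mathfrak{Y},k}(\lambda)$ is a strict inclusion when $k'>k$. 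What you do get is that the image of $\psi$ \emph{generates} $\Ma_{\mathfrak{Y},k}$ as a $\Da^{\dag}_{\mathfrak{Y},k}(\lambda)$-module; that weaker statement would still suffice for faithfulness (since $\gamma_{\mathfrak{Y},k}$ is $\Da^{\dag}_{\mathfrak{Y},k}(\lambda)$-linear), but then you must argue that the image of the \emph{inverse limit} $\Ma_{\infty}(V)\to\Ma_{\mathfrak{Y},k}(\Ub)$ still generates, and your Mittag--Leffler justification no longer applies once surjectivity of the transition maps fails. The cleanest fix is exactly the paper's: pass to global sections and invoke the equivalence $\Gamma$ already established.
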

\begin{proof}
We start the proof by remarking that $\text{sp}_{\;\mathfrak{Y}}(\mathfrak{X}_{\infty})=\mathfrak{Y}$ for every $\mathfrak{Y}\in\Fb_{\mathfrak{X}}$. By remark \ref{G0_eq_system}, the global sections of $\Ma_{\infty}$ equal to
\begin{eqnarray*}
H^{0}(\mathfrak{X}_{\infty},\Ma_{\infty}) = \varprojlim_{(\mathfrak{Y},k)\in\underline{\Fb}_{\mathfrak{X}}}H^{0}(\mathfrak{Y},\Ma_{\mathfrak{Y},k}) = \Gamma (\Ma).
\end{eqnarray*} 
Now, let $f,h: \Ma\rightarrow \Na$ be two morphisms in $\Ca^{G_0}_{\mathfrak{X},\lambda}$ such that $f_{\infty}= h_{\infty}$. By theorem \ref{second_equivalence}, it is enough to verify $\Gamma(f)=\Gamma(h)$ which is clear since $H^{0}(\mathfrak{X}_{\infty},f_{\infty})=H^0(\mathfrak{X}_{\infty},h_{\infty})$.
\end{proof}
\justify
If $(\bullet)_{\infty}$ denotes the previous functor, then we will denote by $\Lb oc^{G_0}_{\infty}(\lambda)$ the composition of the functor $\Lb oc^{G_0}_{\lambda}$ with $(\bullet)_{\infty}$, i.e.,
\begin{eqnarray*}
\{\text{Coadmissible}\; D(G_0,L)_{\lambda}-\text{modules}\}\xrightarrow{\Lb oc^{G_0}_{\infty}(\lambda)}\{G_0-\text{equivariant}\;\Da(\lambda)-\text{modules}\}.
\end{eqnarray*}
Since $\Lb oc^{G_0}_{\lambda}$ is an equivalence of categories, the preceding proposition implies that $\Lb oc^{G_0}_{\infty}(\lambda)$ is a faithful functor.

\section{$G$-equivariant modules}
\justify
Thorough this section we will denote by $G=\G(L)$ and by $\Bb$ the semi-simple Bruhat-Tits building of the $p$-adic group $G$ (\cite{BT1} et \cite{BT2}). This is a simplicial complex endowed with a natural right $G$-action.
\justify
The purpose of this section is to extend the above results from $G_0$-equivariant objects to objects equivariants for the whole group $G$.
\justify
We start by fixing some notation.\footnote{This is exactly as in \cite[5.3.1]{HPSS}.} To each special vertex $v\in\Bb$ the Bruhat-Tits theory associates a connected reductive group $\mathfrak{o}$-scheme $\G_v$, whose generic fiber $(\G_v)_{L} : = \G_v\times_{\text{Spec}(\mathfrak{o})}\text{Spec}(L)$ is canonically isomorphic to $\G_L$. We denote by $X_v$ the smooth flag scheme of $\G_v$ whose generic fiber $(X_v)_L$ is canonically isomorphic to the flag variety $X_L$. We will distinguish the next constructions by adding the corresponding vertex to them. For instance, we will write $Y_v$ for an (algebraic) admissible blow-up of the smooth model $X_v$, $G_{v,0}$ for the group of points $\G_{v}(\mathfrak{o})$ and $G_{v,k}$ for the group of points $\G_{v}(k)(\mathfrak{o})$. We will use the same conventions if we deal with formal completions. For instance, $\mathfrak{Y}_v$ will always denote an admissible formal blow-up of $\mathfrak{X}_v$. We point out to the reader that the blow-up morphism $\mathfrak{Y}_v\rightarrow\mathfrak{X}_v$ will make part of the datum of $\mathfrak{Y}_v$, and that even if for another special vertex $v'\neq v$ the formal $\mathfrak{o}$-scheme $\mathfrak{Y}_v$  is also a blow-up of the smooth formal model $\mathfrak{X}_{v'}$, we will only consider  it as a blow-up of $\mathfrak{X}_v$. We will denote by $\Fb_v:=\Fb_{\mathfrak{X}_v}$, the set of all admissible formal blow-ups $\mathfrak{Y}_v\rightarrow\mathfrak{X}_v$ of  $\mathfrak{X}_v$ and by $\underline{\Fb}_{v}:= \underline{\Fb}_{\mathfrak{X}_v}$ the respective directed system of definition \ref{Defi_F_underline}. By the preceding accord, the sets $\Fb_v$ and $\Fb_{v'}$ are disjoint if $v\neq v'$. Let 
\begin{eqnarray*}
\Fb := \displaystyle\bigsqcup_{v}\Fb_v
\end{eqnarray*}
where $v$ runs over all special vertices of $\Bb$. We recall for the reader that $\mathfrak{X}_{\infty}$ is equal to the projective limit of all formal models of $X^{\text{rig}}$.

\begin{rem}\label{partial_order_F}
The set $\Fb$ is partially ordered in the following way. We say that $\mathfrak{Y}_{v'}\succeq\mathfrak{Y}_v$ if the projection $\text{sp}_{\;\mathfrak{Y}_{v'}}:\mathfrak{X}_{\infty}\rightarrow\mathfrak{Y}_{v'}$ factors through the projection $\text{sp}_{\;\mathfrak{Y}_{v}}:\mathfrak{X}_{\infty}\rightarrow\mathfrak{Y}_{v}$
\begin{eqnarray*}
\begin{tikzcd}
 & \mathfrak{X}_{\infty} \arrow{dr}{\text{sp}_{\;\mathfrak{Y}_v}} \arrow{dl}{\text{sp}_{\;\mathfrak{Y}_{v'}}} \\
\mathfrak{Y}_{v'} \arrow{rr}{} && \mathfrak{Y}_v.
\end{tikzcd}
\end{eqnarray*}
\end{rem} 

\begin{defi}\label{partial_order_underline_F}
We will denote by $\underline{\Fb} := \bigsqcup_{v}\underline{\Fb}_{v}$, where $v$ runs over all the special vertices of $\Bb$. This set is partially ordered as follows. We say that $(\mathfrak{Y}_{v'},k')\succeq (\mathfrak{Y}_{v},k)$ if $\mathfrak{Y}_{v'}\succeq \mathfrak{Y}_{v}$ and $\text{Lie}(\G_{v'}(k'))\subset \text{Lie}(\G_{v}(k))$ (or equivalent $\varpi^{k'}\text{Lie}(\G_{v'})\subset \varpi^{k}\text{Lie}(\G_v)$) as lattices in $\mathfrak{g}_L$.
\end{defi}
\justify
For any special vertex $v\in\Bb$, any element $g\in G$ induces an isomorphism 
\begin{eqnarray*}
\rho^{v}_g : X_v\rightarrow X_{v.g}.
\end{eqnarray*}
The isomorphism induced by $\rho^v_{g}$ on the generic fibers $(X_v)_L\simeq X_L \simeq (X_{v.g})_L$ coincides with right translation by $g$ on $X_L$
\begin{eqnarray*}
\rho_g : X_L = X_L \times_{\text{Spec}(L)}\text{Spec}(L) \xrightarrow{id_{X_L}\times g} X_L\times_{\text{Spec}(L)}\text{Spec}(\G_L) \xrightarrow{\alpha_L} X_L,
\end{eqnarray*}
where we have used $\G(L)=\G_L(L)$. Moreover, $\rho_g^{v}$ induces a morphism $\mathfrak{X}_v\rightarrow \mathfrak{X}_{v.g}$, which we denote again by $\rho^{v}_g$, and which coincides with the right translation on $\mathfrak{X}_v$ if $g\in G_{v,0}$ (of course in this case $vg=v$). Let $(\rho^{v}_g)^{\natural}: \Ob_{\mathfrak{X}_{vg}}\rightarrow (\rho^{v}_g)_*\Ob_{\mathfrak{X}_v}$ be the comorphism of $\rho^v_g$. If $\pi:\mathfrak{Y}_v\rightarrow\mathfrak{X}_v$ is an admissible blow-up of an ideal $\Ib\subset \Ob_{\mathfrak{X}_v}$, then blowing-up $((\rho^{v}_g)^{\natural})^{-1}((\rho_g^v)_*\Ib)$ produced a formal scheme $\mathfrak{Y}_{vg}$ (cf. lemma \ref{blow-up_action}), together with an isomorphism $\rho_g^{v}:\mathfrak{Y}_v\rightarrow\mathfrak{Y}_{vg}$. As in lemma \ref{blow-up_action} we have $k_{\mathfrak{Y}_v} = k_{\mathfrak{Y}_{vg}}$. For any $g,h\in G$ and any admissible formal blow-up $\mathfrak{Y}_v\rightarrow\mathfrak{X}_v$, we have $\rho^{vg}_h\;\circ\; \rho^v_g = \rho^v_{gh} : \mathfrak{Y}_v \rightarrow \mathfrak{Y}_{vgh}$. This gives a right $G$-action on the family $\Fb$ and on the projective limit $\mathfrak{X}_{\infty}$. Finally, if $\mathfrak{Y}_{v'}\succeq \mathfrak{Y}_v$ with morphism $\pi :\mathfrak{Y}_{v'}\rightarrow \mathfrak{Y}_v$ and $g\in G$, then $\mathfrak{Y}_{v'g}\succeq\mathfrak{Y}_{vg}$, and we have the relation $\rho_g^v  \; \circ \;  \pi = \pi g  \; \circ \; \rho_g^{v'}$ (here $\pi g : \mathfrak{Y}_{v'g}\rightarrow\mathfrak{Y}_{vg}$). Now, over every special vertex $v\in\Bb$ the algebraic character $\lambda$ induces an invertible sheaf $\Lb_v(\lambda)$ on $X_v$, such that for every $g\in G$ there exists an isomorphism
\begin{eqnarray*}
R^{v}_{g}: \Lb_{vg}(\lambda)\rightarrow (\rho^{v}_g)_*\Lb_v (\lambda),
\end{eqnarray*}
\justify
satisfying the cocycle condition
\begin{eqnarray} \label{Translation_bruhat_tits}
R_{hg}^{vhg} = \left(\rho^{vh}_g\right)_* R^{v}_{h}\;\circ\; R^{vh}_{g}
\;\;\;\;\;\;\;
(h,g\in G).
\end{eqnarray}
\justify
As usual, for every special vertex $v\in\Bb$, we will denote by $\La_v(\lambda)$ the $p$-adic completion of the sheaf $\Lb_{v}(\lambda)$, which is considered as an invertible sheaf on $\mathfrak{X}_v$. Let $(\mathfrak{Y}_v,k)\in \Fb$ with blow-up morphism $\text{pr}:\mathfrak{Y}_v\rightarrow\mathfrak{X}_v$. At the level of differential operators, we will denote by $\Da^{\dag}_{\mathfrak{Y}_v,k}(\lambda)$ the sheaf of arithmetic differential operators on $\mathfrak{Y}_v$ acting on the line bundle $\La_{v}(\lambda)$\footnote{Here we abuse of the notation and we denote again by $\La_v(\lambda)$ the invertible sheaf $\text{pr}^*\La_v(\lambda)$ on $\mathfrak{Y}_v$.}. We have the following important properties. Let $g\in G$. As in (\ref{Adjoint_blow_up}) the isomorphism (\ref{Translation_bruhat_tits}) induces a left action
\begin{eqnarray*}
\begin{matrix}
T_g^v : & \Da^{\dag}_{\mathfrak{Y}_{vg},k}(\lambda) & \xrightarrow{\simeq} & \left(\rho_{g}^{v}\right)_*  \Da^{\dag}_{\mathfrak{Y}_{v},k}(\lambda)\\

       & P & \mapsto & R_g^{v}\; P\; (R_g^v)^{-1}.
\end{matrix}
\end{eqnarray*}
Now, we identify the global sections $H^0 (\mathfrak{Y}_v,  \Da^{\dag}_{\mathfrak{Y}_{v},k}(\lambda))$ with $\Db^{\text{an}}(\G_v(k)^{\circ})_{\lambda}$ and obtain the group homomorphism
\begin{eqnarray*}
\begin{matrix}
G_{v,k+1} & \rightarrow & H^0 \left(\mathfrak{Y}_v,  \Da^{\dag}_{\mathfrak{Y}_{v},k}(\lambda)\right)^{\times}\\
g & \mapsto & \delta_g ,
\end{matrix}
\end{eqnarray*}
where $G_{v,k+1}=\G_{v}(k)^{\circ}(L)$ denotes the group of $L$-rational points (or $\mathfrak{o}$-points of $\G_{v}(k+1)$). We will follow the same lines of reasoning given in \cite[Proposition 5.3.2]{HPSS} to prove the following proposition.

\begin{prop}\label{Transition_morph}
Suppose $(\mathfrak{Y}_{v'},k')\succeq (\mathfrak{Y}_v,k)$ for  pairs $(\mathfrak{Y}_{v'},k'),\; (\mathfrak{Y}_v,k)\in \underline{\Fb}$ with morphism $\pi: \mathfrak{Y}_{v'}\rightarrow \mathfrak{Y}_v$. There exists a canonical morphism of sheaves of rings 
\begin{eqnarray*}
\Psi : \pi_*  \Da^{\dag}_{\mathfrak{Y}_{v'},k'}(\lambda)\rightarrow  \Da^{\dag}_{\mathfrak{Y}_{v},k}(\lambda)
\end{eqnarray*}
which is $G$-equivariant in the sense that for every $g\in G$ we have $T_g\;\circ\;\Psi = \left(\rho_g^v\right)_*\Psi \;\circ\;(\pi g)_*T_g$.
\end{prop}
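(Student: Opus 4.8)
The plan is to reduce the statement to two independent ingredients that have essentially already been established: the invariance theorem \ref{Invariance} for a single flag model (which gives the existence and properties of $\Psi$ as a morphism of sheaves of rings once we identify the two models appropriately), and the $G$-equivariance of the line bundles encoded by the cocycle relation (\ref{Translation_bruhat_tits}). First I would handle the \emph{existence} of $\Psi$. Since $(\mathfrak{Y}_{v'},k')\succeq(\mathfrak{Y}_v,k)$, by Definition \ref{partial_order_underline_F} the projection $\mathrm{sp}_{\mathfrak{Y}_{v'}}$ factors through $\mathrm{sp}_{\mathfrak{Y}_v}$, so there is a morphism $\pi:\mathfrak{Y}_{v'}\to\mathfrak{Y}_v$, and the lattice condition $\varpi^{k'}\mathrm{Lie}(\G_{v'})\subset\varpi^{k}\mathrm{Lie}(\G_v)$ is exactly what is needed to make $\pi$ behave like a morphism of admissible blow-ups with the right congruence levels. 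I would then observe that the sheaves $\Da^{\dag}_{\mathfrak{Y}_{v'},k'}(\lambda)$ and $\Da^{\dag}_{\mathfrak{Y}_v,k}(\lambda)$ are, via the characterization (\ref{Blow_up_action_line_bundle}), obtained from $\Da^{\dag}_{\mathfrak{Y}_{v'},k'}$ and $\Da^{\dag}_{\mathfrak{Y}_v,k}$ by twisting with $\underline{\La_{v'}(\lambda)}$ and $\underline{\La_v(\lambda)}$; thus it suffices to produce the untwisted morphism $\pi_*\Da^{\dag}_{\mathfrak{Y}_{v'},k'}\to\Da^{\dag}_{\mathfrak{Y}_v,k}$ (which is the content of \cite[Theorem 2.3.8]{HSS}, the untwisted analogue of Theorem \ref{Invariance}, combined with the composite of the level-lowering map (\ref{from_k'_to_k}) and the transition between the two reductive models $\G_{v'}$ and $\G_v$ on the common generic fiber $X_L$), and then tensor it with the identity on the line bundles, using that $R^{v}_g$ and its companions are isomorphisms of line bundles so the twist is harmless. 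This gives the canonical map $\Psi$ and shows it is a ring homomorphism, exactly as in \cite[Proposition 5.3.2]{HPSS}.

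Next I would check the $G$-equivariance identity $T_g\circ\Psi=(\rho_g^v)_*\Psi\circ(\pi g)_*T_g$. This is a diagram chase: $T_g^v$ and $T_g^{v'}$ are by construction conjugation by $R^{v}_g$ and $R^{v'}_g$ respectively, and the compatibility of $\pi$ with the group action (the relation $\rho^v_g\circ\pi=\pi g\circ\rho^{v'}_g$, stated just before the proposition, which gives $\pi_* (\rho_g^{v'})_* = (\rho_g^v)_* (\pi g)_*$) together with the cocycle relation (\ref{Translation_bruhat_tits}) for the $R$'s forces the square to commute. Concretely, for a local section $P$ one writes $\Psi(T_g^{v'}(P))=\Psi(R^{v'}_g P (R^{v'}_g)^{-1})$ and uses that $\Psi$ is compatible with the pullback of the line-bundle isomorphisms — i.e. $\Psi$ intertwines the $R$'s — which itself follows from the fact that $\Psi$ was defined by pulling back along $\pi$ and the line bundle on $\mathfrak{Y}_{v'}$ is $\pi^*$ of the one on $\mathfrak{Y}_v$ up to the canonical identification. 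I would verify this first over the basis of affine opens where $\Lb_v(\lambda)$ trivializes (Proposition \ref{algebraic_local_desc}, so that $\Da^{\dag}_{\mathfrak{Y}_v,k}(\lambda)\simeq\Da^{\dag}_{\mathfrak{Y}_v,k}$ locally and the whole identity reduces to the untwisted case plus bookkeeping of the trivializing sections), then glue.

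The main obstacle I expect is \emph{not} the equivariance chase — that is formal once the pieces are in place — but rather the careful comparison of the two different reductive $\mathfrak{o}$-models $\G_{v'}$ and $\G_v$ attached to the two vertices: a priori $\Da^{\dag}_{\mathfrak{Y}_{v'},k'}(\lambda)$ lives over $\mathfrak{X}_{v'}$ and $\Da^{\dag}_{\mathfrak{Y}_v,k}(\lambda)$ over $\mathfrak{X}_v$, and the morphism $\pi$ goes between blow-ups of genuinely different formal $\mathfrak{o}$-schemes whose generic fibers are both $X^{\mathrm{rig}}$. One must use that $\mathfrak{Y}_{v'}$, regarded in $\Fb$, dominates $\mathfrak{Y}_v$ precisely through the Zariski–Riemann space $\mathfrak{X}_\infty$, so that both sheaves pull back to comparable sheaves on a common refinement, and that the lattice inclusion $\varpi^{k'}\mathrm{Lie}(\G_{v'})\subset\varpi^{k}\mathrm{Lie}(\G_v)$ yields an actual inclusion of the associated distribution algebras $\Db^{\mathrm{an}}(\G_{v'}(k')^{\circ})_\lambda\hookrightarrow\Db^{\mathrm{an}}(\G_v(k)^{\circ})_\lambda$ on global sections, which is what makes $\Psi$ well-defined and canonical. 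Handling this cleanly — essentially transporting \cite[Theorem 2.3.8]{HSS} from a morphism of blow-ups of a \emph{fixed} model to a morphism between blow-ups of two vertex models — is the step that requires the most care; everything else is either cited (Theorem \ref{Invariance}, Proposition \ref{algebraic_local_desc}) or a routine naturality verification.
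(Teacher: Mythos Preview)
Your approach differs from the paper's in a meaningful way. You propose to use the characterization (\ref{Blow_up_action_line_bundle}) to peel off the line-bundle twist, reduce to the untwisted morphism $\pi_*\Da^{\dag}_{\mathfrak{Y}_{v'},k'}\to\Da^{\dag}_{\mathfrak{Y}_v,k}$ via \cite[Theorem 2.3.8]{HSS}, and then tensor back. The paper instead constructs $\Psi$ directly at each finite level $m$ through the auxiliary sheaves $\Ab^{(m,k)}_Y=\Ob_Y\otimes_{\mathfrak{o}}D^{(m)}(\G(k))$: it uses the surjections $\Phi^{(m,k')}_{Y_{v'}}:\Ab^{(m,k')}_{Y_{v'}}\twoheadrightarrow\Db^{(m,k')}_{Y_{v'}}(\lambda)$ and $\Ob_{Y_{v'}}\otimes D^{(m)}(\G_v(k))\twoheadrightarrow\widetilde{\text{pr}}^*\Db^{(m,k)}_{X_v}(\lambda)$, together with the ring inclusion $D^{(m)}(\G_{v'}(k'))\hookrightarrow D^{(m)}(\G_v(k))$ coming from the lattice condition, to get a composite $\Ab^{(m,k')}_{Y_{v'}}\to\widetilde{\text{pr}}^*\Db^{(m,k)}_{X_v}(\lambda)$; it then checks, using $\varpi$-torsion freeness and passage to the generic fiber, that this factors through $\Db^{(m,k')}_{Y_{v'}}(\lambda)$. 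After completing and passing to the limit over $m$, the paper's decisive move for the ``two-vertex obstacle'' you flag is concrete: it views $\mathfrak{Y}_{v'}$ as a blow-up of $\mathfrak{X}_v$ via $\widetilde{\text{pr}}=\text{pr}\circ\pi$, so that Theorem \ref{Invariance} applies to $\pi$ as a morphism over $\mathfrak{X}_v$ and gives $\pi_*(\widetilde{\text{pr}}^*\Da^\dag_{\mathfrak{X}_v,k}(\lambda))=\Da^\dag_{\mathfrak{Y}_v,k}(\lambda)$. This resolves the comparison of vertex models without invoking $\mathfrak{X}_\infty$ or a common refinement. Your route should also succeed (after $\otimes L$ the bundles $\underline{\La_{v'}(\lambda)}_\Q$ and $\pi^*\underline{\La_v(\lambda)}_\Q$ coincide on $\mathfrak{Y}_{v'}$), but you leave the identification ``the line bundle on $\mathfrak{Y}_{v'}$ is $\pi^*$ of the one on $\mathfrak{Y}_v$'' as an assertion; the paper bypasses this entirely by never separating the twist from the differential part. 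For the $G$-equivariance, the paper similarly reduces via the maps $\Phi^{(m,k)}_{Y_v}$ to the functoriality of the rings $D^{(m)}(\G_v(k))$, rather than performing a local trivialization chase as you propose.
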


\begin{proof}
Let us denote by $\text{pr}': \mathfrak{Y}_{v'}\rightarrow \mathfrak{X}_{v'}$ and  $\text{pr}: \mathfrak{Y}_{v}\rightarrow \mathfrak{X}_{v}$ the blow-ups morphisms, and let us put $\widetilde{\text{pr}}:= \text{pr}\;\circ\; \pi$. We have the following commutative diagram 
\begin{eqnarray*}
\begin{tikzcd}
\mathfrak{Y}_{v'} \arrow[to=Z, "\text{pr}' "] \arrow[to=2-2, "\widetilde{\text{pr}}"]
& \mathfrak{Y}_{v} \arrow[d, "\text{pr}"] \\
|[alias=Z]| \mathfrak{X}_{v'}
& \mathfrak{X}_v
\arrow[from=ul, to=1-2, "\pi"].
\end{tikzcd}
\end{eqnarray*}  
Let us fix $m\in\N$. As in \cite[Proposition 5.3.6]{HPSS} we show first the existence of a canonical morphism of sheaves of $\mathfrak{o}$-algebras
\begin{eqnarray}\label{canonical_tilde}
\Db_{Y_{v'}}^{(m,k)}(\lambda) \rightarrow \widetilde{\text{pr}}^* \Db^{(m,k)}_{X_v}(\lambda).
\end{eqnarray}
Here $Y_{v'}$, $Y_v$, $X_{v'}$ and $X_v$ denote the $\mathfrak{o}$-scheme of finite type whose completions are $\mathfrak{Y}_{v'}$, $\mathfrak{Y}_v$, $\mathfrak{X}_{v'}$ and $\mathfrak{X}_v$, respectively. The morphisms between these schemes will be denoted by the same letters, for instance $\text{pr}: Y_v\rightarrow X_{v}$. We recall for the reader that the sheaf $\Db_{Y_{v'}}^{(m,k')}(\lambda)$ is filtered by locally free sheaves of finite rank 
\begin{eqnarray*}
\Db_{Y_{v'},d}^{(m,k')}(\lambda)  = \text{pr}'^{*}\Lb_{v'}(\lambda)\otimes_{\Ob_{Y_{v'}}} \text{pr}'^{*}\Db_{X_{v'},d}^{(m,k')} \otimes_{\Ob_{Y_{v'}}} \text{pr}'^{*} \Lb_{v'}(\lambda)^{\vee} 
 = \text{pr}'^{*}\left(\Db_{X_{v'},d}^{(m,k')}(\lambda)\right),
\end{eqnarray*}
and therefore by the projection formula \cite[Part II, Section 5, exercise 5.1 (d) ]{Hartshorne1} and  given that $\text{pr}'_*\Ob_{Y_{v'}}=\Ob_{X_{v'}}$ (cf. \cite[Lemma 3.2.3 (iii)]{HPSS}) we have for every $d\in\N$
\begin{eqnarray*}
\text{pr}'_*\left(\Db_{Y_{v'},d}^{(m,k')}(\lambda)\right) = \text{pr}'_*\left(\Ob_{Y_{v'}}\otimes_{\Ob_{Y_{v'}}} \text{pr}'^{*}\Db_{X_{v'},d}^{(m,k')}(\lambda)\right)
= \text{pr}'_* (\Ob_{Y_{v'}})\otimes_{\Ob_{X_{v'}}}\Db_{X_{v'},d}^{(m,k')}(\lambda) 
= \Db_{X_{v'},d}^{(m,k')}(\lambda),
\end{eqnarray*}
which implies that 
\begin{eqnarray*}
\text{pr}'_*\left(\Db_{Y_{v'}}^{(m,k')}(\lambda)\right) = \Db_{X_{v'}}^{(m,k')}(\lambda)
\end{eqnarray*}
because the direct image commutes with inductive limits on a noetherian space. By proposition  \ref{canonical_lambda} and the preceding relation we have a canonical map of filtered $\mathfrak{o}$-algebras 
\begin{eqnarray*}
D^{(m)}(\G_{v'}(k'))\rightarrow H^0\left(X_{v'},\Db_{X_{v'}}^{(m,k')}(\lambda)\right) = H^0\left(X_{v'},\text{pr}'_*\left(\Db_{Y_{v'}}^{(m,k')}(\lambda)\right) \right) = H^0\left(Y_{v'},\Db_{Y_{v'}}^{(m,k')}(\lambda)\right),
\end{eqnarray*} 
in particular we get a morphism of sheaves of filtered $\mathfrak{o}$-algebras (this is exactly as we have done in (\ref{morph_with_cong_level_alg}))
\begin{eqnarray}\label{canonical_lambda_blow-up}
\Phi^{(m,k')}_{Y_{v'}} : \Ab^{(m,k')}_{Y_{v'}}:= \Ob_{Y_{v'}}\otimes_{\mathfrak{o}} D^{(m)}(\G_{v'}(k')) \rightarrow \Db_{Y_{v'}}^{(m,k')}(\lambda).
\end{eqnarray}
Applying $\text{Sym}^{(m)}(\bullet)\;\circ\; \varpi^{k'}\text{pr}'^*(\bullet)$ to the surjection (\ref{sur_map-H})  we obtain a surjection 
\begin{eqnarray*}
\Ob_{Y_{v'}}\otimes_{\mathfrak{o}} \text{Sym}^{(m)}\left(\text{Lie}(\G_{v'}(k'))\right)\rightarrow \text{Sym}^{(m)}\left(\varpi^{k'}\text{pr}'^*\;\Tb_{X_{v'}}\right)
\end{eqnarray*}
which equals the associated graded morphism of (\ref{canonical_lambda_blow-up}) by proposition \ref{fini_blow_up}. Hence $\Phi^{(m,k')}_{Y_{v'}}$ is surjective. On the other hand, if we apply $\widetilde{\text{pr}}^*$ to the surjection
\begin{eqnarray*}
\Phi^{(m,k)}_{X_v} : \Ab^{(m,k)}_{X_v} = \Ob_{X_v}\otimes_{\mathfrak{o}} D^{(m)}(\G_v(k)) \rightarrow \Db^{(m,k)}_{X_v}(\lambda)
\end{eqnarray*}
we obtain the surjection $\Ob_{Y_{v'}}\otimes_{\mathfrak{o}} D^{(m)}(\G_v(k))\rightarrow \widetilde{\text{pr}}^* \Db^{(m,k)}_{X_v}(\lambda)$. Let us recall that $(\mathfrak{Y}_{v'},k')\succeq (\mathfrak{Y}_v,k)$ implies, in particular, that $\text{Lie}(\G_{v'}(k'))\subseteq \text{Lie}(\G_v (k))$ and therefore $\varpi^{k'}\text{Lie}(\G_{v'})\subset \varpi^k \text{Lie}(\G_v)$. By (\ref{Description_D(Gk)}), the preceding inclusion gives rise to an injective ring homomorphism $D^{(m)}(\G_{v'}(k'))\hookrightarrow D^{(m)}(\G_v(k))$. Let us see that the composition 
\begin{eqnarray*}
\Ob_{Y_{v'}}\otimes_{\mathfrak{o}} D^{(m)}(\G_{v'}(k'))\hookrightarrow \Ob_{Y_{v'}}\otimes_{\mathfrak{o}}D^{(m)}(\G_v(k)) \twoheadrightarrow \widetilde{\text{pr}}^* \Db^{(m,k)}_{X_v}(\lambda)
\end{eqnarray*}
factors through $\Db^{(m,k')}_{X_{v'}}(\lambda)$.
\begin{eqnarray*}
\begin{tikzcd}
\Ob_{Y_{v'}}\otimes_{\mathfrak{o}} D^{(m)}(\G_{v'}(k')) \arrow[r] \arrow[d]  
& \widetilde{\text{pr}}^* \Db^{(m,k)}_{X_v}(\lambda)\\
\Db^{(m,k')}_{Y_{v'}}(\lambda). \arrow[ru,dashrightarrow]
\end{tikzcd}
\end{eqnarray*}
Since by lemma \ref{algebraic_local_desc} all those sheaves are $\varpi$-torsion free, this can be checked after tensoring with $L$ in which case we have that $\Db^{(m,k')}_{Y_{v'}}\otimes_{\mathfrak{o}}L \simeq \widetilde{\text{pr}}^* \Db^{(m,k)}_{X_{v}}\otimes_{\mathfrak{o}}L$ is the (push-forward of the) sheaf of algebraic differential operators on the generic fiber of $Y_{v'}$ (cf. discussion given at the beginning of subsection \ref{diff_on_adm_bup}). We thus get the canonical morphism of sheaves (\ref{canonical_tilde}). Passing to completions we get a canonical morphism $\widehat{\Da}^{(m,k')}_{\mathfrak{Y}_{v'}}(\lambda)\rightarrow \widetilde{\text{pr}}^* \widehat{\Da}^{(m,k)}_{\mathfrak{X}_{v}}(\lambda)$. Taking inductive limit over all $m$ and inverting $\varpi$ gives a canonical morphism $\Da^{\dag}_{\mathfrak{Y}_{v'},k'}(\lambda)\rightarrow \widetilde{\text{pr}}^* \Da^{\dag}_{\mathfrak{X}_v,k}(\lambda)$. Now, let us consider the formal scheme $\mathfrak{Y}_{v'}$ as a blow-up of $\mathfrak{X}_v$ via $\widetilde{\text{pr}}$. Then $\pi$ becomes a morphism of formal schemes over $\mathfrak{X}_v$ and we consider $\widetilde{\text{pr}}^* \Da^{\dag}_{\mathfrak{X}_v,k}(\lambda)$ as the sheaf of arithmetic differential operators with congruence level $k$ defined on $\mathfrak{Y}_{v'}$ via $\widetilde{\text{pr}}^*$. Using the invariance theorem (theorem \ref{Invariance}) we get $\pi_*\left(\widetilde{\text{pr}}^* \Da^{\dag}_{\mathfrak{X}_v,k}(\lambda)\right)= \Da^{\dag}_{\mathfrak{Y}_v,k}$. Then applying $\pi_*$ to the morphism $\Da^{\dag}_{\mathfrak{Y}_{v'},x'}(\lambda)\rightarrow \widetilde{\text{pr}}^* \Da^{\dag}_{\mathfrak{X}_v,k}(\lambda)$ gives the morphism 
\begin{eqnarray*}
\Psi : \pi_* \Da^{\dag}_{\mathfrak{Y}_{v'},k'}(\lambda)\rightarrow \Da^{\dag}_{\mathfrak{Y}_v,k}
\end{eqnarray*}
of the statement. As in \cite[Proposition 5.3.8]{HPSS}, making use of the maps $\Phi^{(m,k)}_{Y_v}$, as above, the assertion about the $G$-equivariance is reduced to the functorial properties of the rings $D^{(m)}(\G_v(k))$.
\end{proof}

\begin{defi}\label{coadmissible_G_arithmetic}
A coadmissible $G$-equivariant arithmetic $\Da(\lambda)$-module on $\Fb$ consists of a family $\Ma:= (\Ma_{\mathfrak{Y}_v,k})_{(\mathfrak{Y}_v,k)\in\Fb}$ of coherent $\Da^{\dag}_{\mathfrak{Y}_v,k}(\lambda)$-modules with the following properties:
\begin{itemize}
\item[(a)] For any special vertex $v\in \Bb$ and $g\in G$ with isomorphism $\rho_g^v : \mathfrak{Y}_v\rightarrow\mathfrak{Y}_{vg}$, there exists an isomorphism 
\begin{eqnarray*}
\phi^v_g : \Ma_{\mathfrak{Y}_{vg},k}\rightarrow \left(\rho^v_g\right)_* \Ma_{\mathfrak{Y},k}
\end{eqnarray*}
of sheaves of $L$-vector spaces, satisfying the following conditions:
\begin{itemize}
\item[(i)] For all $h,g\in G$ we have \footnote{Here we use the fact the action of $G$ on $\Bb$ is on the right and therefore $(\rho^{vh}_g)_*\;\circ\; (\rho^v_h)_*= (\rho_{hg}^v)_*$.}
\begin{eqnarray*}
(\rho^{vh}_g)_* \phi^v_h\;\circ\; \phi^{vh}_g = \phi^v_{hg}.
\end{eqnarray*}
\item[(ii)] For all open subsets $\Ub\subseteq \mathfrak{Y}_{vg}$, all $P\in\Da^{\dag}_{\mathfrak{Y}_{vg},k}(\lambda)(\Ub)$, and all $m\in \Ma_{\mathfrak{Y}_{vg},k}(\Ub)$ one has $\phi^{v}_{g,\Ub}(P.m)= T^v_{g,\Ub}(P).\phi^v_{g,\Ub}(m)$.
\item[(iii)] For all $g\in G_{k+1,v}$ the map $\phi^v_g:\Ma_{\mathfrak{Y},k}\rightarrow (\rho^v_g)_*\Ma_{\mathfrak{Y},k}=\Ma_{\mathfrak{Y},k}$ is equal to the multiplication by $\delta_g \in H^{0}(\mathfrak{Y}_v, \Da^{\dag}_{\mathfrak{Y}_v,k}(\lambda))$.
\end{itemize}
\item[(b)] For any two pairs $(\mathfrak{Y}_{v'},k')\succeq (\mathfrak{Y}_v,k)$ in $\Fb$ with morphism $\pi:\mathfrak{Y}_{v'}\rightarrow\mathfrak{Y}_v$ there exists a transition morphism $\psi_{\mathfrak{Y}_{v'},\mathfrak{Y}_{v}}:\pi_*\Ma_{\mathfrak{Y}_{v'}}\rightarrow\Ma_{\mathfrak{Y}_v}$, linear relative to the canonical morphism $\Psi:\pi_*\Da^{\dag}_{\mathfrak{Y}_{v'},k'}(\lambda)\rightarrow \Da^{\dag}_{\mathfrak{Y}_v,k}(\lambda)$ (in the preceding proposition) such that
\begin{eqnarray}\label{G_compatibility}
\phi^v_g\;\circ\; \psi_{\mathfrak{Y}_{v'g},\mathfrak{Y}_{vg}} = (\rho_g^v)_*\psi_{\mathfrak{Y}_{v'},\mathfrak{Y}_v}\;\circ\; (\pi .g)_* \phi^{v'}_g
\end{eqnarray}
for any $g\in G$ (where we have use the relation $(\rho_g^v)_*\;\circ\; \pi_* = (\pi . g)_*\;\circ\; (\rho^{v'}_g)_*$). If $v'=v$, and $(\mathfrak{Y}_v',k')\succeq (\mathfrak{Y}_v,k)$ in $\underline{\Fb}_{v}$, and if $\mathfrak{Y}_v'$, $\mathfrak{Y}_v$ are $G_{v,0}$-equivariant, then we require additionally that the morphism induced by $\psi_{\mathfrak{Y}_v',\mathfrak{Y}_v}$ (cf. (\ref{morph_for_equi}))
\begin{eqnarray}
\overline{\psi}_{\mathfrak{Y}'_v,\mathfrak{Y}_v}: \Da^{\dag}_{\mathfrak{Y}_v,k}(\lambda)\otimes_{\pi_* \Da^{\dag}_{\mathfrak{Y}'_v,k'}(\lambda), G_{v,k+1}} \pi_* \Ma_{\mathfrak{Y}'_v,k'} \rightarrow \Ma_{\mathfrak{Y}_v,k}
\end{eqnarray}
is an isomorphism of $\Da^{\dag}_{\mathfrak{Y}_v,k}(\lambda)$-modules. As in theorem \ref{second_equivalence}, the morphisms $\psi_{\mathfrak{Y}_{v'},\mathfrak{Y}_{v}}:\pi_*\Ma_{\mathfrak{Y}_{v'},k'}\rightarrow\Ma_{\mathfrak{Y}_v,k}$ are required to satisfy the transitive condition 
\begin{eqnarray*}
\psi_{\mathfrak{Y}_{v'},\mathfrak{Y}_v}\;\circ\; \pi_* (\psi_{\mathfrak{Y}_{v''},\mathfrak{Y}_{v'}}) = \psi_{\mathfrak{Y}_{v''},\mathfrak{Y}_v},
\end{eqnarray*}
\justify
whenever $(\mathfrak{Y}_{v''},k'')\succeq (\mathfrak{Y}_{v'},k')\succeq (\mathfrak{Y}_{v},k)$ in $\underline{\Fb}$. Moreover, $\psi_{\mathfrak{Y}_v,\mathfrak{Y}_v}=id_{\Ma_{\mathfrak{Y}_v,k}}$.
\end{itemize}
A morphism $\Ma \rightarrow \Na$ between two coadmissible $G$-equivariant arithmetic $\Da(\lambda)$-modules consists in a family of morphisms $\Ma_{\mathfrak{Y},k}\rightarrow
\Na_{\mathfrak{Y},k}$ of $\Da^{\dag}_{\mathfrak{Y},k}(\lambda)$-modules, which respect the extra conditions imposed in $(a)$ and $(b)$. We denote the resulting category by $\Ca^{\Fb}_{G, \lambda}$.
\end{defi}
\justify
We recall for the reader that $D(G_0,L)$ is a Fréchet-Stein algebra \cite[Theorem 24.1]{ST}. Moreover, a $D(G,L)$-module is called coadmissible if it is coadmissible as a $D(H,L)$-module for every compact open subgroup $H\subseteq G$ (cf. \cite[Definition subsection 6]{ST0}). Given that for any two compact open subgroups $H\subseteq H'\subseteq G$ the algebra $D(H',L)$ is finitely generated free and hence coadmissible as a $D(H,L)$-module, it follows from \cite[Lemma 3.8]{ST0} that the preceding condition needs to be tested only for a single compact open subgroup $H\subseteq G$. This motivates the following definition where we will consider the weak Freéchet-Stein structure of $D(G_0,L)$ defined in (\ref{Frechet_structure}). 

\begin{defi}
We say that $M$ is a coadmissble $D(G,L)$-module if $M$ is coadmissisble as a $D(G_0,L)$-module.
\end{defi}
\justify
Let us construct now the bridge to the category of coadmisible $D(G,L)_{\lambda}$-modules. Let $M$ be such a coadmissible $D(G,L)_{\lambda}$-module and let  $V := M'_b$. We fix $v\in \Bb$ a special vertex. Let $V_{\G_v(k)^{\circ}-\text{an}}$ \footnote{Here we use the fact that $(\G_v)_L=\G_L$.} be the subspace of $\G_{v}(k)^{\circ}$-analytic vectors and let $M_{v,k}$ be its continuous dual. For any $(\mathfrak{Y}_v,k)\in\underline{\Fb}$ we have a coherent $\Da^{\dag}_{\mathfrak{Y}_v,k}(\lambda)$-module
\begin{eqnarray*}
\Lb oc^{\dag}_{\mathfrak{Y}_v,k}(\lambda)(M_{v,k}) = \Da^{\dag}_{\mathfrak{Y}_v,k}(\lambda)\otimes_{\Db^{\text{an}}(\G_{v}(k)^{\circ})_{\lambda}}M_{v,k}
\end{eqnarray*} 
and we can consider the family
\begin{eqnarray*}
\Lb oc^{G}_{\lambda}(M) := \left(\Lb oc^{\dag}_{\mathfrak{Y}_v,k}(\lambda)(M_{v,k})\right)_{(\mathfrak{Y}_v,k)\in\underline{\Fb}}.
\end{eqnarray*}
\justify
On the other hand, given an object $\Ma \in \Ca^{\Fb}_{G,\lambda}$, we may consider the projective limit 
\begin{eqnarray*}
\Gamma (\Ma):= \varprojlim_{(\mathfrak{Y},k)\in\underline{\Fb}} H^0 (\mathfrak{Y}, \Ma_{\mathfrak{Y},k})
\end{eqnarray*}
with respect to the transition maps $\psi_{\mathfrak{Y}',\mathfrak{Y}}$. Here the projective limit is taken in the sens of abelian groups and over the cofinal family of pairs $(\mathfrak{Y}_v ,k)\in\underline{\Fb}$ with $G_{v,0}$-equivariant $\mathfrak{Y}_v$ , cf. remark \ref{G0_eq_system}.

\begin{theo}\label{third_equivalence} Let us suppose that $\lambda\in\text{Hom}(\T,\G_m)$ is an algebraic character such that $\lambda+\rho\in\mathfrak{t}^*_L$ is a dominant and regular character of $\mathfrak{t}_L$. (and therefore, a dominant and regular character on every special vertex of $\Bb$). The functors $\Lb oc^{G}_{\lambda} (\bullet)$ and $\Gamma (\bullet)$ induce quasi-inverse equivalences between the category of coadmissible $D(G,L)_{\lambda}$-modules and $\Ca^{\Fb}_{G,\lambda}$.
\end{theo}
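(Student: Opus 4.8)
The plan is to mimic the proof of Theorem \ref{second_equivalence} (equivalently \cite[Theorem 5.2.23]{HPSS}) but now over the larger index set $\underline{\Fb}$, using the $G$-equivariant transition morphisms $\Psi$ of Proposition \ref{Transition_morph} in place of their $G_0$-equivariant counterparts. As in that proof, I would organize the argument into four steps: (1) for a coadmissible $D(G,L)_\lambda$-module $M$ the family $\Lb oc^G_\lambda(M)$ genuinely lies in $\Ca^{\Fb}_{G,\lambda}$, and this assignment is functorial; (2) for $\Ma\in\Ca^{\Fb}_{G,\lambda}$ the projective limit $\Gamma(\Ma)$ is a coadmissible $D(G,L)_\lambda$-module; (3) $\Gamma\circ\Lb oc^G_\lambda\simeq\mathrm{id}$; and (4) $\Lb oc^G_\lambda\circ\Gamma\simeq\mathrm{id}$. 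The key inputs are the Beilinson--Bernstein theorem on blow-ups (Theorem \ref{BB_blow_up}), the invariance theorem (Theorem \ref{Invariance}), the equivalences for a single special vertex already packaged in Theorem \ref{second_equivalence}, and the structural description of coadmissible modules via the weak Fréchet--Stein structure $D(G_0,L)\simeq\varprojlim_k D(\G(k)^\circ,G_0)$ together with its central-reduction version.

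\textbf{Key steps in detail.} For Step 1, given $M$ with $V:=M'_b$, I would set $M_{v,k}:=(V_{\G_v(k)^\circ-\mathrm{an}})'_b$; by \cite[Lemma 6.1.13]{Emerton2} each $M_{v,k}$ is a finitely presented $\Db^{\mathrm{an}}(\G_v(k)^\circ)_\lambda$-module, so $\Lb oc^{\dag}_{\mathfrak{Y}_v,k}(\lambda)(M_{v,k})$ is coherent. The isomorphisms $\phi^v_g$ are defined exactly as in Step 1 of the proof of Theorem \ref{second_equivalence}: on simple tensors $\phi^v_{g,\Ub}(P\otimes m):=T^v_{g,\Ub}(P)\otimes\tilde\varphi_g(m)$, where $\tilde\varphi_g\colon M_{vg,k}\to M_{v,k}$ is dual to $w\mapsto g^{-1}w$; one checks $(\rho^v_g)_*$-exactness to see this extends over a finite presentation, and conditions (i)--(iii) of Definition \ref{coadmissible_G_arithmetic}(a) follow from $T^v_g$-linearity and the fact that Dirac distributions $\delta_g$ for $g\in G_{v,k+1}$ sit inside $\Db^{\mathrm{an}}(\G_v(k)^\circ)$ (cf. Lemma \ref{trivial_on_k+1}). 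For the transition morphisms $\psi_{\mathfrak{Y}_{v'},\mathfrak{Y}_v}$ required by (b), I would set $\psi(P\otimes m):=\Psi(P)\otimes\tilde\psi(m)$ with $\tilde\psi\colon M_{v',k'}\to M_{v,k}$ dual to the inclusion $V_{\G_v(k)^\circ-\mathrm{an}}\hookrightarrow V_{\G_{v'}(k')^\circ-\mathrm{an}}$ (legitimate because $(\mathfrak{Y}_{v'},k')\succeq(\mathfrak{Y}_v,k)$ forces $\mathrm{Lie}(\G_{v'}(k'))\subseteq\mathrm{Lie}(\G_v(k))$, hence $\G_{v'}(k')^\circ\subseteq\G_v(k)^\circ$); the cocycle and transitivity identities follow from functoriality of localization, and the required isomorphism property of $\overline\psi_{\mathfrak{Y}'_v,\mathfrak{Y}_v}$ in the same-vertex case is exactly Lemma \ref{rel_coadmissible} combined with Lemma \ref{linear_extension}. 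Step 2 mirrors Step 2 of Theorem \ref{second_equivalence}: choosing for each $k$ a pair $(\mathfrak{Y}_v,k)$, the groups $N_k:=H^0(\mathfrak{Y}_v,\Ma_{\mathfrak{Y}_v,k})$ are finitely presented $D(\G_v(k)^\circ,G_{v,0})_\lambda$-modules by Theorem \ref{BB_blow_up} and (\ref{Iso_delta_g}), and the isomorphism (\ref{morph_for_equi}) together with Lemma \ref{rel_coadmissible} shows $D(\G(k)^\circ,G_0)\otimes_{D(\G(k')^\circ,G_0)}N_{k'}\xrightarrow{\simeq}N_k$, i.e. $(N_k)_k$ is a coadmissible sequence, so $\Gamma(\Ma)=\varprojlim_k N_k$ is coadmissible; independence of the chosen vertex follows because the flag schemes at different special vertices have canonically isomorphic generic fibers. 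Steps 3 and 4 are then the standard ``both composites are identified on the defining coadmissible sequences'' argument: the global sections of $\Lb oc^G_\lambda(M)$ recover $(V_{\G_v(k)^\circ-\mathrm{an}})'_b$ by Theorem \ref{BB_blow_up}, and conversely $\Lb oc^{\dag}_{\mathfrak{Y}_v,k}(\lambda)(H^0(\mathfrak{Y}_v,\Ma_{\mathfrak{Y}_v,k}))\simeq\Ma_{\mathfrak{Y}_v,k}$, with the $\phi^v_g$ and $\psi$ matching up by $T^v_g$- and $\Psi$-linearity.

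\textbf{Main obstacle.} I expect the delicate point to be the interaction between the two kinds of indices in $\underline{\Fb}$: moving between different special vertices $v$, which requires the $G$-equivariant transition morphism $\Psi$ of Proposition \ref{Transition_morph} and the identification $\pi_*(\widetilde{\mathrm{pr}}^*\Da^{\dag}_{\mathfrak{X}_v,k}(\lambda))=\Da^{\dag}_{\mathfrak{Y}_v,k}(\lambda)$ coming from the invariance theorem, and refining the congruence level $k$. One must check that $\Gamma(\Ma)$ does not depend on which cofinal chain of pairs $(\mathfrak{Y}_v,k)$ one uses, that the various $\tilde\psi$'s and $\tilde\varphi_g$'s are mutually compatible (the two commutative squares as in (\ref{diag_first_eq})), and crucially that the reduction ``coadmissibility needs to be tested for a single compact open subgroup'' (via \cite[Lemma 3.8]{ST0}) is compatible with the central reduction by $\mathrm{Ker}(\chi_{\lambda+\rho})$; this is what lets one pass from the $G_{v,0}$-equivariant picture (Theorem \ref{second_equivalence}) at each fixed $v$ to a genuinely $G$-equivariant statement. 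Apart from these bookkeeping issues the proof is, as the paragraph before the theorem already indicates, word-for-word parallel to \cite[Theorem 5.2.23]{HPSS}.
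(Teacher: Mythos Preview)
Your proposal is correct and follows the same four-step architecture as the paper's own proof (which itself mirrors \cite[Theorem 5.3.12]{HPSS}). The one point where your emphasis differs from the paper is Step~2: the paper does not re-derive the coadmissible-sequence structure there (it simply invokes Theorem~\ref{second_equivalence} to get coadmissibility over $D(G_{v,0},L)_\lambda$ for any fixed $v$), but instead devotes Claim~2 to \emph{constructing} the $G$-action on $\Gamma(\Ma)$, namely letting $g\in G$ act via the global-sections map of $\phi^v_g$ to produce an isomorphism $\Gamma(\Ma)=\varprojlim_{\underline{\Fb}_{vg}}H^0(\mathfrak{Y}_{vg},\Ma_{\mathfrak{Y}_{vg},k})\xrightarrow{\;g\;}\varprojlim_{\underline{\Fb}_v}H^0(\mathfrak{Y}_v,\Ma_{\mathfrak{Y}_v,k})=\Gamma(\Ma)$, with well-definedness coming from (\ref{G_compatibility}) and the associativity $h.(g.m)=(hg).m$ from condition (a)(i). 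Likewise in Steps~3 and~4 the paper's work is in matching the $G$-actions on both sides, the underlying $D(G_{v,0},L)_\lambda$-isomorphisms being already known from Theorem~\ref{second_equivalence}. Your ``main obstacle'' paragraph correctly flags the passage from $G_{v,0}$- to $G$-equivariance as the new content, but its resolution is precisely this direct use of the $\phi^v_g$ on global sections, rather than any delicate cofinality or central-reduction compatibility.
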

\justify
The proof follows the same lines of reasoning given in \cite[Theorem 5.3.12]{HPSS}.
\begin{proof}
The proof is an extension of the the proof of theorem \ref{second_equivalence}, taking into account the additional $G$-action. Let $M$ be a coadmissible $D(G,L)_{\lambda}$-module and let $\Ma \in \Ca^{G}_{\Fb,\lambda}$. The proof of the theorem follows the following steps.
\justify
\textit{Claim 1.} \textit{One has $\Lb oc^{G}_{\lambda}(M)\in\Ca^{\Fb}_{G,\lambda}$ and $\Lb oc^{G}_{\lambda}(\bullet)$ is functorial.}
\begin{proof}
Let $g\in G$, $v\in\Bb$ a special vertex and $\rho^v_g:\mathfrak{Y}_v\rightarrow\mathfrak{Y}_{vg}$ the respective isomorphism. For conditions $(a)$ for $\Lb oc^{G}_{\lambda}(M)$ we need the maps
\begin{eqnarray*}
\phi_g : \Lb oc^{G}_{\lambda}(M)_{\mathfrak{Y}_v,k} := \Lb oc^{\dag}_{\mathfrak{Y}_v,k}(\lambda)(M_{v,k}) \rightarrow (\rho^v_g)_*\Lb  oc^{G}_{\lambda}(M)_{\mathfrak{Y}_v,k}
\end{eqnarray*}
satisfying the properties $(i)$, $(ii)$ and $(iii)$. Let $\tilde{\phi}^v_g : M_{vg,k}\rightarrow M_{v,k}$ denote the dual map to \footnote{Here we use $\G_{vg}(k)^{\circ}=g^{-1}\G_{v}(k)^{\circ}g$ in $\G^{\text{rig}}$.} $V_{\G_v(k)^{\circ}-\text{an}}  \rightarrow  V_{\G_{vg}(k)^{\circ}-\text{an}}$; $w \mapsto  g^{-1}w$. Let $\Ub\subseteq \mathfrak{Y}_{vg}$ be an open subset and $P\in\Da^{\dag}_{\mathfrak{Y}_{vg},k}(\lambda)(\Ub)$, $m\in M_{vg,k}$. We define
\begin{eqnarray} \label{G-action}
\phi^v_{g,\;\Ub}(P\otimes m) := T^v_{g,\;\Ub}(P)\otimes \tilde{\phi}^v_g (m).
\end{eqnarray}
Exactly as we have done in theorem \ref{second_equivalence}, the family $(\phi^v_g)$ satisfies the requirements $(i)$, $(ii)$ and $(iii)$. Let us verify now condition $(b)$. Given $(\mathfrak{Y}_{v'},k')\succeq (\mathfrak{Y}_v,k)$ in $\underline{\Fb}$, we have $\G_{v'}(k')^{\circ}\subseteq \G_{v}(k)^{\circ}$ in $\G^{\text{rig}}$ and we denote by $\tilde{\psi}_{\mathfrak{Y}_{v'},\mathfrak{Y}_v}: M_{v',k'}\rightarrow M_{v,k}$ the map dual to the natural inclusion $V_{\G_v(k)^{\circ}-\text{an}}\subseteq V_{\G_{v'}(k')^{\circ}-\text{an}}$. Let $\Ub\subseteq \mathfrak{Y}_{v'}$ be an open subset and $P\in \pi_{*}\Da^{\dag}_{\mathfrak{Y}_{v'},k'}(\lambda)(\Ub)$, $m\in M_{v',k'}$. We then define \footnote{We avoid the subscript $\Ub$ in order to soft the notation.}
\begin{eqnarray*}
\psi_{\mathfrak{Y}_{v'},\mathfrak{Y}_v}(P\otimes m) := \Psi_{\mathfrak{Y}_{v'},\mathfrak{Y}_v}(P)\otimes \tilde{\psi}_{\mathfrak{Y}_{v'},\mathfrak{Y}_{v}}(m)
\end{eqnarray*}
where $\Psi_{\mathfrak{Y}_{v'},\mathfrak{Y}_v}: \pi_{*}\Da^{\dag}_{\mathfrak{Y}_{v'},k'}(\lambda)\rightarrow \Da^{\dag}_{\mathfrak{Y}_v,k}(\lambda)$ is the canonical morphism given by the preceding proposition. This definition extends to a map
\begin{eqnarray*}
\psi_{\mathfrak{Y}_{v'},\mathfrak{Y}_{v}}: \pi_* \Lb oc^{G}_{\lambda}(M)_{\mathfrak{Y}_{v'},k'} \rightarrow \Lb oc^{G}_{\lambda}(M)_{\mathfrak{Y}_v,k}
\end{eqnarray*}
which satisfies all the required conditions. The functoriality of $\Lb oc^{G}_{\lambda}(\bullet)$ can be verified exactly as we have done for the functor $\Lb oc^{G_0}_{\lambda}(\bullet)$.
\end{proof}
\justify
\textit{Claim 2.} $\Gamma (\Ma)$ is a coadmissible $D(G,L)_{\lambda}$-module.
\begin{proof}
We already know that $\Gamma(\Ma)$ is a coadmissible $D(G_{v,0},L)_{\lambda}$-module for any $v$ (theorem \ref{second_equivalence}). So it suffices to exhibit a compatible $G$-action on $\Gamma (\Ma)$. Let $g\in G$. The isomorphisms $\phi^v_g : \Ma_{\mathfrak{Y}_{vg},k}\rightarrow (\rho^v_g)_* \Ma_{\mathfrak{Y},k}$, which are compatibles with transitions maps, induce isomorphisms at the level of global sections (which we denote again by $\phi^v_g$ to soft the notation)
\begin{eqnarray*}
\phi^v_g : H^0(\mathfrak{Y}_{{vg},k}, \Ma_{\mathfrak{Y}_{vg},k})\rightarrow H^0( \mathfrak{Y}_{v} ,\Ma_{\mathfrak{Y},k}).
\end{eqnarray*}
Let us identify
\begin{align*}
\Gamma(\Ma) &= \varprojlim_{(\mathfrak{Y}_{vg},k)\in\underline{\Fb}_{vg}}H^0(\mathfrak{Y}_{{vg},k}, \Ma_{\mathfrak{Y}_{vg},k})\\
& = \left\{ \left( m_{\mathfrak{Y}_{vg},k} \right)_{(\mathfrak{Y}_{vg},k)\in\underline{\Fb}_{vg}} \in \displaystyle\prod_{(\mathfrak{Y}_{vg},k)\in\underline{\Fb}_{vg}}  H^0(\mathfrak{Y}_{{vg},k}, \Ma_{\mathfrak{Y}_{vg},k})\; |\; \psi_{\mathfrak{Y}'_{vg},\mathfrak{Y}_{vg}}(m_{\mathfrak{Y}'_{vg},k}) = m_{\mathfrak{Y}_{vg},k} \right\}
\end{align*}
Where we have abused of the notation and we have denoted by $\psi_{\mathfrak{Y}'_{vg},\mathfrak{Y}_{vg}}$ the morphism obtained by taking global sections on the morphism  $\psi_{\mathfrak{Y}'_{vg},\mathfrak{Y}_{vg}}: (\pi .g)_*\Da^{\dag}_{\mathfrak{Y}'_{vg},k'}(\lambda)\rightarrow \Da^{\dag}_{\mathfrak{Y}_{vg},k}(\lambda)$. For $g\in G$ and $m:=(m_{\mathfrak{Y}_{vg},k} )_{(\mathfrak{Y}_{vg},k)\in\underline{\Fb}_{vg}}\in \Gamma (\Ma)$ we define 
\begin{eqnarray}
g.m := \left(\phi^v_g (m_{\mathfrak{Y}_{vg},k})\right)_{(\mathfrak{Y}_{vg},k)\in \underline{\Fb}_{vg}}\in\displaystyle\prod_{(\mathfrak{Y}_v,k)\in\underline{\Fb}_v}H^0(\mathfrak{Y}_v,\Ma_{\mathfrak{Y}_v,k}),
\hspace{1 cm}
g.m_{\;(\mathfrak{Y}_v,k)\in\underline{\Fb}_v} :=  \phi^v_g (m_{\mathfrak{Y}_{vg},k})
\end{eqnarray}
\justify
We want to see that $g.m \in \Gamma (\Ma)=\varprojlim_{(\mathfrak{Y}_v,k)\in\underline{\Fb}_v}H^0(\mathfrak{Y}_v,\Ma_{\mathfrak{Y}_v,k})$ and that this assignment defines a left $G$-action on $\Gamma(\Ma)$.  Taking global sections on (\ref{G_compatibility}) we get the relation $\phi_g^v\;\circ\;\psi_{\mathfrak{Y}'_{vg},\mathfrak{Y}_{vg}} = \psi_{\mathfrak{Y}'_v,\mathfrak{Y}_v}\;\circ\; \phi^v_g$, which implies that
\begin{eqnarray*}
\psi_{\mathfrak{Y}'_{v},\mathfrak{Y}_{v}} (g.m_{\;\mathfrak{Y}'_v,k'}) =
 \psi_{\mathfrak{Y}'_{v},\mathfrak{Y}_{v}} (\phi^v_g (m_{\mathfrak{Y}'_{vg},k'})) 
  = \phi^v_g (\psi_{\mathfrak{Y}'_{vg},\mathfrak{Y}_{vg}}(m_{\mathfrak{Y}'_{vg},k'})) 
  = \phi^v_g (m_{\mathfrak{Y}_{vg},k}) 
  = g.m_{\;\mathfrak{Y}_v,k}.
\end{eqnarray*}
We obtain an isomorphism
\begin{eqnarray*}
\Gamma (\Ma) = \varprojlim_{\underline{\Fb}_{vg}}H^{0}(\mathfrak{Y}_{vg},\Ma_{\mathfrak{Y}_{vg},k})\xrightarrow{g} \varprojlim_{\underline{\Fb}_{v}} H^{0}(\mathfrak{Y}_{v},\Ma_{\mathfrak{Y}_{v},k}) = \Gamma(\Ma).
\end{eqnarray*}
According to $(i)$ in $(a)$ we have the sequence 
\begin{eqnarray*}
\phi^{v}_{hg} : H^0(\mathfrak{Y}_{vhg},\Ma_{\mathfrak{Y}_{vhg},k}) \xrightarrow{\phi^{vh}_g} H^0(\mathfrak{Y}_{vh},\Ma_{\mathfrak{Y}_{vh},k}) \xrightarrow{\phi^v_h} H^0(\mathfrak{Y}_{v},\Ma_{\mathfrak{Y}_{v},k})
\end{eqnarray*}
which tells us that $h.(g.m)=(hg).m$, for $h,g\in G$ and $m\in\Gamma(\Ma)$. This gives a $G$-action on $\Gamma(\Ma)$ which, by construction, is compatible with its various $D(G_{v,0},L)$-module structures.
\end{proof}
\justify
\textit{Claim 3.} $\Gamma\;\circ\; \Lb oc^{G}_{\lambda}(M) \simeq M$.
\begin{proof}
By theorem \ref{second_equivalence} we know that this holds as a coadmissible $D(G_0,L)_{\lambda}$-module, so we need to identify the $G$-action on both sides. Let $v$ be a special vertex. According to (\ref{G-action}), the action
\begin{eqnarray*}
\Gamma\; \circ \; \Lb oc^{G}_{\lambda}(M) \simeq \varprojlim_{k} M_{vg,k} \rightarrow \varprojlim_v M_{v,k} \simeq \Gamma \;\circ\; \Lb oc^{G}_{\lambda}(M)
\end{eqnarray*}
of an element $g\in G$ on $\Gamma\; \circ \; \Lb oc^{G}_{\lambda}(M)$ is induced by $\tilde{\phi}^v_g: M_{vg,k}\rightarrow M_{v,k}$. By dualizing 
\begin{eqnarray*}
V = \displaystyle\bigcup_{k\in\N} V_{\G_{vg}(k)^{\circ}-\text{an}} =  \displaystyle\bigcup_{k\in\N} V_{\G_{v}(k)^{\circ}-\text{an}}
\end{eqnarray*}
we obtain the identification 
\begin{eqnarray*}
M \simeq \varprojlim_{k} M_{vg,k} \simeq \varprojlim_{k} M_{v,k},
\end{eqnarray*}
and therefore we get back the original action of $g$ on $M$.
\end{proof}
\justify
\textit{Claim 4.} $\Lb oc^{G}_{\lambda}\;\circ\;\Gamma (\Ma) \simeq \Ma$.
\begin{proof}
We know that $\Lb oc^{G}_{\lambda}(\Gamma (\Ma))_{\mathfrak{Y}_v,k} = \Ma_{\mathfrak{Y}_v,k}$ as $\Da^{\dag}_{\mathfrak{Y}_v,k}(\lambda)$-modules for any $(\mathfrak{Y}_v,k)\in\underline{\Fb}$, cf. theorem \ref{second_equivalence}. It remains to verify that these isomorphisms are compatible with the maps $\phi^v_g$ and $\psi_{\mathfrak{Y}_{v'},\mathfrak{Y}_v}$ on both sides. To do that, let us see that the maps $\phi^v_g$ on the left-hand side are induced by the maps of the right-hand side. Given 
\begin{eqnarray*}
\phi^v_g : \Ma_{\mathfrak{Y}_v,k}\rightarrow (\rho^v_g)_* \Ma_{\mathfrak{Y}_v,k},
\end{eqnarray*}
the corresponding map
\begin{eqnarray*}
\phi^v_g : \Lb oc^{G}_{\lambda}(\Gamma(\Ma))_{\mathfrak{Y}_{vg},k} \rightarrow (\rho^v_g)_* (\Lb oc^{G}_{\lambda}(\Gamma (\Ma))_{\mathfrak{Y}_v,k})
\end{eqnarray*}
equals the map 
\begin{eqnarray*}
\Da^{\dag}_{\mathfrak{Y}_{vg},k}(\lambda)\otimes_{\Db^{\text{an}}(\G_{vg}(k)^{\circ})_{\lambda}}H^{0}\left(\mathfrak{Y}_{vg},\Ma_{\mathfrak{Y}_{vg},k}\right) \rightarrow (\rho^v_g)_*\left(\Da^{\dag}_{\mathfrak{Y}_v,k}(\lambda)\otimes_{\Db^{\text{an}}(\G_v(k)^{\circ})_{\lambda}}H^0 \left(\mathfrak{Y}_v,\Ma_{\mathfrak{Y}_{v,k}}\right)\right)
\end{eqnarray*}
given locally by $T^v_{g,\mathfrak{Y}_{gv}}\otimes H^0(\mathfrak{Y}_{vg}, \phi^v_g)$, cf. (\ref{G-action}). Let $\Ub\subseteq \mathfrak{Y}_v$ be an open subset and $P\in\Da^{\dag}_{\mathfrak{Y}_v,k}(\lambda)(\Ub)$, $m\in M_{v,k}= H^0(\mathfrak{Y}_{vg},\Ma_{\mathfrak{Y}_{vg},k})$. The isomorphism $\Lb oc^{G}_{\lambda}(\Gamma(\Ma))_{\mathfrak{Y}_{v},k}\simeq \Ma_{\mathfrak{Y}_v,k}$ are induced (locally) by $P\otimes m \mapsto P . (m|_{\Ub})$. Condition $(ii)$ tells us that these morphisms interchange the maps $\phi^v_g$, as desired. The compatibility with transitions maps $\psi_{\mathfrak{Y}_{v'},\mathfrak{Y}_v}$ for two models $(\mathfrak{Y}_{v'},k')\succeq (\mathfrak{Y},k)$ in $\underline{\Fb}$ is deduced in a entirely similar manner as we have done in theorem \ref{second_equivalence} and the fact that $\psi_{\mathfrak{Y}_{v'},\mathfrak{Y}_v}$ is linear relative to the canonical morphism $\Psi : \pi_* \Da^{\dag}_{\mathfrak{Y}_{v'},k'}(\lambda)\rightarrow \Da^{\dag}_{\mathfrak{Y}_v,k}$.
\end{proof}
\justify
This ends the proof of the theorem.
\end{proof}
\justify
As in the case of the group $G_0$, we now indicate how objects from $\Ca^{\Fb}_{G , \lambda}$ can be realized as $G$-equivariant sheaves on the $G$-space $\mathfrak{X}_{\infty}$. The following discussion is an adaptation of the discussion given in \cite[5.4.3 and proposition 5.4.5]{PSS2} to our case.

\begin{prop}
The $G_0$-equivariant structure of the sheaf $\Da(\lambda)$ extends to a $G$-equivariant structure.
\end{prop}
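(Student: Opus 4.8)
The plan is to build the $G$-equivariant structure on $\Da(\lambda)$ over the Zariski-Riemann space $\mathfrak{X}_\infty$ by exactly mimicking the construction of the $G_0$-equivariant structure carried out in Subsection \ref{G_0-ZR}, but now letting the vertex $v$ vary. Recall from that subsection that for an open subset of the form $V=\text{sp}_{\;\mathfrak{Y}_v}^{-1}(\Ub)$ one has
\begin{eqnarray*}
\Da(\lambda)(V) = \varprojlim_{\mathfrak{Y}_{v'}\succeq \mathfrak{Y}_v} \Da^{\dag}_{\mathfrak{Y}_{v'},k'}(\lambda)\left(\text{sp}_{\;\mathfrak{Y}_{v'}}(V)\right),
\end{eqnarray*}
where now the projective limit runs over the cofinal subsystem of $\underline{\Fb}$ above $(\mathfrak{Y}_v,k)$ (which, by the partial order of Definition \ref{partial_order_underline_F}, includes models coming from other special vertices). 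First I would record that the transition maps $\Psi_{\mathfrak{Y}_{v''},\mathfrak{Y}_{v'}}$ of Proposition \ref{Transition_morph} are associative, so that this projective limit is well defined; then the open subsets $\text{sp}_{\;\mathfrak{Y}_v}^{-1}(\Ub)$ form a basis of the topology of $\mathfrak{X}_\infty$ and $\Da(\lambda)$ is the sheaf associated to this presheaf.

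Next I would verify that $G$ acts on $\mathfrak{X}_\infty$ continuously: this was already set up in Section \ref{ABUFM}'s opening discussion via the isomorphisms $\rho^v_g:\mathfrak{Y}_v\to\mathfrak{Y}_{vg}$ and the compatibilities $\rho^{vg}_h\circ\rho^v_g=\rho^v_{gh}$ and $\rho^v_g\circ\pi=\pi g\circ\rho^{v'}_g$ for $\mathfrak{Y}_{v'}\succeq\mathfrak{Y}_v$. The key point is then the $G$-equivariance statement in Proposition \ref{Transition_morph}: for every $g\in G$ one has $T^v_g\circ\Psi=(\rho^v_g)_*\Psi\circ(\pi g)_*T^{v'}_g$. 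This is precisely the diagram that, in the $G_0$-case, was needed to glue the local actions $T^{\mathfrak{Y}}_g$ into a single action $T_g:\Da(\lambda)\xrightarrow{\simeq}(\rho_g)_*\Da(\lambda)$. So I would take the element $P=(P_{\mathfrak{Y}_{v'},k'})\in\Da(\lambda)(V)$ (written as a compatible system under the $\Psi$'s), apply $T^{v'}_g$ componentwise, and use the equivariance diagram to check that the resulting system $g.P:=(T^{v'}_{g,\;\text{sp}_{\mathfrak{Y}_{v'}}(V)}(P_{\mathfrak{Y}_{v'},k'}))$ is again compatible with the transition maps, hence defines an element of $(\rho_g)_*\Da(\lambda)((\rho_g)^{-1}V)$. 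The cocycle relation $(\rho_g)_*T_h\circ T_g=T_{hg}$ follows by feeding the componentwise relation $T^v_{hg}=(\rho^{vh}_g)_*T^v_h\circ T^{vh}_g$ (the analogue on the building of \eqref{cocyle_action}, obtained from the cocycle \eqref{Translation_bruhat_tits} for the line bundles $\Lb_v(\lambda)$) into the identification of $\Da(\lambda)(V)$ as a compatible system. Finally one must observe that restricting this $G$-action to the subgroup $G_0=\G(\mathfrak{o})$, which fixes each special vertex $v$ associated to a chosen base point and acts on each $\mathfrak{X}_v$ by the translation morphisms of Subsection \ref{group_action_on_blow_up}, recovers exactly the $G_0$-equivariant structure constructed in Subsection \ref{G_0-ZR}; this is immediate from the fact that for $g\in G_{v,0}$ the morphism $\rho^v_g$ is the translation $\rho_g$ and $T^v_g$ reduces to the earlier $T_g$.

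The main obstacle I anticipate is purely bookkeeping: one must be careful that the projective limit defining $\Da(\lambda)(V)$ is taken over the correct cofinal subsystem — namely the pairs $(\mathfrak{Y}_{v'},k')\succeq(\mathfrak{Y}_v,k)$ in $\underline{\Fb}$ — and that this subsystem is stable (up to cofinal equivalence) under the $G$-action, which is guaranteed by the relation $(\mathfrak{Y}_{v'g},k')\succeq(\mathfrak{Y}_{vg},k)$ recorded at the end of the discussion preceding Definition \ref{coadmissible_G_arithmetic}. Once the indexing is set up correctly, every verification — well-definedness of $g.P$, the cocycle condition, and agreement with the $G_0$-structure — is a formal consequence of the componentwise statements and the functoriality in Proposition \ref{Transition_morph}, so no genuinely new input is required beyond what is already in the excerpt.
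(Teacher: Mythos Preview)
Your proposal is correct and follows essentially the same approach as the paper: both arguments define the $G$-action on $\mathfrak{X}_\infty$ via the family $(\rho^v_g)$, apply the isomorphisms $T^{v'}_g$ componentwise to a compatible system, and invoke the $G$-equivariance of the transition maps $\Psi$ from Proposition~\ref{Transition_morph} to pass to the projective limit. The paper's proof is slightly terser in that it does not spell out the cocycle verification or the cofinality bookkeeping you flag, but the underlying mechanism is identical.
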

\begin{proof}
Let $g\in G$ and let $v,v' \in \Bb$ be special vertexes. Let us suppose that $(\mathfrak{Y}_{v'},k')\succeq (\mathfrak{Y}_v,k)$ in $\underline{\Fb}$. The isomorphism $\rho^{v'}_g : \mathfrak{Y}_{v'}\rightarrow \mathfrak{Y}_{v'g}$ induces a ring isomorphism 
\begin{eqnarray*}
T_g^{v'} : \Da^{\dag}_{\mathfrak{Y}_{v'g},k'}(\lambda)\rightarrow \left(\rho_g^{v'}\right)_* \Da^{\dag}_{\mathfrak{Y}_{v'},k'}(\lambda).
\end{eqnarray*}
On the other hand, and exactly as we have done in (\ref{G_0-action_on_ZR}), the commutative diagram
\begin{eqnarray*}
\begin{tikzcd} [column sep=huge]
\mathfrak{X}_{\infty} \arrow[r, "sp_{\;\mathfrak{Y}_{v}}"] \arrow[d, "sp_{\;\mathfrak{Y}_{v'}}"]
& \mathfrak{Y}_{v} \arrow[r, "\rho_g^{v}"]
& \mathfrak{Y}_{vg} \\
\mathfrak{Y}_{v'} \arrow[r, "\rho_{g}^{v'}"] \arrow[ur, "\pi"]
& \mathfrak{Y}_{v'g}. \arrow[ur, "\pi .g "]
\end{tikzcd}
\end{eqnarray*} 
defines a continuous function 
\begin{eqnarray*}
\begin{matrix}
\rho_g : & \mathfrak{X}_{\infty}  & \rightarrow & \mathfrak{X}_{\infty}\\
           & (a_{v}) & \mapsto & (\rho_g^{v} (a_{v})),
\end{matrix}
\end{eqnarray*}
which satisfies $\text{pr}_{\mathfrak{Y}_{v'g}}\;\circ\;\rho_g = \rho_g^{v'}\;\circ\;\text{pr}_{\mathfrak{Y}_{v'}}$. In particular, if $V\subseteq \mathfrak{X}_{\infty}$ is the open subset $V:=\text{pr}_{\mathfrak{Y}_{v}}^{-1}(\Ub)$ with $\Ub\subseteq \mathfrak{Y}_v$ an open subset. Then 
\begin{eqnarray*}
\left(\rho_g^{v'}\right)^{-1}\left( \text{pr}_{\mathfrak{Y}_{v'g}}(V)\right) = \text{pr}_{\mathfrak{Y}_{v'}}\left(\rho_g^{-1}(V)\right)
\end{eqnarray*}
and so the map $T_g^{v'}$ induces the morphism 
\begin{eqnarray}\label{G_projective_limit}
\Da^{\dag}_{\mathfrak{Y}_{v'g},k'}(\lambda)\left(\text{pr}_{\mathfrak{Y}_{v'g}}(V)\right)\rightarrow \Da^{\dag}_{\mathfrak{Y}_{v'},k'}(\lambda)\left(\text{pr}_{\mathfrak{Y}_{v'}}\left( \rho_g^{-1}(V) \right)\right).
\end{eqnarray}
Moreover, if $(\mathfrak{Y}_{v''},k'')\succeq (\mathfrak{Y}_{v'},k')\succeq (\mathfrak{Y}_v ,k)$ in $\underline{\Fb}$, and as before $V:= \text{pr}_{\mathfrak{Y}_{v}}^{-1}(\Ub)\subseteq \mathfrak{X}_{\infty}$ with $\Ub\subseteq\mathfrak{Y}_v$ an open subset, then the commutative diagram
\begin{eqnarray*}
\begin{tikzcd}
\Da^{\dag}_{\mathfrak{Y}_{v''g},k''}(\lambda)\left(\text{pr}_{\mathfrak{Y}_{v''g}}(V)\right) \arrow[r] \arrow[d]
& \Da^{\dag}_{\mathfrak{Y}_{v''},k''}(\lambda)\left(\text{pr}_{\mathfrak{Y}_{v''}}\left( \rho_g^{-1}(V) \right)\right).
 \arrow[d]\\
\Da^{\dag}_{\mathfrak{Y}_{v'g},k'}(\lambda)\left(\text{pr}_{\mathfrak{Y}_{v'g}}(V)\right) \arrow[r]
& \Da^{\dag}_{\mathfrak{Y}_{v'},k'}(\lambda)\left(\text{pr}_{\mathfrak{Y}_{v'}}\left( \rho_g^{-1}(V) \right)\right).
\end{tikzcd}
\end{eqnarray*}
implies that if, by cofinality, we identify $\Da(\lambda)(V) = \varprojlim_{(\mathfrak{Y}_{vg},k)\in\underline{\Fb}_{vg}} \Da^{\dag}_{\mathfrak{Y}_{vg},k}(\lambda)\left(\text{pr}_{\mathfrak{Y}_{vg}}(V)\right)$ and we take projective limits in (\ref{G_projective_limit}), then we get a ring homomorphism
\begin{eqnarray*}
T_{g,V} : \Da(\lambda)(V) \rightarrow (\rho_g)_*\Da (\lambda) (V)
\end{eqnarray*}
which implies that the sheaf $\Da(\lambda)$ is $G$-equivariant. Furthermore, from construction this $G$-quivariant structure extends the $G_0$-structure defined in the subsection \ref{G_0-ZR}.
\end{proof}

\justify
Finally, let us recall the faithful functor 
\begin{eqnarray*}
\Ma\rightsquigarrow \Ma_{\infty}
\end{eqnarray*}
from coadmissible $G_0$-equivariant arithmetic $\Da(\lambda)$-modules on $\Fb_{\mathfrak{X}}$ to $G_0$-equivariant $\Da(\lambda)$-modules on $\mathfrak{X}_{\infty}$. If $\Ma$ comes from a coadmissible $G$-equivariant $\Da(\lambda)$-module on $\Fb$, then $\Ma_{\infty}$ is in fact $G$-equivariant (as in (\ref{G_0-equi}), this can be proved by using the family of $L$-linear isomorphisms $(\phi_g^v)_{g\in G}$. As in proposition \ref{G_0-equivalence}, the preceding theorem gives us

\begin{theo}\label{fully_faithful}
Let us suppose that $\lambda\in\text{Hom}(\T,\G_m)$ is an algebraic character such that $\lambda+\rho\in\mathfrak{t}^*_L$ is a dominant and regular character of $\mathfrak{t}_L$. The functor $\Ma\rightsquigarrow\Ma_{\infty}$ from the category $\Ca^{\Fb}_{G,\lambda}$ to $G$-equivariant $\Da(\lambda)$-modules on $\mathfrak{X}_{\infty}$ is a faithful functor.
\end{theo}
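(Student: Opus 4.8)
The plan is to reduce the statement to the already-established equivalence of categories in Theorem \ref{third_equivalence}, exactly mimicking the proof of Proposition \ref{G_0-equivalence}. Since the functor $\Ma \rightsquigarrow \Ma_\infty$ is already known (from the discussion preceding the statement, and from Proposition \ref{G_0-equivalence} together with the preceding proposition asserting the $G$-equivariant structure on $\Da(\lambda)$) to be a well-defined functor from $\Ca^{\Fb}_{G,\lambda}$ to the category of $G$-equivariant $\Da(\lambda)$-modules on $\mathfrak{X}_\infty$, the only thing left to prove is faithfulness: given two parallel morphisms $f, h : \Ma \to \Na$ in $\Ca^{\Fb}_{G,\lambda}$ with $f_\infty = h_\infty$, we must deduce $f = h$.

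First I would record the identification of global sections. As in the proof of Proposition \ref{G_0-equivalence}, one has $\text{sp}_{\;\mathfrak{Y}_v}(\mathfrak{X}_\infty) = \mathfrak{Y}_v$ for every $\mathfrak{Y}_v \in \Fb$, and the cofinality statement of Remark \ref{G0_eq_system} (applied at each special vertex) shows that
\begin{eqnarray*}
H^0(\mathfrak{X}_\infty, \Ma_\infty) = \varprojlim_{(\mathfrak{Y}_v,k)\in\underline{\Fb}} H^0(\mathfrak{Y}_v, \Ma_{\mathfrak{Y}_v,k}) = \Gamma(\Ma),
\end{eqnarray*}
and likewise for $\Na$. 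Moreover the formation $\Ma \rightsquigarrow \Ma_\infty$ is functorial, so $f_\infty$ induces on global sections precisely the map $\Gamma(f) : \Gamma(\Ma) \to \Gamma(\Na)$, and similarly for $h$. Hence the hypothesis $f_\infty = h_\infty$ forces $H^0(\mathfrak{X}_\infty, f_\infty) = H^0(\mathfrak{X}_\infty, h_\infty)$, i.e. $\Gamma(f) = \Gamma(h)$.

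Next I would invoke Theorem \ref{third_equivalence}: since $\Gamma(\bullet)$ is (part of) an equivalence of categories between $\Ca^{\Fb}_{G,\lambda}$ and the category of coadmissible $D(G,L)_\lambda$-modules, it is in particular faithful. Therefore $\Gamma(f) = \Gamma(h)$ already implies $f = h$, which is exactly the faithfulness of $\Ma \rightsquigarrow \Ma_\infty$. One should be slightly careful to note that this argument only uses that $f_\infty$ and $h_\infty$ agree on global sections, which is a priori weaker than agreeing as morphisms of sheaves on $\mathfrak{X}_\infty$; this is fine because we only need the implication "$f_\infty = h_\infty \Rightarrow f = h$", and the weaker hypothesis already suffices.

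The main (and only genuine) obstacle is bookkeeping: one must make sure that the projective limit defining $\Gamma(\Ma)$ is really computed over the cofinal subsystem of pairs $(\mathfrak{Y}_v,k)$ with $\mathfrak{Y}_v$ being $G_{v,0}$-equivariant — as stipulated after Definition \ref{coadmissible_G_arithmetic} — so that the identification $H^0(\mathfrak{X}_\infty,\Ma_\infty)=\Gamma(\Ma)$ is legitimate, and that the $G$-action (not merely the $G_0$-action) on both sides matches, so that the functoriality of $(\bullet)_\infty$ is genuinely a statement in the category of $G$-equivariant sheaves. Both points are handled by the constructions already carried out in Section \ref{G_0-ZR} and in the preceding proposition of this section; no new input is required, and the proof is essentially a two-line deduction from Theorem \ref{third_equivalence} once the identification of global sections is in place.
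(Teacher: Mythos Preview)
Your proposal is correct and follows exactly the approach the paper intends: the paper does not even spell out a separate proof for this theorem, but merely writes ``As in proposition \ref{G_0-equivalence}, the preceding theorem gives us'' before stating it, meaning one replays the argument of Proposition \ref{G_0-equivalence} with Theorem \ref{third_equivalence} in place of Theorem \ref{second_equivalence}. Your identification $H^0(\mathfrak{X}_\infty,\Ma_\infty)=\Gamma(\Ma)$ and the deduction $f_\infty=h_\infty\Rightarrow\Gamma(f)=\Gamma(h)\Rightarrow f=h$ via the equivalence of Theorem \ref{third_equivalence} is precisely that argument.
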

\begin{Final_remark} \label{Final_remar} \textbf{Final remark.} We end this work by remarking to the reader that the functors in proposition \ref{G_0-equivalence} and theorem \ref{fully_faithful} become \textit{fully faithful} functors if we required that the objects in the target category carry a structure of locally convex topological $\Da(\lambda)$-modules (cf. \cite[Propositions 5.2.31 and 5.3.16]{HPSS}). In fact, following \cite[5.2.30]{HPSS} we can see that $\Da(\lambda)$ carries a natural structure of a sheaf of locally convex topological $L$-algebras and more generally, if $\Ma \in \Cb^{G_0}_{\mathfrak{X},\lambda}$ (resp. $\Ma\in\Cb^{G}_{\mathfrak{X},\lambda}$) then $\Ma_{\infty}$ becomes a $G_0$-equivariant (resp. a $G$-equivariant) sheaf of locally convex topological $L$-vector spaces, endowed with the structure of a topological $\Da(\lambda)$-module.
\end{Final_remark}


\newpage


\begin{thebibliography}{X}

\bibitem{BB} \textsc{a. beilinson \& j. bernstein}, {Localisation de $\mathfrak{g}$-modules}. Comptes Rendus, 292 (1981), 15–18. MathSciNet MATH.

\bibitem{Berthelot0} \textsc{p. berthelot}, { Cohomologie rigide et cohomologie rigide à supports propres}. Preprint of the university of Rennes, 1996.

\bibitem{Berthelot1} \textsc{p. berthelot}, {$\Db$-modules arithmétiques I. Opérateurs différentiels de niveau fini}. Ann. Sci. École Norm. Sup.(4), 29(2), 185-272.

\bibitem{Berthelot3}\textsc{p. berthelot}, {Introduction a la theorie arithmetique des $\Db$-modules, Cohomologies $p$-adiques et applications arithmetiques. II}.  Astérisque, 2002, vol. 279, p. 1-80.

\bibitem{Berthelot2}\textsc{p. berthelot \& a. ogus}, {Notes on crystalline cohomology. (MN-21)}. Princeton University Press, 2015.

\bibitem{Bosch} \textsc{s. bosch}, {Lectures on formal and rigid geometry}. Berlin/Heidelberg/New York : Springer, 2014.

\bibitem{BLR} \textsc{s. bosch, w. lütkebohmert \& m. raynaud}, {Néron models}. Springer Science \& Business Media, 2012.

\bibitem{Brion1} \textsc{m. brion}, {Linearization of algebraic groups actions}. 2018.

\bibitem{BT1} \textsc{f. bruhat \& j. tits}, {Groupes réductifs sur un corps local I}. Hautes Etudes Sci. Publ. Math.,
(41): 5–252, 1972.

\bibitem{BT2} \textsc{f. bruhat \& j.tits}, {Groupes réductifs sur un corps local II}.  Hautes Etudes Sci. Publ. Math.,
(60): 5–184, 1984.

\bibitem{Dixmier} \textsc{j. dixmier}, {Enveloping algebras}. Vol. 14. Newnes, 1977.

\bibitem{Dolgachev} \textsc{i. dolgachev}, {Lectures on invariant theory}. Cambridge University Press, 2003.

\bibitem{Emerton1} \textsc{m. emerton}, {Locally analytic representation theory of $p$-adic reductive groups: A summary of some recent developments}. London mathematical society lecture note series, 2007, vol. 320, p. 407.

\bibitem{Emerton2} \textsc{m. emerton}, {Locally Analytic Vectors in Representations of Locally $p$-adic Analytic Groups}. American Mathematical Soc., 2017.

\bibitem{Grothendieck1} \textsc{a. grothendieck}, {Éléments de géométrie algébrique: I. Le langage des schémas}.  Publications Mathématiques de l'IHÉS, 1960, vol. 4, p. 5-228.

\bibitem{Grothendieck2} \textsc{a. grothendieck}, {Éléments de Géométrie algébrique: II. Étude globale élémentaire de quelques classes de morphismes}. Publications Mathématiques de l'IHES, 1961, vol. 8, p. 5-222.

\bibitem{Hartshorne1} \textsc{r. hartshorne}, {Algebraic geometry}. Vol. 52. Springer Science \& Business Media, 2013.

\bibitem{Hartshorne2} \textsc{r. hartshorne},  \textit{ Residues and duality}.  Lectures notes of a seminar on the work of A. Grothendieck, given at Harvard 1963/64. Springer.

\bibitem{HTT} \textsc{r. hotta, k. takeuchi \& t. tanasaki}, { $\Db$-modules, perverse sheaves and representation theory}. Vol. 236. Springer Science \& Business Media, 2007.

\bibitem{Huyghe1} \textsc{c. huyghe}, {$\Db^\dag$-affinité de l'espace projectif}. Compositio Mathematica, 1997, vol. 108, no 3, p. 277-318.

\bibitem{Huyghe2} \textsc{c. huyghe},
{Un théorème de Beilinson-Bernstein pour les $\Db$-modules arithmétiques}. Bull. Soc. math. France
137 (2), 2009, p. 159–183.

\bibitem{HS1} \textsc{c. huyghe \& t. schmidt}, {$\Db$-modules arithmétiques, distributions et localisation}. Rendiconti del Seminario Matematico della Università di Padova 139, 1--76 (2018).

\bibitem{HS2} \textsc{c. huyghe \& t. schmidt}, {$\Db$-modules arithmétiques sur la variété de drapeaux}. To appear in Journal für die Reine und Angewandte Mathematik. doi 10.1515/ crelle-2017-0021.

\bibitem{HSS} \textsc{c. huyghe, t. schmidt \& m. strauch}, {Arithmetic structures for differential operators on formal schemes}. arXiv:1709.00555v3.

\bibitem{HPSS} \textsc{c. huyghe, d. patel, t. schmidt \& m. strauch}, {$\Da^{\dag}$-affinity of formal models of flag varieties}. Preprint 2017.  arXiv:1501.05837v2 (to appear in Mathematical Research Letters).

\bibitem{Jantzen} \textsc{j.c. jantzen}, {Representation of algebraic groups}. American mathematical soc., 2007.

\bibitem{Liu} \textsc{q. liu}, {Algebraic geometry and arithmetic curves}. Volume 6 of Oxford  Graduated Texts in Mathematics. Oxford University Press, Oxford, 2002. Translated from the French by Reiné Erné, Oxford Sciences Publications.

\bibitem{MRS} \textsc{j.c mcconnell, j.c robson \& l.w small}, {Noncommutative notherian rings}. American Mathematical Soc., 2001.

\bibitem{Milicic1} \textsc{d. mili\v{c}i\'{c}}, {Localization and representation theory of reductive Lie groups}.  Preprint (April 1993), http://www. math. utah. edu/milicic/Eprints/book. pdf (1993).

\bibitem{PSS1}\textsc{d. patel, t. schmidt \& m. strauch}, {Locally analytic representations and sheaves on the Bruhat–Tits building}. Algebra \& Number Theory 8.6 (2014): 1365-1445.

\bibitem{PSS2}\textsc{d. patel, t. schmidt \& m. strauch}, {Locally analytic representations of GL(2,$L$) via semistable models of $\mathbb{P}^1$}. Journal of the Institute of Mathematics of Jussieu, 2017, p. 1-63.

\bibitem{Sarrazola} \textsc{a. sarrazola alzate}, {A Beilinson-Bernstein theorem for twisted arithmetic differential operators on the formal flag variety}. arXiv preprint arXiv:1811.11567v4.

\bibitem{ST0} \textsc{p. schneider \& j. teitelbaum}, {Algebras of $p$-adic distributions and admissible representations}.  Inventiones mathematicae, 2003, vol. 153, no 1, p. 145-196.

\bibitem{ST} \textsc{p. schneider  \& j. teitelbaum}, {Continuous and locally analytic representation theory}. Lectures at Hangzhou 2004.

\bibitem{ST1} \textsc{p. schneider \& j. teitelbaum}, {Locally analytic distributions and $p$-adic representation theory, with applications to GL$_2$}. Journal of the American Mathematical Society, 2002, vol. 15, no 2, p. 443-468.

\bibitem{WW} \textsc{w.c waterhouse \& b. weisfeiler}, {One-dimensional affine group schemes}. Journal of Algebra, 66(2), 550-568.

\end{thebibliography}
\end{document}